\documentclass[a4paper]{amsart}
\usepackage[english]{babel}
\usepackage{amsmath,amsthm,amssymb,amsfonts}

\newtheorem{thm}{Theorem}
\newtheorem{prp}[thm]{Proposition}
\newtheorem{cor}[thm]{Corollary}
\newtheorem{lem}[thm]{Lemma}

\makeatletter
\newcommand\textsubscript[1]{\@textsubscript{\selectfont#1}}
\def\@textsubscript#1{{\m@th\ensuremath{_{\mbox{\fontsize\sf@size\z@#1}}}}}
\makeatother

\DeclareMathOperator{\Kern}{\mathcal{K}}
\DeclareMathOperator{\supp}{\mathrm{supp}}
\DeclareMathOperator{\tr}{\mathrm{tr}}

\DeclareMathOperator{\sgn}{\mathrm{sgn}}

\newcommand{\N}{\mathbb{N}}
\newcommand{\Z}{\mathbb{Z}}
\newcommand{\R}{\mathbb{R}}
\newcommand{\C}{\mathbb{C}}

\newcommand{\tc}{\,:\,}

\newcommand{\leftopenint}{\left]}
\newcommand{\rightopenint}{\right[}
\newcommand{\leftclosedint}{\left[}
\newcommand{\rightclosedint}{\right]}

\DeclareMathOperator{\pf}{\mathrm{Pf}}

\newcommand{\vecU}{\mathbf{U}}

\newcommand{\Ell}{\mathcal{L}}

\newcommand{\lie}{\mathfrak}
\newcommand{\fst}{\lie{g}_1}
\newcommand{\snd}{\lie{g}_2}
\newcommand{\ddsnd}{\lie{g}^*_{2,\mathrm{r}}}
\newcommand{\dusnd}{\snd^*}

\newcommand{\cone}{\mathrm{\Gamma}}

\newcommand{\cnnb}{\mathrm{c}}
\newcommand{\prnb}{\mathrm{p}}
\newcommand{\rad}{\mathrm{r}}
\newcommand{\sph}{\mathrm{s}}

\newcommand{\radp}{\rho}
\newcommand{\crdp}{\delta}
\newcommand{\thinp}{\epsilon}
\newcommand{\smct}{a}
\newcommand{\smctb}{\sigma}

\newcommand{\done}{M}
\newcommand{\dtwo}{{d_2}}
\newcommand{\jone}{j}
\newcommand{\jtwo}{k}

\newcommand{\polbul}{*}

\newcommand{\defeq}{\mathrel{:=}}

\newcommand{\SC}{\mathrm{SC}}
\newcommand{\BD}{\mathrm{BD}}
\newcommand{\HDP}{\mathrm{HDP}}

\begin{document}
\title[Spectral multipliers on $2$-step groups]{Spectral multiplier theorems of Euclidean type on new classes of $2$-step stratified groups}
\author{Alessio Martini}
\address{Alessio Martini \\ Mathematisches Seminar \\ Christian-Albrechts-Universit\"at zu Kiel \\ Ludewig-Meyn-Str.\ 4 \\ D-24118 Kiel \\ Germany}
\email{martini@math.uni-kiel.de}
\author{Detlef M\"uller}
\address{Detlef M\"uller \\ Mathematisches Seminar \\ Christian-Albrechts-Universit\"at zu Kiel \\ Ludewig-Meyn-Str.\ 4 \\ D-24118 Kiel \\ Germany}
\email{mueller@math.uni-kiel.de}
\subjclass[2010]{43A22, 42B15}
\keywords{nilpotent Lie groups, spectral multipliers, sublaplacians, Mihlin-H\"ormander multipliers, singular integral operators}

\thanks{The first-named author gratefully acknowledges the support of the Alexander von Humboldt Foundation.}

\begin{abstract}
From a theorem of Christ and Mauceri and Meda it follows that, for a homogeneous sublaplacian $L$ on a $2$-step stratified group $G$ with Lie algebra $\lie{g}$, an operator of the form $F(L)$ is of weak type $(1,1)$ and bounded on $L^p(G)$ for $1 < p < \infty$ if the spectral multiplier $F$ satisfies a scale-invariant smoothness condition of order $s > Q/2$, where $Q = \dim \lie{g} + \dim[\lie{g},\lie{g}]$ is the homogeneous dimension of $G$. Here we show that the condition can be pushed down to $s > d/2$, where $d = \dim \lie{g}$ is the topological dimension of $G$, provided that $d \leq 7$ or $\dim [\lie{g},\lie{g}] \leq 2$.
\end{abstract}

\maketitle

\section{Introduction}

Let $G$ be a $2$-step stratified group. In other words, $G$ is a connected, simply connected nilpotent Lie group, whose Lie algebra $\lie{g}$ is endowed with a decomposition $\lie{g} = \fst \oplus \snd$ for some nontrivial subspaces $\fst,\snd$ of $\lie{g}$, called layers, such that $[\fst,\fst] = \snd$ and $[\lie{g},\snd] = \{0\}$. Let $L$ be a homogeneous sublaplacian on $G$, that is, a second-order, left-invariant differential operator of the form $L = -\sum_j X_j^2$, where $\{X_j\}_j$ is a basis of the first layer $\fst$. Since $L$ is (essentially) self-adjoint on $L^2(G)$, a functional calculus for $L$ is defined via the spectral theorem and, for all bounded Borel functions $F : \R \to \C$, the operator $F(L)$ is $L^2$-bounded.

For the $L^p$-boundedness for $p\neq 2$ of an operator of the form $F(L)$, sufficient conditions can be given in terms of smoothness assumptions on the spectral multiplier $F$. Namely, let $Q = \dim \fst+2 \dim\snd$ be the homogeneous dimension of $G$, denote by $W_2^s$ the $L^2$ Sobolev space of (fractional) order $s$, and let $\chi \in C^\infty_c(\leftopenint 0,\infty \rightopenint)$ be nontrivial. Then the following well-known result holds.

\begin{thm}[Christ \cite{christ_multipliers_1991}, Mauceri and Meda \cite{mauceri_vectorvalued_1990}]\label{thm:christmaucerimeda}
If
\[
\sup_{t > 0} \| F(t \cdot) \, \chi \|_{W_2^s} < \infty
\]
for some $s > Q/2$, then the operator $F(L)$ is of weak type $(1,1)$ and bounded on $L^p(G)$ for all $p \in \leftopenint 1,\infty \rightopenint$.
\end{thm}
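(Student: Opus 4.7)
The plan is to reduce the problem to a Calderón--Zygmund kernel estimate and then invoke the standard singular-integral theory on the space of homogeneous type $(G, d, dx)$, where $d$ is the Carnot--Carathéodory distance induced by the basis $\{X_j\}_j$ of $\fst$ and $dx$ is Haar measure. In this geometry, balls of radius $r$ have volume comparable to $r^Q$, which is where the homogeneous dimension enters. Since $F$ is bounded, $F(L)$ is automatically $L^2$-bounded by the spectral theorem, so by Calderón--Zygmund theory it suffices to verify the Hörmander integral condition
\[
\sup_{y \in G} \int_{d(0,x) > 2 d(0,y)} |K(y^{-1} x) - K(x)| \, dx \lesssim \sup_{t > 0} \|F(t \cdot) \chr\|_{W_2^s}
\]
on the convolution kernel $K \defeq K_{F(L)}$; the weak type $(1,1)$ bound and $L^p$-boundedness for $1 < p < \infty$ then follow by interpolation and duality.

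Decompose $F$ dyadically as $F = \sum_{k \in \Z} F_k$ with $F_k(\lambda) \defeq F(\lambda) \chr(2^{-k} \lambda)$, so that each $F_k$ is supported in $[2^{k-1}, 2^{k+1}]$. The automorphic dilations $\delta_t$ of $G$ satisfy $L \circ \delta_t = t^2 \, L$ and have Jacobian $t^Q$, so the analysis of $K_{F_k(L)}$ reduces to that of $K_{f(L)}$ for $f$ supported in a fixed compact subinterval of $(0,\infty)$, with the scale $2^{-k/2}$ tracked in the spatial variable. The crux is then the weighted Plancherel estimate
\[
\int_G |K_{f(L)}(x)|^2 (1 + |x|)^{2\alpha} \, dx \lesssim \|f\|_{W_2^\alpha}^2, \qquad \alpha \geq 0,
\]
where $|x|$ denotes the Carnot--Carathéodory distance to the identity. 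Applying Cauchy--Schwarz against the weight $(1+|x|)^{-\alpha}$ (which lies in $L^2(G)$ precisely when $2\alpha > Q$) converts this into an $L^1$ bound on $K_{f(L)}$ whenever $\alpha > Q/2$; summing dyadically, and treating left differences by the analogous argument combined with a Bernstein-type gain at scale $2^{-k/2}$, produces the Hörmander condition with Sobolev index $s > Q/2$.

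The unweighted ($\alpha = 0$) case of the Plancherel estimate is the identity $\|K_{f(L)}\|_2^2 = c \int_0^\infty |f(\lambda)|^2 \lambda^{Q/2-1} \, d\lambda$: the Plancherel measure of the homogeneous sublaplacian has density $\lambda^{Q/2-1}$, forced by the $\delta_t$-homogeneity together with the heat kernel asymptotic $e^{-tL}(0,0) \sim t^{-Q/2}$. This density and the $r^Q$ ball-volume growth are jointly responsible for the threshold $Q/2$ in the statement. The main technical obstacle is the weighted ($\alpha > 0$) Plancherel estimate: the classical route exploits the finite-propagation-speed property of the wave operators $\cos(t\sqrt{L})$, whose convolution kernels are supported in $\{d(0,\cdot) \leq t\}$, so that Fourier inversion trades Sobolev smoothness of $f$ in the spectral variable for polynomial spatial decay of $K_{f(L)}$; this is essentially the mechanism employed by Christ and, in a slightly different guise, by Mauceri and Meda.
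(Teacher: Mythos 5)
Your proposal is a correct sketch of the classical proof, which the paper itself does not present: Theorem~\ref{thm:christmaucerimeda} is cited from Christ and Mauceri--Meda, and the discussion in the introduction surrounding \eqref{eq:standardl2} confirms precisely the structure you use --- Calder\'on--Zygmund reduction to an $L^1$ estimate for compactly supported multipliers, which is in turn deduced from a weighted $L^2$ (Plancherel-type) estimate via Cauchy--Schwarz, with the Plancherel density $\lambda^{Q/2-1}$ and ball-volume growth $r^Q$ jointly producing the threshold $Q/2$. Two small caveats: the paper's footnote indicates that Mauceri--Meda established the weighted $L^2$ bound by adapting the representation-theoretic method of De Michele--Mauceri rather than by finite propagation speed (the latter is closer in spirit to Christ and to later treatments such as Hebisch and Cowling--Sikora), and the paper's version \eqref{eq:standardl2} has a strict loss $\beta > \alpha$ whereas you state it with $\beta = \alpha$; neither point affects the final $s > Q/2$ threshold, since one may always take $\alpha$ slightly smaller than $s$.
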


This theorem holds in fact for a stratified group of arbitrary step, but here we are interested only in the $2$-step case. Our aim is to improve Theorem~\ref{thm:christmaucerimeda}, by pushing down the smoothness condition to $s > d/2$, where $d = \dim\fst+\dim\snd$ is the topological dimension of $G$. Such an improvement of Theorem~\ref{thm:christmaucerimeda}
(corresponding in the Euclidean setting to the classical Mihlin-H\"ormander theorem for the Laplacian $L = -\Delta$ on $\R^d$)
is known to hold for specific classes of groups \cite{mller_spectral_1994,hebisch_multiplier_1993,martini_n32,martini_heisenbergreiter}, including the groups of Heisenberg type and more generally the M\'etivier groups,
but it is still an open problem whether it may be achieved for an arbitrary $2$-step group. The main result of the present paper reads as follows.

\begin{thm}\label{thm:mhlow}
Suppose that $\dim\snd \leq 2$ or $d \leq 7$. If
\begin{equation}\label{eq:mh}
\sup_{t > 0} \| F(t \cdot) \, \chi \|_{W_2^s} < \infty
\end{equation}
for some $s > d/2$, then the operator $F(L)$ is of weak type $(1,1)$ and bounded on $L^p(G)$ for all $p \in \leftopenint 1,\infty \rightopenint$.
\end{thm}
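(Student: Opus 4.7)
The plan is to follow the by-now-standard reduction, going back to \cite{mauceri_vectorvalued_1990,christ_multipliers_1991,hebisch_multiplier_1993}, of a Mihlin--H\"ormander-type theorem with threshold $s > d/2$ to a \emph{weighted Plancherel estimate} for the convolution kernel $\Kern_F$ of $F(L)$. Concretely, the task reduces to showing that, for every $F \in L^\infty(\R)$ supported in a fixed dyadic annulus of $\leftopenint 0,\infty \rightopenint$ and every $\alpha < d/2$,
\begin{equation*}
\int_G |\Kern_F(x)|^2 \, (1+|x|)^{2\alpha} \, dx \,\leq\, C_\alpha \, \|F\|_{W_2^s}^2 \qquad \text{for all } s > \alpha,
\end{equation*}
where $|\cdot|$ is a homogeneous distance from the identity. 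Once this is available, the Cauchy--Schwarz inequality against $(1+|x|)^{-\alpha} \in L^2(G)$ gives $\|\Kern_F\|_{L^1(G)} \leq C \|F\|_{W_2^s}$ for $s > d/2$; combined with the standard Calder\'on--Zygmund theory on the space of homogeneous type $G$, this delivers the asserted weak type $(1,1)$ and $L^p$-boundedness for $p \in \leftopenint 1,\infty \rightopenint$.

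To push the threshold in the weighted estimate from $Q/2$ (which is what the argument of Theorem~\ref{thm:christmaucerimeda} yields) down to $d/2$, I would exploit the Kirillov orbit picture. After partial Fourier transform in the central layer, the Plancherel measure of $G$ is supported on representations $\pi_\mu$ parametrized by $\mu \in \ddsnd$ through the skew-symmetric forms $J_\mu$ on $\fst$, and $\pi_\mu(L)$ has discrete spectrum of the form $\sum_j (2n_j+1)|d_j(\mu)|$, with $\pm i d_j(\mu)$ the nonzero eigenvalues of $J_\mu$. The left-hand side of the weighted estimate then expands as an explicit Hermite-type integral over $(\mu,n)$, and the gain comes from splitting the weight as $(1+|x_1|)^{2\alpha_1}(1+|x_2|)^{2\alpha_2}$ with $\alpha_1 + \alpha_2 = \alpha$: the $x_1$-factor is absorbed via a Hermite weighted Plancherel estimate pointwise in $\mu$ (losing at most $\dim\fst/2$ derivatives), and the $x_2$-factor via integration by parts in $\mu$ (losing only $\dim\snd/2$ derivatives rather than the $\dim\snd$ that a naive estimate on $G$ would produce).

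I expect the main obstacle to lie on the exceptional loci in $\dusnd$ where $J_\mu$ degenerates or some $|d_j(\mu)|$ become much smaller than the generic scale $|\mu|$: on such strata the Hermite spectrum of $\pi_\mu(L)$ forgets transverse modes and the pointwise-in-$\mu$ estimate loses derivatives, while integration by parts in $\mu$ becomes dangerous near the singularities. The plan is to decompose $\dusnd$ into a generic-rank cone, on which all nonzero $|d_j(\mu)|$ are comparable to $|\mu|$ and the previous argument goes through, plus finitely many lower-dimensional strata, on each of which the drop in the dimension of the integration domain compensates for the weaker Hermite estimate. When $\dim\snd \leq 2$, $\dusnd$ is at most two-dimensional and the exceptional set reduces essentially to finitely many half-lines, handled by direct computation; when $d \leq 7$, a finite classification of $2$-step stratified algebras applies, and I would verify case by case that $J_\mu$ admits a stratification compatible with the estimate. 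Controlling the derivative losses uniformly across all exceptional strata is the core technical difficulty, and the reason why the hypotheses $\dim\snd \leq 2$ or $d \leq 7$ are imposed: for general $2$-step groups the degeneration pattern of $J_\mu$ can be too rich to be handled by a single uniform polynomial weight.
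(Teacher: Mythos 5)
Your high-level reduction (weighted Plancherel estimate plus Calder\'on--Zygmund theory) and your diagnosis of the exceptional loci in $\dusnd$ are on the right track, but there is a structural gap in the plan. You aim for a single \emph{global} weighted $L^2$ estimate for $\Kern_{F(L)}$, with one anisotropic weight $(1+|x_1|)^{2\alpha_1}(1+|x_2|)^{2\alpha_2}$ that works uniformly over all of $\dusnd$, followed by a single application of Cauchy--Schwarz. The paper makes a point of the fact that precisely such a global estimate is \emph{not} available in the generality considered here: the extra weight on the central variable must be adapted to the stratum of the singular set of $\eta\mapsto J_\eta$ one is sitting over. Concretely, after decomposing $F(L)=\sum_\iota F(L)\,\zeta_\iota(\vecU)$ with a partition of unity in $\eta$, each piece admits a weighted $L^2$ estimate with a weight like $(1+|u_1^v|)^{\alpha_1}(1+|u_2^v|)^{\alpha_2}$ in coordinates on $\snd$ dual to a $v$-dependent frame on $\dusnd$; since those coordinates rotate with $v$, the estimates cannot be recombined at the $L^2$ level, and one must instead apply H\"older on each piece and sum in $L^1$. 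Your phrase ``the drop in the dimension of the integration domain compensates'' doesn't capture this: the decomposition is not there to shrink the domain but to change the frame in which differentiations in $\eta$ (hence weights in $u$) are controllable. Relatedly, your initial display with the isotropic homogeneous weight $(1+|x|)^{2\alpha}$ and $\alpha<d/2$ cannot be followed by Cauchy--Schwarz against $(1+|x|)^{-\alpha}$, since that function is in $L^2(G)$ only for $\alpha>Q/2$; the anisotropic split you mention later is necessary from the start.

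Two technical ingredients that your proposal does not reach, and which are essential in the paper, are worth naming. For $\dim\snd=2$ the control of $b_j^\eta,P_j^\eta$ near a singular ray is obtained from Puiseux expansions (resolution of plane algebraic curve singularities), which is what makes the ``finitely many half-lines'' case tractable; a generic ``direct computation'' would not give the uniform $\SC$/$\BD$ bounds needed in Proposition~\ref{prp:cutoff_partiall2norm}. More seriously, in the $d=7$, $\dim\snd=3$ classification, the case labelled (37D), where $\pf J_\eta=\eta_1^2+\eta_2^2-\eta_3^2$ and the singular set is a genuine cone, cannot be dispatched by a finite stratification at all: the paper resorts to an \emph{infinite} conic decomposition of Seeger--Sogge--Stein type with a thinness parameter $\epsilon\sim\crdp^{1/2}$, together with an additional first-layer weight $w(x)$ constructed via the Hebisch--Zienkiewicz mechanism (Lemma~\ref{lem:g743_hzestimate}), and only the combination of both closes the estimate. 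Your ``verify case by case'' plan would stall exactly there. So while the orbit-method kernel formula and the idea of stratifying $\dusnd$ according to the degeneracy of $J_\eta$ are the right starting points, the decisive moves — piece-dependent central weight summed in $L^1$, Puiseux-type control on curves, and the double (dyadic plus conic) decomposition with a first-layer extra weight for the cone — are missing.
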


Necessary and sufficient conditions for the $L^p$-boundedness of operators belonging to the functional calculus of a (sub)elliptic operator $L$ have been extensively studied in many contexts. In some cases (e.g., for Laplace-Beltrami operators on Riemannian symmetric spaces of the non-compact type, and for some sublaplacians on Lie groups of exponential growth) it is known that the $L^p$-boundedness of $F(L)$ for some $p \neq 2$ implies the existence of a holomorphic extension of $F$ to a complex neighborhood of a nonisolated point of the $L^2$-spectrum of $L$ \cite{anker_fourier_1990,anker_multiplicateurs_1986,christ_spectral_1996,clerc_multipliers_1974,hebisch_sub-laplacians_2005,ludwig_sub-laplacians_2000,ludwig_holomorphic_2008,taylor_estimates_1989}. In contrast, in other cases (e.g., for sublaplacians on Lie groups of polynomial growth, and more generally for operators with Gaussian-type heat kernel bounds on spaces of homogeneous type), a condition of the form \eqref{eq:mh}, requiring only a finite order of differentiability $s$ on $F$, is sufficient to guarantee the $L^p$-boundedness of $F(L)$ for $p \in \leftopenint 1,\infty \rightopenint$, provided $s$ is sufficiently large \cite{alexopoulos_spectral_1994,duong_plancherel-type_2002,hebisch_functional_1995}. In this context, several works have been devoted to obtaining, for some particular spaces and operators, the same smoothness condition as in the Euclidean case, i.e., $s > d/2$, where $d$ is the topological dimension of the space (see, e.g., \cite{cowling_spectral_2001,cowling_spectral_2011,martini_grushin2,martini_grushin_2012} for works outside the realm of stratified groups).

Let us return to the initial setting of a homogeneous sublaplacian $L$ on a $2$-step stratified group $G$. The proof of Theorem~\ref{thm:mhlow} is reduced, by a standard argument (see, e.g., \cite[Theorem~4.6]{martini_joint_2012}) based on the Calder\'on-Zygmund theory of singular integral operators, to the following $L^1$-estimate for the convolution kernel $\Kern_{F(L)}$ of the operator $F(L)$ corresponding to a compactly supported multiplier $F$.

\begin{prp}\label{prp:l1low}
Suppose that $\dim\snd \leq 2$ or $d \leq 7$. For all compact sets $K \subseteq \R$, for all Borel functions $F : \R \to \C$ with $\supp F \subseteq K$, and for all $s > d/2$,
\[
\|\Kern_{F(L)}\|_{1} \leq C_{K,s} \|F\|_{W_2^s}.
\]
\end{prp}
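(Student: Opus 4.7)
The plan is to bound the $L^1$ norm of the kernel by a weighted $L^2$ norm in which the first and second layers are weighted separately, and then to evaluate this weighted $L^2$ norm by a group Plancherel computation along the centre. Identifying $G$ with $\fst \oplus \snd$ via exponential coordinates and writing $x=(u,v)$, Cauchy--Schwarz gives
\[
\|\Kern_{F(L)}\|_1 \leq C_{\smct,\smctb} \bigl\|(1+|u|)^{\smct}(1+|v|)^{\smctb}\Kern_{F(L)}\bigr\|_2
\]
whenever $2\smct > \dim\fst$ and $2\smctb > \dim\snd$, and the improvement over the $s>Q/2$ threshold of Theorem~\ref{thm:christmaucerimeda} ultimately stems from the use of this mixed product weight in place of a homogeneous one.

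The task thus becomes to bound the right-hand side by $\|F\|_{W_2^s}$ with $s > d/2$. Taking the partial Fourier transform along the central variable $v$, the sublaplacian $L$ becomes, for each $\mu \in \dusnd$, a harmonic oscillator $\Ell_\mu$ on $\fst$ of rank $\mathrm{rank}\,J_\mu/2$, where $J_\mu(X,Y) = \mu([X,Y])$. A weighted Plancherel formula of the type developed in \cite{martini_joint_2012} then converts the central weight $(1+|v|)^{\smctb}$ into $\smctb$ derivatives in $\mu$ and the first-layer weight $(1+|u|)^{\smct}$ into an oscillator-smoothing gain at each fibre. Dyadic decomposition in $|\mu|$ and application of this weighted Plancherel estimate on the generic stratum, where $J_\mu$ has maximal rank $2\done$ and the spectrum of $\Ell_\mu$ depends smoothly on $\mu$, reproduces the analysis of the M\'etivier case and yields the sharp $s > d/2$ threshold.

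The main obstacle is the contribution of the singular stratum $\Sigma = \{\mu \tc \mathrm{rank}\,J_\mu < 2\done\}$, where the rank of $\Ell_\mu$ drops and the above weighted Plancherel estimate becomes too weak. This is exactly the point at which the hypotheses of the proposition enter. When $\dim\snd \leq 2$, the dual $\dusnd$ is at most two-dimensional, so $\Sigma$ is contained in a finite union of lines through the origin and can be handled by a one-dimensional refinement of the dyadic cut-off near $\Sigma$, combined with a separate estimate on the degenerate directions. When $d \leq 7$, one instead invokes a classification of $2$-step stratified Lie algebras of topological dimension at most $7$ to verify, case by case, that either the group is already of M\'etivier type (so that $\Sigma = \{0\}$) or the codimension of $\Sigma$ in $\dusnd$ is strictly larger than the loss incurred by the rank drop. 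In both situations $\Sigma$ contributes only lower-order terms, and a routine summation of the dyadic pieces completes the proof.
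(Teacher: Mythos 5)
Your proposal correctly identifies the singular set of $\eta \mapsto J_\eta$ as the central obstacle and the weighted Plancherel machinery as the right tool, but there are two genuine gaps. First, the opening Cauchy--Schwarz reduction commits you to a single fixed product weight $(1+|u|)^{\smct}(1+|v|)^{\smctb}$ on $G$, hence to a global weighted $L^2$ estimate
\[
\bigl\|(1+|u|)^{\smct}(1+|v|)^{\smctb}\Kern_{F(L)}\bigr\|_2 \leq C_{K,s}\|F\|_{W_2^s}
\]
with $2\smct>\dim\fst$, $2\smctb>\dim\snd$ and $s>d/2$, and the paper explicitly does \emph{not} obtain such an estimate. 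The central weight corresponds to $\eta$-derivatives of $b_j^\eta$ and $P_j^\eta$, and controlling these near the singular set forces one to use coordinate systems on $\dusnd$ adapted to individual branches of that set (rays when $\dim\snd=2$); since these coordinate systems differ from branch to branch, the resulting piecewise weighted $L^2$ estimates carry inequivalent weights and can be recombined only after passing to $L^1$ via H\"older. This is precisely why the paper first decomposes $\Kern_{F(L)}=\sum_\iota\Kern_{F(L)\,\zeta_\iota(\vecU)}$ along the spectrum of the central derivatives and sums only at the $L^1$ level. Moreover, even within a fixed branch, you need the Puiseux-series argument of Lemma~\ref{lem:2dc_discretesingularities} to verify that $b_j^\eta,P_j^\eta$ are ``self-controlled'' in the adapted coordinates; smoothness away from the origin alone is not enough.

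Second, your claim that for $d\leq 7$ a case-by-case check shows either M\'etivier type or that ``the codimension of $\Sigma$ in $\dusnd$ is strictly larger than the loss incurred by the rank drop'' is false, and the ensuing conclusion that $\Sigma$ contributes only lower-order terms begs the question in the hardest case. The indecomposable $7$-dimensional algebra labelled (37D) has $\pf J_\eta=\eta_1^2+\eta_2^2-\eta_3^2$, whose zero set is a conic surface of codimension $1$ in the $3$-dimensional $\dusnd$; the rank drop there is genuinely felt, and a naive dyadic cut-off near the cone loses too much. Treating this case occupies all of \S\ref{section:g743} and requires two further ideas absent from your proposal: an infinite ``second dyadic decomposition'' of the conic neighbourhood into sectors of angular width $\sim\crdp^{1/2}$ in the style of Seeger--Sogge--Stein, each with its own adapted coordinate frame, together with an extra first-layer weight gained by a variant of Hebisch's method (Lemma~\ref{lem:g743_hzestimate}). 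Without the Hebisch-type weight the sector count is too large to sum down to $s>d/2$; without the sector decomposition the coordinates cannot be adapted to the cone at all.
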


Estimates of this kind on an arbitrary stratified group have been known for a long time \cite[Lemma 6.35]{folland_hardy_1982} under a stronger assumption on $s$.
In particular, in \cite[Lemma 1.2]{mauceri_vectorvalued_1990} this $L^1$-estimate is proved\footnote{In \cite[Lemma 1.2]{mauceri_vectorvalued_1990} the $L^1$-estimate and the weighted $L^2$-estimate \eqref{eq:standardl2} are stated under the hypothesis that the compact set $K$ does not contain $0$. Nevertheless an inspection of the proof (based on \cite{de_michele_mulipliers_1987}) shows that for a nonabelian stratified group $G$ this restriction on $K$ can be removed. See also the discussion in \cite{mller_spectral_1994}, where even the abelian case is covered.} for $s > Q/2$, as a consequence of a weighted $L^2$-estimate: if $|\cdot|_G : G \to \R$ is a homogeneous norm on $G$, then, for all $\beta > \alpha \geq 0$ and all multipliers $F : \R \to \C$ supported in a compact set $K \subseteq \R$,
\begin{equation}\label{eq:standardl2}
\|(1+|\cdot|_G)^\alpha \Kern_{F(L)}\|_{2} \leq C_{K,\alpha,\beta} \|F\|_{W_2^\beta}.
\end{equation}
The known improvements of Theorem~\ref{thm:christmaucerimeda} are all based on an improved version of \eqref{eq:standardl2} entailing an ``extra weight'' $w : G \to \leftclosedint 1,\infty \rightopenint$, i.e.,
\begin{equation}\label{eq:weightedl2}
\|(1+|\cdot|_G)^\alpha \, w \, \Kern_{F(L)}\|_{2} \leq C_{K,\alpha,\beta} \|F\|_{W_2^\beta}.
\end{equation}
Different types of weights $w$ are used in the various works \cite{mller_spectral_1994,hebisch_multiplier_1993,martini_n32,martini_heisenbergreiter}; in particular, \cite{hebisch_multiplier_1993} uses an extra weight depending (in exponential coordinates) only on the variables on the first layer, whereas \cite{martini_n32,martini_heisenbergreiter} exploit a weight depending only on the variables on the second layer. In any case, the presence of the extra weight is sufficient to compensate the difference $\dtwo = \dim \snd$ between the homogeneous dimension and the topological dimension.

In the present work, however, no ``global'' $L^2$-estimate of the form \eqref{eq:weightedl2} is obtained. More precisely, if $U_1,\dots,U_\dtwo$ is a basis of $\snd$, then the sublaplacian $L$ and the ``central derivatives'' $-iU_1,\dots,-iU_\dtwo$ admit a joint functional calculus. If $\vecU$ denotes the ``vector of operators'' $(-iU_1,\dots,-iU_\dtwo)$, then, by the use of a suitable partition of unity $\{\zeta_\iota\}_\iota$ here we decompose the operator $F(L)$ along the spectrum of $\vecU$, thus
\begin{equation}\label{eq:decomposition}
\Kern_{F(L)} = \sum_\iota \Kern_{F(L) \, \zeta_\iota(\vecU)}.
\end{equation}
For each piece $\Kern_{F(L) \, \zeta_\iota(\vecU)}$ we prove a weighted $L^2$-estimate of the type \eqref{eq:weightedl2}, where the extra weight $w$ may depend on the piece, hence these estimates cannot be directly summed; however they can be summed at the level of $L^1$, after the application of H\"older's inequality, thus yielding the improved $L^1$-estimate of Proposition~\ref{prp:l1low}.

The decomposition \eqref{eq:decomposition} is related to the possible ``singularities'' of the algebraic structure of $G$. Namely, let $\langle \cdot, \cdot \rangle$ be the inner product on $\fst$ determined by the sublaplacian, and define for all $\eta \in \dusnd$ the skewsymmetric endomorphism $J_\eta$ of $\fst$ by
\[
\langle J_\eta x, x' \rangle = \eta([x,x']) \qquad\text{for all $x,x'\in \fst$.}
\]
Then $-J_\eta^2$ can be decomposed according to the spectral theorem, i.e.,
\[
-J_\eta^2 = \sum_{\jone=1}^\done (b_\jone^\eta)^2 P_\jone^\eta
\]
for some distinct $b_1^\eta,\dots,b_\done^\eta \in \leftopenint 0,\infty \rightopenint$ and some projections $P_1^\eta,\dots,P_\done^\eta$ on mutually orthogonal subspaces of $\fst$ of even ranks. By the use of the representation theory of the nilpotent group $G$, a formula for the (Euclidean) Fourier transform $\widehat \Kern_{H(L,\vecU)}$ of the convolution kernel of an operator $H(L,\vecU)$
 in the joint functional calculus 
can be written, involving the quantities $b_1^\eta,\dots,b_\done^\eta$, $P_1^\eta,\dots,P_\done^\eta$. Weighted $L^2$-estimates of $\Kern_{H(L,\vecU)}$ correspond, roughly speaking, to $L^2$-estimates of derivatives of $\widehat \Kern_{H(L,\vecU)}$; therefore we are interested in controlling the derivatives of the (algebraic) functions $\eta \mapsto b_\jone^\eta$ and $\eta \mapsto P_\jone^\eta$.

The singularities of these functions lie on a homogeneous Zariski-closed subset of $\dusnd$. For the groups considered in \cite{mller_spectral_1994,martini_n32,martini_heisenbergreiter}, the only relevant singularity is at the origin of $\dusnd$, and the derivatives of the $b_\jone^\eta$ and $P_\jone^\eta$ can be simply controlled by homogeneity. This is not the case for more general $2$-step groups. Nevertheless, if $\dim \snd = 2$, then the singular set is a finite union of rays emanating from the origin; by the use of a finite decomposition \eqref{eq:decomposition} we can consider each of these rays separately, and classical results for the resolution of singularities of algebraic curves allow us to obtain the desired estimate.

For the case $d \leq 7$, it remains then to consider some examples where $\dim \snd = 3$. It turns out that in most examples the singular set is a finite union of lines and planes, and an adaptation of the technique used when $\dim \snd = 2$ works here too. However there is an (essentially unique) example where the singular set has a nonflat component, namely a conic surface. In this case, in the neighborhood of the cone we exploit an infinite decomposition \eqref{eq:decomposition} analogous to the ``second dyadic decomposition'' used in \cite{seeger_regularity_1991} to prove sharp $L^p$ estimates for Fourier integral operators. Due to the ``too large amount'' of pieces, this technique alone would give only a partial improvement of Theorem~\ref{thm:christmaucerimeda}; however a further extra weight can be gained in this case by a variation of the technique of \cite{hebisch_multiplier_1993,hebisch_multiplier_1995} (as extended in \cite[\S 3]{martini_joint_2012} to the joint functional calculus of commuting operators), and the combination of the two techniques yields eventually the wanted result.

The rest of this paper is devoted to the proof of Proposition~\ref{prp:l1low}. Namely, in \S\ref{section:2dc} below the case $\dim\snd \leq 2$ is considered, while in \S\ref{section:7dim} and \S\ref{section:g743} we deal with the groups of dimension at most $7$. First, however, in \S\ref{section:kernel} we obtain the formula for $\widehat \Kern_{H(L,\vecU)}$, and in \S\ref{section:kernelderivatives} we develop a technique to deal with derivatives of this formula; these preliminary results are valid on all $2$-step groups.

\section{Joint functional calculus and kernel formula}\label{section:kernel}

Let $G$ be a connected, simply connected $2$-step nilpotent Lie group. Let $\lie{g}$ be its Lie algebra, and let $\lie{g} = \lie{g}_1 \oplus \lie{g}_2$ be a stratification of $\lie{g}$; in other words, $[\lie{g}_1,\lie{g}_1] = \lie{g}_2 \neq \{0\}$ and $[\lie{g},\lie{g}_2] = \{0\}$, and in particular $\lie{g}_2= [\lie{g},\lie{g}]$ is contained in the center of $\lie{g}$. The group $G$ can be identified with its Lie algebra $\lie{g}$ via the exponential map, and the Haar measure on $G$ corresponds to the Lebesgue measure on $\lie{g}$. In particular, an element of $G$ can be written as $(x,u)$, where $x$ and $u$ denote the components in $\fst$ and $\snd$ respectively. A homogeneous norm $|\cdot|_G$ on $G$ is then defined by
\begin{equation}\label{eq:homogeneousnorm}
|(x,u)|_G = |x| + |u|^{1/2}
\end{equation}
for any choice of norms on $\fst$ and $\snd$ (see \cite[\S1.2]{goodman_nilpotent_1976} for a more general definition of homogeneous norm).

A homogeneous sublaplacian $L$ on $G$ is an operator of the form $- \sum_j X_j^2$ for some basis $\{X_j\}_j$ of the first layer $\lie{g}_1$. A homogeneous sublaplacian $L$ on $G$ determines uniquely an inner product $\langle \cdot, \cdot \rangle$ on $\lie{g}_1$ so that $L = - \sum_j \tilde X_j^2$ for any orthonormal basis $\{\tilde X_j\}_j$ of $\lie{g}_1$; vice versa, an inner product $\langle \cdot,\cdot \rangle$ on $\lie{g}_1$ determines a homogeneous sublaplacian $L$.

Let $\dtwo = \dim \snd$, and let $\{U_k\}_k$ be a basis of $\snd$. Then the operators
\begin{equation}\label{eq:operators}
L,-iU_1,\dots,-iU_\dtwo
\end{equation}
are essentially self-adjoint and commute strongly on $L^2(G)$, hence they admit a joint functional calculus (see, e.g., \cite[Corollary~3.3]{martini_spectral_2011}). Denote by $\vecU$ the ``vector of operators'' $(-iU_1,\dots,-iU_\dtwo)$. If $\dusnd$ is identified with $\R^\dtwo$ via the chosen basis $\{U_k\}_k$ of $\snd$, then the operator $H(L,\vecU)$ is well-defined and bounded on $L^2(G)$ for all bounded Borel functions $H : \R \times \dusnd \to \C$. Since $L,-iU_1,\dots,-iU_\dtwo$ are left-invariant, the same holds for $H(L,\vecU)$, and we denote by $\Kern_{H(L,\vecU)}$ its convolution kernel.

For all $\eta \in \dusnd$, define $J_\eta$ as the unique endomorphism of $\fst$ such that
\[\langle J_\eta x,x' \rangle = \eta([x,x'])\]
for all $x,x'\in \fst$. Note that $J_\eta$ is skewadjoint for all $\eta \in \dusnd$, hence $-J_\eta^2 = J_\eta^* J_\eta$ is selfadjoint and nonnegative. Let $p_\eta$ be the characteristic polynomial of $-J_\eta^2$, i.e.,
\begin{equation}\label{eq:charpoly}
p_\eta(\lambda) = \det (\lambda + J_\eta^2).
\end{equation}
We show now that the polynomials $p_\eta$ admit a ``simultaneous factorization'' when $\eta$ ranges in a Zariski-open subset of $\dusnd$. This is in fact a classical result based on the theory of discriminants and resultants (for which we refer the reader, e.g., to \cite[\S 3.5]{cox_ideals_1997}, \cite[\S A.1]{fischer_plane_2001}, and references therein), nevertheless we sketch a proof here for completeness.

\begin{lem}\label{lem:factorization}
There exist a nonempty, homogeneous Zariski-open subset $\ddsnd$ of $\dusnd$ and numbers $\done \in \N\setminus \{0\}$, $r_0 \in \N$, $r_1,\dots,r_\done \in \N \setminus \{0\}$ such that
\[
p_\eta(\lambda) = \lambda^{r_0} (\lambda - (b_1^\eta)^2)^{2r_1} \cdots (\lambda - (b_\done^\eta)^2)^{2r_\done}
\]
for all $\eta \in \dusnd$, where the $\eta \mapsto b_j^\eta$ are continuous functions on $\dusnd$ and real analytic functions on $\ddsnd$, homogeneous of degree $1$, such that
\[
b_j^\eta > 0 \qquad\text{ and }\qquad b_j^\eta \neq b_{j'}^\eta \,\text{ if $j \neq j'$}
\]
for all $\eta \in \ddsnd$ and $j,j' \in \{1,\dots,\done\}$.
\end{lem}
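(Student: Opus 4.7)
My plan is to combine three ingredients: the spectral structure forced by the skew-symmetry of $J_\eta$, elementary algebraic geometry applied to the coefficients of $p_\eta$, and analytic perturbation theory for simple eigenvalues.

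First I would establish the pointwise factorization. Because $J_\eta$ is real and skew-adjoint with respect to $\langle\cdot,\cdot\rangle$, its complex eigenvalues come in conjugate pairs $\pm i b$ with $b\ge 0$, both elements of each pair occurring with equal multiplicity. Consequently every nonzero eigenvalue $b^2$ of $-J_\eta^2$ has even multiplicity, so that pointwise
\[
p_\eta(\lambda) = \lambda^{r_0(\eta)} \prod_{j=1}^{\done(\eta)} (\lambda - (b_j^\eta)^2)^{2 r_j(\eta)}
\]
with $b_j^\eta>0$ distinct; the multiplicities $r_0(\eta), \done(\eta), r_j(\eta)$ a priori depend on $\eta$.

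Next I would stabilize these multiplicities on a Zariski-open cone. The coefficients of $p_\eta$, viewed as a polynomial in $\lambda$, are polynomials in $\eta$. Pick $r_0$ minimal such that the coefficient of $\lambda^{r_0}$ in $p_\eta$ does not vanish identically; on the Zariski-open set where it does not vanish, $q_\eta(\lambda)\defeq p_\eta(\lambda)/\lambda^{r_0}$ is a monic polynomial in $\lambda$ of constant degree with $q_\eta(0)\ne 0$ and coefficients polynomial in $\eta$. Since on this open set every positive root of $q_\eta$ still has even multiplicity, the number of distinct positive roots of $q_\eta$ equals half of $\deg q_\eta - \deg\gcd(q_\eta,q_\eta')$. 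Classical subresultant theory expresses the condition $\deg\gcd(q_\eta,q_\eta')\ge k$ by the simultaneous vanishing of certain polynomials in the coefficients of $q_\eta$, hence in $\eta$; thus the generic value $\done$ of the root count is attained on a Zariski-open set, and an analogous argument applied iteratively to $\gcd$'s stabilizes the multiplicities $r_1,\dots,r_\done$. All the resulting conditions are homogeneous, since $J_{t\eta}=tJ_\eta$ forces the whole generic pattern to be scaling-invariant, so we may take $\ddsnd$ to be a homogeneous Zariski-open set.

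On $\ddsnd$ the analyticity of $b_j^\eta$ follows from standard analytic perturbation theory: the Riesz projection
\[
P_j^\eta = \frac{1}{2\pi i} \oint_{\gamma_j(\eta)} (\zeta + J_\eta^2)^{-1}\,d\zeta,
\]
where $\gamma_j(\eta)$ is a small circle around $(b_j^\eta)^2$ separating it from the other eigenvalues, depends real-analytically on $\eta$ since $\gamma_j(\eta)$ can be kept locally fixed; hence so does
\[
(b_j^\eta)^2 = \tr\bigl((-J_\eta^2) P_j^\eta\bigr)/\tr P_j^\eta,
\]
and taking positive square roots preserves real-analyticity. The identity $J_{t\eta}=tJ_\eta$ gives $b_j^{t\eta}=|t|\,b_j^\eta$, i.e. homogeneity of degree one. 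Finally, continuity on all of $\dusnd$ follows from the classical continuity of the (unordered) spectrum of a continuous family of symmetric matrices: ordering $0<b_1^\eta<\cdots<b_\done^\eta$ on $\ddsnd$ extends unambiguously by continuity to the Zariski-closed complement (where some of the $b_j^\eta$ may coincide, but this does not affect continuity once they are relabelled in increasing order). The one delicate step is the second paragraph: showing that the multiplicity pattern really is constant on a Zariski-open homogeneous set, given that the raw discriminant of $q_\eta$ is identically zero because of the even-multiplicity constraint—this is precisely where the subresultant/iterated-$\gcd$ bookkeeping is indispensable.
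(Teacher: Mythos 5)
Your proposal reaches the same conclusion by a genuinely different route. The paper works directly in the unique factorization domain $\R[\dusnd][\lambda]$: it factors $p_\polbul = \lambda^{s_0}p_{1,\polbul}^{s_1}\cdots p_{n,\polbul}^{s_n}$ into irreducibles, defines the $b_j^\eta$ from the roots of the individual factors $p_{l,\eta}$, and cuts out $\ddsnd$ using the constant terms and discriminants of the $p_{l,\polbul}$ together with the resultants $S_{l,l',\polbul}$; analyticity on $\ddsnd$ comes from the implicit function theorem, and homogeneity from a weighted grading on $\R[\dusnd][\lambda]$ (degree $2$ to $\lambda$). You instead avoid the UFD factorization, stabilizing the multiplicity pattern of $q_\eta = p_\eta/\lambda^{r_0}$ directly via iterated-gcd/subresultant conditions, and get analyticity from Riesz projections. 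Both work; the paper's route has the advantage that $p_\eta = \lambda^{s_0}\prod_l\prod_j(\lambda-R_{l,j}^\eta)^{s_l}$ is a \emph{polynomial identity}, so the factorization in the statement holds for \emph{all} $\eta\in\dusnd$ automatically.

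Two things in your write-up need repair. First, a small slip: for $q_\eta = \prod_j(\lambda-c_j)^{2r_j}$ one has $\deg\gcd(q_\eta,q_\eta') = \sum_j(2r_j-1)$, so the number of distinct roots equals $\deg q_\eta - \deg\gcd(q_\eta,q_\eta')$ itself, not half of it; the ``half'' should be deleted (the evenness of the multiplicities is what forces each $s_l$ to be even, i.e.\ $s_l = 2r_l$, but it does not introduce a factor of $2$ into the root count). Second, and more substantively, the lemma asserts $p_\eta(\lambda) = \lambda^{r_0}\prod_j(\lambda-(b_j^\eta)^2)^{2r_j}$ for \emph{every} $\eta\in\dusnd$, and your final paragraph only addresses continuity of the $b_j^\eta$ across $\dusnd\setminus\ddsnd$, not validity of the factorization there. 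To close this, observe that the ordered eigenvalues $\mu_1^\eta\leq\dots\leq\mu_{\dim\fst}^\eta$ of $-J_\eta^2$ are globally continuous, and on the dense set $\ddsnd$ they satisfy the block identities $\mu_1=\dots=\mu_{r_0}=0$, $\mu_{r_0+1}=\dots=\mu_{r_0+2r_1}$, etc.; by continuity and density these identities persist on all of $\dusnd$, so the block structure cannot split off $\ddsnd$ (blocks can only merge), and the formula follows with $(b_j^\eta)^2$ the $j$-th block value. Without this remark the claimed unambiguous extension is not self-evident.
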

\begin{proof}
For all $\eta \in \dusnd$, the roots of $p_{\eta}$ are the eigenvalues of $-J_{\eta}^2$, which are all real and nonnegative, and moreover the nonzero eigenvalues have even multiplicity, since they correspond to pairs of conjugate eigenvalues of $J_\eta$. What remains to show is essentially that the number and the multiplicities of the roots of $p_\eta$ do not change when $\eta$ ranges in a nonempty homogeneous Zariski-open subset of $\dusnd$, and that the roots are real analytic functions of $\eta$ there. 

Since $J_\eta$ is a linear function of $\eta$, the coefficients of $p_\eta$ are polynomial functions of $\eta$; hence $\eta \mapsto p_\eta$ can be identified with an element $p_\polbul$ of $\R[\dusnd][\lambda]$, that is, with a polynomial in the indeterminate $\lambda$ whose coefficients are polynomials on $\dusnd$.

$\R[\dusnd][\lambda]$ is a unique factorization domain, hence we can write
\begin{equation}\label{eq:factorization}
p_\polbul = \lambda^{s_0} p_{1,\polbul}^{s_1} \cdots p_{n,\polbul}^{s_n}
\end{equation}
where $s_0 \in \N$, $n,s_1,\dots,s_n \in \N \setminus \{0\}$, and the $p_{l,\polbul}$ are monic and irreducible elements of $\R[\dusnd][\lambda]$, pairwise coprime and coprime with $\lambda$.

Suppose first that $n = 1$. Let $g$ be the degree (in $\lambda$) of $p_{1,\polbul}$ and, for all $\eta \in \dusnd$, let $R_{1}^\eta \leq \dots \leq R_{g}^\eta$ be the increasing enumeration of the roots of $p_{1,\eta}$, repeated according to their multiplicities. By Rouch\'e's theorem, the $\eta \mapsto R_j^\eta$ are continuous on $\dusnd$. Since $p_{1,\polbul}$ is monic, irreducible and not divisible by $\lambda$, its ``constant term'' $C_{\polbul} = p_{1,\polbul}(0)$ and its discriminant $D_\polbul$ are nonzero elements of $\R[\dusnd]$. For all $\eta \in \dusnd$, if $D_{\eta} \neq 0$, then the roots of $p_{1,\eta}$ are simple. Therefore, if we set $\ddsnd = \{ \eta \tc D_{\eta} \cdot C_{\eta} \neq 0\}$, then, for all $\eta \in \ddsnd$, the $R_{l}^\eta$ are nonzero and distinct, and do not annihilate $\partial_\lambda p_{1,\eta}$, and in particular they are analytic functions of $\eta \in \ddsnd$ by the implicit function theorem. Moreover, by \eqref{eq:factorization}, $R_j^\eta$ is a root of $p_{\eta}$ of multiplicity $s_1$ for all $\eta \in \ddsnd$.

Suppose instead that $n > 1$. Then,  proceeding as before, for each irreducible factor $p_{l,\polbul}$ of $p_\polbul$ we find a system of nonnegative continuous functions $\eta\mapsto R_{l,j}^\eta$ ($j=1,\dots,g_l$) such that
\begin{equation}\label{eq:factorroots}
p_{l,\eta}(\lambda) = \prod_j (\lambda-R_{l,j}^\eta), \qquad R_{l,1}^\eta \leq \dots \leq R_{l,g_l}^\eta,
\end{equation}
for all $\eta \in \dusnd$, and moreover we find a Zariski-open set $A_l$ such that, for all $\eta \in A_l$, the quantities $R_{j,1}^\eta,\dots,R_{j,g_j}^\eta$ are nonzero and distinct, and analytic functions of $\eta\in A_l$. In particular, by \eqref{eq:factorization},
\[
p_\eta(\lambda) = \lambda^{s_0} \prod_l \prod_j (\lambda-R_{l,j}^\eta)^{s_l}.
\]
It is however possible that roots $R_{l,j}^\eta$ and $R_{l',j'}^\eta$ coming from two distinct factors $p_{l,\polbul}$ and $p_{l',\polbul}$ coincide for some $\eta \in A_1 \cap \dots \cap A_n$.
To circumvent this, we consider the resultant $S_{l,l',\polbul}$ of $p_{l,\polbul}$ and $p_{l',\polbul}$, which is a nonzero element of $\R[\dusnd]$, because $p_{l,\polbul}$ and $p_{l',\polbul}$ are coprime. By setting $\ddsnd = \bigcap_l A_l  \cap \bigcap_{l\neq l'} \{ \eta \tc S_{l,l',\eta} \neq 0\}$, we obtain that the $R_{l,j}^\eta$ are all distinct and nonzero for all $\eta \in \ddsnd$, hence $R_{l,j}^\eta$ is a root of $p_\eta$ of multiplicity $s_l$ for all $\eta \in \ddsnd$.

It remains to discuss the homogeneity of $\ddsnd$ and the $\eta \mapsto R_{l,j}^\eta$. Note that the function $(\eta,t) \mapsto p_\eta(t^2)$ is homogeneous; in other words, if we define an algebra gradation $\gamma$ on $\R[\dusnd][\lambda]$ by assigning the standard polynomial degree to the elements of $\R[\dusnd]$ and degree $2$ to $\lambda$, then $p_\polbul$ is $\gamma$-homogeneous. By \eqref{eq:factorization} we then infer that the factors $p_{l,\polbul}$ are also $\gamma$-homogeneous. The homogeneity properties of discriminants and resultants (cf.\ \cite[\S A.1.3]{fischer_plane_2001}) allow us then to conclude that $\ddsnd$ is homogeneous; moreover, since the roots $R_{l,j}^\eta$ are uniquely determined by \eqref{eq:factorroots}, the $\gamma$-homogeneity of the $p_{l,\polbul}$ implies that the $\eta \mapsto R_{l,j}^\eta$ are homogeneous of degree $2$.
\end{proof}

\begin{lem}\label{lem:spectraldecomposition}
With the notation of Lemma~\ref{lem:factorization}, we can write
\begin{equation}\label{eq:spectraldecomposition}
-J_\eta^2 = \sum_{j=1}^\done (b_j^\eta)^2 P_j^\eta
\end{equation}
for all $\eta \in \ddsnd$, where the $P_j^\eta$ are orthogonal projections on $\fst$ of rank $2r_j$ for all $\eta \in \ddsnd$, with pairwise orthogonal ranges. In fact the $P_j^\eta$ are (componentwise) real analytic functions of $\eta \in \ddsnd$, homogeneous of degree $0$, and are rational functions of $\eta,b_1^\eta,\dots,b_\done^\eta$.
Moreover
\begin{equation}\label{eq:hsnorm}
\left( \sum_{\jone=1}^\done 2r_\jone (b_\jone^\eta)^2 \right)^{1/2} = (\tr(J_\eta^* J_\eta))^{1/2}
\end{equation}
and the last expression, as a function of $\eta$, is a norm induced by an inner product on $\dusnd$.
\end{lem}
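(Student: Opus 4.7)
The plan is to derive the decomposition \eqref{eq:spectraldecomposition} directly from the spectral theorem applied to the selfadjoint, nonnegative operator $-J_\eta^2$. For $\eta \in \ddsnd$, Lemma~\ref{lem:factorization} tells us that the distinct eigenvalues of $-J_\eta^2$ are $0$ (with algebraic multiplicity $r_0$) and $(b_1^\eta)^2,\dots,(b_\done^\eta)^2$ (with algebraic multiplicities $2r_1,\dots,2r_\done$). Define $P_j^\eta$ as the orthogonal projection onto the eigenspace of $-J_\eta^2$ associated with $(b_j^\eta)^2$. Pairwise orthogonality of the ranges is the standard feature of spectral projections of a selfadjoint operator, and, since selfadjointness forces algebraic and geometric multiplicities to coincide, the rank of $P_j^\eta$ equals $2r_j$; this yields \eqref{eq:spectraldecomposition}.

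The analyticity, rationality, and homogeneity of $P_j^\eta$ will be obtained by Lagrange interpolation on the eigenvalues. For $\eta \in \ddsnd$, set
\[
q_{j,\eta}(\lambda) \defeq \frac{\lambda}{(b_j^\eta)^2} \prod_{\substack{l=1 \\ l \neq j}}^{\done} \frac{\lambda - (b_l^\eta)^2}{(b_j^\eta)^2 - (b_l^\eta)^2}
\]
(with the $\lambda/(b_j^\eta)^2$ factor omitted when $r_0 = 0$); this polynomial equals $1$ at $(b_j^\eta)^2$ and vanishes at every other eigenvalue of $-J_\eta^2$, so $P_j^\eta = q_{j,\eta}(-J_\eta^2)$. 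Since $J_\eta$ depends linearly on $\eta$, and since the denominators $(b_j^\eta)^2$ and $(b_j^\eta)^2 - (b_l^\eta)^2$ are nonzero on $\ddsnd$ by Lemma~\ref{lem:factorization}, this expression realizes $P_j^\eta$ as a rational function of $\eta$ and $b_1^\eta,\dots,b_\done^\eta$, which is real analytic in $\eta \in \ddsnd$ because the $b_l^\eta$ are. Homogeneity of $P_j^\eta$ of degree $0$ follows because $-J_\eta^2$ and each $(b_l^\eta)^2$ are homogeneous of degree $2$, so numerator and denominator in each Lagrange factor scale by the same power of the dilation parameter.

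For \eqref{eq:hsnorm}, taking the trace of \eqref{eq:spectraldecomposition} and using $\tr(P_j^\eta) = 2r_j$ gives
\[
\tr(J_\eta^* J_\eta) = \tr(-J_\eta^2) = \sum_{j=1}^\done 2 r_j (b_j^\eta)^2.
\]
Finally, since $\eta \mapsto J_\eta$ is a linear map from $\dusnd$ into the space of endomorphisms of $\fst$, the pullback under this map of the Hilbert--Schmidt inner product $(A,B) \mapsto \tr(A^* B)$ is a symmetric, positive semidefinite bilinear form on $\dusnd$ whose associated quadratic form is $\eta \mapsto \tr(J_\eta^* J_\eta)$. Positive definiteness is the only thing to check: if $J_\eta = 0$, then $\eta([x,x']) = 0$ for all $x,x' \in \fst$, and since $[\fst,\fst] = \snd$ by the stratification hypothesis, $\eta$ vanishes on $\snd$, hence $\eta = 0$. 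No step is genuinely delicate; the only points that require attention are handling the factor corresponding to the zero eigenvalue in the Lagrange formula when $r_0 > 0$, and invoking $[\fst,\fst] = \snd$ at the end to upgrade the quadratic form to an inner product.
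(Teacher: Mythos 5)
Your proof is correct and follows essentially the same route as the paper: spectral theorem for the selfadjoint operator $-J_\eta^2$, Lagrange interpolation to express $P_j^\eta$ as a rational function of $\eta$ and the $b_j^\eta$ (the paper keeps the $\lambda/(b_j^\eta)^2$ factor in all cases, which is harmless when $r_0=0$), trace identity for \eqref{eq:hsnorm}, and injectivity of $\eta\mapsto J_\eta$ via $[\fst,\fst]=\snd$ to get positive definiteness. The only cosmetic difference is that the paper derives homogeneity of $P_j^\eta$ from uniqueness of the spectral decomposition whereas you read it off the interpolation formula directly; both arguments are fine.
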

\begin{proof}
For all $\eta \in \ddsnd$, \eqref{eq:spectraldecomposition} is the spectral decomposition of the selfadjoint endomorphism $-J_\eta^2$ given by the spectral theorem; the uniqueness of this decomposition, together with the homogeneity of $\eta \mapsto J_\eta$ and the $\eta \mapsto b_j^\eta$, implies the homogeneity of the $\eta \mapsto P_j^\eta$.

From the spectral decomposition \eqref{eq:spectraldecomposition} one deduces that $P_j^\eta = F_{j,\eta}(-J_\eta^2)$ for all (Borel) functions $F_{j,\eta} : \R \to \C$ such that $F_{j,\eta}(0) = 0$, $F_{j,\eta}((b_j^\eta)^2) = 1$, and $F_{j,\eta}((b_{j'}^\eta)^2) = 0$ for $j'\neq j$. If we choose as $F_{j,\eta}$ the interpolating polynomial
\[
F_{j,\eta}(\lambda) = \frac{\lambda \prod_{j'\neq j} (\lambda-(b_{j'}^\eta)^2)}{(b_j^\eta)^2 \prod_{j'\neq j} ((b_{j}^\eta)^2 - (b_{j'}^\eta)^2)},
\]
then it is clear that the $P_j^\eta$ are rational functions of $\eta,b_1^\eta,\dots,b_\done^\eta$ and that they are analytic on $\ddsnd$.

The identity \eqref{eq:hsnorm} is an immediate consequence of \eqref{eq:spectraldecomposition}. The right-hand side of \eqref{eq:hsnorm} is the pullback of the Hilbert-Schmidt norm via the map $\eta \mapsto J_\eta$, and since this map is injective (because $\snd = [\fst,\fst]$) the conclusion follows.
\end{proof}

From now on, let $\ddsnd$, $\done$, $r_0,r_1,\dots,r_\done$, $b_1^\eta,\dots,b_\done^\eta$, $P_1^\eta,\dots,P_\done^\eta$ be defined as in Lemmata~\ref{lem:factorization} and \ref{lem:spectraldecomposition}, and set $P_0^\eta = 1 - (P_1^\eta+\dots+P_\done^\eta)$.
Moreover, for all $n,k \in \N$, let
\[L_n^{(k)}(t) = \frac{t^{-k} e^t}{n!} \left( \frac{d}{dt} \right)^n (t^{k+n} e^{-t})\]
be the $n$-th Laguerre polynomial of type $k$, and define
\begin{align*}
\Ell_n^{(k)}(t) &= (-1)^n e^{-t} L_n^{(k)}(2t);
\end{align*}
for convenience, set $\Ell_n^{(k)} = 0$ for all  $n < 0$. In terms of these quantities, we can now write a formula for the convolution kernel of an operator in the joint functional calculus of $L,\vecU$. Namely, for all $H : \R \times \dusnd \to \C$, let $m_H : \R^\done \times \R \times \ddsnd \to \C$ be defined by
\begin{equation}\label{eq:multiplier}
m_H(n,\mu,\eta) = H\left(\sum_{j=1}^\done (2n_j+r_j) b_j^\eta + \mu, \eta\right).
\end{equation}

\begin{prp}\label{prp:kernel}
Suppose that $H : \R \times \dusnd \to \C$ is in the Schwartz class.
Then, for all $(x,u) \in G$,
\begin{equation}\label{eq:kernel}
\Kern_{H(L,\vecU)}(x,u) 
= \frac{2^{|r|}}{(2\pi)^{\dim G}}
 \int_{\ddsnd} \int_{\fst} V(\xi,\eta) \, e^{i \langle \xi, x \rangle} \, e^{i \langle \eta, u \rangle} \,d\xi \,d\eta,
\end{equation}
where $|r| = r_1 + \dots + r_\done$ and
\begin{equation}\label{eq:kernelfulltransform}
V(\xi,\eta) = \sum_{n \in \N^\done} m_H(n,|P^\eta_0 \xi|^2,\eta) 
\prod_{\jone=1}^\done \Ell_{n_\jone}^{(r_\jone-1)}(|P^\eta_\jone \xi|^2 /b^\eta_\jone).
\end{equation}
\end{prp}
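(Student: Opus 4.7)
The plan is to derive the kernel formula via a partial Fourier transform in the central variable $u \in \snd$. Since $U_1,\dots,U_\dtwo$ are central in $\lie{g}$, this Fourier transform diagonalizes each $-iU_k$ as multiplication by $\eta_k$, and conjugates $L$ to a family of Hermite-type operators $L_\eta$ on $L^2(\fst)$ whose quadratic and first-order terms are determined by the skew-adjoint form $J_\eta$. Interpreting the right-hand side of \eqref{eq:kernel} as an iterated Fourier inversion (first in $\xi \in \fst$, then in $\eta \in \snd^*$) reduces the proposition to the computation, for each $\eta \in \ddsnd$, of the integral kernel of $H(L_\eta,\eta)$ on $\fst$, and then of its Euclidean Fourier transform in the $x$ variable.

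The key step is the explicit spectral decomposition of $L_\eta$ for $\eta \in \ddsnd$. By Lemma~\ref{lem:spectraldecomposition}, $\fst$ splits orthogonally as $\bigoplus_{j=0}^\done P_j^\eta(\fst)$, and $L_\eta$ respects this splitting: it acts as the Euclidean Laplacian on $P_0^\eta(\fst) = \ker J_\eta$, and as a (rescaled) isotropic harmonic oscillator of frequency $b_j^\eta$ on each $2r_j$-dimensional invariant subspace $P_j^\eta(\fst)$ for $j \geq 1$. The Laplacian part is diagonalized by Fourier modes $e^{i\langle\xi,x\rangle}$ with $\xi \in P_0^\eta(\fst)$ and eigenvalue $|P_0^\eta \xi|^2$, accounting for the $\mu$-slot in \eqref{eq:multiplier}. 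The harmonic oscillator on $P_j^\eta(\fst)$ has spectrum $\{(2n_j+r_j)\, b_j^\eta : n_j \in \N\}$, and its spectral projections, pushed over to the Fourier side, are given by evaluations of the Laguerre functions $\Ell_{n_j}^{(r_j-1)}$ at $|P_j^\eta \xi|^2/b_j^\eta$, by the classical description of spectral projectors of $2r_j$-dimensional isotropic harmonic oscillators via Laguerre polynomials of type $r_j - 1$.

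Assembling these pieces, the joint eigenvalues of $L_\eta$ take the form $|P_0^\eta\xi|^2 + \sum_j (2n_j+r_j)\, b_j^\eta$, which is exactly the first argument of $H$ in the definition \eqref{eq:multiplier} of $m_H$. Summing over $n \in \N^\done$ and integrating over $\xi \in \fst$ realizes the full Fourier inversion on $\fst$, and then Fourier inversion in $\eta$ on $\snd$ reintroduces $e^{i\langle\eta,u\rangle}$, producing the double integral in \eqref{eq:kernel}. The main technical obstacle is the bookkeeping: the constant $2^{|r|}/(2\pi)^{\dim G}$ must be traced through the Fourier inversion formulas on $\snd$ and on $\fst$ together with the chosen normalization of $\Ell_n^{(k)}$. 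The Schwartz hypothesis on $H$ ensures absolute convergence of the series over $n$ and of the integrals, since $m_H$ decays rapidly as $|n|+|\eta|\to\infty$, so all formal manipulations are justified; finally, restricting the outer integral to $\ddsnd$ is harmless because $\dusnd\setminus\ddsnd$ has Lebesgue measure zero by Lemma~\ref{lem:factorization}.
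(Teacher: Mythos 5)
Your proposal follows the same route that the paper delegates to \cite[Proposition~4]{martini_heisenbergreiter}: partial Fourier transform in the central variable $u$, block decomposition of the transformed operator according to the spectral decomposition of $-J_\eta^2$ from Lemma~\ref{lem:spectraldecomposition}, identification of the eigenvalues with the first argument of $m_H$ in \eqref{eq:multiplier}, and Fourier inversion. The Schwartz hypothesis on $H$ and the Lebesgue-nullity of $\dusnd\setminus\ddsnd$ do justify the formal manipulations exactly as you say. So the overall strategy is correct and not a genuinely different approach.

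There is, however, a real gap in the step that is supposed to produce the factors $\Ell_{n_\jone}^{(r_\jone-1)}(|P_\jone^\eta\xi|^2/b_\jone^\eta)$ in \eqref{eq:kernelfulltransform}. After the partial Fourier transform in $u$, the sublaplacian becomes the \emph{twisted Laplacian} $L_\eta=-\Delta+\tfrac14|J_\eta x|^2-i\langle J_\eta x,\nabla\rangle$ on $\fst$, not a direct sum of a Euclidean Laplacian and harmonic oscillators; on each block $P_\jone^\eta(\fst)$ the two are only unitarily equivalent, and that conjugation does not commute with the Euclidean Fourier transform in $x$ that you then apply. Your appeal to ``the classical description of spectral projectors of $2r_\jone$-dimensional isotropic harmonic oscillators via Laguerre polynomials'' therefore invokes the wrong classical fact: the eigenprojectors of the harmonic oscillator are rank-$\binom{n+r-1}{n}$ integral operators whose kernels are Hermite expansions, and they are not single Laguerre functions even after Fourier transform. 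What is actually needed is the (equally classical but distinct) result, going back to Geller and systematized by Thangavelu, that the \emph{twisted-convolution} kernel of the $n$-th spectral projection of the rank-$2r$ twisted Laplacian has Euclidean Fourier transform equal, up to the normalization absorbed into $2^{|r|}$, to $\Ell_n^{(r-1)}$ evaluated at the rescaled radial variable. Replacing your cited ingredient with that one, and keeping track of the twisted-convolution structure until the final Fourier inversion, repairs the argument and reproduces \eqref{eq:kernel}--\eqref{eq:kernelfulltransform}.
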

\begin{proof}
Analogous to the proof of \cite[Proposition 4]{martini_heisenbergreiter}.
\end{proof}

The following identities are easily obtained from the properties of Laguerre polynomials (see, e.g., \cite[\S10.12]{erdelyi_higher2_1981}).

\begin{lem}\label{lem:laguerre}
For all $k,n,n' \in \N$ and $t \in \R$,
\begin{gather}
\label{eq:laguerred} \frac{d}{dt} \Ell_n^{(k)}(t) = \Ell_{n-1}^{(k+1)}(t) - \Ell_{n}^{(k+1)}(t),\\
\label{eq:laguerreo} \int_{0}^{\infty} \Ell_n^{(k)}(t) \, \Ell_{n'}^{(k)}(t) \, t^k \,dt = \begin{cases}
\frac{(n+k)!}{2^{k+1} n!} &\text{if $n=n'$,}\\
0 &\text{otherwise.}
\end{cases}
\end{gather}
\end{lem}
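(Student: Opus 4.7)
Both identities follow from textbook properties of the classical Laguerre polynomials. The plan is to reduce each claim to a standard formula for $L_n^{(k)}$ via the definition $\Ell_n^{(k)}(t) = (-1)^n e^{-t} L_n^{(k)}(2t)$, and in both cases no serious obstacle arises — the whole content is bookkeeping of signs and of the factor $2$ hidden in $L_n^{(k)}(2t)$.

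For the derivative identity \eqref{eq:laguerred}, I would start from the two well-known recursions
\[
\frac{d}{dt} L_n^{(k)}(t) = -L_{n-1}^{(k+1)}(t), \qquad L_n^{(k)}(t) = L_n^{(k+1)}(t) - L_{n-1}^{(k+1)}(t),
\]
both of which can be found in \cite[\S10.12]{erdelyi_higher2_1981}. Differentiating $\Ell_n^{(k)}(t) = (-1)^n e^{-t} L_n^{(k)}(2t)$ via the product and chain rules gives
\[
\frac{d}{dt} \Ell_n^{(k)}(t) = (-1)^n e^{-t}\bigl[-L_n^{(k)}(2t) - 2 L_{n-1}^{(k+1)}(2t)\bigr],
\]
and substituting the second recursion for $L_n^{(k)}(2t)$ collapses the bracket to $-L_n^{(k+1)}(2t) - L_{n-1}^{(k+1)}(2t)$. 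Re-expressing this in terms of $\Ell$, using that $(-1)^n = -(-1)^{n-1}$, yields the desired identity $\Ell_{n-1}^{(k+1)}(t) - \Ell_{n}^{(k+1)}(t)$.

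For the orthogonality relation \eqref{eq:laguerreo}, I would pull the exponential factor out and change variables $s = 2t$:
\[
\int_0^\infty \Ell_n^{(k)}(t)\,\Ell_{n'}^{(k)}(t)\, t^k\, dt = (-1)^{n+n'} \int_0^\infty e^{-2t} L_n^{(k)}(2t) L_{n'}^{(k)}(2t)\, t^k\, dt = \frac{(-1)^{n+n'}}{2^{k+1}} \int_0^\infty e^{-s} L_n^{(k)}(s) L_{n'}^{(k)}(s)\, s^k\, ds.
\]
The standard orthogonality for Laguerre polynomials with respect to $e^{-s} s^k\, ds$ gives $\frac{(n+k)!}{n!}\delta_{n,n'}$, and the sign $(-1)^{n+n'}$ becomes $+1$ when $n=n'$, producing exactly $\frac{(n+k)!}{2^{k+1}n!}$ in the diagonal case and $0$ otherwise. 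This completes the proof.
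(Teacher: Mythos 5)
Your proof is correct and follows exactly the route the paper intends: the paper merely cites the standard Laguerre recurrences and orthogonality from \cite[\S10.12]{erdelyi_higher2_1981}, and you have carried out the straightforward sign/scaling bookkeeping that reduces \eqref{eq:laguerred} and \eqref{eq:laguerreo} to those identities via the definition $\Ell_n^{(k)}(t) = (-1)^n e^{-t} L_n^{(k)}(2t)$.
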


Proposition~\ref{prp:kernel}, together with the Plancherel formula for the Euclidean Fourier transform and the orthogonality properties \eqref{eq:laguerreo} of the Laguerre functions, allows us to compute the Plancherel measure associated to the system \eqref{eq:operators} of commuting operators in the sense of \cite{martini_spectral_2011,martini_joint_2012}.

\begin{cor}\label{cor:plancherelmeasure}
For all $H : \R \times \dusnd \to \C$ in the Schwartz class, if $m$ is defined as in \eqref{eq:multiplier}, then
\begin{multline*}
\int_G |\Kern_{H(L,\vecU)}(x,u)|^2 \,dx \,du \\
= (2\pi)^{|r|-\dim G} \int_{\ddsnd} \int_{\leftclosedint 0,\infty \rightopenint} \sum_{n \in \N^\done} |m_H(n,\mu,\eta)|^2 \, \prod_{\jone=1}^\done \Bigl[ (b_\jone^\eta)^{r_\jone} {\textstyle \binom{n_\jone+r_\jone-1}{n_\jone}} \Bigr] \,d\sigma_{r_0}(\mu) \,d\eta,
\end{multline*}
where $|r| = r_1 + \dots + r_\done$, $\sigma_{r_0}$ is the Dirac delta at $0$ if $r_0 = 0$, and
\[
d\sigma_{r_0}(\mu) = \frac{\pi^{r_0/2}}{\Gamma(r_0/2)} \mu^{r_0/2} \,\frac{d\mu}{\mu}
\]
if $r_0 > 0$.
\end{cor}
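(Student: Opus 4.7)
The plan is to apply the Euclidean Plancherel theorem to the representation \eqref{eq:kernel} and then to exploit the spectral decomposition of $-J_\eta^2$ together with the orthogonality relation \eqref{eq:laguerreo} to evaluate the resulting integral explicitly. Extending $V(\cdot,\eta)$ by zero on the negligible complement $\dusnd\setminus\ddsnd$, formula \eqref{eq:kernel} identifies $\Kern_{H(L,\vecU)}$ as $2^{|r|}/(2\pi)^{\dim G}$ times the inverse Euclidean Fourier transform of $V$ on $\fst\oplus\dusnd \simeq \R^{\dim G}$, so Plancherel yields
\[
\int_G |\Kern_{H(L,\vecU)}(x,u)|^2 \,dx\,du = \frac{2^{2|r|}}{(2\pi)^{\dim G}} \int_{\ddsnd}\int_{\fst} |V(\xi,\eta)|^2 \,d\xi\,d\eta,
\]
and the whole problem reduces to evaluating the inner integral for each fixed $\eta\in\ddsnd$.

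For such an $\eta$, Lemma~\ref{lem:spectraldecomposition} (together with the definition $P_0^\eta = 1 - (P_1^\eta+\dots+P_\done^\eta)$) decomposes $\fst$ as an orthogonal direct sum $\bigoplus_{\jone=0}^\done \mathrm{ran}\,P_\jone^\eta$ of subspaces of real dimensions $r_0,2r_1,\dots,2r_\done$. I would decompose $\xi = \xi_0+\xi_1+\dots+\xi_\done$ accordingly and integrate each factor separately in polar coordinates. On $\mathrm{ran}\,P_\jone^\eta$ with $\jone\geq 1$, the substitution $t_\jone \defeq |\xi_\jone|^2/b_\jone^\eta$ turns the radial measure $s^{2r_\jone-1}\,ds$ into $\tfrac{1}{2}(b_\jone^\eta)^{r_\jone} t_\jone^{r_\jone-1}\,dt_\jone$, which together with the surface area $2\pi^{r_\jone}/(r_\jone-1)!$ of the unit sphere in a $2r_\jone$-dimensional real space produces exactly the weight $t_\jone^{r_\jone-1}\,dt_\jone$ demanded by \eqref{eq:laguerreo}, with prefactor $\pi^{r_\jone}(b_\jone^\eta)^{r_\jone}/(r_\jone-1)!$. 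On $\mathrm{ran}\,P_0^\eta$, where $V(\xi,\eta)$ depends on $\xi_0$ only through $\mu \defeq |\xi_0|^2$, polar coordinates reproduce the measure $d\sigma_{r_0}(\mu)$ when $r_0\geq 1$, whereas for $r_0=0$ no integration is required, matching the Dirac-delta convention. Expanding $|V|^2$ as a double sum over $n,n'\in\N^\done$ and invoking \eqref{eq:laguerreo} in each $t_\jone$-variable collapses each double sum to its diagonal and produces the Laguerre normalisation $(n_\jone+r_\jone-1)!/(2^{r_\jone} n_\jone!)$.

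Assembling everything, rewriting $(n_\jone+r_\jone-1)!/[n_\jone!\,(r_\jone-1)!]$ as $\binom{n_\jone+r_\jone-1}{n_\jone}$, and combining the Plancherel prefactor $2^{2|r|}/(2\pi)^{\dim G}$ with the accumulated geometric constant $\prod_{\jone=1}^\done \pi^{r_\jone}/2^{r_\jone}$ gives precisely the overall factor $(2\pi)^{|r|-\dim G}$ appearing in the statement. The argument has no conceptual obstacle: it is a Plancherel computation combined with Laguerre orthogonality. The actual work lies in the careful bookkeeping of four simultaneous sources of constants (the Plancherel prefactor, the sphere surface areas, the Jacobians of the radial substitutions in each $\mathrm{ran}\,P_\jone^\eta$, and the Laguerre normalisation in \eqref{eq:laguerreo}). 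Interchanges of the $n$-sum with the $\xi$- and $\eta$-integrals and with complex conjugation are harmless, being justified by the Schwartz hypothesis on $H$ together with the exponential decay of the functions $\Ell_n^{(k)}$.
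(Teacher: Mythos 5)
Your proposal is correct and follows exactly the route the paper indicates (Euclidean Plancherel applied to \eqref{eq:kernel}, decomposition of $\fst$ along the ranges of $P_0^\eta,\dots,P_\done^\eta$, polar coordinates plus the substitution $t_\jone = |\xi_\jone|^2/b_\jone^\eta$, and the Laguerre orthogonality \eqref{eq:laguerreo}). The constant-tracking is right: the product of the Plancherel factor $2^{2|r|}/(2\pi)^{\dim G}$, the sphere areas $2\pi^{r_\jone}/(r_\jone-1)!$, the Jacobians $\tfrac12(b_\jone^\eta)^{r_\jone}t_\jone^{r_\jone-1}$, and the normalization $(n_\jone+r_\jone-1)!/(2^{r_\jone}n_\jone!)$ collapses to $(2\pi)^{|r|-\dim G}$ times the stated integrand, with the $\xi_0$-integral in polar coordinates reproducing $d\sigma_{r_0}$.
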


\section{Self-controlled functions and differential polynomials}\label{section:selfcontrol}

By \eqref{eq:kernel} and integration by parts, the multiplication of $\Kern_{H(L,\vecU)}(x,u)$ by polynomial functions of $u$ corresponds to taking $\eta$-derivatives of $V(\xi,\eta)$ in \eqref{eq:kernelfulltransform}; we are then interested in estimating $\eta$-derivatives of $V(\xi,\eta)$ in terms of derivatives of the multiplier $H$, or rather of its reparametrization $m_H$. The expressions for these derivatives obtained from \eqref{eq:kernelfulltransform} can be quite complicated; nevertheless we will show that they have a specific form, which is ``self-reproducing'', so that they can can be estimated (under suitable assumptions on $\eta$-derivatives of $b_1^\eta,\dots,b_\done^\eta,P_1^\eta,\dots,P_\done^\eta$)  by (finite sums of) expressions analogous to \eqref{eq:kernelfulltransform}, where $m_H$ is replaced by some derivative of $m_H$. In order to give a precise meaning to these ideas, in this section we introduce some definitions and notation, which will be then exploited in the following \S\ref{section:kernelderivatives} to deal with derivatives of $V(\xi,\eta)$.

Let $\Omega$ be a smooth manifold. Let $D = (D_1,\dots,D_n)$ be a system of smooth commuting vector fields on $\Omega$. Set $D^\alpha = D_1^{\alpha_1} \cdots D_n^{\alpha_n}$ and $|\alpha| = \alpha_1 + \dots + \alpha_n$ for all multiindices $\alpha \in \N^n$. Inequalities between multiindices shall be interpreted componentwise.

For all $k \in \N$ and all functions $g : \Omega \to \C$, let $\BD_{\Omega,D}^k(g)$ be the set of the smooth functions $f : \Omega \to \C$ such that there exists a constant $C \geq 0$ such that for all $\alpha \in \N^n$ with $|\alpha| \leq k$,
\[|D^\alpha f| \leq C |g|\]
($\BD$ stands for ``bounded derivatives''); the minimum of these constants $C$ will be denoted as $\|f\|_{\BD_{\Omega,D}^k(g)}$. We denote moreover by $\BD_{\Omega,D}^\infty(g)$ the intersection $\bigcap_{k \in \N} \BD_{\Omega,D}^k(g)$. When $f$ is a smooth $\R^m$-valued function on $\Omega$, we will write $f \in \BD_{\Omega,D}^k(g)$ to express that all the components of $f$ belong to $\BD_{\Omega,D}^k(g)$.

In the following we will have to deal with expressions given by linear combinations of products of iterated derivatives $D^\alpha f$ of a given function $f$. Since we need to keep track of the form of these expressions, independently of the choice of $f$ or $D$, it is convenient to introduce the following definition.
Fix a system $(X_\alpha)_{\alpha \in \N^n}$ of indeterminates (one should think of each indeterminate $X_\alpha$ as representing an iterated derivative $D^\alpha f$).
For all $k \in \N \cup \{\infty\}$ and $r \in \N$, let $\HDP_n^k(r)$ be the set of homogeneous polynomials of degree $r$ with complex coefficients and indeterminates from $(X_\alpha)_{\alpha \in \N^n,|\alpha|\leq k}$ ($\HDP$ stands for ``homogeneous differential polynomial''). 
If $p \in \HDP_n^\infty(r)$ and $\gamma \in \N^n$, we denote by $\partial^\gamma p$ the polynomial given by
\[
\partial^\gamma p \defeq \sum_{\alpha} X_{\alpha+\gamma} \frac{\partial p}{\partial X_\alpha}
\]
(note that only a finite number of summands is nonzero). Further, if $p \in \HDP_n^\infty(r)$ and $f \in C^\infty(\Omega)$, we denote by $p(D;f)$ the function obtained from $p$ by replacing the indeterminate $X_\alpha$ with $D^\alpha f$ for all $\alpha \in \N^n$. The basic properties of the classes $\HDP$ are summarized in the following lemma.

\begin{lem}\label{lem:hdp}
Let $f,h : \Omega \to \C$ be smooth, $\kappa \in \leftclosedint 0,\infty \rightopenint$, $k,r \in \N$.
\begin{itemize}
\item[(i)] If $p \in \HDP^k_n(r)$, then $\partial^\gamma p \in \HDP^{k+|\gamma|}_n(r)$ and $(\partial^\gamma p)(D;f) = D^\gamma (p(D;f))$.
\item[(ii)] If $p \in \HDP_{n}^k(r)$ and $\|f\|_{\BD_{\Omega,D}^k(h)} \leq \kappa$, then $|p(D;f)| \leq C_{p,\kappa} |h|^r$.
\end{itemize}
\end{lem}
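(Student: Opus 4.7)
Both parts are essentially bookkeeping about polynomial substitution; the real content is that the class $\HDP_n^\infty(r)$ is closed under the operator $\partial^\gamma$ introduced to encode ordinary differentiation. The plan is the following.

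For part (i), I would first reduce to the case $|\gamma|=1$, so $\gamma=e_i$ for some $i$, and then iterate, noting that $\partial^\gamma = \partial^{e_{i_1}}\cdots\partial^{e_{i_{|\gamma|}}}$ by the very definition. For the degree claim, I would observe that a monomial $M = \prod_\alpha X_\alpha^{m_\alpha}$ of total degree $r$ (so $\sum_\alpha m_\alpha = r$) in indeterminates with $|\alpha|\le k$ satisfies
\[
\partial^{e_i} M \;=\; \sum_\beta m_\beta \, X_{\beta+e_i} \prod_{\alpha \neq \beta} X_{\alpha}^{m_\alpha} \cdot X_\beta^{m_\beta-1},
\]
which is again a sum of monomials of total degree $r$, now with indeterminates of index $\leq k+1$. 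Linearity extends this to all $p\in\HDP_n^k(r)$, giving $\partial^{e_i} p \in \HDP_n^{k+1}(r)$.

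For the identity $(\partial^\gamma p)(D;f) = D^\gamma(p(D;f))$, I would again reduce to $\gamma = e_i$. Applying $D_i$ to $p(D;f)$ and using the ordinary chain rule (treating $p$ as a polynomial in finitely many indeterminates $X_\alpha$) yields
\[
D_i\bigl(p(D;f)\bigr) \;=\; \sum_{\alpha} \frac{\partial p}{\partial X_\alpha}(D;f)\cdot D_i(D^\alpha f) \;=\; \sum_{\alpha} \frac{\partial p}{\partial X_\alpha}(D;f)\cdot D^{\alpha+e_i} f,
\]
where commutativity of $D_1,\dots,D_n$ was used in the last equality. Comparing with the definition of $\partial^{e_i} p$ gives the claim for $|\gamma|=1$, and iteration closes the induction.

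For part (ii), I would expand $p$ as a finite sum $p = \sum_I c_I \prod_\alpha X_\alpha^{m_{I,\alpha}}$ with $\sum_\alpha m_{I,\alpha} = r$ and $m_{I,\alpha} = 0$ whenever $|\alpha| > k$. Substituting and using $|D^\alpha f|\leq \kappa |h|$ for all $|\alpha|\leq k$ gives
\[
|p(D;f)| \;\leq\; \sum_I |c_I| \prod_\alpha |D^\alpha f|^{m_{I,\alpha}} \;\leq\; \Bigl(\sum_I |c_I|\Bigr)\kappa^r |h|^r,
\]
so one may take $C_{p,\kappa} = \kappa^r \sum_I |c_I|$. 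There is no real obstacle here: the only thing to be careful about is that the homogeneity degree $r$ refers to the polynomial degree in the indeterminates $X_\alpha$, not to any order of differentiation, so the exponent of $|h|$ matches $r$ exactly regardless of $k$.
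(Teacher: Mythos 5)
Part (ii) is correct and matches the paper's (unstated) argument; no issues there. Your $|\gamma|=1$ computation in part (i) is also fine, and that single step is indeed all that the paper ever uses.

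The gap is your passing remark that $\partial^\gamma = \partial^{e_{i_1}}\cdots\partial^{e_{i_{|\gamma|}}}$ ``by the very definition.'' It is not, and this matters. The paper defines $\partial^\gamma$ for arbitrary $\gamma$ by the one-shot formula $\partial^\gamma p = \sum_\alpha X_{\alpha+\gamma}\,\partial p/\partial X_\alpha$, and this does \emph{not} coincide with the iterated composition once $|\gamma| \geq 2$. Concretely, with $p = X_0^2$ and $\gamma = e_1 + e_2$ the one-shot formula gives $\partial^{e_1+e_2}(X_0^2) = 2 X_{e_1+e_2} X_0$, whereas iterating gives $\partial^{e_1}\partial^{e_2}(X_0^2) = 2 X_{e_1+e_2} X_0 + 2 X_{e_1} X_{e_2}$; the missing Hessian-type terms are exactly the ones needed to reproduce $D_1 D_2(f^2) = 2 f\, D_1 D_2 f + 2 (D_1 f)(D_2 f)$. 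So if you take the displayed definition literally, the identity $(\partial^\gamma p)(D;f) = D^\gamma(p(D;f))$ in part (i) actually \emph{fails} for $|\gamma|\geq 2$, and your ``proof by iteration'' is really a silent redefinition of $\partial^\gamma$ as the iterated operator. That redefinition is the right fix, and with it your argument goes through and gives a complete proof; the degree bound $\partial^\gamma p \in \HDP_n^{k+|\gamma|}(r)$ also holds under either reading. But you should either say explicitly that you are reinterpreting $\partial^\gamma$ as $\partial^{e_{i_1}}\cdots\partial^{e_{i_{|\gamma|}}}$ (noting that the one-shot formula recovers this only when $|\gamma|\leq 1$), or observe that the lemma is invoked in the rest of the paper only with $|\gamma|=1$, where the two notions agree and no issue arises.
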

\begin{proof}
Part (i) follows from Leibniz' rule, while part (ii) is immediate from the definitions.
\end{proof}

For all $k \in \N \cup \{\infty\}$, let $\SC_{\Omega,D}^k$ be the set of the smooth functions $f : \Omega \to \C$ such that $f \in \BD_{\Omega,D}^k(f)$ ($\SC$ stands for ``self-controlled''). When $k < \infty$, we set $\|f\|_{\SC^k_{\Omega,D}} = \|f\|_{\BD^k_{\Omega,D}(f)}$. Note that
\begin{equation}\label{eq:sc_scalar}
\| \lambda f \|_{\SC^k_{\Omega,D}} = \| f \|_{\SC^k_{\Omega,D}}
\end{equation}
for all $\lambda \in \C \setminus \{0\}$. We now show some closure properties of the classes $\SC$.

\begin{lem}\label{lem:sc}
Let $k \in \N \cup \{\infty\}$.
\begin{itemize}
\item[(i)] The constant functions belong to $\SC_{\Omega,D}^k$.
\item[(ii)] If $f,g \in \SC_{\Omega,D}^k$ and $f,g \geq 0$, then $f+g \in \SC_{\Omega,D}^k$.
\item[(iii)] If $f,g \in \SC_{\Omega,D}^k$, then $fg \in \SC_{\Omega,D}^k$.
\item[(iv)] If $f \in \SC_{\Omega,D}^k$, $|f| > 0$, and $r \in \Z$, then $f^r \in \SC_{\Omega,D}^k$.
\item[(v)] If $f \in \SC_{\Omega,D}^k$, $f > 0$, and $r \in \C$, then $f^r \in \SC_{\Omega,D}^k$.
\end{itemize}
If moreover $k < \infty$ and $\|f\|_{\SC_{\Omega,D}^k},\|g\|_{\SC_{\Omega,D}^k} \leq \kappa$ for some $\kappa \in \leftclosedint 0,\infty\rightopenint$, then $\|f+g\|_{\SC_{\Omega,D}^k} \leq \kappa$, $\|fg\|_{\SC_{\Omega,D}^k} \leq C_{n,k,\kappa}$, $\|f^r\|_{\SC_{\Omega,D}^k} \leq C_{n,k,r,\kappa}$ in the cases (ii), (iii), (iv) and (v) respectively.
\end{lem}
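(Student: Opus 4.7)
The plan is to prove the five items sequentially, leveraging the differential-polynomial formalism of Lemma~\ref{lem:hdp} and using each earlier item when establishing the next. Parts (i)--(iii) are essentially mechanical: for a constant $c$, all derivatives of positive order vanish, so $|D^\alpha c| \leq |c|$ with constant $1$; for nonnegative $f,g$ the identity $|f+g| = f+g$ gives $|D^\alpha(f+g)| \leq |D^\alpha f| + |D^\alpha g| \leq \kappa(f+g) = \kappa|f+g|$; and for (iii) I would apply Leibniz' rule
\[
D^\alpha(fg) = \sum_{\beta \leq \alpha} \binom{\alpha}{\beta} D^\beta f \cdot D^{\alpha-\beta} g,
\]
bounding each factor to obtain $|D^\alpha(fg)| \leq 2^{|\alpha|} \kappa^2 |fg|$, with a constant depending only on $n$, $k$, $\kappa$.

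For (iv), the case $r \geq 0$ follows from (i) and (iii) by induction on $r$. For $r < 0$, it suffices (again by (iii)) to handle $r = -1$; for this, I would show by induction on $|\alpha|$ that
\[
D^\alpha(1/f) = \frac{p_\alpha(D;f)}{f^{|\alpha|+1}}
\]
for some $p_\alpha \in \HDP^{|\alpha|}_n(|\alpha|)$ independent of $f$, the inductive step using the quotient rule together with Lemma~\ref{lem:hdp}(i). Since $\|f\|_{\SC^k_{\Omega,D}} \leq \kappa$, Lemma~\ref{lem:hdp}(ii) bounds $|p_\alpha(D;f)| \leq C_{\alpha,\kappa}|f|^{|\alpha|}$, whence $|D^\alpha(1/f)| \leq C_{\alpha,\kappa}|1/f|$.

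The main obstacle is (v), where $r$ may be an arbitrary complex number and the chain rule is no longer a single Leibniz iteration. The idea is to prove by induction on $|\alpha|$ a Fa\`a di Bruno-type formula
\[
D^\alpha(f^r) = f^r \sum_{m=0}^{|\alpha|} q_{\alpha,m}(r) \, \frac{p_{\alpha,m}(D;f)}{f^m},
\]
with polynomials $q_{\alpha,m} \in \C[r]$ and elements $p_{\alpha,m} \in \HDP^{|\alpha|}_n(m)$ that are independent of $f$ (with $p_{\alpha,0}$ constant). The inductive step rests on $D_i(f^r) = r (D_if)\, f^{r-1}$, on the quotient rule, and on part (i) of Lemma~\ref{lem:hdp} applied to the $p_{\alpha,m}$. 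Once the formula is in place, Lemma~\ref{lem:hdp}(ii) bounds each $|p_{\alpha,m}(D;f)| \leq C_{\alpha,\kappa}|f|^m$, so the whole sum is majorized by $C_{n,k,r,\kappa}|f^r|$; the dependence on $r$ enters only through the polynomials $q_{\alpha,m}(r)$, which is the one bookkeeping subtlety that requires care.
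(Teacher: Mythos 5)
Your proof is correct and follows essentially the same route as the paper: parts (i)--(iii) are identical, and (iv)--(v) are both ultimately driven by inductively establishing, via Leibniz' rule and Lemma~\ref{lem:hdp}(i), a closed-form expression for $D^\alpha(f^r)$ as a ratio of a homogeneous differential polynomial and a power of $f$, then invoking Lemma~\ref{lem:hdp}(ii). The only difference is organizational: the paper treats (iv) and (v) in one stroke via the single identity $D^\alpha(f^r)=f^{r-|\alpha|}\Phi_{r,\alpha}(D;f)$ with $\Phi_{r,\alpha}\in\HDP_n^{|\alpha|}(|\alpha|)$ (valid simultaneously for integer and complex $r$), whereas you split (iv) into $r\geq 0$ (by (i) and (iii)) and $r=-1$ (quotient rule), and for (v) write a Fa\`a di Bruno-type expansion $f^r\sum_m q_{\alpha,m}(r)\,p_{\alpha,m}(D;f)/f^m$ that factors the $r$-dependence into scalar polynomials $q_{\alpha,m}$. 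Your decomposition is a harmless refinement of the same idea; it slightly lengthens the argument but makes the polynomial dependence on $r$ in the constant $C_{n,k,r,\kappa}$ a bit more transparent.
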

\begin{proof}
Part (i) is trivial. Part (ii) follows from the linearity of $D^\alpha$ and the fact that $|f+g| = |f|+|g|$ when $f,g \geq 0$. Part (iii) follows from Leibniz' rule. As for part (iv) and part (v), from the identity $D_j (f^r) = r f^{r-1} D_j f$ one deduces inductively via Leibniz' rule and Lemma~\ref{lem:hdp}(i) that $D^\alpha (f^r) = f^{r-|\alpha|} \Phi_{r,\alpha}(D;f)$, where $\Phi_{r,\alpha} \in \HDP_{n}^{|\alpha|}(|\alpha|)$, and consequently $|D^\alpha (f^r)| \leq C_{r,\alpha,\kappa} |f^r|$ whenever $|\alpha| \leq k$ by Lemma~\ref{lem:hdp}(ii).
\end{proof}

The following lemma deals with the behavior of the class $\SC$ under composition; it will be particularly useful in proving uniform estimates for cutoff functions.

\begin{lem}\label{lem:sccomp2}
Let $k \in \N$ and $\kappa \in \leftclosedint 0,\infty \rightopenint$. Let $I \subseteq \R$ be open, $f : \Omega \to I$ and $g : I \to \C$ be smooth. Suppose that $\|f\|_{\SC_{\Omega,D}^k}, \|g\|_{C^k(f(\Omega))} \leq \kappa$, $f(\Omega) \cap \supp g \subseteq \leftclosedint -\kappa,\kappa\rightclosedint$. Then:
\begin{itemize}
\item[(i)] $g \circ f \in \BD_{\Omega,D}^k(1)$ and $\|g\circ f\|_{\BD_{\Omega,D}^k(1)} \leq C_{n,k,\kappa}$;
\item[(ii)] if moreover $|g(x)| \geq \kappa^{-1} |x|$ for all $x \in f(\Omega)$, then $g \circ f \in \SC^k_{\Omega,D}$ and $\|g \circ f\|_{\SC_{\Omega,D}^k} \leq C_{n,k,\kappa}$.
\end{itemize}
\end{lem}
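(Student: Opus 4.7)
My plan is to expand $D^\alpha(g \circ f)$ via a Faà di Bruno-type formula in the language of $\HDP$ polynomials introduced above, and then to exploit the self-control of $f$ together with the support condition on $g$ to conclude.

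First, I would establish by induction on $|\alpha|$ that for every $\alpha \in \N^n$ there exist polynomials $p_{r,\alpha} \in \HDP^{|\alpha|}_n(r)$, for $r = 0, 1, \dots, |\alpha|$, such that
\[
D^\alpha(g \circ f) = \sum_{r=0}^{|\alpha|} (g^{(r)} \circ f) \, p_{r,\alpha}(D;f),
\]
with $p_{0,0} = 1$ and $p_{0,\alpha} = 0$ for $|\alpha| \geq 1$. The base case $\alpha = 0$ is immediate. For the inductive step, applying $D_j$ to the right-hand side and using the chain rule for $g^{(r)} \circ f$, Leibniz' rule, and Lemma~\ref{lem:hdp}(i) (which converts $D_j(p(D;f))$ into $(\partial^{e_j} p)(D;f)$), one obtains the explicit recursion $p_{r,\alpha+e_j} = X_{e_j} \, p_{r-1,\alpha} + \partial^{e_j} p_{r,\alpha}$, which lies in $\HDP^{|\alpha|+1}_n(r)$ as required.

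Next I combine this expansion with the hypotheses. Since $\|f\|_{\SC^k_{\Omega,D}} \leq \kappa$, Lemma~\ref{lem:hdp}(ii) (applied with $h = f$) yields $|p_{r,\alpha}(D;f)| \leq C_{n,k,\kappa} |f|^r$ for all $|\alpha| \leq k$, while $|g^{(r)}(f(x))| \leq \|g\|_{C^k(f(\Omega))} \leq \kappa$ for $r \leq k$. Moreover, if $f(x_0) \notin \supp g$, then $g$ vanishes on an open neighborhood of $f(x_0)$, so by continuity of $f$ the composition $g \circ f$ vanishes identically near $x_0$, whence every $D^\alpha(g \circ f)$ vanishes there. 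Thus it suffices to estimate $D^\alpha(g \circ f)(x)$ at points where $f(x) \in \supp g$, where by hypothesis $|f(x)| \leq \kappa$. This yields $|f(x)|^r \leq \kappa^r$ and hence $|D^\alpha(g \circ f)(x)| \leq C_{n,k,\kappa}$, proving part (i).

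For part (ii), the additional lower bound $|g(y)| \geq \kappa^{-1}|y|$ for $y \in f(\Omega)$ allows me to refine the estimate: at points where $f(x) \in \supp g$ and for $r \geq 1$,
\[
|f(x)|^r \leq \kappa^{r-1} \, |f(x)| \leq \kappa^r \, |g(f(x))| = \kappa^r \, |(g\circ f)(x)|.
\]
The $r = 0$ term in the expansion is nonzero only for $\alpha = 0$, in which case the sought inequality is trivial. Therefore $|D^\alpha(g\circ f)| \leq C_{n,k,\kappa} |g\circ f|$ uniformly on $\Omega$, as desired. The only delicate point is the vanishing argument outside $\supp g$, which is what lets us afford the crude bound $|g^{(r)} \circ f| \leq \kappa$ without regard to where $f$ actually lands; beyond that, the $\HDP$/$\SC$ formalism of \S\ref{section:selfcontrol} is tailored precisely to chain-rule estimates of this kind, and I anticipate no further obstacle.
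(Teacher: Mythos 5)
Your proof is correct and follows essentially the same route as the paper: both rely on a Faà di Bruno expansion of $D^\alpha(g\circ f)$ into terms $(g^{(r)}\circ f)\cdot p(D;f)$ with $p \in \HDP_n^{|\alpha|}(r)$, then invoke Lemma~\ref{lem:hdp}(ii), the $C^k$-bound on $g$, and the support condition on $g$ to get part (i), and for part (ii) exploit that only terms with $r\geq 1$ appear when $\alpha\neq 0$ so a factor $|f|\leq\kappa|g\circ f|$ can be extracted. The only cosmetic difference is that you make the Faà di Bruno recursion explicit and treat the vanishing of $D^\alpha(g\circ f)$ off $f^{-1}(\supp g)$ as a separate observation, whereas the paper absorbs it by bounding $|g^{(h)}\circ f|\leq\kappa\,(\tilde g\circ f)$ with $\tilde g$ the characteristic function of $\supp g$.
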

\begin{proof}
Let $\alpha \in \N^N$ be such that $|\alpha| \leq k$. If $\alpha=0$, then it is obvious that $|D^\alpha(g\circ f)|$ is bounded by $\kappa$ and also by $|g\circ f|$. Suppose instead that $\alpha\neq 0$. Iterated application of Leibniz' rule and Lemma~\ref{lem:hdp}(i) gives that
\[
D^\alpha (g \circ f) = \sum_{h=1}^{|\alpha|} (g^{(h)} \circ f) \cdot \Psi_{\alpha,h}(D;f),
\]
where $\Psi_{\alpha,h} \in \HDP^{|\alpha|}_{n}(h)$. Since the $|g^{(h)}|$ are bounded by $\kappa$ on $U$, from Lemma~\ref{lem:hdp}(ii) we obtain
\[
|D^\alpha (g \circ f)| \leq C_{\alpha,\kappa} \sum_{h=1}^{|\alpha|} (\tilde g \circ f) \, |f|^h,
\]
where $\tilde g$ is the characteristic function of $\supp g$. Since $\supp g \subseteq \leftclosedint -\kappa,\kappa\rightclosedint$, from this inequality we deduce immediately that $|D^\alpha (g \circ f)| \leq C_{\alpha,\kappa}$, hence part (i) follows. Since $h \geq 1$ in the sum above, the same inequality yields also $|D^\alpha (g \circ f)| \leq C_{\alpha,\kappa} |f|$; in the case $|g(x)| \geq \kappa^{-1} |x|$ for all $x \in f(\Omega)$, we have $|f| \leq \kappa |g\circ f|$, and part (ii) follows.
\end{proof}

Let us now specialize to the case where $\Omega$ is an open subset of $\R^n$, with coordinates $(\eta_1,\dots,\eta_n)$, and $D = (\eta_1\partial_{\eta_1},\dots,\eta_n \partial_{\eta_n})$. In this case, homogeneity properties (together with the compactness of the unit sphere $S^{n-1}$ of $\R^n$) can be used to show that a function belongs to some classes $\BD$ or $\SC$. In fact, one can obtain estimates independent of the choice of (orthonormal) coordinates.

\begin{lem}\label{lem:schom_unif}
Let $\Omega \subseteq \R^n \setminus \{0\}$ be open and homogeneous. Let $f : \overline{\Omega} \setminus \{0\} \to \C$ be homogeneous of degree $r \in \C$ and admitting a smooth extension to some open neighborhood of $\overline{\Omega} \setminus \{0\}$ in $\R^n \setminus \{0\}$. Then, for all $k \in \N$, there exists $C_{f,k} \in \leftclosedint 0,\infty \rightopenint$ such that, for all choices of orthonormal coordinates $(\tilde\eta_1,\dots,\tilde\eta_n)$ on $\R^n$, if $D = (\tilde\eta_1 \partial_{\tilde\eta_1}, \dots, \tilde\eta_n \partial_{\tilde\eta_n})$, then
\begin{itemize}
\item[(i)] $\|f\|_{\BD_{\Omega,D}^k(\eta \mapsto |\eta|^{\Re r})} \leq C_{f,k}$ and,
\item[(ii)] if moreover $f$ does not vanish in $\overline{\Omega} \setminus \{0\}$, then $\|f\|_{\SC_{\Omega,D}^k} \leq C_{f,k}$.
\end{itemize}
\end{lem}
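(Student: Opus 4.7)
The plan is to reduce everything to a compactness argument on the unit sphere, after expressing $D^\alpha$ in a form whose coefficients are visibly controlled independently of the coordinate choice. The starting observation is that each scaling operator $\tilde\eta_j \partial_{\tilde\eta_j}$ preserves homogeneity, so $D^\alpha f$ is again homogeneous of degree $r$. Moreover, by a standard Stirling-type identity, $(\tilde\eta_j\partial_{\tilde\eta_j})^{\alpha_j}$ is a fixed linear combination of the operators $\tilde\eta_j^{\ell}\,\partial_{\tilde\eta_j}^{\ell}$ for $\ell\le\alpha_j$, with universal numerical coefficients. Consequently
\[
D^\alpha f = \sum_{\beta\le\alpha} c_{\alpha,\beta}\, \tilde\eta^{\beta}\,\partial_{\tilde\eta}^{\beta} f,
\]
with $c_{\alpha,\beta}$ depending only on $\alpha$.

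Next, I would fix once and for all a reference orthonormal system $(\eta_1,\dots,\eta_n)$, and write each $\partial_{\tilde\eta_j}$ as a linear combination $\sum_k A_{jk}\partial_{\eta_k}$ where $A$ is the orthogonal matrix relating the two coordinate systems. Expanding $\partial_{\tilde\eta}^{\beta}$ yields at most $n^{|\beta|}$ terms of the form $\partial_{\eta}^{\gamma}$ with $|\gamma|=|\beta|$, each weighted by a product of entries of $A$ of absolute value $\le 1$. Combining this with the trivial bound $|\tilde\eta^\beta|\le|\eta|^{|\beta|}$ gives
\[
|D^\alpha f(\eta)| \le C_{\alpha,n} \sum_{|\gamma|\le|\alpha|} |\eta|^{|\gamma|}\, \bigl|\partial_\eta^{\gamma} f(\eta)\bigr|,
\]
where now the right-hand side is written entirely in the fixed coordinate system and the constant $C_{\alpha,n}$ is independent of the choice of $(\tilde\eta_1,\dots,\tilde\eta_n)$.

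It remains to estimate $|\partial_\eta^\gamma f(\eta)|$ on $\overline{\Omega}\setminus\{0\}$. By hypothesis $f$ extends smoothly to an open neighborhood of this set in $\R^n\setminus\{0\}$, so each $\partial_\eta^\gamma f$ is continuous on $\overline{\Omega}\setminus\{0\}$; since $f$ is homogeneous of degree $r$, the derivative $\partial_\eta^\gamma f$ is homogeneous of degree $r-|\gamma|$, whence
\[
|\partial_\eta^\gamma f(\eta)| = |\eta|^{\Re r-|\gamma|}\,\bigl|\partial_\eta^\gamma f(\eta/|\eta|)\bigr|.
\]
The compact set $S^{n-1}\cap\overline{\Omega}$ (which we may assume nonempty, otherwise the statement is vacuous) carries the continuous function $|\partial_\eta^\gamma f|$, so it is bounded there by a constant depending only on $f$ and $|\gamma|$. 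Inserting this into the previous display gives $|D^\alpha f(\eta)|\le C_{f,k}|\eta|^{\Re r}$ uniformly in $\eta\in\Omega$ and in the orthonormal coordinates $(\tilde\eta_1,\dots,\tilde\eta_n)$, proving (i).

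For (ii), the non-vanishing hypothesis together with compactness and homogeneity yields the matching lower bound $|f(\eta)|\ge c_f\,|\eta|^{\Re r}$ on $\overline{\Omega}\setminus\{0\}$, since $|f|$ is continuous and strictly positive on the compact set $S^{n-1}\cap\overline{\Omega}$. Combining this with (i) gives $|D^\alpha f|\le C_{f,k}|f|$ on $\Omega$, which is the desired self-control bound. No step looks delicate: the one point to keep in mind is that the estimates in the reduction must retain uniformity in the coordinate change, which is guaranteed precisely because the expansion coefficients in the steps above only involve the combinatorial constants $c_{\alpha,\beta}$, the orthogonal matrix entries (bounded by $1$), and the trivial bound $|\tilde\eta_j|\le|\eta|$.
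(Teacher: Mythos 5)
Your proof is correct and follows essentially the same route as the paper: expand $D^\alpha$ as a linear combination of operators $\tilde\eta^\beta\partial_{\tilde\eta}^\beta$, reduce the coordinate dependence to something uniformly controlled, invoke homogeneity, and conclude by compactness of $S^{n-1}\cap\overline{\Omega}$. The only (cosmetic) difference is in how you track uniformity in the choice of orthonormal frame: you pass to a fixed reference system via the orthogonal change-of-basis matrix $A$ and use the trivial bound $|A_{jk}|\le 1$, whereas the paper expresses $\partial_{\tilde\eta}^\beta f$ as the pairing of the coordinate-free tensor $\nabla^{|\beta|}f$ with $\tilde e_1^{\otimes\beta_1}\otimes\cdots\otimes\tilde e_n^{\otimes\beta_n}$ and bounds it by the intrinsic tensor norm $|\nabla^{|\beta|}f|$; both devices yield the same constant, independent of the frame.
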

\begin{proof}
Denote by $\nabla^k f$ the symmetric $k$-tensor of $k$th-order derivatives of $f$. If $(\tilde e_1,\dots,\tilde e_n)$ is the orthonormal basis of $\R^n$ associated to the coordinates $(\tilde\eta_1,\dots,\tilde\eta_n)$, then $\partial_{\tilde\eta}^\alpha f = \langle \nabla^{|\alpha|} f, \tilde e_1^{\otimes \alpha_1} \otimes \dots \otimes \tilde e_n^{\otimes \alpha_n} \rangle$, and consequently
\[
|D^\alpha f(\eta)| = \left|\sum_{\beta \leq \alpha} C_{\alpha,\beta} \, \tilde\eta^\beta \partial_{\tilde\eta}^\beta f(\eta) \right| \leq C_{\alpha} \max_{k \leq |\alpha|} |\eta|^k |\nabla^k f(\eta)|.
\]
From this inequality, the continuity of $\nabla^k f$ and the compactness of $S^{n-1} \cap \overline{\Omega}$, we deduce that $|D^\alpha f|$ can be majorized on $S^{n-1} \cap \overline{\Omega}$ by a constant $C_{f,\alpha}$ not depending on the choice of coordinates; since $D^\alpha f$ is homogeneous of degree $r$, we then deduce that
\[
|D^\alpha f(\eta)| \leq C_{f,\alpha} |\eta|^{\Re r}
\]
for all $\eta \in \overline{\Omega} \setminus \{0\}$, and part (i) is proved. On the other hand, if $f$ does not vanish on $\overline{\Omega} \setminus \{0\}$, then by compactness and homogeneity we deduce that
\[
|\eta|^{\Re r} \leq C_f |f(\eta)|
\]
for all $\eta \in \overline{\Omega} \setminus \{0\}$, and part (ii) follows by combining the two inequalities.
\end{proof}

A multivariate analogue of the previous argument, exploiting the compactness of the product of unit spheres, yields immediately the following result.

\begin{lem}\label{lem:schom_multi}
Let $\Omega = (\R^{n_1} \setminus \{0\}) \times \dots \times (\R^{n_s} \setminus \{0\})$. Suppose that $f : \Omega \to \C$ is smooth and multihomogeneous of degree $r \in \C^s$, i.e.,
\[
f(\lambda_1 \eta_1,\dots,\lambda_s \eta_s) = \lambda_1^{r_1} \dots \lambda_s^{r_s} f(\eta_1,\dots,\eta_s),
\]
for all $\lambda_1,\dots,\lambda_s \in \leftopenint 0,\infty \rightopenint$ and $\eta=(\eta_1,\dots,\eta_s) \in \Omega$. Then, for all $k \in \N$, there exists $C_{f,k} \in \leftclosedint 0,\infty \rightopenint$ such that, for all choices of orthonormal coordinates $(\tilde\eta_{l,1},\dots,\tilde\eta_{l,n_l})$ on $\R^{n_l}$ for $l=1,\dots,s$, if
\[
D = (\tilde\eta_{1,1} \partial_{\tilde\eta_{1,1}}, \dots, \tilde\eta_{1,n_1} \partial_{\tilde\eta_{1,n_1}}, \dots, \tilde\eta_{s,1} \partial_{\tilde\eta_{s,1}}, \dots, \tilde\eta_{s,n_s} \partial_{\tilde\eta_{s,n_s}}),
\]
then
\begin{itemize}
\item[(i)] $\|f\|_{\BD_{\Omega,D}^k(\eta \mapsto |\eta_1|^{\Re r_1} \cdots |\eta_s|^{\Re r_s})} \leq C_{f,k}$ and,
\item[(ii)] if moreover $f$ does not vanish in $\Omega$, then $\|f\|_{\SC_{\Omega,D}^k} \leq C_{f,k}$.
\end{itemize}
\end{lem}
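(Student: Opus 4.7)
The plan is to transpose the proof of Lemma~\ref{lem:schom_unif} to the multivariate setting, the only real change being that the univariate unit sphere is replaced by the compact product of unit spheres $\Sigma = S^{n_1-1} \times \cdots \times S^{n_s-1}$, and the single Euler homogeneity of $f$ by its multihomogeneity.

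First I would partition each multi-index $\alpha$ for $D$ into blocks $(\alpha_1,\dots,\alpha_s)$ with $\alpha_l \in \N^{n_l}$ corresponding to the $l$-th factor, and expand the purely multiplicative operator $D^\alpha = \prod_{l,j}(\tilde\eta_{l,j}\partial_{\tilde\eta_{l,j}})^{\alpha_{l,j}}$ in terms of plain derivatives as
\[
D^\alpha f = \sum_{\beta_1 \leq \alpha_1,\dots,\beta_s \leq \alpha_s} C_{\alpha,\beta} \, \tilde\eta^\beta \, \partial_{\tilde\eta}^\beta f,
\]
via a coordinate-wise Stirling-type identity. Estimating $|\tilde\eta_l^{\beta_l}| \leq |\eta_l|^{|\beta_l|}$ and expressing $\partial_{\tilde\eta}^\beta f$ as a contraction of the iterated block-gradient $\nabla_1^{|\beta_1|} \cdots \nabla_s^{|\beta_s|} f$ against unit tensors in each block, one obtains
\[
|D^\alpha f(\eta)| \leq C_\alpha \max_{k_l \leq |\alpha_l|} |\eta_1|^{k_1} \cdots |\eta_s|^{k_s} \, \bigl|\nabla_1^{k_1} \cdots \nabla_s^{k_s} f(\eta)\bigr|,
\]
with $C_\alpha$ independent of the choice of orthonormal coordinates, since both the tensor norm of each block-gradient and the Euclidean norms $|\eta_l|$ are coordinate-invariant.

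Next I would use continuity of these iterated block-gradients together with compactness of $\Sigma$ to bound each $|\nabla_1^{k_1} \cdots \nabla_s^{k_s} f|$ on $\Sigma$ by a constant depending only on $f$ and $k_1,\dots,k_s$. Multihomogeneity of $f$ of degree $r$ gives that $\nabla_1^{k_1} \cdots \nabla_s^{k_s} f$ is itself multihomogeneous of degrees $(r_1-k_1,\dots,r_s-k_s)$; writing $\eta_l = |\eta_l|\,\omega_l$ with $\omega_l \in S^{n_l-1}$ and inserting the scaling relation therefore yields
\[
|\eta_1|^{k_1} \cdots |\eta_s|^{k_s} \, \bigl|\nabla_1^{k_1} \cdots \nabla_s^{k_s} f(\eta)\bigr| \leq C_{f,k_1,\dots,k_s} \, |\eta_1|^{\Re r_1} \cdots |\eta_s|^{\Re r_s}
\]
on all of $\Omega$, which combined with the previous display proves part~(i).

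For part~(ii), if $f$ does not vanish on $\Omega$, then by continuity and compactness there is a positive lower bound for $|f|$ on $\Sigma$; multihomogeneity then promotes this to the global inequality $|\eta_1|^{\Re r_1} \cdots |\eta_s|^{\Re r_s} \leq C_f |f(\eta)|$ on $\Omega$, and substituting into the estimate from part~(i) gives $|D^\alpha f| \leq C_{f,\alpha} |f|$. There is no serious obstacle here: once the Stirling-type expansion is in place, the argument is essentially a bookkeeping exercise, and the only point requiring care is to verify that every constant is genuinely independent of the chosen orthonormal coordinates in each block.
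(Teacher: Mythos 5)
Your proposal is correct and is essentially the paper's own argument: the paper explicitly introduces this lemma as "a multivariate analogue of the previous argument, exploiting the compactness of the product of unit spheres," and your proof carries out exactly that transcription, replacing the single sphere by $S^{n_1-1}\times\cdots\times S^{n_s-1}$ and single homogeneity by multihomogeneity block by block. The Stirling-type expansion of $D^\alpha$, the tensor-contraction bound making the constants independent of the orthonormal frame, and the rescaling via multihomogeneity are precisely the ingredients implicit in the paper's one-line proof sketch.
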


We conclude this section by briefly recalling the construction of smooth homogeneous partitions of unity on $\R^n \setminus \{0\}$ (i.e., partitions of unity made of smooth functions homogeneous of degree $0$) depending on a thinness parameter $\thinp$ (i.e., corresponding to the choice of an $\epsilon$-separated set $I_\epsilon$ of unit vectors), which have been extensively used in the literature (see, e.g., \cite{fefferman_note_1973,christ_weak_1988,seeger_regularity_1991}), and will be useful in \S\ref{section:g743} below. The language introduced above can be used to express the uniformity in $\thinp$ of the estimates on the derivatives of the components of the partitions of unity.

\begin{lem}\label{lem:hompartuni}
For all $\thinp \in \leftopenint 0,1 \rightclosedint$, there exist a finite subset $I_\thinp$ of $S^{n-1}$ and a smooth homogeneous partition of unity $(\chi_{\thinp,v})_{v \in I_\thinp}$ on $\R^n \setminus \{0\}$ such that
\begin{itemize}
\item[(i)] the cardinality of $I_\thinp$ is at most $C \thinp^{1-n}$
\end{itemize}
and, for all $v \in I_\thinp$,
\begin{itemize}
\item[(ii)] $\supp \chi_{\thinp,v} \subseteq \{ \xi \tc \thinp/4 \leq | \xi/|\xi| - v | \leq 4\thinp \}$;
\item[(iii)] if $(\xi_1^v,\dots,\xi_n^v)$ are orthonormal coordinates on $\R^n$ such that $v$ corresponds to $(1,0,\dots,0)$, then, for all $\alpha \in \N^n$ and $\xi \in \R^n \setminus \{0\}$,
\[
|\partial_{\xi^v}^\alpha \chi_{\thinp,v}(\xi)| \leq C_\alpha |\xi|^{-|\alpha|} \thinp^{\alpha_1-|\alpha|};
\]
\item[(iv)] if moreover $D_v = (\xi^v_1 \partial_{\xi^v_1}, \dots, \xi^v_n \partial_{\xi^v_n})$, then, for all $k \in \N$,
\[
\| \chi_{\thinp,v} \|_{\BD^k_{\R^n \setminus \{0\},D_v}(1)} \leq C_k.
\]
\end{itemize}
\end{lem}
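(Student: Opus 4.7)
The plan is to build the family $(\chi_{\thinp,v})_{v \in I_\thinp}$ by first choosing an $\thinp$-separated set of directions on the unit sphere, then defining angular cutoffs from a fixed bump, and finally normalizing the family to a partition of unity. First I would let $I_\thinp \subseteq S^{n-1}$ be any maximal $\thinp$-separated subset with respect to Euclidean distance. Maximality forces the balls $B(v,\thinp)$ for $v \in I_\thinp$ to cover $S^{n-1}$, while separation makes the balls $B(v,\thinp/2)$ pairwise disjoint, giving $|I_\thinp| \leq C \thinp^{1-n}$ by a standard volume comparison; this yields (i). Separation also provides a bounded overlap property: at every $\xi \in \R^n \setminus \{0\}$, the number of $v \in I_\thinp$ with $|\xi/|\xi| - v| \leq 4\thinp$ is bounded by a constant depending only on $n$.

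Next I would fix once and for all a smooth $\phi : [0,\infty) \to [0,1]$ with $\phi \equiv 1$ on $[0,1]$ and $\supp \phi \subseteq [0,2]$, set
\[
\tilde\chi_{\thinp,v}(\xi) \defeq \phi(|\xi/|\xi| - v|/(2\thinp)), \qquad \Phi_\thinp \defeq \sum_{v \in I_\thinp} \tilde\chi_{\thinp,v},
\]
and define $\chi_{\thinp,v} \defeq \tilde\chi_{\thinp,v}/\Phi_\thinp$. Each $\tilde\chi_{\thinp,v}$ is smooth and degree-$0$ homogeneous on $\R^n \setminus \{0\}$, is supported where $|\xi/|\xi| - v| \leq 4\thinp$, and is identically $1$ where $|\xi/|\xi| - v| \leq 2\thinp$. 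Consequently $\Phi_\thinp$ is bounded below by $1$ (by the covering property) and above by a constant (by bounded overlap), and $(\chi_{\thinp,v})_v$ is a degree-$0$-homogeneous smooth partition of unity satisfying (ii).

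The heart of the argument is (iii). Working in orthonormal coordinates $(\xi_1^v,\dots,\xi_n^v)$ in which $v$ corresponds to $(1,0,\dots,0)$, I would use the identity $|\xi/|\xi| - v|^2 = 2(1 - \xi_1^v/|\xi|)$, valid on the cone $\{\xi_1^v > 0\}$ containing the support of $\tilde\chi_{\thinp,v}$. On that support $\xi_1^v \sim |\xi|$ and $|\xi_j^v| \leq C\thinp|\xi|$ for $j \geq 2$, and a direct computation gives $|\partial_{\xi_j^v} \tilde\chi_{\thinp,v}| \leq C (\thinp |\xi|)^{-1}$ for $j \geq 2$ but only $|\partial_{\xi_1^v} \tilde\chi_{\thinp,v}| \leq C |\xi|^{-1}$; the factor $\thinp^{-1}$ absent in the radial direction reflects that $\partial_{\xi_1^v}(\xi_1^v/|\xi|) = \sum_{j \geq 2}(\xi_j^v)^2/|\xi|^3$ is itself of order $\thinp^2/|\xi|$ on the support, cancelling one $\thinp^{-1}$ coming from the rescaling inside $\phi$. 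An induction on $|\alpha|$ via Leibniz' and chain rules then yields
\[
|\partial_{\xi^v}^\alpha \tilde\chi_{\thinp,v}(\xi)| \leq C_\alpha |\xi|^{-|\alpha|} \thinp^{\alpha_1 - |\alpha|},
\]
and the same bound transfers to $\chi_{\thinp,v}$ since $\Phi_\thinp$ is bounded away from $0$ and, being a locally finite sum of terms of the same form, satisfies an estimate of the same shape. Finally (iv) follows at once from (iii) combined with $|\xi_1^v| \leq |\xi|$ and $|\xi_j^v| \leq C\thinp|\xi|$ ($j \geq 2$) on $\supp \chi_{\thinp,v}$: in every scale-invariant derivative $\xi_i^v \partial_{\xi_i^v}$ the factor $\xi_i^v$ exactly absorbs the worst $\thinp^{-1}$ supplied by (iii).

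The main obstacle I anticipate is the radial-versus-tangential bookkeeping in (iii): one must justify a penalty $\thinp^{-1}$ for each differentiation transverse to $v$ but no such penalty along $v$, and confirm that this pattern survives the quotient $\tilde\chi_{\thinp,v}/\Phi_\thinp$. The key observation enabling this is that $\Phi_\thinp$ itself inherits the estimate of (iii) with the same exponents in $\thinp$ (by bounded overlap), so the quotient rule and the higher-order Leibniz/chain rule expansions used in the induction respect the power-of-$\thinp$ structure.
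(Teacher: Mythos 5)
Your construction does not satisfy property (ii), and this is not incidental: (ii) is deliberately stated as a departure from the standard construction. You take $\phi$ identically $1$ on $[0,1]$ and supported in $[0,2]$, so $\tilde\chi_{\thinp,v}(v) = \phi(0) = 1$; since $\Phi_\thinp(v)$ is finite, $\chi_{\thinp,v}(v) > 0$ and hence $v \in \supp\chi_{\thinp,v}$. But $|v/|v| - v| = 0$, which violates the lower bound $\thinp/4 \leq |\xi/|\xi| - v|$ required in (ii). The paper instead takes an \emph{annular} bump $\phi$ on $\R^n$ (equal to $1$ on $\{1/2 \leq |z| \leq 5/2\}$ and supported in $\{1/4 \leq |z| \leq 4\}$), so that $\tilde\chi_{\thinp,v}(\xi) = \phi(\thinp^{-1}(\xi/|\xi|-v))$ vanishes on the small cone around $v$. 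With this choice the usual covering statement ``every $\xi \in S^{n-1}$ is within $\thinp$ of some $v \in I_\thinp$'' is no longer enough to conclude $\Phi_\thinp \geq 1$; the paper proves the stronger fact that every $\xi \in S^{n-1}$ is at distance between $\thinp/2$ and $5\thinp/2$ from some $v$, using the auxiliary observation that every $v \in I_\thinp$ has a neighbor $v' \in I_\thinp$ with $\thinp \leq |v-v'| \leq 2\thinp$. This lower bound on the transversal size is used materially in \S\ref{section:g743} (for instance, the two-sided estimate $|\eta_2^q| \sim \smctb\crdp^{1/2}|(\eta_1,\eta_2)|$ on $\supp\tilde\chi_{\crdp,q}$, which in turn controls $|\eta_3^q|$ from below), so the gap would propagate.

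On (iii), given the annular cutoff, your route --- differentiating directly via $|\xi/|\xi| - v|^2 = 2(1-\xi_1^v/|\xi|)$ and observing that the radial derivative of this quantity is of size $\thinp^2/|\xi|$ on the support --- is a genuinely different argument from the paper's, which first establishes the coordinate-free bound $|\partial^\alpha_{\tilde\xi}\chi_{\thinp,v}| \lesssim |\xi|^{-|\alpha|}\thinp^{-|\alpha|}$ and then uses $0$-homogeneity: since $\varrho^k\partial_\varrho^k$ annihilates $\chi_{\thinp,v}$, each radial derivative converts into transversal derivatives multiplied by transversal coordinates of size $O(\thinp|\xi|)$, which produces the $\thinp^{\alpha_1-|\alpha|}$ gain. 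The homogeneity route has the advantage of applying directly to the normalized $\chi_{\thinp,v}$ rather than to $\tilde\chi_{\thinp,v}$, whereas your approach must justify separately that $\Phi_\thinp$ enjoys the radially improved estimate in coordinates adapted to $v$ even though its summands $\tilde\chi_{\thinp,v'}$ are adapted to nearby but different $v'$; you assert this but do not prove it. Once the annular $\phi$ is substituted and this point addressed, (iii) and (iv) follow along the lines you sketch.
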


Note that the above constants $C,C_\alpha,C_k$ do not depend on $\thinp$ or $v$, but may depend on the dimension $n$. Note further that, differently from the standard construction, here we require $\chi_{\thinp,v}$ not only to be supported in a conic neighborhood of the direction $v$, but also to vanish on a smaller conic neighborhood of $v$; this property will be convenient to estimate from above and from below the size of the ``transversal component'' $(\xi_2^v,\dots,\xi_n^v)$ of a point $\xi$ in the support of $\chi_{\thinp,v}$.

\begin{proof}
We follow, with slight variations, the construction given in \cite[\S IX.4]{stein_harmonic_1993}. Let $I_\thinp$ be a subset of $S^{n-1}$ such that
\[
|v-v'| \geq \thinp \text{ for all $v,v' \in I_\thinp$ with $v\neq v'$}
\]
and maximal among the subsets of $S^{n-1}$ with this property. A moment's reflection shows that (i) is satisfied, that
\[
\text{for all $v \in I_\epsilon$ there exists $v'\in I_\epsilon$ such that } \epsilon \leq |v-v'| \leq 2\epsilon,
\]
and that
\begin{equation}\label{eq:reticolo}
\text{for all } \xi \in S^{n-1} \text{ there is } v \in I_\thinp \text{ such that } (1/2) \, \thinp \leq |v-\xi| < (5/2) \, \thinp.
\end{equation}

Choose a smooth $\phi : \R^n \to \R$ such that $\phi(\xi) = 1$ for $1/2 \leq |\xi| \leq 5/2$ and $\supp \phi \subseteq \{ \xi \tc 1/4 \leq |\xi| \leq 4 \}$, and set
\[
\tilde\chi_{\thinp,v}(\xi) = \phi(\thinp^{-1}(\xi/|\xi|-v)), \qquad \chi_{\thinp,v} = \frac{\tilde\chi_{\thinp,v}}{\sum_{v'\in I_\thinp} \tilde\chi_{\thinp,v'}}.
\]
By \eqref{eq:reticolo} $\chi_{\thinp,v}$ is well-defined and smooth on $\R^n \setminus \{0\}$, and clearly it is homogeneous of degree $0$ and satisfies (ii); further
\[
|\partial^\alpha_{\tilde\xi} \chi_{\thinp,v}(\xi)| \leq C_\alpha |\xi|^{-|\alpha|} \thinp^{-|\alpha|}
\]
for all $\alpha \in \N^n$ and all choices of orthonormal coordinates $(\tilde\xi_1,\dots,\tilde\xi_n)$ on $\R^n$ (cf.\ \cite[\S IX.4.4]{stein_harmonic_1993}).

The last inequality proves (iii) in the case $\alpha_1 = 0$; for $\alpha_1 > 0$, one then proceeds by induction on $\alpha_1$, by exploiting the identity
\[
(\xi^v_1)^k \partial_{\xi_1^v}^k = \varrho^k \partial_\varrho^k - \sum_{|\beta| = k, \, \beta_1 < k} c_\beta \, (\xi^v)^\beta \partial_{\xi^v}^\beta
\]
(where $\varrho = |\xi|$, $\partial_\varrho$ is the radial derivative, and $c_\beta \in \N$), the fact that
\[
\varrho^k \partial_\varrho^k f = c_{k,\gamma} f \qquad\text{for all $f$ homogeneous of degree $\gamma$}
\]
(where $c_{k,\gamma} = \gamma (\gamma-1) \dots (\gamma-k+1)$) and that
\begin{equation}\label{eq:stimesupporto}
\xi^v_1 \sim |\xi|, \quad |\xi^v_2|,\dots,|\xi^v_n| \lesssim \thinp |\xi| \qquad\text{for all $\xi \in \supp \chi_{\thinp,v}$}.
\end{equation}

From (iii) and \eqref{eq:stimesupporto} we deduce in particular that
\[
|(\xi^v)^\alpha \partial_{\xi_v}^\alpha \chi_{\thinp,v}(\xi)| \leq C_\alpha,
\]
and (iv) follows because $D_v^\alpha = \sum_{\beta \leq \alpha} c_{\alpha,\beta} (\xi^v)^\beta \partial_{\xi^v}^\beta$ for some $c_{\alpha,\beta} \in \N$.
\end{proof}

\section{Derivatives of the kernel formula}\label{section:kernelderivatives}

By using the notation of \S\ref{section:selfcontrol}, we can now show that iterated $\eta$-derivatives of $V$ in \eqref{eq:kernelfulltransform} have a precise form.

Let $e_1,\dots,e_\done$ denote the standard basis of $\R^\done$, and let $D = (D_1,\dots,D_N)$ be a commuting system of smooth vector fields on $\ddsnd$. We introduce some operators on functions $f : \R^\done \times \R \times \ddsnd \to \C$ as follows:
\begin{align*}
\delta_j f(n,\mu,\eta) &= f(n+e_j,\mu,\eta) - f(n,\mu,\eta)\\
\partial_{n_j} f(n,\mu,\eta) &= \frac{\partial}{\partial n_j} f(n,\mu,\eta),\\
\partial_\mu f(n,\mu,\eta) &= \frac{\partial}{\partial\mu} f(n,\mu,\eta),\\
D_k f(n,\mu,\eta) &= D_k (f(n,\mu,\cdot))(\eta)
\end{align*}
for all $j \in \{1,\dots,\done\}$, $k \in \{1,\dots,N\}$. As usual, for all $\alpha \in \N^N$ and $\beta \in \N^\done$, we set $D^\alpha = D_1^{\alpha_1} \dots D_N^{\alpha_N}$, $\delta^\beta = \delta_1^{\beta_1} \dots \delta_\done^{\beta_\done}$, $\partial_n^\beta = \partial_{n_1}^{\beta_1} \dots \partial_{n_\done}^{\beta_\done}$.

\begin{prp}\label{prp:derivatives}
Let $D = (D_1,\dots,D_N)$ be a commuting system of smooth vector fields on $\ddsnd$.
For all $H : \R \times \dusnd \to \C$ smooth and compactly supported in $\R \times \ddsnd$, if $V$ is defined as in \eqref{eq:kernelfulltransform}, then, for all $\alpha \in \N^N$, $\eta \in \ddsnd$, $\xi \in \fst$,
\begin{multline}\label{eq:derivatives}
D^\alpha V(\xi,\eta) 
= \sum_{\iota \in I_{\alpha}} \sum_{n \in \N^\done} D^{\gamma^\iota} \partial_\mu^{k_\iota} \delta^{\beta^\iota} m_H(n,|P_0^\eta \xi|^2,\eta) \, \Psi_{\iota,0}(D;|P_0^\eta \xi|^2) \\
\times \prod_{\jone=1}^\done \left[ \Ell^{(r_\jone-1+\beta^\iota_\jone)}_{n_\jone}(|P_\jone^\eta \xi|^2/b_\jone^\eta) \, \Phi_{\iota,\jone}(D;1/b_\jone^\eta) \, \Psi_{\iota,\jone}(D;|P_\jone^\eta \xi|^2) \right] 
\end{multline}
where $I_{\alpha}$ is a finite set and, for all $\iota \in I_{\alpha}$,
\begin{itemize}
\item $\gamma^\iota \in \N^N$, $k_\iota \in \N$, $\beta^\iota \in \N^\done$, $\gamma^\iota \leq \alpha$,
\item $\Psi_{\iota,0} \in \HDP^{|\alpha|}_{N}(k_\iota)$,
\item for $\jone=1,\dots,\done$, $\Phi_{\iota,\jone} \in \HDP^{|\alpha|}_{N}(\beta^\iota_\jone)$,
\item for $\jone=1,\dots,\done$, $\Psi_{\iota,\jone} = \Psi_{\iota,\jone}^0 \Psi_{\iota,\jone}^1$, where $\Psi^1_{\iota,\jone} \in \HDP^{|\alpha|}_{N}(q^\iota_\jone)$ and $\Psi_{\iota,\jone}^0 \in \HDP^{1}_{N}(\beta^\iota_\jone - q^\iota_\jone)$, for some $q^\iota \in \N^\done$ such that $q^\iota \leq \beta^\iota$,
\item $\min\{1,|\alpha|\} \leq |\beta^\iota| + k_\iota + |\gamma^\iota|$ and $|\beta^\iota| + k_\iota + |\gamma^\iota|+ |q^\iota| \leq |\alpha|$.
\end{itemize}
\end{prp}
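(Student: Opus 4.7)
The plan is to induct on $|\alpha|$. For the base case $|\alpha| = 0$, the identity \eqref{eq:derivatives} is just the definition \eqref{eq:kernelfulltransform} of $V$: take $I_0$ to consist of a single index for which $\gamma^\iota = k_\iota = 0$, $\beta^\iota = q^\iota = 0$, and every HDP polynomial equals $1$. All the stated constraints hold trivially since $\min\{1, 0\} = 0$.

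For the inductive step, apply one additional vector field $D_{k^*}$ to the right-hand side of \eqref{eq:derivatives}. The Schwartz decay of $m_H$ allows interchanging $D_{k^*}$ with the sums over $\iota$ and $n$, and Leibniz' rule distributes the derivative across the factors of each summand. Four cases arise, according to which factor is hit: (a) the multiplier $D^{\gamma^\iota}\partial_\mu^{k_\iota}\delta^{\beta^\iota}m_H(n, |P_0^\eta\xi|^2, \eta)$, where by the chain rule one obtains either a term with $\gamma^\iota$ raised by $e_{k^*}$, or a term with $k_\iota$ raised by $1$ into which the new factor $D_{k^*}|P_0^\eta\xi|^2$ is absorbed by $\Psi_{\iota,0}$; (b) a Laguerre factor $\Ell^{(r_j - 1 + \beta^\iota_j)}_{n_j}(|P_j^\eta\xi|^2/b_j^\eta)$, where the identity \eqref{eq:laguerred} yields the difference $\Ell^{(r_j + \beta^\iota_j)}_{n_j - 1} - \Ell^{(r_j + \beta^\iota_j)}_{n_j}$ times $D_{k^*}(|P_j^\eta\xi|^2/b_j^\eta)$, after which summation by parts in $n_j$ converts the difference into $\delta_j$ applied to $m_H$ (raising $\beta^\iota_j$ by $1$ and matching the shifted Laguerre type), while the chain-rule factor is split via the product rule into a piece absorbed into $\Phi_{\iota,j}$ and a piece absorbed into $\Psi^0_{\iota,j}$; (c) one of the HDPs $\Psi_{\iota,0}$, $\Phi_{\iota,j}$, or $\Psi^1_{\iota,j}$, where Lemma~\ref{lem:hdp}(i) replaces the polynomial by $\partial^{e_{k^*}}$ of itself, preserving the degree and raising the HDP class by one; (d) $\Psi^0_{\iota,j} \in \HDP^1_N$, where the image $\partial^{e_{k^*}}\Psi^0$ naturally splits into a term $X_{e_{k^*}} \, \partial\Psi^0/\partial X_0$ remaining in $\HDP^1_N$ and terms $X_{e_l + e_{k^*}} \, \partial\Psi^0/\partial X_{e_l}$ involving a second-order indeterminate, which must be reorganized into $\Psi^1$.

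Collecting the resulting summands yields the decomposition of $D^{\alpha + e_{k^*}}V$ in the form \eqref{eq:derivatives}, with a new index set $I_{\alpha + e_{k^*}}$, and it remains to verify the parameter constraints. The main obstacle lies in the bookkeeping of the decomposition $\Psi_{\iota,j} = \Psi^0_{\iota,j}\Psi^1_{\iota,j}$: in case (b) the newly introduced $|P_j^\eta\xi|^2$-factor must consistently be placed in $\Psi^0$ rather than $\Psi^1$ (so that $\beta^\iota_j$ grows by $1$ while $q^\iota_j$ is unchanged, matching the unit increase of $|\alpha|$), whereas in case (d) the promotion of the second-order-indeterminate terms from $\Psi^0$ into $\Psi^1$ raises $|q^\iota|$ by the degree of $\Psi^0$, namely $\beta^\iota_j - q^\iota_j$, so one must use the slack already present in the inequality $|\beta^\iota| + k_\iota + |\gamma^\iota| + |q^\iota| \leq |\alpha|$ (a slack equal precisely to the $\Psi^0$-degrees) to absorb this jump. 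The lower bound $\min\{1, |\alpha|\} \leq |\beta^\iota| + k_\iota + |\gamma^\iota|$ then follows from the observation that nontrivial HDP factors arise only after a derivative has acted on $m_H$ (case (a), incrementing $|\gamma|$ or $k$) or on a Laguerre polynomial (case (b), incrementing $|\beta|$), so once any derivative has been applied, at least one of $|\beta^\iota|, k_\iota, |\gamma^\iota|$ must already be nonzero.
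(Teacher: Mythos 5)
The overall strategy — induction on $|\alpha|$, with Proposition~\ref{prp:kernel} as the base case, and Leibniz' rule generating the four cases (a)--(d) according to which factor the new derivative hits — is exactly the paper's, and your analysis of cases (a), (b) and (c), as well as the lower bound $\min\{1,|\alpha|\}\leq|\beta^\iota|+k_\iota+|\gamma^\iota|$, is sound. However, there is a genuine error in case (d). When $D_{k^*}$ hits one of the monomial factors of $\Psi^0_{\iota,j}$ (recall, as the paper notes, that one may rearrange the sum so that each $\Psi^0_{\iota,j}$ is a single monic monomial in $X_0$ and the $X_{e_l}$), the resulting monomials $X_{e_l+e_{k^*}}\,\partial\Psi^0/\partial X_{e_l}$ should be reorganized by moving \emph{only the single second-order factor} $X_{e_l+e_{k^*}}$ into $\Psi^1_{\iota,j}$, leaving $\partial\Psi^0/\partial X_{e_l}\in\HDP^1_N(\beta^\iota_j-q^\iota_j-1)$ as the new $\Psi^0$. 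This raises $q^\iota_j$ by exactly $1$, keeps $\beta^\iota_j$ fixed, and so increases $|\beta^\iota|+k_\iota+|\gamma^\iota|+|q^\iota|$ by $1$, matching the unit increment of $|\alpha|$ with no slack consumed.

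Your version instead promotes the entire monomial $X_{e_l+e_{k^*}}\,\partial\Psi^0/\partial X_{e_l}$ (degree $\beta^\iota_j-q^\iota_j$) into $\Psi^1$, causing $|q^\iota|$ to jump by $\beta^\iota_j-q^\iota_j$, and then appeals to the claim that the slack $|\alpha|-(|\beta^\iota|+k_\iota+|\gamma^\iota|+|q^\iota|)$ equals $\sum_j(\beta^\iota_j-q^\iota_j)$. That claim is false: the slack grows by $1$ only at case (c) applications, whereas $\sum_j(\beta^\iota_j-q^\iota_j)$ grows at each case (b) application. A concrete counterexample: two applications of case (b) to $j=1$ give $|\alpha|=2$, $\beta^\iota=(2,0,\dots)$, $q^\iota=0$, $k_\iota=|\gamma^\iota|=0$, hence slack $=0$ while $\beta^\iota_1-q^\iota_1=2$; a subsequent case (d) would, under your bookkeeping, set $|q^\iota|=2$ with $|\alpha|=3$, violating $|\beta^\iota|+k_\iota+|\gamma^\iota|+|q^\iota|\leq|\alpha|$. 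The fix is simply to move one factor, not the whole monomial.
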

\begin{proof}
Notice first that the above statement can be equivalently rephrased by additionally requiring that each of the polynomials $\Psi_{\iota,j}^0$ is made of a unique monic monomial (it is sufficient to rearrange the sum). Hence we may suppose that $\Psi_{\iota,j}^0(D;|P_\jone^\eta \xi|^2)$ is just a product of factors of the form $|P_\jone^\eta \xi|^2$ or $D_\jtwo |P_\jone^\eta\xi|^2$ for $\jtwo \in \{1,\dots,N\}$.

The proof goes by induction on $|\alpha|$. The case $\alpha=0$ is given by Proposition~\ref{prp:kernel}. For the inductive step, one employs Leibniz' rule, and the following observations:
\begin{itemize}
\item when a $D_\jtwo$-derivative hits $D^{\gamma^\iota} \partial_\mu^{k_\iota} \delta^{\beta^\iota} m_H(n,|P_0^\eta \xi|^2,\eta)$, either it increases $\gamma^\iota_\jtwo$, or it increases $k_\iota$; in the second case, the degree of $\Psi_{\iota,0}$ is increased too;
\item when a $D_\jtwo$-derivative hits $\Ell^{(r_\jone-1+\beta^\iota_\jone)}_{n_\jone}(|P_\jone^\eta \xi|^2/b_\jone^\eta)$, then \eqref{eq:laguerred} and summation by parts in $n_\jone$ increase the order $\beta^\iota_\jone$ of discrete differentiation in $\delta_\jone$, and moreover the additional factor
\[
D_\jtwo(|P^\eta_\jone \xi|^2/b_\jone^\eta) = (1/b_\jone^\eta) D_\jtwo(|P^\eta_\jone \xi|^2) + |P^\eta_\jone \xi|^2 D_\jtwo(1/b_\jone^\eta)
\]
given by the chain rule increases the degrees of both $\Phi_{\iota,\jone}$ and $\Psi^0_{\iota,\jone}$;
\item when a $D_\jtwo$-derivative hits $\Psi_{\iota,0}(D;|P_0^\eta\xi|^2)$, $\Phi_{\iota,\jone}(D;1/b_\jone^\eta)$ or $\Psi_{\iota,\jone}^1(D;|P_\jone^\eta\xi|^2)$, ``nothing happens'' because of Lemma \ref{lem:hdp}(i);
\item when a $D_\jtwo$-derivative hits a factor of $\Psi_{\iota,\jone}^0(D;|P_\jone^\eta\xi|^2)$, the derivative of this factor can be included in the new $\Psi^1_{\iota,\jone}(D;|P_\jone^\eta\xi|^2)$, hence the degree $q^\iota_\jone$ of $\Psi^1_{\iota,\jone}$ increases, while the degree of $\Psi_{\iota,\jone}^0$ decreases, and the sum $\beta^\iota_\jone$ of the degrees is unvaried.
\end{itemize}
The conclusion follows.
\end{proof}

The previous formula expresses $D^\alpha V$ in terms of derivatives $D^{\gamma^\iota} \partial_\mu^{k_\iota} \delta^{\beta^\iota} m_H$ of the reparametrized multiplier $m_H$ whose total order $|\beta^\iota| + k_\iota + |\gamma^\iota|$ does not exceed $|\alpha|$. We will now convert this formula into an $L^2$-estimate, by exploiting the orthogonality properties of the Laguerre functions. More precisely, we will use an enhanced version of the orthogonality relations \eqref{eq:laguerreo}, allowing for a mismatch between the type of Laguerre functions and the exponent in the weight defining the measure. As we will see, this mismatch may produce additional discrete differentiations of $m_H$;
nevertheless, the total order of differentiation will not exceed $|\alpha|$, thanks to the fact that in Proposition~\ref{prp:derivatives} the degrees $q_\jone^\iota$ of the $\Psi_{\iota,\jone}^1$ are also kept under control.

Note that, for all $f : \R^\done \times \R \times \ddsnd \to \C$, $\alpha \in \N^\done$, $\mu \in \R$ and $\eta \in \ddsnd$, the functions $\delta^\alpha f(\cdot,\mu,\eta)$ and $\partial_n^\alpha f(\cdot,\mu,\eta)$ depend only on $f(\cdot,\mu,\eta)$; in other words, $\delta^\alpha$ and $\partial_n^\alpha$ can be thought of as operators on functions $\R^\done \to \C$. Set moreover $\langle s \rangle = 1 + |s|$ and $(s)_+ = \max\{s,0\}$ for all $s \in \R$. For a multiindex $\alpha = (\alpha_1,\dots,\alpha_\done) \in \R^\done$, set $(\alpha)_+ = ((\alpha_1)_+,\dots,(\alpha_\done)_+)$. The aforementioned ``enhanced orthogonality relations'' can be then stated as follows.

\begin{lem}\label{lem:laguerreorthogonality}
For all $h,k \in \N^\done$ and all compactly supported $f : \R^\done \to \C$,
\begin{multline*}
\int_{\leftopenint 0,\infty\rightopenint^\done} \Bigl| \sum_{n \in \N^\done} f(n) \, \prod_{j=1}^\done \Ell_{n_j}^{(k_j)}(t_j) \Bigr|^2 \,t^h \,dt \\
\leq C_{h,k} \sum_{n \in \N^\done} |\delta^{(k-h)_+} f(n)|^2 \, \prod_{j=1}^\done \langle n_j\rangle^{h_j+2(k_j-h_j)_+}.
\end{multline*}
\end{lem}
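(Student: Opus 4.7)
My plan is to reduce the multidimensional statement to its one-dimensional version and then split the one-dimensional case into two subcases. The reduction to $1$D is by Fubini: the weight $t^h = \prod_j t_j^{h_j}$ and the Laguerre product $\prod_j \Ell_{n_j}^{(k_j)}(t_j)$ factorize across the coordinates, and the operators $\delta_j^{(k_j-h_j)_+}$ act on independent indices and commute, so it suffices to apply the 1D bound successively in $t_1,\dots,t_\done$, moving one $\delta_j^{(k_j-h_j)_+}$ onto $f$ and accumulating a factor $\langle n_j \rangle^{h_j+2(k_j-h_j)_+}$ at each step. The 1D statement to prove is thus
\[
\int_0^\infty \Bigl| \sum_n f(n) \Ell_n^{(k)}(t) \Bigr|^2 t^h \, dt \leq C_{h,k} \sum_n |\delta^{(k-h)_+} f(n)|^2 \langle n \rangle^{h+2(k-h)_+}.
\]

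When $h \geq k$, one may use the classical Laguerre identity $\Ell_n^{(k)} = \Ell_n^{(k+1)} + \Ell_{n-1}^{(k+1)}$ (easily derived from \eqref{eq:laguerred}), iterated $h-k$ times, to rewrite $\sum_n f(n) \Ell_n^{(k)}(t) = \sum_n g(n) \Ell_n^{(h)}(t)$, where $g(n) = \sum_{j=0}^{h-k} \binom{h-k}{j} f(n+j)$. Then \eqref{eq:laguerreo}, together with the bound $(n+h)!/n! \leq C_h \langle n \rangle^h$ and the trivial estimate $|g(n)|^2 \leq C \sum_j |f(n+j)|^2$, immediately yields the required $\leq C \sum_n |f(n)|^2 \langle n \rangle^h$, matching the claim since $(k-h)_+ = 0$.

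The case $h < k$ is the main case and the source of the main difficulty. Setting $s := k-h \geq 1$ and iterating the identity above in the opposite direction gives the finite expansion $\Ell_n^{(k)}(t) = \sum_{j=0}^n (-1)^j \binom{s+j-1}{j} \Ell_{n-j}^{(h)}(t)$. Substituting and swapping the order of summation, $\sum_n f(n) \Ell_n^{(k)}(t) = \sum_m \tilde g(m) \Ell_m^{(h)}(t)$ with $\tilde g(m) = \sum_{j \geq 0}(-1)^j \binom{s+j-1}{j} f(m+j)$, and by \eqref{eq:laguerreo} it suffices to bound $\sum_m \langle m \rangle^h |\tilde g(m)|^2$. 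Applying Abel summation $s$ times turns $\tilde g(m)$ into $(-1)^s \sum_{j \geq 0} A(j) \delta^s f(m+j)$, where the $s$-fold iterated partial sum $A$ of $j \mapsto (-1)^j\binom{s+j-1}{j}$ has generating function $(1+x)^{-s}(1-x)^{-s} = (1-x^2)^{-s}$; hence $A(j) = \binom{s+j/2-1}{j/2}$ for even $j$ and zero otherwise, so in particular $|A(j)| \leq C_s \langle j \rangle^{s-1}$.

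The remaining technical obstacle is that $A \notin \ell^1$: it grows polynomially, so neither Young's inequality nor a single global Cauchy--Schwarz produces the required estimate without an $\langle n \rangle^\epsilon$ loss in the weight exponent. To avoid this I would split $\tilde g = \tilde g_- + \tilde g_+$ according to whether $j \leq m$ or $j > m$. The near piece is handled by the unweighted Cauchy--Schwarz, giving $|\tilde g_-(m)|^2 \leq C m^{2s-1} \sum_{j \leq m} |\delta^s f(m+j)|^2$; for the far piece I would use a weighted Cauchy--Schwarz with weight $w(j) = \langle j \rangle^\gamma$, with $\gamma$ chosen in the nonempty open interval $(2s-1,\, h+2s)$. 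The lower endpoint guarantees that $\sum_{j>m}|A(j)|^2/w(j) \sim m^{2s-1-\gamma}$ is convergent, and the upper endpoint ensures that after switching to the variable $n = m+j$ the resulting $m$-sum scales, for each fixed $n$, exactly as $\langle n \rangle^{h+2s}$, so the target exponent is hit with no loss.
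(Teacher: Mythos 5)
The paper gives no proof of its own for this lemma; it simply cites \cite[Lemma~7]{martini_heisenbergreiter}, so there is nothing in the present paper to compare your argument against line by line. I can only assess your proof on its own merits, and it is correct.

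The reduction to dimension one by Fubini and iterated application of the 1D bound is sound, because the weight, the Laguerre product, and the difference operators all factor across coordinates. In the case $h \geq k$, the upward recursion $\Ell_n^{(k)} = \Ell_n^{(k+1)} + \Ell_{n-1}^{(k+1)}$ (an equivalent form of \eqref{eq:laguerred} combined with the classical three-term identity) iterated $h-k$ times, followed by \eqref{eq:laguerreo} and the bound $(n+h)!/n! \lesssim \langle n\rangle^h$, gives the result with no difference operator appearing, matching the statement. In the case $s=k-h\geq 1$, your inversion $\Ell_n^{(k)} = \sum_{j\geq 0}(-1)^j\binom{s+j-1}{j}\Ell_{n-j}^{(h)}$ is correct (iterating $\Ell_n^{(k)} = \Ell_n^{(k-1)} - \Ell_{n-1}^{(k)}$ and counting compositions), and the $s$-fold Abel summation converting $\tilde g$ into $(-1)^s\sum_j A(j)\,\delta^s f(m+j)$, with $A$ the $s$-fold partial sum of $(-1)^j\binom{s+j-1}{j}$ having generating function $(1-x^2)^{-s}$ and hence $|A(j)|\lesssim\langle j\rangle^{s-1}$, is exactly right. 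The near/far split is the essential step: the near piece ($j\leq m$) by unweighted Cauchy--Schwarz contributes $\langle m\rangle^{2s-1}\sum_{j\leq m}|\delta^s f(m+j)|^2$ and, after switching to $n=m+j\in[m,2m]$ (so there are $\lesssim\langle n\rangle$ relevant values of $m$, each with $\langle m\rangle\sim\langle n\rangle$), yields precisely $\sum_n |\delta^s f(n)|^2\langle n\rangle^{h+2s}$; the far piece is handled by the weighted Cauchy--Schwarz with $\langle j\rangle^\gamma$, and the two constraints $\gamma>2s-1$ (convergence of the tail $\sum_{j>m}\langle j\rangle^{2s-2-\gamma}$) and $\gamma<h+2s$ (so $\sum_{m<n/2}\langle m\rangle^{h+2s-1-\gamma}\sim\langle n\rangle^{h+2s-\gamma}$, avoiding the logarithmic loss at equality) are compatible since $h\geq 0$ forces $2s-1<h+2s$. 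The exponents recombine to $\langle n\rangle^{h+2s}=\langle n\rangle^{h+2(k-h)_+}$, which is exactly the claim. This gives a clean, self-contained proof where the paper defers entirely to an external reference.
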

\begin{proof}
See \cite[Lemma~7]{martini_heisenbergreiter}.
\end{proof}

Another simple remark will be of use: via the fundamental theorem of integral calculus, finite differences can be estimated by continuous derivatives.

\begin{lem}\label{lem:discretecontinuous}
Let $f : \R^\done \to \C$ be smooth, and let $\beta \in \N^\done$. Then
\[\delta^\beta f(n) = \int_{J_\beta} \partial_n^\beta f(n+s) \,d\nu_\beta(s)\]
for all $n \in \R^\done$, where $J_\beta = \prod_{\jone=1}^\done \leftclosedint 0,\beta_\jone\rightclosedint$ and $\nu_\beta$ is a Borel probability measure on $J_\beta$. In particular
\[|\delta^\beta f(n)|^2 \leq \int_{J_\beta} |\partial^\beta f(n+s)|^2 \,d\nu_\beta(s)\]
for all $n \in \R^\done$.
\end{lem}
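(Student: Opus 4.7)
The plan is to reduce to the one-variable case and then iterate. In one variable, the fundamental theorem of calculus gives
\[
\delta g(n) = g(n+1)-g(n) = \int_0^1 g'(n+t)\,dt
\]
for smooth $g : \R \to \C$, and iterating $\beta$ times yields
\[
\delta^\beta g(n) = \int_{[0,1]^\beta} g^{(\beta)}(n+t_1+\cdots+t_\beta)\,dt_1\cdots dt_\beta.
\]
Pushing Lebesgue measure on $[0,1]^\beta$ forward along the map $(t_1,\dots,t_\beta) \mapsto t_1+\cdots+t_\beta$ produces a Borel probability measure on $[0,\beta]$ (the Irwin--Hall distribution), call it $\nu_\beta^{(1)}$, and the identity above rewrites as $\delta^\beta g(n) = \int_0^\beta g^{(\beta)}(n+s)\,d\nu_\beta^{(1)}(s)$.

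For the multivariate statement I would exploit two commutativity facts: the operators $\delta_\jone$ commute among themselves, and $\delta_\jone$ commutes with $\partial_{n_{\jone'}}$ whenever $\jone' \neq \jone$. Applying the one-variable identity successively in each coordinate, Fubini's theorem then gives
\[
\delta^\beta f(n) = \int_{J_\beta} \partial_n^\beta f(n+s)\,d\nu_\beta(s),
\]
where $\nu_\beta = \nu_{\beta_1}^{(1)} \otimes \cdots \otimes \nu_{\beta_\done}^{(1)}$. As a tensor product of Borel probability measures, $\nu_\beta$ is itself a Borel probability measure on $J_\beta$, as required.

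The second, $L^2$-type inequality follows immediately from the first by Jensen's inequality (equivalently Cauchy--Schwarz) with respect to the probability measure $\nu_\beta$: for any $\nu_\beta$-integrable $g : J_\beta \to \C$ one has $\bigl|\int_{J_\beta} g\,d\nu_\beta\bigr|^2 \leq \int_{J_\beta} |g|^2\,d\nu_\beta$, which applied to $g(s) = \partial_n^\beta f(n+s)$ is exactly the stated bound. There is no genuine obstacle in this argument; it is a direct consequence of the fundamental theorem of calculus and the fact that probability averages are $L^2$-contractive. The only small point requiring a line of care is bookkeeping to confirm that the iterated one-variable integral representation indeed assembles into the product measure $\nu_\beta$ on $J_\beta$, which is handled by Fubini.
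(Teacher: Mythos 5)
Your proof is correct. The paper states this lemma without proof, remarking only that it follows from the fundamental theorem of integral calculus, which is exactly the route you take (one-variable FTC iterated in each coordinate, pushed forward to the Irwin--Hall measures, tensorized, and closed with Jensen for the probability measure $\nu_\beta$).
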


We now have all the ingredients to obtain from Proposition~\ref{prp:derivatives}, under suitable assumptions on $\eta$-derivatives of $b_1^\eta,\dots,b_\done^\eta,P_1^\eta,\dots,P_\done^\eta$, an estimate for a partial $L^2$-norm of $D^\alpha V(\xi,\eta)$ in terms of derivatives of the reparametrized multiplier $m_H$ of order at most $|\alpha|$. A comparison of this estimate with Corollary~\ref{cor:plancherelmeasure} shows the ``self-reproducing'' character of the formulas under consideration.

\begin{prp}\label{prp:partiall2norm}
Let $D = (D_1,\dots,D_N)$ be a commuting system of smooth vector fields on $\ddsnd$. Let $\Omega \subseteq \ddsnd$ be open, and suppose that
\begin{equation}\label{eq:selfcontrolhypothesis}
\|b_1^\eta\|_{\SC^A_{\Omega,D}},\dots,\|b_\done^\eta\|_{\SC^A_{\Omega,D}}, \|P_1^\eta\|_{\BD_{\Omega,D}^A(1)},\dots,\|P_\done^\eta\|_{\BD_{\Omega,D}^A(1)} \leq \kappa
\end{equation}
for some $A \in \N$ and $\kappa \in \leftclosedint 0,\infty \rightopenint$. For all $H : \R \times \dusnd \to \C$ smooth and compactly supported in $\R \times \Omega$, and for all $\alpha \in \N^N$ with $|\alpha| \leq A$, if $V$ is defined as in \eqref{eq:kernelfulltransform}, then
\begin{multline*}
\int_{\fst} \Bigl|D^\alpha V(\xi,\eta) \Bigr|^2 \, d\xi 
\leq C_{\kappa,\alpha} \sum_{\iota \in I'_{\alpha}} \int_{\leftclosedint 0,\infty \rightopenint} \int_{J_\iota} \sum_{n \in \N^\done} |D^{\gamma^\iota} \partial_\mu^{k_\iota} \partial_n^{\beta^\iota} m_H(n+s,\mu,\eta)|^2 \\
\times \prod_{\jone=1}^\done \left[ (b^\eta_\jone)^{1+a^\iota_\jone-2\beta^\iota_\jone} \langle n_\jone \rangle^{a^\iota_\jone} \right] \,d\nu_{\iota}(s) \,d\sigma_\iota(\mu),
\end{multline*}
for all $\eta \in \ddsnd$, where $I'_{\alpha}$ is a finite set and, for all $\iota \in I'_{\alpha}$,
\begin{itemize}
\item $\beta^\iota,a^\iota \in \N^\done$, $k_\iota \in \N$, $\gamma^\iota \in \N^N$,
\item $\gamma^\iota \leq \alpha$ and $a^\iota_\jone \geq r_\jone-1$ for $\jone=1,\dots,\done$,
\item $\min\{1,|\alpha|\} \leq |\gamma^\iota| + k_\iota + |\beta^\iota| \leq |\alpha|$,
\item $J_\iota = \prod_{\jone=1}^\done \leftclosedint 0,\beta^\iota_\jone \rightclosedint$ and $\nu_\iota$ is a Borel probability measure on $J_\iota$,
\item $\sigma_\iota$ is a regular Borel measure on $\leftclosedint 0,\infty \rightopenint$,
\item if $r_0=0$, then $k_\iota=0$ and $\sigma_\iota$ is the Dirac delta at $0$,
\item if $r_0>0$, then $d\sigma_\iota(\mu) = \mu^{r_0/2+u_\iota - 1} \,d\mu$ for some $u_\iota \in \N$.
\end{itemize}
\end{prp}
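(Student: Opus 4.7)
The plan is to start from the explicit formula for $D^\alpha V(\xi,\eta)$ given by Proposition~\ref{prp:derivatives}. Applying the triangle inequality (or Cauchy--Schwarz) to the finite sum over $\iota \in I_\alpha$ and squaring, it suffices to estimate $\int_{\fst} |T_\iota(\xi,\eta)|^2\,d\xi$ for each individual summand $T_\iota$. For each $\iota$, I would first establish pointwise bounds for the auxiliary factors. The hypothesis $\|b_\jone^\eta\|_{\SC^A_{\Omega,D}} \leq \kappa$ combined with Lemmas~\ref{lem:sc}(iv) and \ref{lem:hdp}(ii) gives $|\Phi_{\iota,\jone}(D;1/b_\jone^\eta)| \leq C(b_\jone^\eta)^{-\beta^\iota_\jone}$. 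For the factors involving $|P_\jone^\eta\xi|^2$, the key refined estimate comes from the product rule and the self-adjointness of $P_\jone^\eta$:
\[
|D_k|P_\jone^\eta\xi|^2| = 2|\langle D_kP_\jone^\eta\xi, P_\jone^\eta\xi\rangle| \leq C|\xi|\,|P_\jone^\eta\xi|.
\]
Since $\Psi_{\iota,\jone}^0 \in \HDP_N^1$ involves only first-order derivatives of $|P_\jone^\eta\xi|^2$, this yields the crucial estimate $|\Psi_{\iota,\jone}^0|^2 \leq C(b_\jone^\eta t_\jone)^{\beta^\iota_\jone-q^\iota_\jone}|\xi|^{2(\beta^\iota_\jone-q^\iota_\jone)}$, where $t_\jone \defeq |P_\jone^\eta\xi|^2/b_\jone^\eta$. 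For $\Psi_{\iota,\jone}^1$ and $\Psi_{\iota,0}$, which can involve higher-order derivatives, the crude bound $|D^\gamma|P_\jone^\eta\xi|^2| \leq C|\xi|^2$ suffices, yielding $|\Psi_{\iota,\jone}^1|^2 \leq C|\xi|^{4q^\iota_\jone}$ and $|\Psi_{\iota,0}|^2 \leq C|\xi|^{4k_\iota}$.

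Next I would change variables in the $\xi$-integral using the orthogonal decomposition $\fst = \ker J_\eta \oplus \bigoplus_\jone \mathrm{range}(P_\jone^\eta)$. Polar coordinates on each subspace produce the measure
\[
d\xi = C\,\mu^{r_0/2-1}\,d\mu \prod_\jone (b_\jone^\eta)^{r_\jone} t_\jone^{r_\jone-1}\,dt_\jone
\]
(with the Dirac convention when $r_0=0$ and bounded angular factors that are integrated out trivially). The combined $|\Psi|^2$-bounds contribute $\prod_\jone(b_\jone^\eta t_\jone)^{\beta^\iota_\jone-q^\iota_\jone} \cdot (\mu+\sum_\ell b_\ell^\eta t_\ell)^{2k_\iota+|\beta^\iota|+|q^\iota|}$, which I would expand by the multinomial theorem into a finite sum of monomials $\mu^a \prod_\ell (b_\ell^\eta t_\ell)^{\tilde a_\ell}$. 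Each $\mu^a$ is absorbed into $d\sigma_\iota(\mu) = \mu^{r_0/2+u_\iota-1}\,d\mu$ with $u_\iota = a \in \N$, while each $(b_\jone^\eta t_\jone)^{\tilde a_\jone}$ augments the $t_\jone$-weight to $h_\jone = r_\jone - 1 + (\beta^\iota_\jone - q^\iota_\jone) + \tilde a_\jone$. Applying the enhanced Laguerre orthogonality from Lemma~\ref{lem:laguerreorthogonality}, with type $k_\jone = r_\jone - 1 + \beta^\iota_\jone$ and weight exponent $h_\jone$, produces a mismatch $(k_\jone-h_\jone)_+ = (q^\iota_\jone - \tilde a_\jone)_+ \leq q^\iota_\jone$, yielding additional finite differences $\delta^{(q^\iota-\tilde a)_+}$ on $m_H$ and a weight $\langle n_\jone\rangle^{a^\iota_\jone}$ with $a^\iota_\jone = h_\jone + 2(k_\jone-h_\jone)_+$. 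Finally, Lemma~\ref{lem:discretecontinuous} converts the cumulative discrete derivatives $\delta^{\beta^\iota + (q^\iota - \tilde a)_+}$ into continuous $\partial_n$-derivatives integrated against Borel probability measures $\nu_\iota$ on cubes $J_\iota$.

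The main obstacle is arranging that the total order of differentiation of $m_H$, namely $|\gamma^\iota|+k_\iota+|\beta^\iota|+|(q^\iota-\tilde a)_+|$, does not exceed $|\alpha|$. This is precisely where the refined bound on $\Psi^0_{\iota,\jone}$ --- exploiting its membership in $\HDP^1$ --- is indispensable: the extracted factor $(b_\jone^\eta t_\jone)^{\beta^\iota_\jone - q^\iota_\jone}$ pushes $h_\jone$ up by $\beta^\iota_\jone - q^\iota_\jone$, capping the per-coordinate Laguerre mismatch at $q^\iota_\jone$. Combining $|(q^\iota-\tilde a)_+| \leq |q^\iota|$ with the structural estimate $|\gamma^\iota|+k_\iota+|\beta^\iota|+|q^\iota| \leq |\alpha|$ from Proposition~\ref{prp:derivatives} then gives the desired constraint. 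The remaining verifications --- $\gamma^\iota\leq\alpha$, $a^\iota_\jone \geq r_\jone - 1$, and the vanishing when $r_0 = 0$ of terms with $k_\iota > 0$ (since $\Psi_{\iota,0}$ applied to the identically zero $\mu$ vanishes whenever its degree is positive, forcing also $u_\iota = 0$ and $\sigma_\iota$ to reduce to the Dirac at $0$) --- then follow by direct bookkeeping on the arithmetic above.
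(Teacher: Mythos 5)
Your proposal follows the same strategy as the paper: estimate the differential polynomials from Proposition~\ref{prp:derivatives} using the hypothesis \eqref{eq:selfcontrolhypothesis} and Lemmas~\ref{lem:sc} and \ref{lem:hdp}, with the refined first-order bound on $\Psi^0_{\iota,\jone}$ providing the crucial extra factor of $|P_\jone^\eta\xi|^{\beta^\iota_\jone - q^\iota_\jone}$; redistribute the powers of $|\xi|^2 = |P_0^\eta\xi|^2 + \sum_\ell |P_\ell^\eta\xi|^2$ via the multinomial theorem; pass to the $(\mu,t)$-variables; apply the enhanced Laguerre orthogonality of Lemma~\ref{lem:laguerreorthogonality}, tracking the mismatch $(k_\jone-h_\jone)_+ \leq q^\iota_\jone$ so that the total order of differentiation stays bounded by $|\alpha|$ through $|\gamma^\iota|+k_\iota+|\beta^\iota|+|q^\iota|\leq|\alpha|$; and finally replace finite differences by continuous derivatives via Lemma~\ref{lem:discretecontinuous}. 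This matches the paper's argument step for step (the only cosmetic difference being that you expand the multinomial after squaring, whereas the paper rearranges before squaring, so your $u_\iota$ need not be even — but the statement allows any $u_\iota\in\N$, so this is immaterial).
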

\begin{proof}
Because of the support condition on $H$, both sides of the above inequality vanish if $\eta \notin \Omega$, hence we may assume $\eta \in \Omega$.

Under our assumption \eqref{eq:selfcontrolhypothesis}, we can estimate the ``differential polynomials'' in the right-hand side of \eqref{eq:derivatives} whenever $|\alpha| \leq A$. In fact, by Lemma~\ref{lem:sc}(iv), $\|1/b^\eta_\jone\|_{\SC^{|\alpha|}_{\Omega,D}} \leq C_{\kappa,\alpha}$, and consequently, by Lemma \ref{lem:hdp}(ii),
\[
|\Phi_{\iota,\jone}(D;1/b_\jone^\eta)| \leq C_{\kappa,\alpha} (1/b_\jone^\eta)^{\beta^\iota_\jone}.
\]
Analogously, since $\|P^\eta_\jone\|_{\BD^{|\alpha|}_{\Omega,D}(1)} \leq \kappa$ and
\[
|D^\theta |P^\eta_\jone \xi|^2| = |\langle (D^\theta P^\eta_\jone) \xi, \xi \rangle| \leq \|D^\theta P^\eta_\jone\| \, |\xi|^2,\\
\]
for all $\theta \in \N^N$, we deduce that
\[
|\Psi_{\iota,0}(D;|P_0^\eta \xi|^2)| \leq C_{\kappa,\alpha} |\xi|^{2k_\iota}, \qquad |\Psi^1_{\iota,\jone}(D;|P_\jone^\eta \xi|^2)| \leq C_{\kappa,\alpha} |\xi|^{2q^\iota_\jone}.
\]
For the terms $\Psi^0_{\iota,\jone}(D;|P_\jone^\eta\xi|^2)$, containing only derivatives of order zero or one, a better estimate holds, since
\[
|D_{\jtwo} |P^\eta_\jone \xi|^2| = |2\langle (D_{\jtwo} P^\eta_\jone) \xi, P^\eta_\jone \xi \rangle| \leq 2 \|D_{\jtwo} P^\eta_\jone\| \, |P^\eta_\jone \xi| \, |\xi|,
 \qquad |P^\eta_\jone \xi|^2 \leq |P^\eta_\jone \xi| \, |\xi|,
\]
and consequently
\[
|\Psi^0_{\iota,\jone}(D;|P_\jone^\eta \xi|^2)| \leq C_{\kappa,\alpha} |P_\jone^\eta \xi|^{\beta^\iota_\jone - q^\iota_\jone} |\xi|^{\beta^\iota_\jone - q^\iota_\jone}.
\]
From Proposition~\ref{prp:derivatives} and the triangular inequality we then obtain that
\begin{multline*}
\Bigl|D^\alpha V(\xi,\eta) \Bigr|^2 
\leq C_{\kappa,\alpha} \sum_{\iota \in I_{\alpha}} \Bigl| \sum_{n \in \N^\done}  D^{\gamma^\iota} \partial_\mu^{k_\iota} \delta^{\beta^\iota} m_H(n,|P_0^\eta \xi|^2,\eta) \\
\times |\xi|^{2k_\iota} \, \prod_{\jone=1}^\done \left[ \Ell^{(r_\jone-1+\beta^\iota_\jone)}_{n_\jone}(|P_\jone^\eta \xi|^2/b_\jone^\eta) \, (1/b_\jone^\eta)^{\beta^\iota_\jone} \, |\xi|^{\beta^\iota_\jone+q^\iota_\jone} |P^\eta_\jone \xi|^{\beta^\iota_\jone - q^\iota_\jone} \right] \Bigr|^2.
\end{multline*}
Since $|\xi|^2 = \sum_{\jone=0}^\done |P^\eta_\jone \xi|^2$, the sum can be rearranged so to give
\begin{multline*}
\Bigl|D^\alpha V(\xi,\eta) \Bigr|^2 
\leq C_{\kappa,\alpha} \sum_{\iota \in I'_{\alpha}} \Bigl| \sum_{n \in \N^\done} D^{\gamma^\iota} \partial_\mu^{k_\iota} \delta^{\beta^\iota} m_H(n,|P_0^\eta \xi|^2,\eta) \\
\times |P_0^\eta \xi|^{2\tilde k_\iota} \, \prod_{\jone=1}^\done \left[ \Ell^{(r_\jone-1+\beta^\iota_\jone)}_{n_\jone}(|P_\jone^\eta \xi|^2/b_\jone^\eta) \, (1/b_\jone^\eta)^{\beta^\iota_\jone} \, |P^\eta_\jone \xi|^{\beta^\iota_\jone - q^\iota_\jone + c^\iota_\jone} \right] \Bigr|^2,
\end{multline*}
where $\tilde k_\iota \in \N$, $c^\iota \in \N^\done$ and $|c^\iota| + 2\tilde k_\iota = |\beta^\iota| + |q^\iota| + 2k_\iota$. Set $p^\iota = \beta^\iota - q^\iota + c^\iota$, and let $\sigma_\iota$ be the measure on $\leftclosedint 0,\infty \rightopenint$ given by $\mu^{r_0/2-1+2\tilde k_\iota} \,d\mu$ if $r_0 > 0$, or by the Dirac measure in $0$ if $r_0 = 0$; then, by a change of variables,
\begin{multline*}
\int_{\fst} \Bigl|D^\alpha V(\xi,\eta) \Bigr|^2 \,d\xi 
\leq C_{\kappa,\alpha} \sum_{\iota \in I'_{\alpha}} \int_{\leftclosedint 0,\infty \rightopenint} \int_{\leftopenint 0,\infty \rightopenint^\done} \Bigl| \sum_{n \in \N^\done} D^{\gamma^\iota} \partial_\mu^{k_\iota} \delta^{\beta^\iota} m_H(n,\mu,\eta) \\
\times \prod_{\jone=1}^\done \Ell^{(r_\jone-1+\beta^\iota_\jone)}_{n_\jone}(t_\jone) \Bigr|^2 
\prod_{\jone=1}^\done \frac{t_\jone^{p^\iota_\jone+r_\jone-1}}{(b_\jone^\eta)^{2\beta^\iota_\jone-p^\iota_\jone-r_\jone}}  \,dt \,d\sigma_\iota(\mu),
\end{multline*}
which yields, by Lemma~\ref{lem:laguerreorthogonality},
\begin{multline*}
\int_{\fst} \Bigl|D^\alpha V(\xi,\eta) \Bigr|^2 \,d\xi
\leq C_{\kappa,\alpha} \sum_{\iota \in I'_{\alpha}} \int_{\leftclosedint 0,\infty \rightopenint} \sum_{n \in \N^\done} |D^{\gamma^\iota} \partial_\mu^{k_\iota} \delta^{\tilde\beta^\iota} m_H(n,\mu,\eta)|^2 \\
\times \prod_{\jone=1}^\done (b_\jone^\eta)^{1+a^\iota_\jone-2\tilde\beta^\iota_\jone} \langle n_\jone\rangle^{a^\iota_\jone} \,d\sigma_\iota(\mu),
\end{multline*}
where $\tilde\beta^\iota = \beta^\iota + (\beta^\iota-p^\iota)_+ \in \N^\done$ and $a^\iota_\jone = r_\jone-1+p^\iota_\jone+2(\beta^\iota_\jone-p^\iota_\jone)_+ \in \N$. On the other hand, since $(\beta^\iota-p^\iota)_+ = (q^\iota-c^\iota)_+ \leq q^\iota$, we have
\[
\min\{1,|\alpha|\} \leq |\gamma^\iota| + k_\iota + |\beta^\iota| \leq |\gamma^\iota| + k_\iota + |\tilde\beta^\iota| \leq |\gamma^\iota| + k_\iota + |\beta^\iota| + |q^\iota| \leq |\alpha|,
\]
and the conclusion follows by renaming $\tilde\beta^\iota$ as $\beta^\iota$ and then applying Lemma~\ref{lem:discretecontinuous}.
\end{proof}

\begin{cor}\label{cor:partiall2norm_factor}
Under the hypotheses of Proposition~\ref{prp:partiall2norm}, suppose further that
\[
H(\lambda,\eta) = F(\lambda) \, \chi(\eta);
\]
then
\begin{multline*}
\int_{\fst} \Bigl|D^\alpha V(\xi,\eta) \Bigr|^2 \, d\xi 
\leq C_{\kappa,\alpha} \sum_{\iota \in I''_{\alpha}} \int_{\leftclosedint 0,\infty \rightopenint} \int_{J_\iota} \sum_{n \in \N^\done} |D^{\gamma^\iota} \chi(\eta)|^2 \\
\times \left|F^{(k_\iota)}\left(\sum_{\jone=1}^\done (2(n_\jone+s_\jone)+r_\jone) b_\jone^\eta  + \mu\right)\right|^2 
\prod_{\jone=1}^\done \left[ (b^\eta_\jone)^{1+a^\iota_\jone} \langle n_\jone \rangle^{a^\iota_\jone} \right] \,d\nu_{\iota}(s) \,d\sigma_\iota(\mu),
\end{multline*}
for all $\eta \in \ddsnd$, where $I''_{\alpha}$ is a finite set and, for all $\iota \in I''_{\alpha}$,
\begin{itemize}
\item $a^\iota \in \N^\done$, $k_\iota \in \N$, $\gamma^\iota \in \N^N$,
\item $\gamma^\iota \leq \alpha$ and 
$\min\{1,|\alpha|\} \leq |\gamma^\iota| + k_\iota  \leq |\alpha|$,
\item $J_\iota = \prod_{\jone=1}^\done \leftclosedint 0,\beta^\iota_\jone \rightclosedint$ and $\nu_\iota$ is a Borel probability measure on $J_\iota$,
\item $\sigma_\iota$ is a regular Borel measure on $\leftclosedint 0,\infty \rightopenint$.
\end{itemize}
\end{cor}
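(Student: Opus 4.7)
The plan is to substitute the product form $H(\lambda,\eta)=F(\lambda)\chi(\eta)$ into the bound of Proposition~\ref{prp:partiall2norm} and then expand the mixed derivative $D^{\gamma^\iota}\partial_\mu^{k_\iota}\partial_n^{\beta^\iota} m_H$ on the right-hand side into explicit terms that match the claimed form. Writing
\[
\Lambda(n,\mu,\eta) \defeq \sum_{\jone=1}^\done (2n_\jone+r_\jone)\, b_\jone^\eta + \mu,
\]
so that $m_H(n,\mu,\eta)=F(\Lambda(n,\mu,\eta))\,\chi(\eta)$, a direct computation gives
\[
\partial_\mu^{k_\iota}\partial_n^{\beta^\iota} m_H(n+s,\mu,\eta) = 2^{|\beta^\iota|}\, F^{(|\beta^\iota|+k_\iota)}(\Lambda(n+s,\mu,\eta))\,\chi(\eta)\prod_{\jone=1}^\done (b_\jone^\eta)^{\beta^\iota_\jone}.
\]

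Next I would apply $D^{\gamma^\iota}$ via Leibniz' rule, distributing $\gamma^\iota=\gamma_0+\gamma_*+\gamma_1+\dots+\gamma_\done$ across the three kinds of factors: $\chi$, $F^{(|\beta^\iota|+k_\iota)}\circ\Lambda$, and the $(b_\jone^\eta)^{\beta^\iota_\jone}$. Derivatives on $(b_\jone^\eta)^{\beta^\iota_\jone}$ produce, by Lemma~\ref{lem:sc}(iii)--(v) together with the hypothesis $\|b_\jone^\eta\|_{\SC^A_{\Omega,D}}\le\kappa$, a factor bounded in absolute value by $C_{\kappa,\alpha}(b_\jone^\eta)^{\beta^\iota_\jone}$. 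For derivatives hitting $F^{(q)}\circ\Lambda$ with $q=|\beta^\iota|+k_\iota$, iterated chain rule (Fa\`a di Bruno) yields a finite sum of terms
\[
F^{(q+p)}(\Lambda(n+s,\mu,\eta))\prod_{i=1}^{p} D^{\delta_i}\Lambda(n+s,\mu,\eta),
\]
with $\delta_1+\dots+\delta_p=\gamma_*$ and $p\le|\gamma_*|$. Since $D^{\delta_i}\Lambda=\sum_\jone(2(n_\jone+s_\jone)+r_\jone)\,D^{\delta_i}b_\jone^\eta$, the $\SC$-hypothesis, together with $|s|\le|\beta^\iota|\le|\alpha|$, yields $|D^{\delta_i}\Lambda|\le C_{\kappa,\alpha}\sum_\jone \langle n_\jone\rangle\, b_\jone^\eta$, and the multinomial expansion of the product $\prod_i|D^{\delta_i}\Lambda|$ produces a sum of monomials $\prod_\jone \langle n_\jone\rangle^{\rho_\jone}(b_\jone^\eta)^{\rho_\jone}$ with $|\rho|=p$.

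Combining these estimates, $|D^{\gamma^\iota}\partial_\mu^{k_\iota}\partial_n^{\beta^\iota} m_H(n+s,\mu,\eta)|^2$ is dominated by a finite sum of terms of the form $|D^{\gamma_0}\chi(\eta)|^2\,|F^{(q+p)}(\Lambda(n+s,\mu,\eta))|^2 \prod_\jone \langle n_\jone\rangle^{2\rho_\jone}(b_\jone^\eta)^{2\beta^\iota_\jone+2\rho_\jone}$. Multiplying by the weight $\prod_\jone(b_\jone^\eta)^{1+a_\jone^\iota-2\beta_\jone^\iota}\langle n_\jone\rangle^{a_\jone^\iota}$ from Proposition~\ref{prp:partiall2norm}, the $2\beta^\iota_\jone$ factors cancel and we obtain exactly the form claimed, with new indices $\gamma^\iota_{\mathrm{new}}=\gamma_0$, $k_\iota^{\mathrm{new}}=q+p$, and $a_\jone^{\iota,\mathrm{new}}=a_\jone^\iota+2\rho_\jone$. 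The upper bound $|\gamma^\iota_{\mathrm{new}}|+k_\iota^{\mathrm{new}}\le|\alpha|$ follows from $|\gamma_0|+p\le|\gamma_0|+|\gamma_*|\le|\gamma^\iota|$ and the Proposition~\ref{prp:partiall2norm} constraint $|\gamma^\iota|+k_\iota+|\beta^\iota|\le|\alpha|$; the lower bound $\min\{1,|\alpha|\}\le|\gamma^\iota_{\mathrm{new}}|+k_\iota^{\mathrm{new}}$ is handled by a short case analysis (if $q\ge 1$ there is nothing to prove; if $q=0$, then $\beta^\iota=0$ and $k_\iota=0$, so the proposition already gives $|\gamma^\iota|\ge 1$, forcing either $|\gamma_0|\ge 1$ or $p\ge 1$).

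The only real work is bookkeeping the indices through the Leibniz and Fa\`a di Bruno expansions; the self-controlled property of $b_\jone^\eta$ ensures that each derivative $D^{\delta_i}b_\jone^\eta$ is absorbed back into $b_\jone^\eta$, so no new analytic difficulty is expected — the statement is essentially just a translation of Proposition~\ref{prp:partiall2norm} to the factored setting.
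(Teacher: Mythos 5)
Your proposal is correct and takes essentially the same route as the paper: both substitute $m_H(n,\mu,\eta)=F(\Lambda(n,\mu,\eta))\chi(\eta)$ into the conclusion of Proposition~\ref{prp:partiall2norm}, expand $D^{\gamma^\iota}\partial_\mu^{k_\iota}\partial_n^{\beta^\iota}m_H$ by Leibniz into terms of the form $D^{\gamma''}\chi \cdot F^{(|\beta^\iota|+k_\iota+p)}(\Lambda)\cdot(\text{weight})$, and use the $\SC$-hypothesis on the $b_\jone^\eta$ to absorb the differential-polynomial factors into powers of $b_\jone^\eta$ and $\langle n_\jone\rangle$. The only cosmetic difference is that you invoke Fa\`a di Bruno explicitly for the chain rule, whereas the paper states the resulting expansion directly; the index bookkeeping, including the lower-bound case analysis when $k_\iota=|\beta^\iota|=0$, is the same in both arguments.
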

\begin{proof}
This is an immediate consequence of Proposition~\ref{prp:partiall2norm} and the fact that, if $H(\lambda,\eta) = F(\lambda) \chi(\eta)$, then
\begin{multline*}
D^\gamma \partial_\mu^k \partial_n^\beta m_H(n,\mu,\eta) 
 = \sum_{\substack{\gamma' + \gamma'' \leq \gamma \\ |\gamma'| + |\gamma''| + |\beta| \geq \min\{1,|\gamma|\}}} F^{(|\beta|+k+|\gamma'|)}\left(\sum_{\jone=1}^\done b_\jone^\eta \langle n_\jone\rangle_\jone + \mu\right) \\
 \times D^{\gamma''}\chi(\eta) \, \prod_{\jone=1}^\done \Bigl[ \langle n_\jone\rangle_\jone^{\gamma'_\jone} \Psi_{\beta,\gamma,\gamma',\gamma'',\jone}(D;b_\jone^\eta) \Bigr]
\end{multline*}
where $\langle \ell \rangle_\jone = 2\ell+r_\jone$ and $\Psi_{\beta,\gamma,\gamma',\gamma'',\jone} \in \HDP_{N}^{|\gamma|}(\beta_\jone+\gamma'_\jone)$, so
\begin{multline*}
|D^\gamma \partial_\mu^k \partial_n^\beta m_H(n,\mu,\eta)|^2 
 \leq C_{\kappa,\gamma,k,\beta} \sum_{\substack{\gamma' + \gamma'' \leq \gamma \\ |\gamma'| + |\gamma''| + |\beta| \geq \min\{1,|\gamma|\}}} |D^{\gamma''}\chi(\eta)|^2 \\
 \times \prod_{\jone=1}^\done \Bigl[ (b_\jone^\eta)^{2\beta_\jone+2\gamma'_\jone} \langle n_\jone\rangle^{2\gamma'_\jone} \Bigr] \, \left| F^{(|\beta|+k+|\gamma'|)}\left(\sum_{\jone=1}^\done b_\jone^\eta \langle n_\jone\rangle_\jone + \mu\right) \right|^2
\end{multline*}
for all $n \in \R^\done$, $\mu \in \leftclosedint 0,\infty \rightopenint$, $\eta \in \ddsnd$, whenever $|\gamma| \leq A$.
\end{proof}

We are finally able to prove the fundamental estimate, which will allow us to carry on the strategy described in the introduction, based on the decomposition \eqref{eq:decomposition} of an operator $F(L)$ along the spectrum of $\vecU$. The following result shows in fact that a weighted $L^2$-norm of the kernel of $F(L) \, \chi(\vecU)$ for some cutoff $\chi$ can be controlled by a Sobolev norm of $F$ times the square root of a weighted measure of $\supp \chi$.

\begin{prp}\label{prp:cutoff_partiall2norm}
Suppose that $D = (\tilde\eta_1 \partial_{\tilde\eta_1},\dots,\tilde\eta_\dtwo \partial_{\tilde\eta_\dtwo})$ for some linear coordinates $(\tilde\eta_1,\dots,\tilde\eta_\dtwo)$ on $\dusnd$.
Let $\Omega \subseteq \ddsnd$ be open, and suppose that \eqref{eq:selfcontrolhypothesis} holds
for some $A \in \N$ and $\kappa \in \leftclosedint 0,\infty \rightopenint$.
Let $\chi \in C^\infty_c(\Omega)$ be of the form
\[
\chi(\eta) = \chi_\rad(f(\eta)) \, \chi_\sph(\eta),
\]
where $\chi_\rad \in C^\infty_c(\leftopenint 0,\infty \rightopenint)$, $f : \dusnd \setminus \{0\} \to \leftopenint 0,\infty \rightopenint$ is smooth and homogeneous of degree $1$, $\chi_\sph \in C^\infty(\dusnd \setminus \{0\})$ is homogeneous of degree $0$, and
\begin{gather}
\label{eq:suppcondition} |\supp\chi_\rad| \geq \kappa^{-1}, \qquad \supp \chi_\rad \subseteq \leftclosedint \kappa^{-1},\kappa \rightclosedint,\\
\label{eq:boundcondition}
\|\chi_\rad\|_{C^A},\|f\|_{\SC_{\Omega,D}^A}, \|\chi_\sph\|_{\BD_{\Omega,D}^A(1)} \leq \kappa.
\end{gather}
Let $(\tilde u_1,\dots,\tilde u_\dtwo)$ be the coordinates on $\snd$ dual to $(\tilde\eta_1,\dots,\tilde\eta_\dtwo)$.
Then, for all compact sets $K \subseteq \R$, for all Borel functions $F : \R \to \C$ supported in $K$, and for all $\alpha \in \N^\dtwo$ with $|\alpha| \leq A$,
\[
\int_G \left||\tilde u^\alpha| \, \Kern_{F(L) \, \chi(\vecU)}(x,u)\right|^2 \,dx\,du 
\leq C_{\kappa,K,\alpha} \|F\|^2_{W_2^{|\alpha|}} \int_{\supp\chi} |\tilde\eta^{-2\alpha}| \,d\eta.
\]
\end{prp}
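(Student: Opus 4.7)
The plan is to combine the Euclidean Plancherel theorem with the kernel formula of Proposition~\ref{prp:kernel}, the derivative identity of Proposition~\ref{prp:derivatives}, and the weighted $L^2$-bound of Corollary~\ref{cor:partiall2norm_factor}. Since multiplication by $\tilde u_j$ on $G$ is dual under the Euclidean Fourier transform to $-i\partial_{\tilde\eta_j}$, formula \eqref{eq:kernel} and Plancherel give
$$\int_G ||\tilde u^\alpha|\,\Kern_{F(L)\chi(\vecU)}(x,u)|^2\,dx\,du = c\int_\ddsnd\int_\fst |\partial_{\tilde\eta}^\alpha V(\xi,\eta)|^2\,d\xi\,d\eta.$$
The Stirling-type identity $\tilde\eta_j^{\alpha_j}\partial_{\tilde\eta_j}^{\alpha_j} = \prod_{\ell=0}^{\alpha_j-1}(D_j-\ell)$ then yields the pointwise bound $|\partial_{\tilde\eta}^\alpha V|^2 \leq C_\alpha |\tilde\eta^{-\alpha}|^2 \sum_{\gamma\leq\alpha}|D^\gamma V|^2$, already producing the weight $|\tilde\eta^{-2\alpha}|$ that appears on the right-hand side of the target inequality.

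I would then apply Corollary~\ref{cor:partiall2norm_factor} to each $\int_\fst|D^\gamma V|^2\,d\xi$ with $H(\lambda,\eta) = F(\lambda)\chi(\eta)$; its hypothesis \eqref{eq:selfcontrolhypothesis} is given by assumption. The $\chi$-derivatives appearing in the resulting sum are uniformly controlled via the factored form $\chi = \chi_\rad(f)\chi_\sph$: Lemma~\ref{lem:sccomp2}(i), applied using \eqref{eq:suppcondition} and \eqref{eq:boundcondition}, gives $\chi_\rad\circ f \in \BD^A_{\Omega,D}(1)$, and combined with $\chi_\sph\in\BD^A_{\Omega,D}(1)$ through Lemma~\ref{lem:sc}(iii) this yields $|D^{\gamma^\iota}\chi(\eta)| \leq C_{\kappa,\alpha}\,\mathbf{1}_{\supp\chi}(\eta)$.

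The core technical step is bounding the remaining $n,\mu,s$-sum-integral by $\|F\|_{W_2^{|\alpha|}}^2$ uniformly in $\eta\in\supp\chi$. The key observation is that $\supp F\subseteq K$ forces the argument $\Lambda_n^{s,\mu,\eta}$ of $F^{(k_\iota)}$ to lie in $K$, so $b_j^\eta\langle n_j\rangle \leq C_K$; together with $b_j^\eta \leq C_\kappa$ on $\supp\chi$ (by the degree-one homogeneity of $f$ and the support condition on $\chi_\rad$) and the inequality $a_j^\iota \geq r_j-1$, this allows one to dominate
$$\prod_j (b_j^\eta)^{1+a_j^\iota}\langle n_j\rangle^{a_j^\iota} \leq C_{K,\kappa,\alpha}\,\prod_j (b_j^\eta)^{r_j}\binom{n_j+r_j-1}{n_j},$$
recasting the weight into the Plancherel form of Corollary~\ref{cor:plancherelmeasure}. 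Integrating over $\eta$ and invoking that corollary reinterprets the whole expression as the squared $L^2(G)$-norm of the convolution kernel of an operator of the form $F^{(k_\iota)}(L-c_s(\vecU))\,h_s(\vecU)$, with $h_s(\eta) = |\tilde\eta^{-\alpha}|\,|D^{\gamma^\iota}\chi(\eta)|$; a standard weighted Plancherel estimate for compactly supported multipliers of the sublaplacian then yields the upper bound $C\|F^{(k_\iota)}\|_2^2\int_{\supp\chi}|\tilde\eta^{-2\alpha}|\,d\eta$. Since $k_\iota\leq |\alpha|$, this is majorized by $C\|F\|_{W_2^{|\alpha|}}^2\int_{\supp\chi}|\tilde\eta^{-2\alpha}|\,d\eta$, as required. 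The main obstacle is establishing this Plancherel estimate uniformly in the $\eta$-dependent shift $c_s$ and accommodating the possible degeneracy of the $b_j^\eta$ near the boundary of $\ddsnd$; the self-control hypothesis \eqref{eq:selfcontrolhypothesis} is precisely what makes the required uniform bookkeeping work, after which the finitely many contributions from Corollary~\ref{cor:partiall2norm_factor} and the Stirling expansion are summed.
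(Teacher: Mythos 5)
Your opening moves track the paper's proof closely: Plancherel to pass from $(x,u)$ to the $(\xi,\eta)$-side, the identity $\partial_{\tilde\eta}^\alpha = \tilde\eta^{-\alpha}\sum_{\tilde\alpha\leq\alpha}c_{\alpha,\tilde\alpha}D^{\tilde\alpha}$ to peel off the $|\tilde\eta^{-2\alpha}|$ weight, Corollary~\ref{cor:partiall2norm_factor} with $H=F\otimes\chi$, and Lemma~\ref{lem:sccomp2}(i) to show $|D^{\gamma^\iota}\chi|\lesssim\mathbf{1}_{\supp\chi}$. Your domination $\prod_j(b_j^\eta)^{1+a_j^\iota}\langle n_j\rangle^{a_j^\iota}\lesssim\prod_j(b_j^\eta)^{r_j}\binom{n_j+r_j-1}{n_j}$, using $(b_j^\eta\langle n_j\rangle)^{a_j^\iota-r_j+1}\leq(\max K)^{a_j^\iota-r_j+1}$ together with $a_j^\iota\geq r_j-1$, is also correct. (The paper instead uses the tighter $\lesssim\prod_j b_j^\eta$, dropping the $\langle n_j\rangle$-factors entirely; either works, and yours is chosen to reproduce the Plancherel weight of Corollary~\ref{cor:plancherelmeasure}.)

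The gap is in the last step. You reinterpret the remaining $\eta,n,\mu,s$-integral as $\|\Kern_{F^{(k_\iota)}(L\pm c_s(\vecU))\,h_s(\vecU)}\|_{L^2(G)}^2$ and then invoke ``a standard weighted Plancherel estimate for compactly supported multipliers of the sublaplacian'' to produce $C\|F^{(k_\iota)}\|_2^2\int_{\supp\chi}|\tilde\eta^{-2\alpha}|\,d\eta$. But no such estimate is available to be cited: extracting the factor $\int_{\supp\chi}|\tilde\eta^{-2\alpha}|\,d\eta$ is exactly the content that still has to be established, and it is non-trivial precisely because the Plancherel measure in $\lambda$ (the lattice $\sum_j(2n_j+r_j)b_j^\eta+\mu$ with weights $\prod_j(b_j^\eta)^{r_j}\binom{n_j+r_j-1}{n_j}$) degenerates as $\eta$ approaches $\partial\ddsnd$, and because the argument of $F^{(k_\iota)}$ carries the $\eta$-dependent shift $c_s(\eta)=2\sum_j s_jb_j^\eta$ which couples the $n$-lattice with $\mu$. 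The reinterpretation also does not literally match Corollary~\ref{cor:plancherelmeasure}, since $\sigma_\iota$ carries the extra factor $\mu^{u_\iota}$ (harmless, as $\mu\leq\max K$, but it must be said). The paper closes this gap with an explicit computation: decompose $d\eta$ in polar coordinates adapted to $f$ (writing $\eta=\rho\eta'$ with $f(\eta')=1$ and $d\eta=\rho^{\dtwo-1}\,d\Sigma(\eta')\,d\rho$), rescale $\rho$ so the argument of $F^{(k_\iota)}$ becomes $\rho+\mu$, use $\supp\chi_\rad\subseteq[\kappa^{-1},\kappa]$ to confine $\rho/\Sigma$ (with $\Sigma=\sum_j\langle n_j+s_j\rangle_j b_j^{\eta'}$) to a fixed annulus, count that the constrained $n$-sum has at most $(\kappa\rho)^\done\prod_j(b_j^{\eta'})^{-1}$ terms (cancelling $\prod_j b_j^{\eta'}$), and finally use \eqref{eq:suppcondition} to convert $\int_{\supp\chi_\sph\cap S}|\tilde\eta^{-2\alpha}|\,d\Sigma$ into $\int_{\supp\chi}|\tilde\eta^{-2\alpha}|\,d\eta$. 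Also, the role you assign to \eqref{eq:selfcontrolhypothesis} in this last step is misplaced: that hypothesis is spent in applying Corollary~\ref{cor:partiall2norm_factor}, and the final bound rests only on $\supp F\subseteq K$ and the conditions \eqref{eq:suppcondition} on $\chi_\rad$.
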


Note that, in the above formula, the volume elements $du$ and $d\eta$ are fixed as in Corollary~\ref{cor:plancherelmeasure}, and do not depend on the choice of coordinates $(\tilde\eta_1,\dots,\tilde\eta_\dtwo)$ and $(\tilde u_1,\dots,\tilde u_\dtwo)$. On the other hand, by \eqref{eq:selfcontrolhypothesis} and \eqref{eq:boundcondition}, the quantity $\kappa$ will depend in general on such choice.

\begin{proof}
Via a standard approximation argument, we may assume that $F$ is smooth.

The Lebesgue measure on $\dusnd$ can be decomposed in ``polar coordinates'' according to $f$, i.e.,
\begin{equation}\label{eq:polarcoords}
\int_{\dusnd} \phi(\eta) \,d\eta = \int_0^\infty \int_S \phi(\rho\eta) \,d\Sigma(\eta) \,\rho^\dtwo \frac{d\rho}{\rho},
\end{equation}
for all (nonnegative or integrable) Borel $\phi : \dusnd \to \C$, where $S = \{\eta \tc f(\eta) = 1\}$ and $\Sigma$ is some regular Borel measure on $S$. Let $\tilde\chi_\rad : \R \to \R$ be the characteristic function of $\supp \chi_\rad$, and $\tilde\chi_\sph : S \to \R$ be the characteristic function of $\supp \chi_\sph \cap S$. Leibniz' rule, \eqref{eq:boundcondition} and Lemma~\ref{lem:sccomp2}(i) then yield
\[
|D^\gamma \chi(\eta)| \leq C_{\gamma,\kappa} \, \tilde\chi_\rad(f(\eta)) \, \tilde\chi_\sph(\eta/f(\eta))
\]
for all $\eta \in \dusnd \setminus \{0\}$ and $\gamma \in \N^N$ with $|\gamma| \leq A$. Note moreover that, if $\langle \ell \rangle_\jone = 2\ell + r_\jone$, then
\[
b_\jone^\eta \langle n_\jone \rangle \leq b_\jone^\eta \langle n_\jone + s_\jone \rangle_\jone \leq \sum_{\jone=1}^\done b_\jone^\eta \langle n_\jone + s_\jone \rangle_\jone + \mu \leq \max K
\]
for all $\eta \in \ddsnd$, $n \in \N^\done$, $s \in \leftclosedint 0,\infty \rightopenint^\done$, $\mu \in \leftclosedint 0,\infty \rightopenint$ such that $\sum_{\jone=1}^\done b_\jone^\eta \langle n_\jone + s_\jone \rangle_\jone + \mu \in \supp F$.
From Corollary~\ref{cor:partiall2norm_factor} we then deduce
\begin{multline*}
\int_{\fst} \Bigl|D^\alpha V(\xi,\eta) \Bigr|^2 \, d\xi 
\leq C_{\kappa,K,\alpha} \sum_{\iota \in I''_{\alpha}} \sum_{n \in \N^\done} \tilde\chi_\rad(f(\eta)) \, \tilde\chi_\sph(\eta/f(\eta)) \\
\times  \int_{\leftclosedint 0,\infty \rightopenint} \int_{J_\iota} \left|F^{(k_\iota)}\left(\sum_{\jone=1}^\done \langle n_\jone+s_\jone\rangle_\jone b_\jone^\eta  + \mu\right)\right|^2 
\prod_{\jone=1}^\done b^\eta_\jone \, \,d\nu_{\iota}(s) \,d\sigma_\iota(\mu),
\end{multline*}
for all $\eta \in \ddsnd$, where $I''_{\alpha}$ is a finite set and, for all $\iota \in I''_{\alpha}$,
\begin{itemize}
\item $k_\iota \in \N$, $k_\iota  \leq |\alpha|$,
\item $J_\iota = \prod_{\jone=1}^\done \leftclosedint 0,\beta^\iota_\jone \rightclosedint$ and $\nu_\iota$ is a Borel probability measure on $J_\iota$,
\item $\sigma_\iota$ is a regular Borel measure on $\leftclosedint 0,\infty \rightopenint$.
\end{itemize}
On the other hand it is easily proved that, for all $\alpha \in \N^N$,
\[
\partial_{\tilde\eta}^{\alpha} = \tilde\eta^{-\alpha} \sum_{\tilde\alpha \leq \alpha} c_{\alpha,\tilde\alpha} D^{\tilde\alpha}
\]
for some $c_{\alpha,\tilde\alpha} \in \Z$, hence
\begin{multline*}
\int_{\fst} \Bigl|\partial_{\tilde\eta}^\alpha V(\xi,\eta) \Bigr|^2 \, d\xi 
\leq C_{\kappa,K,\alpha} \,  |\tilde\eta^{-2\alpha}| \, \tilde\chi_\rad(f(\eta)) \, \tilde\chi_\sph(\eta/f(\eta)) \\
\times \sum_{\iota \in I'''_{\alpha}} \sum_{n \in \N^\done}  \int_{\leftclosedint 0,\infty \rightopenint} \int_{J_\iota} \left|F^{(k_\iota)}\left(\sum_{\jone=1}^\done \langle n_\jone+s_\jone\rangle_\jone b_\jone^\eta  + \mu\right)\right|^2 
\prod_{\jone=1}^\done b^\eta_\jone \, \,d\nu_{\iota}(s) \,d\sigma_\iota(\mu),
\end{multline*}
where $I'''_{\alpha}$ is the disjoint union of the $I''_{\tilde\alpha}$ with $\tilde\alpha\leq \alpha$. The properties of the Fourier transform give us finally
\begin{multline*}
\int_G \left||\tilde u^\alpha| \, \Kern_{F(L) \, \chi(\vecU)}(x,u)\right|^2 \,dx\,du 
\leq C_{\kappa,K,\alpha} \sum_{\iota \in I'''_{\alpha}} \sum_{n \in \N^\done} \int_{\ddsnd} \tilde\chi_\rad(f(\eta)) \, \tilde\chi_\sph(\eta/f(\eta)) \\
\times |\tilde\eta^{-2\alpha}| \int_{\leftclosedint 0,\infty \rightopenint} \int_{J_\iota} \left|F^{(k_\iota)}\left(\sum_{\jone=1}^\done \langle n_\jone+s_\jone\rangle_\jone b_\jone^\eta  + \mu\right)\right|^2 
\prod_{\jone=1}^\done b^\eta_\jone \, \,d\nu_{\iota}(s) \,d\sigma_\iota(\mu) \,d\eta.
\end{multline*}
Passing to polar coordinates as in \eqref{eq:polarcoords} and rescaling, we obtain that
\begin{multline*}
\int_G \left||\tilde u^\alpha| \, \Kern_{F(L) \, \chi(\vecU)}(x,u)\right|^2 \,dx\,du 
\leq C_{\kappa,K,\alpha} 
\\
\times \sum_{\iota \in I'''_{\alpha}} \int_0^\infty \int_{\supp \chi_\sph \cap S} |\tilde\eta^{-2\alpha}| \int_{J_\iota}  \sum_{n \in \N^\done} \tilde\chi_\rad\left(\frac{ \rho}{\sum_{\jone=1}^\done \langle n_\jone+s_\jone\rangle_\jone b_\jone^\eta} \right) \\
\times  \int_{\leftclosedint 0,\infty \rightopenint} |F^{(k_\iota)}\left(\rho  + \mu\right) |^2 
\prod_{\jone=1}^\done b^\eta_\jone \, \,d\sigma_\iota(\mu) \,d\nu_{\iota}(s) \,d\Sigma(\eta) \,\frac{d\rho}{\rho}.
\end{multline*}
In the sum over $\N^\done$ above, the $n$-th summand vanishes unless $\sum_{\jone=1}^\done \langle n_\jone+s_\jone\rangle_\jone b_\jone^\eta \leq \kappa\rho$, hence there are at most $(\kappa \rho)^\done \prod_{\jone=1}^\done (b_\jone^\eta)^{-1}$ nonvanishing summands, and our estimate becomes
\begin{multline*}
\int_G \left||\tilde u^\alpha| \, \Kern_{F(L) \, \chi(\vecU)}(x,u)\right|^2 \,dx\,du 
\leq C_{\kappa,K,\alpha} \int_{\supp\chi_\sph \cap S} |\tilde\eta^{-2\alpha}|  \,d\Sigma(\eta) \\
\times \sum_{\iota \in I'''_{\alpha}}  
 \int_{\leftclosedint 0,\infty \rightopenint} \int_0^\infty |F^{(k_\iota)}\left(\rho  + \mu\right) |^2 
\rho^{\done-1}  \,d\rho \,d\sigma_\iota(\mu).
\end{multline*}
On the other hand
\begin{multline*}
\int_{\leftclosedint 0,\infty \rightopenint} \int_0^\infty |F^{(k_\iota)}\left(\rho  + \mu\right) |^2 
\rho^{\done-1}  \,d\rho \,d\sigma_\iota(\mu) \\
\leq C_{K} \, \sigma_\iota(\leftclosedint 0,\max K \rightclosedint) \sup_{\mu \in \leftclosedint 0,\max K\rightclosedint} \int_0^\infty |F^{(k_\iota)}\left(\rho+\mu\right) |^2 \,d\rho \leq C_{K,\alpha} \|F\|_{W_2^{|\alpha|}}
\end{multline*}
for all $\iota \in I'''_\alpha$, and moreover
\begin{multline*}
\int_{\supp \chi} |\tilde\eta^{-2\alpha}| \,d\eta = \int_0^\infty \int_S \tilde\chi_\rad(\rho) \, \tilde\chi_\sph(\eta) \, |\tilde\eta^{-2\alpha}| \, \rho^{\dtwo-2|\alpha|} \,d\Sigma(\eta) \,\frac{d\rho}{\rho}  \\
\geq C_\kappa \, \int_{\supp\chi_\sph \cap S} |\tilde\eta^{-2\alpha}| \,d\Sigma(\eta)
\end{multline*}
by \eqref{eq:polarcoords} and \eqref{eq:suppcondition}, and the conclusion follows.
\end{proof}

\section{Groups with $2$-dimensional second layer}\label{section:2dc}

By using the estimates obtained in \S\ref{section:kernelderivatives}, here we prove Proposition~\ref{prp:l1low} in the case $\dtwo = \dim\snd \leq 2$. In fact, if $\dtwo = 1$, then $G$ is a Heisenberg group, and Proposition~\ref{prp:l1low} follows from the results of \cite{mller_spectral_1994}. Therefore in the rest of the section we suppose that $\dtwo = 2$.

We now show that the singular set $\dusnd \setminus \ddsnd$ is the union of a finite number of rays emanating from the origin, and that in the neighborhood of each of these rays a system of coordinates on $\dusnd$ can be chosen so to satisfy the hypothesis \eqref{eq:selfcontrolhypothesis}.

Fix any Euclidean norm and orientation on $\dusnd$, and denote by $S$ the unit sphere in $\dusnd$. For all $v \in S$, let $(\eta_1^v,\eta_2^v)$ denote the coordinates on $\dusnd$ determined by completing $v$ to a positive orthonormal basis of $\dusnd$, and set $D_v = (\eta_1^v \partial_{\eta_1^v},\eta_2^v \partial_{\eta_2^v})$. For all $X \subseteq \dusnd \setminus \{0\}$, let $\cone{X}$ denote the ``cone over $X$'', i.e., the set $\{ \lambda v \tc \lambda \in \leftopenint 0, \infty \rightopenint, \, v \in X\}$.

\begin{lem}\label{lem:2dc_discretesingularities}
There exists a finite subset $N$ of $S$ such that
\[
\dusnd \setminus \ddsnd = \{0\} \cup \cone{N}.
\]
Moreover, if $U$ is an open subset of $S \setminus N$ and $\overline{U} \cap N \subseteq \{v\}$ for some $v \in S$, then 
\[
b_1^\eta,\dots,b_\done^\eta \in \SC_{\cone{U},D_v}^\infty \quad\text{and}\quad P_0^\eta,P_1^\eta,\dots,P_\done^\eta \in \BD_{\cone{U},D_v}^\infty(1).
\]
\end{lem}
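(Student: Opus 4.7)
The plan is to prove the first assertion by a real-factorization argument in two variables, and the second by a Puiseux-type analysis near the singular ray.

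For the first assertion, I would observe from the proof of Lemma~\ref{lem:factorization} that $\dusnd \setminus \ddsnd$ is the real zero set of a nonzero polynomial $\Pi \in \R[\dusnd]$ built out of the discriminants, constant terms, and pairwise resultants of the irreducible factors of $p_\polbul$. The $\gamma$-homogeneity observations in that proof imply that $\Pi$ is homogeneous (since the indeterminate $\lambda$ does not appear in it). A nonzero homogeneous polynomial on $\dusnd \cong \R^2$ factors over $\R[\eta_1,\eta_2]$ into linear forms and irreducible quadratic forms: the former cut out lines through the origin, whereas the latter vanish only at the origin. Hence the real zero set of $\Pi$ is $\{0\} \cup L_1 \cup \dots \cup L_M$ for finitely many lines $L_i$, and $N = \bigcup_i (L_i \cap S)$ is the desired finite subset of $S$.

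For the second assertion, the case $\overline U \cap N = \emptyset$ is settled directly by Lemma~\ref{lem:schom_unif}: on the compact set $\overline U \subseteq S \cap \ddsnd$ the $b_j^\eta$ are smooth and strictly positive, while the entries of the $P_j^\eta$ are smooth and bounded. In the interesting case $\overline U \cap N = \{v\}$, I would choose coordinates so that $v$ is the positive $\eta_1^v$-direction and parametrize the relevant part of $\cone U$ by $(s,t)\mapsto (s,st)$ with $s>0$ and $t$ in a bounded interval $I \subseteq \R\setminus\{0\}$ having $0$ as a boundary point. The chain rule then gives $\eta_1^v\partial_{\eta_1^v} = s\partial_s - t\partial_t$ and $\eta_2^v \partial_{\eta_2^v} = t\partial_t$; homogeneity yields $b_j^\eta = s\,\beta_j(t)$ and $P_j^\eta = Q_j(t)$, where $\beta_j$ is real-analytic and strictly positive on $I$, continuous on $\overline I$, and $Q_j$ is bounded and real-analytic on $I$. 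Both $\beta_j$ and the entries of $Q_j$ are algebraic in $t$ (the former because $b_j^\eta$ is an algebraic function of $\eta$, the latter by Lemma~\ref{lem:spectraldecomposition}).

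Puiseux's theorem at $t=0$, combined with the continuity of $\beta_j$ and the boundedness of the entries of $Q_j$ (both of which exclude negative exponents), produces $q \in \N\setminus\{0\}$ such that, after substituting $t=\tau^q$, one has $\beta_j(\tau^q) = \tau^{k_j} h_j(\tau)$ with $h_j$ analytic and $h_j(0)>0$, and each entry of $Q_j(\tau^q)$ is analytic at $\tau=0$. Since $t\partial_t = (1/q)\tau\partial_\tau$, a routine induction based on $\tau\partial_\tau(\tau^k f) = \tau^k(k+\tau\partial_\tau)f$ yields $(\tau\partial_\tau)^n(\tau^{k_j} h_j) = \tau^{k_j}\widetilde H_{j,n}(\tau)$ with $\widetilde H_{j,n}$ analytic. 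Combined with $h_j(0)>0$, this gives $|(t\partial_t)^n\beta_j| \leq C_n|\beta_j|$ near $t=0$, while boundedness of $Q_j$ directly yields $|(t\partial_t)^n Q_j| \leq C_n$; the corresponding estimates away from $t=0$ are immediate from analyticity and compactness. Combining these with the formulas for $\eta_j^v\partial_{\eta_j^v}$ and Lemma~\ref{lem:sc}(iii) applied to the factorization $b_j^\eta = s\cdot \beta_j(t)$ (noting that $s$ trivially lies in $\SC^\infty_{\cone U,D_v}$), I obtain $b_j^\eta \in \SC^\infty_{\cone U,D_v}$ and $P_j^\eta \in \BD^\infty_{\cone U,D_v}(1)$.

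The hardest part is turning Puiseux's asymptotic information into the genuine self-control inequality for $\beta_j$: Puiseux alone only gives the leading order of vanishing at $t=0$, whereas self-control additionally demands a matching lower bound on $|\beta_j|$. This lower bound is what makes the argument close, and it follows from the strict positivity of $\beta_j$ on $I$ (a consequence of $\cone U\subseteq\ddsnd$) together with the nonvanishing of the leading Puiseux coefficient $h_j(0)$.
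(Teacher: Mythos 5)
Your proof is correct and follows essentially the same approach as the paper's: the finiteness of $N$ comes from the homogeneity of the singular set together with the two-dimensionality of $\dusnd$, and the self-control near a singular direction $v$ comes from a Puiseux expansion in the slope variable $t=\eta_2^v/\eta_1^v$, with the nonvanishing leading coefficient supplying the lower bound on $|\beta_j|$ needed to close the argument and the nonnegativity of the Puiseux exponents (forced by boundedness) handling the $Q_j$. Your substitution $t=\tau^q$ to convert the fractional power series into genuine analytic ones is a clean, equivalent variant of the paper's direct term-by-term application of $(t\partial_t)^k$ to the Puiseux series; just be sure to run the argument on both branches $t>0$ and $t<0$ (the paper does this by passing to $t\mapsto f(-t)$), and note that the bound on $(t\partial_t)^n Q_j$ really comes from the analyticity of $Q_j(\tau^q)$ at $\tau=0$ rather than from boundedness alone.
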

\begin{proof}
By Lemma~\ref{lem:factorization}, $\dusnd \setminus \ddsnd$ is the zero set of a nonzero homogeneous polynomial $T$ on $\dusnd$, hence it corresponds to the zero set of $T$ in the projective space $P(\dusnd)$. Since $\dusnd$ is $2$-dimensional, $P(\dusnd)$ is $1$-dimensional, hence the zero set of $T$ in $P(\dusnd)$ is finite, and $\dusnd \setminus \ddsnd = \{0\} \cup \cone{N}$ for some finite subset $N$ of $S$.

Let now $U$ be an open subset of $S$ such that $\overline{U} \cap N \subseteq \{v\}$ for some $v \in S$. If $v \notin N$, then $\overline{U} \cap N = \emptyset$. In particular the functions $b_\jone^\eta$ and $P_\jone^\eta$ are smooth in a neighborhood of $\cone{\overline{U}}$, and the $b_\jone^\eta$ do not vanish there; their homogeneity properties are then sufficient to conclude, by Lemma~\ref{lem:schom_unif}, that $b_j^\eta \in \SC^\infty_{\cone{U},D_v}$ and $P_j^\eta \in \BD^\infty_{\cone{U},D_v}(1)$.

Suppose instead that $v \in N$. Let us use the coordinates $(\eta_1^v,\eta_2^v)$ on $\dusnd$: then $v$ corresponds to the point $(1,0)$, and $T$ becomes a nonzero homogeneous polynomial in two indeterminates with $T(1,0) = 0$. Denote by $p_\eta$ the characteristic polynomial of $-J_\eta^2$, as in \eqref{eq:charpoly}, and let $\tilde p_t(\lambda) = p_{(1,t)}(\lambda)$, $\tilde T(t) = T(1,t)$, $\tilde b_j(t) = b_j^{(1,t)}$, $\tilde P_j(t) = P_j^{(1,t)}$. Then the $\tilde b_j(t)$ are the square roots of the roots of $\tilde p_t$, i.e.,
\[\tilde p_t(\lambda) = \lambda^{r_0} (\lambda - (\tilde b_1(t))^2)^{2r_1} \cdots (\lambda - (\tilde b_\done(t))^2)^{2r_\done},\]
and are analytic functions on $\{t \tc \tilde T(t) \neq 0\}$. Moreover, since the coefficients of $\tilde p_t$ are polynomials in $t$, by Puiseux's theorem on the resolution of singularities of plane algebraic curves (see, e.g., \cite[\S 7]{fischer_plane_2001} or the discussion in \cite[\S 3]{phong_newton_1997}) there exists $\epsilon > 0$ such that the functions $\tilde b_\jone$, restricted to $\leftopenint 0,\epsilon \rightopenint$, admit a convergent Puiseux expansion; by Lemma~\ref{lem:spectraldecomposition}, the same is true for the matrix coefficients of the $\tilde P_\jone$, because they are rational functions of $\tilde b_1,\dots,\tilde b_\done$ and the identity.
This means that, if $f$ denotes any of the $\tilde b_j$ or any of the matrix coefficients of the $\tilde P_j$, then $f$ on the interval $\leftopenint 0,\epsilon \rightopenint$ can be written as
\[
f(t) = t^{h/n} \sum_{m \geq 0} a_m t^{m/n}
\]
for some $n \in \N \setminus \{0\}$, $h \in \Z$, and coefficients $a_m \in \R$ with $a_0 \neq 0$ (in fact it must be $h \in \N$, because both the roots $\tilde b_j(t)$ and the coefficients of the projections $\tilde P_j(t)$ are bounded in a neighborhood of $t = 0$); term by term differentiation then gives that
\[
(t\partial_t)^k f(t) = t^{h/n} \sum_{m \geq 0} (h+m)^k n^{-k} a_m t^{m/n}
\]
for all $k \in \N$ (note that the derived series have the same radius of convergence). Therefore, for all $k \in \N$, the function $t \mapsto t^{-h/n} (t\partial_t)^k f(t)$ has a continuous extension to $\leftclosedint 0,\epsilon \rightopenint$; moreover, since $a_0 \neq 0$, modulo taking a smaller $\epsilon$, we may assume that the continuous extension of $t \mapsto t^{-h/n} f(t)$ does not vanish in $\leftclosedint 0,\epsilon \rightopenint$. Consequently, by compactness,
\[
|t^{-h/n} (t\partial_t)^k f(t)| \leq C_{f,k} |t^{-h/n} f(t)|
\]
for all $t \in \leftclosedint 0,\epsilon/2 \rightclosedint$, and therefore
\begin{equation}\label{eq:homogeneousinequality}
|(t\partial_t)^k f(t)| \leq C_{f,k} |f(t)|
\end{equation}
for all $t \in \leftopenint 0,\epsilon/2 \rightclosedint$. The same argument, applied to the function $t \mapsto f(-t)$, shows that \eqref{eq:homogeneousinequality} holds for all $t \in \leftclosedint -\epsilon/2,\epsilon/2 \rightclosedint \setminus \{0\}$ and all $k \in \N$, if $\epsilon > 0$ is sufficiently small.

Note now that, if $F$ is one of the $\eta \mapsto b_j^\eta$ or one of the matrix coefficients of the $\eta \mapsto P_j^\eta$, then $F(\eta_1^v,\eta_2^v) = (\eta_1^v)^\mu f(\eta_2^v/\eta_1^v)$ for some $\mu \in \{0,1\}$, where $f(t) = F(1,t)$. Inductively one then shows that
\[
D_v^\alpha F(\eta_1^v,\eta_2^v) = \sum_{s=0}^{|\alpha|} c_{\alpha,s} \, (\eta_1^v)^\mu ((t\partial_t)^s f)(\eta_2^v/\eta_1^v)
\]
for all $\alpha \in \N^2$ and some coefficients $c_{\alpha,s} \in \R$, and in particular, by \eqref{eq:homogeneousinequality},
\begin{equation}\label{eq:homogeneousinequality2}
|D_v^\alpha F(\eta_1^v,\eta_2^v)| \leq C_{\alpha,F} |F(\eta_1^v,\eta_2^v)|
\end{equation}
for all $\alpha \in \N^2$ and all $(\eta_1^v,\eta_2^v)$ with $0 < |\eta_2^v/\eta_1^v| \leq \epsilon/2$.

On the other hand, if $V = \{ \eta \in U \tc |\eta_2^v/\eta_1^v| > \epsilon/2\}$, then $\overline{V} \cap N = \emptyset$ and we already know that $b_j^\eta \in \SC^\infty_{\cone{V},D_v}$ and $P_j^\eta \in \BD^\infty_{\cone{V},D_v}(1)$. By combining this fact with \eqref{eq:homogeneousinequality2} and the boundedness of the coefficients of $P_j^\eta$, we obtain that $b_j^\eta \in \SC^\infty_{\cone{U},D_v}$ and $P_j^\eta \in \BD^\infty_{\cone{U},D_v}(1)$.
\end{proof}

Let $N$ be the finite subset of $S$ given by Lemma~\ref{lem:2dc_discretesingularities}; in the case this set is empty, we take instead $N = \{(1,0)\}$. We may then choose an open cover $\{U_v\}_{v \in N}$ of $S$ such that $\overline{U_v} \cap N = \{v\}$ for all $v \in N$.

Let $\{\zeta_v\}_{v \in N}$ be a smooth partition of unity on $S$ subordinate to the open cover $\{U_v\}_{v \in N}$; each $\zeta_v$ extends uniquely to a smooth function on $\dusnd \setminus \{0\}$, homogeneous of degree $0$, which we still denote by $\zeta_v$. Let moreover $\chi \in C^\infty_c(\leftopenint 0,\infty \rightopenint)$ be such that $\supp \chi \subseteq \leftclosedint 1/2, 2 \rightclosedint$ and $\sum_{n \in \Z} \chi(2^n t) = 1$ for all $t \in \leftopenint 0,\infty \rightopenint$. For all $v \in N$, denote by $(u_1^v,u_2^v)$ the coordinates on $\snd$ dual to the coordinates $(\eta_1^v,\eta_2^v)$ on $\dusnd$.

\begin{prp}\label{prp:2dc_secondweightedestimate}
Let $K \subseteq \R$ be compact. For all Borel functions $F : \R \to \C$ such that $\supp F \subseteq K$,
for all $v \in N$,
and for all $\alpha \in \leftclosedint 0,1/2 \rightopenint^2$,
\[
\int_{G} | (1+|u_1^v|)^{\alpha_1} (1+|u_2^v|)^{\alpha_2} \Kern_{F(L) \, \zeta_{v}(\vecU)}(x,u) |^2 \,dx \,du \leq C_{K,\alpha} \|F\|_{W_2^{\alpha_1+\alpha_2}}^2.
\]
\end{prp}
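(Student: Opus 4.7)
The plan is to decompose $\zeta_v(\vecU)$ via a two-parameter dyadic partition of unity in the radial scale $|\eta|$ and the angular distance $|\eta_2^v|/|\eta|$ from the singular ray, apply Proposition~\ref{prp:cutoff_partiall2norm} to each piece at integer multi-indices, interpolate the weight via Stein's theorem for analytic families to reach fractional exponents, and sum the resulting estimates. The constraints $\alpha_j < 1/2$ will emerge precisely as the convergence conditions for the two geometric series produced by this summation.

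Concretely, define $\chi_{n,m}(\eta) \defeq \chi(2^n|\eta|)\,\chi(2^m|\eta_2^v|/|\eta|)\,\zeta_v(\eta)$ and $K_{n,m} \defeq \Kern_{F(L)\chi_{n,m}(\vecU)}$, so $\Kern_{F(L)\zeta_v(\vecU)} = \sum_{n,m} K_{n,m}$ almost everywhere; compactness of $\supp F$ and homogeneity of the $b_j^\eta$ restrict the sum to $n \geq n_0$, and $|\eta_2^v|/|\eta| \leq 1$ on $\supp\zeta_v$ restricts it to $m \geq m_0$. Each cutoff factors as $\chi_{n,m} = \chi_{\rad}(f(\eta))\chi_{\sph}(\eta)$ with $f(\eta) = 2^n|\eta|$, $\chi_{\rad} = \chi$, and $\chi_{\sph}(\eta) = \chi(2^m|\eta_2^v|/|\eta|)\,\zeta_v(\eta)$, so Proposition~\ref{prp:cutoff_partiall2norm} applies with $\Omega = \cone{U_v}$ and $D = D_v$, the hypothesis \eqref{eq:selfcontrolhypothesis} being provided by Lemma~\ref{lem:2dc_discretesingularities}. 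The delicate point is that $\|f\|_{\SC^A}$ and $\|\chi_{\sph}\|_{\BD^A(1)}$ must be uniform in $(n,m)$: for $f$, this follows from \eqref{eq:sc_scalar} and Lemma~\ref{lem:schom_unif} applied to $|\eta|$; for $\chi_{\sph}$, the key observation is that although $g(\eta) \defeq |\eta_2^v|/|\eta|$ is not self-controlled on all of $\cone{U_v}$ (it vanishes on the singular ray), its restriction to each half $\{\pm\eta_2^v > 0\} \cap \cone{U_v}$ factors as $\pm\eta_2^v \cdot |\eta|^{-1}$ of functions lying in $\SC^\infty_{D_v}$ by Lemma~\ref{lem:schom_unif}; hence $g \in \SC^\infty_{D_v}$ on each half by Lemma~\ref{lem:sc}, and since $\|2^m g\|_{\SC^A} = \|g\|_{\SC^A}$ by \eqref{eq:sc_scalar}, Lemma~\ref{lem:sccomp2}(i) yields $\|\chi(2^m g)\|_{\BD^A_{D_v}(1)}$ uniformly bounded in $m$; multiplication by the smooth degree-$0$ homogeneous factor $\zeta_v$ preserves this.

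Applying Proposition~\ref{prp:cutoff_partiall2norm} at the four corners $(a_1, a_2) \in \{0,1\}^2$, with $|\eta_1^v| \sim 2^{-n}$, $|\eta_2^v| \sim 2^{-n-m}$, and $|\supp\chi_{n,m}| \sim 2^{-2n-m}$, yields
\[
\|(u_1^v)^{a_1}(u_2^v)^{a_2} K_{n,m}\|_{L^2(G)} \lesssim 2^{n(a_1+a_2-1) + m(a_2 - 1/2)}\,\|F\|_{W_2^{a_1+a_2}}
\]
uniformly in $(n,m)$. Iterated complex interpolation in the analytic family $(z_1, z_2) \mapsto (1+|u_1^v|)^{z_1}(1+|u_2^v|)^{z_2} K_{n,m}$, paired with the standard complex interpolation scale on $W_2^s$, extends this to all $(\alpha_1, \alpha_2) \in \leftclosedint 0, 1 \rightclosedint^2$, giving
\[
\|(1+|u_1^v|)^{\alpha_1}(1+|u_2^v|)^{\alpha_2} K_{n,m}\|_{L^2(G)} \lesssim 2^{n(\alpha_1+\alpha_2-1) + m(\alpha_2 - 1/2)}\,\|F\|_{W_2^{\alpha_1+\alpha_2}}.
\]
The triangle inequality then bounds the total weighted $L^2$-norm by the double series $\sum_{n \geq n_0, m \geq m_0} 2^{n(\alpha_1+\alpha_2-1)+m(\alpha_2-1/2)}$, which converges precisely when $\alpha_1+\alpha_2 < 1$ and $\alpha_2 < 1/2$, both implied by $\alpha \in \leftclosedint 0, 1/2 \rightopenint^2$; the desired estimate follows. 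The principal obstacle in executing this plan is establishing the uniform-in-$m$ $\BD$-bound on $\chi_{\sph}$ through the self-controlled structure of $|\eta_2^v|/|\eta|$ away from the singular ray; without it, the constants in Proposition~\ref{prp:cutoff_partiall2norm} would grow geometrically in $m$ and the summation would diverge.
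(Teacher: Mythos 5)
Your decomposition is a one-parameter angular refinement: you localize only $|\eta_2^v|/|\eta|$ to a dyadic scale $2^{-m}$, and then claim $|\eta_1^v|\sim 2^{-n}$ on $\supp\chi_{n,m}$. That last claim is not justified, and it fails for the small values of $m$ for which $\supp\chi(2^m|\eta_2^v|/|\eta|)$ meets the axis $\{\eta_1^v=0\}$ (concretely $m\in\{0,1\}$, since $|\eta_2^v|/|\eta|\in[1/2,1]$ resp.\ $[1/4,1]$ there, so $\eta_1^v$ ranges down to $0$). There is no structural reason for $\supp\zeta_v$ to avoid the direction $v^\perp$ — in fact, if $N$ reduces to a single point, then $\zeta_v\equiv 1$ and $\eta_1^v$ certainly vanishes on $\supp\zeta_v$. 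For those $m$ the factor $\int_{\supp\chi_{n,m}}|\eta_1^v|^{-2\alpha_1}|\eta_2^v|^{-2\alpha_2}\,d\eta$ appearing in Proposition~\ref{prp:cutoff_partiall2norm} is infinite at the corner $(a_1,a_2)=(1,0)$ (and at $(1,1)$), since near $\theta=\pi/2$ one faces $\int|\cos\theta|^{-2}\,d\theta$. Hence the endpoint estimates you want to interpolate do not hold uniformly, and the interpolation/summation scheme breaks down at the very first step. The obstacle you flagged (uniformity of the $\BD$-bound on $\chi_\sph$) is real but handled correctly; the gap you did not flag is the missing lower bound on $|\eta_1^v|$.

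The paper's proof closes this gap by using a \emph{two-parameter} angular decomposition: the cutoff $\zeta_{v,\radp,\crdp}$ contains the product $\prod_{\jtwo=1}^2\chi\bigl(|\eta_\jtwo^v|/(|\eta|\crdp_\jtwo)\bigr)$, so that on each piece $|\eta_\jtwo^v|\sim\radp\crdp_\jtwo$ for \emph{both} $\jtwo=1$ and $\jtwo=2$; in particular $|\eta_1^v|$ is pinned to a definite dyadic scale and the weighted integral is finite for every integer multi-index. This yields the per-piece bound $\prod_{\jtwo}(\radp\crdp_\jtwo)^{1-2\alpha_\jtwo}$ and, after summing the geometric series in $n_1,n_2\geq 0$ and $m$ (bounded above), the conditions $\alpha_1<1/2$ and $\alpha_2<1/2$ (with $\alpha_1+\alpha_2<1$ implied). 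Your proposed summation condition $\alpha_1+\alpha_2<1,\ \alpha_2<1/2$ is strictly weaker than the proposition requires, which should itself have been a warning sign: the missing constraint $\alpha_1<1/2$ is exactly the convergence condition for the geometric series in $\crdp_1=2^{-n_1}$, i.e.\ the sum over the $\eta_1^v$-scale that your decomposition omits. To salvage your approach you would need to add the factor $\chi(2^k|\eta_1^v|/|\eta|)$ and sum over $k$ as well — at which point you recover the paper's decomposition.
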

\begin{proof}
For all $\radp \in \leftopenint 0,\infty \rightopenint$ and $\crdp = (\crdp_1,\crdp_2) \in \leftopenint 0, 1 \rightclosedint^2$, let $\chi_{v,\radp,\crdp} : \dusnd \to \C$ be defined by
\[
\zeta_{v,\radp,\crdp}(\lambda,\eta) = \chi(|\eta|/\radp) \, \zeta_v(\eta/|\eta|) \, \prod_{\jtwo = 1}^2 \chi(|\eta^v_\jtwo|/(|\eta| \crdp_\jtwo)).
\]
Then 
\[
\zeta_v(\eta) = \sum_{m \in \Z, \, n \in \N^2} \zeta_{v,2^m,(2^{-n_1},2^{-n_2})}(\eta)
\]
for all $\eta \in \dusnd$ with $\eta_1^v \eta_2^v \neq 0$ (in fact some of the summands are identically zero, but we may disregard this). On the other hand the set $\{\eta \tc \eta_1^v \eta_2^v = 0\}$ is negligible with respect to the joint spectral resolution of $\vecU$. Moreover, by Lemma~\ref{lem:spectraldecomposition}, $\eta \mapsto (\sum_\jone 2r_\jone (b_\jone^\eta)^2)^{1/2}$ is a norm on $\dusnd$, hence there is a constant $\kappa \in \leftopenint 0,\infty \rightopenint$ such that
\[
|\eta| \leq \kappa \sum_\jone b_\jone^\eta \leq \kappa \left(\sum_\jone b_\jone^\eta (2n_\jone+r_\jone) + \mu\right)
\]
for all $\eta \in \dusnd$, $\mu \in \leftclosedint 0,\infty \rightopenint$ and $n \in \N^\done$; hence, by \eqref{eq:kernel}, $F(L) \, \zeta_{v,\radp,\crdp}(\vecU) = 0$ unless $\radp \leq 2\kappa \max K$. Consequently
\begin{equation}\label{eq:2dc_dyadic}
F(L) \, \zeta_v(\vecU) = \sum_{\substack{m \in \Z, \, n \in \N^2 \\ 2^{m} \leq 2\kappa \max K}} F(L) \, \zeta_{v,2^m,(2^{-n_1},2^{-n_2})}(\vecU)
\end{equation}
in the strong $L^2$ operator topology.

Set $\Omega_v = \cone{U_v} \cap \{ \eta \tc \eta_1^v \eta_2^v \neq 0\}$. If $f$ is any of the functions $\eta \mapsto |\eta|/\radp$, $\eta \mapsto |\eta_\jtwo|/(|\eta| \crdp_\jtwo)$, then by \eqref{eq:sc_scalar} and Lemma~\ref{lem:schom_multi} it is immediately seen that, for all $A \in \N$, $\|f\|_{\SC_{\Omega_v,D_v}^A}$ is finite and independent of $\radp,\crdp$. Therefore, if
\[
\zeta_{v,\crdp}(\eta) = \zeta_v(\eta/|\eta|) \, \prod_{\jtwo = 1}^2 \chi(|\eta^v_\jtwo|/(|\eta| \crdp_\jtwo)),
\]
then $\sup_{\crdp \in \leftopenint 0,1 \rightclosedint^2} \|\zeta_{v,\crdp}\|_{\BD^A_{\Omega_v,D_v}(1)}$ is finite by Lemma~\ref{lem:sccomp2}(i), and
\[
\zeta_{v,\radp,\crdp}(\lambda,\eta) = \chi(|\eta|/\radp) \, \zeta_{v,\crdp}(\eta).
\]
On the other hand, $|\eta^v_\jtwo| \sim \radp\crdp_\jtwo$ for $\eta \in \supp\zeta_{v,\radp,\crdp}$, hence the measure of $\supp\zeta_{v,\radp,\crdp}$ is at most $\radp^2 \crdp_1 \crdp_2$.
Consequently, by Proposition~\ref{prp:cutoff_partiall2norm} and Lemma~\ref{lem:2dc_discretesingularities}, for all $\alpha \in \N^2$,
\[
\int_G \left||u_1^v|^{\alpha_1} |u_2^v|^{\alpha_2} \, \Kern_{F(L) \, \zeta_{v,\radp,\crdp}(\vecU)}(x,u)\right|^2 \,dx\,du 
\leq C_{K,\kappa,\alpha} \|F\|^2_{W_2^{|\alpha|}} \prod_{\jtwo=1}^2 (\radp \crdp_\jtwo)^{1-2\alpha_\jtwo}.
\]
By interpolation, the same estimate holds for all $\alpha \in \leftclosedint 0,\infty \rightopenint^2$. In the case $\alpha_1,\alpha_2 < 1/2$, the dyadic decomposition \eqref{eq:2dc_dyadic} then yields
\[
\int_{G} | |u_1^v|^{\alpha_1} |u_2^v|^{\alpha_2} \Kern_{F(L) \, \zeta_v(\vecU)}(x,u) |^2 \,dx \,du \leq C_{K,\alpha} \|F\|_{W_2^{\alpha_1+\alpha_2}}^2.
\]
In order to conclude, it is sufficient to combine this estimate with the ones where $(\alpha_1,\alpha_2)$ is replaced by $(0,0)$, $(\alpha_1,0)$, $(0,\alpha_2)$.
\end{proof}

Via H\"older's inequality, the previous estimates can be combined at the level of $L^1$; interpolation with the standard estimate valid on all $2$-step groups then allows us to conclude the proof of Proposition~\ref{prp:l1low} for the groups with $\dtwo = 2$.

\begin{prp}\label{prp:2dc_l1estimate}
Let $K \subseteq \R$ be compact. For all Borel functions $F : \R \to \C$ such that $\supp F \subseteq K$, and for all $\alpha,\beta \in \R$ such that $\beta \geq (\dim \snd)/2$ and $\beta > \alpha + d/2$,
\begin{equation}\label{eq:weightedl1}
\| (1+|\cdot|_G)^\alpha \, \Kern_{F(L)} \|_1 \leq C_{K,\alpha,\beta} \| F \|_{W_2^\beta}.
\end{equation}
\end{prp}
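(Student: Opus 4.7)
The plan is to decompose $\Kern_{F(L)} = \sum_{v \in N} \Kern_{F(L)\zeta_v(\vecU)}$ (a finite sum) and bound each piece via Cauchy--Schwarz, using Proposition~\ref{prp:2dc_secondweightedestimate} together with the standard weighted $L^2$ estimate \eqref{eq:standardl2}; a final complex interpolation with the standard $L^1$ bound then yields the full range $\beta > \alpha + d/2$.

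For each $v$, I would apply Cauchy--Schwarz with an adapted weight $W_v(x,u) = (1+|x|)^{a_0}(1+|u_1^v|)^{a_1}(1+|u_2^v|)^{a_2}$ and exponents $a_0 > \alpha + \dim\fst/2$, $a_i > (1+\alpha)/2$, so that the elementary bound $(1+|\cdot|_G)^\alpha \leq C(1+|x|)^\alpha(1+|u_1^v|)^{\alpha/2}(1+|u_2^v|)^{\alpha/2}$ (valid for $\alpha \geq 0$) renders $(1+|\cdot|_G)^\alpha W_v^{-1}$ square-integrable on $G$. This reduces the task to controlling $\|W_v \Kern_{F(L)\zeta_v(\vecU)}\|_2$ by a Sobolev norm of $F$.

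The main obstacle is that the $u_i^v$-exponents $a_i$ of $W_v$ must exceed $1/2$, whereas Proposition~\ref{prp:2dc_secondweightedestimate} only provides weights $(1+|u_1^v|)^{b_1}(1+|u_2^v|)^{b_2}$ for $b_i < 1/2$. I would cross this barrier by complex interpolation, combining Proposition~\ref{prp:2dc_secondweightedestimate} (Sobolev cost $b_1+b_2$) with \eqref{eq:standardl2} used through $(1+|u_i^v|) \leq (1+|\cdot|_G)^2$ (which handles any $u$-weight $(1+|u_1^v|)^{c_1}(1+|u_2^v|)^{c_2}$ at Sobolev cost $2(c_1+c_2)+\epsilon$); pushing $b_i \to 1/2^-$ and the interpolation parameter $\theta \to 0^+$ produces
\[
\|(1+|u_1^v|)^{a_1}(1+|u_2^v|)^{a_2}\Kern_{F(L)\zeta_v(\vecU)}\|_2 \leq C_\epsilon\|F\|_{W_2^{2(a_1+a_2)-1+\epsilon}}
\]
for any $a_i > 1/2$, exhibiting a precise saving of $1 = \dim\snd/2$ in Sobolev order against \eqref{eq:standardl2} alone. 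A further H\"older-type interpolation combining this with \eqref{eq:standardl2} applied through $(1+|x|) \leq (1+|\cdot|_G)$ for the $(1+|x|)^{a_0}$ factor gives $\|W_v \Kern_{F(L)\zeta_v(\vecU)}\|_2 \leq C\|F\|_{W_2^{a_0+2(a_1+a_2)-1+\epsilon}}$; choosing $a_0,a_i$ minimally and summing the finitely many pieces then produces $\|(1+|\cdot|_G)^\alpha \Kern_{F(L)}\|_1 \leq C\|F\|_{W_2^{\beta}}$ for any $\beta > d/2 + 3\alpha$.

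This is already optimal at $\alpha = 0$ (giving $\beta > d/2$) but too weak for positive $\alpha$. The remaining gap is closed by a final complex interpolation of weighted $L^1$ bounds between the just-obtained $\alpha=0$ estimate (Sobolev $> d/2$) and the standard $L^1$ bound $\|(1+|\cdot|_G)^{\alpha_1}\Kern_{F(L)}\|_1 \leq C\|F\|_{W_2^{\beta_1}}$, valid for any $\beta_1 > \alpha_1 + Q/2$: at parameter $\theta \in (0,1)$ the interpolated target is $L^1((1+|\cdot|_G)^{\theta\alpha_1})$ and the Sobolev index exceeds $d/2 + \theta\alpha_1 + \theta\dim\snd/2$, so setting $\alpha = \theta\alpha_1$ and sending $\alpha_1 \to \infty$ yields $\beta > \alpha + d/2$. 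The condition $\beta \geq \dim\snd/2$ becomes binding only when $\alpha < -\dim\fst/2$, in which range the monotonicity $(1+|\cdot|_G)^\alpha \leq (1+|\cdot|_G)^{-\dim\fst/2}$ reduces matters to the borderline case $\alpha = -\dim\fst/2$, where the preceding argument already produces $\beta > 1 = \dim\snd/2$.
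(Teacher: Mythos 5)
The decomposition, the Cauchy--Schwarz step, and the closing interpolation all match the paper's strategy, but the intermediate $L^2$-level interpolation you use to push the central-weight exponents past $1/2$ does not work, and that is where the proposal breaks. Your second interpolation endpoint claims $\|(1+|u_1^v|)^{c_1}(1+|u_2^v|)^{c_2}\Kern_{F(L)\zeta_v(\vecU)}\|_2 \leq C\|F\|_{W_2^{2(c_1+c_2)+\epsilon}}$ with $c_1+c_2$ arbitrarily large, derived from \eqref{eq:standardl2} via $(1+|u_i^v|)\leq(1+|\cdot|_G)^2$; but \eqref{eq:standardl2} is an estimate for $\Kern_{F(L)}$, not for $\Kern_{F(L)\zeta_v(\vecU)}$, and the latter does not inherit it. In exponential coordinates, $\zeta_v(\vecU)$ acts as a Euclidean Fourier multiplier in the central variable $u$ alone, with symbol $\zeta_v$ homogeneous of degree $0$ and smooth only away from $\eta=0$; its $u$-kernel is a Calder\'on--Zygmund kernel with only power decay $\sim |u|^{-\dim\snd}$, and convolving $\Kern_{F(L)}$ with it destroys the fast $u$-decay that underlies \eqref{eq:standardl2}. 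Concretely, $\zeta_v(\vecU)$ is not bounded on $L^2(G,(1+|\cdot|_G)^{4(c_1+c_2)}\,dx\,du)$ once $c_1+c_2\geq 1$, since the restriction of that weight to $u$-slices then leaves the Muckenhoupt $A_2$ class of $\R^{\dim\snd}$. Moreover, the barrier is impassable by this route: every $L^2$ endpoint actually available for the pieces (Proposition~\ref{prp:2dc_secondweightedestimate} with $b_1+b_2<1$, or anything inherited from \eqref{eq:standardl2} with $c_1+c_2<1$) has $u$-exponents summing to strictly less than $1$, so the interpolated exponents satisfy $a_1+a_2<1$, whereas your Cauchy--Schwarz forces $a_1,a_2>1/2$ as soon as $\alpha\geq 0$.

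The paper goes around the barrier rather than through it. It applies the piecewise Cauchy--Schwarz only in the range $\alpha<-(\dim\fst)/2$, $2\beta>\alpha+Q/2$, $\beta\geq 0$, where the factor $(1+|\cdot|_G)^{2\alpha}$ itself already supplies enough $u$-integrability to make do with exponents $\theta<1/2$ from Proposition~\ref{prp:2dc_secondweightedestimate}; it then interpolates the resulting weighted $L^1$ bound for the \emph{full} kernel $\Kern_{F(L)}$ against the one obtained in the range $\beta>\alpha+Q/2$, $\beta\geq 0$ from H\"older and \eqref{eq:standardl2}. Both endpoints concern $\Kern_{F(L)}$, so that interpolation is legitimate, and the convex hull of the two ranges covers $\beta\geq(\dim\snd)/2$, $\beta>\alpha+d/2$. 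To repair your argument, drop the $L^2$-level interpolation, restrict the piecewise step to $\alpha$ sufficiently negative, and interpolate at the $L^1$ level.
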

\begin{proof}
We prove \eqref{eq:weightedl1} for $\alpha,\beta$ belonging to two different ranges:
\begin{gather}
\label{eq:range1} \beta \geq 0, \qquad \beta > \alpha + Q/2;\\
\label{eq:range2} \beta \geq 0, \quad 2\beta > \alpha + Q/2, \quad \alpha < -(\dim \fst)/2.
\end{gather}
The conclusion is then obtained by interpolation: in fact, for all small $\delta > 0$, the point $P_\delta$ with coordinates $\alpha=-\dim\fst/2-\delta,\beta=\dim\snd/2$ belongs to \eqref{eq:range2} and also to the line $\beta = \alpha+d/2+\delta$, hence the convex hull of $P_\delta$ and the region \eqref{eq:range1} contains the range $\beta \geq (\dim \snd)/2$, $\beta > \alpha+d/2+\delta$. 

For the range \eqref{eq:range1}, we choose $s \in \leftopenint \alpha+Q/2, \beta \rightopenint$ and then apply H\"older's inequality and the standard estimate \eqref{eq:standardl2}:
\[
\| (1+|\cdot|_G)^\alpha \, \Kern_{F(L)} \|_1 \leq \| (1+|\cdot|_G)^{\alpha-s} \|_2 \, \| (1+|\cdot|_G)^s \, \Kern_{F(L)} \|_2 \leq C_{K,\alpha,\beta} \|F\|_{W_2^\beta}.
\]

For the range \eqref{eq:range2}, instead, we first split the left-hand side of \eqref{eq:weightedl1} as follows:
\[
\| (1+|\cdot|_G)^\alpha \, \Kern_{F(L)} \|_1 \leq \sum_{v \in N} \| (1+|\cdot|_G)^\alpha \, \Kern_{F(L) \, \zeta_v(\vecU)} \|_1.
\]
Each summand in the right-hand side can be then estimated by H\"older's inequality: for all $\theta \in \R$,
\begin{multline*}
\| (1+|\cdot|_G)^\alpha \, \Kern_{F(L) \, \zeta_v(\vecU)} \|_1 \\
\leq \left(\int_G (1+|(x,u)|_G)^{2\alpha} (1+|u_1^v|)^{-2\theta} (1+|u_2^v|)^{-2\theta} \,dx \,du \right)^{1/2} \\
\times \left(\int_G |(1+|u_1^v|)^{\theta} (1+|u_2^v|)^{\theta} \Kern_{F(L) \, \zeta_v(\vecU)}(x,u)|^2 \,dx \,du \right)^{1/2}.
\end{multline*}
Note that $\alpha + (\dim \fst)/2 < 0$, and therefore $\alpha + Q/2 < \dim \snd = 2$. Choose $\theta$ so that $4\theta \in \leftclosedint0,2\beta\rightclosedint \cap \leftopenint \alpha+Q/2,2\rightopenint$, then choose $\alpha_1 \in \leftopenint (\dim \fst)/2,-\alpha+4\theta-2\rightopenint$, and set $\alpha_2 = -\alpha-\alpha_1$. Hence $-\alpha = \alpha_1 + \alpha_2$, $\alpha_1 > (\dim\fst)/2 > 0$ and $\alpha_2 > 2-4\theta > 0$. Therefore
\begin{multline*}
\int_G (1+|(x,u)|_G)^{2\alpha} (1+|u_1^v|)^{-2\theta} (1+|u_2^v|)^{-2\theta} \,dx \,du  \\
\leq \int_G (1+|x|)^{-2\alpha_1} (1+|u_1^v|)^{-2\theta-\alpha_2/2} (1+|u_2^v|)^{-2\theta-\alpha_2/2} \,dx \,du < \infty,
\end{multline*}
since $2\alpha_1 > \dim \fst$ and $2\theta + \alpha_2/2 > 1$. On the other hand, by Proposition~\ref{prp:2dc_secondweightedestimate},
\[
\int_G |(1+|u_1^v|)^{\theta} (1+|u_2^v|)^{\theta} \Kern_{F(L) \, \zeta_v(\vecU)}(x,u)|^2 \,dx \,du \leq C_{K,\alpha,\beta} \|F\|^2_{W_2^{2\theta}} \leq C_{K,\alpha,\beta} \|F\|^2_{W_2^{\beta}}
\]
since $\theta \in \leftclosedint 0,1/2\rightopenint$ and $2\theta \leq \beta$.
\end{proof}

\section{Groups of dimension at most $7$}\label{section:7dim}

In view of the results of \S\ref{section:2dc}, in order to complete the proof of Proposition~\ref{prp:l1low}, it remains to consider the case $d \leq 7$ and $\dim\snd > 2$. Some remarks on the possible structures of $G$ will help us to identify the cases which are not already covered by the existing literature.

\begin{prp}
Suppose that $d \leq 7$. Then
\begin{itemize}
\item[(i)] $\dim\snd \leq 3$;
\item[(ii)] if $d < 7$ and $\dim\snd = 3$, then $G$ is isomorphic to the free $2$-step nilpotent group $N_{3,2}$ on $2$ generators;
\item[(iii)] if $d = 7$, $\dim\snd = 3$, and $\lie{g}$ is decomposable, then $G$ is isomorphic to the direct product $N_{3,2} \times \R$, and the sublaplacian $L$ decomposes as $L'+L''$, where $L'$ and $L''$ correspond to sublaplacians on the factors $N_{3,2}$ and $\R$.
\end{itemize}
\end{prp}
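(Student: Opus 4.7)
The plan is to argue all three parts by elementary dimension counting, based on the fact that $[\fst,\fst]=\snd$ forces the commutator map $\fst\wedge\fst\to\snd$ to be surjective, whence
\[
\dim\snd\leq\binom{\dim\fst}{2}.
\]

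For part~(i), if $\dim\snd\geq 4$, then $\binom{\dim\fst}{2}\geq 4$ forces $\dim\fst\geq 4$ (since $\binom{3}{2}=3$), and so $d=\dim\fst+\dim\snd\geq 8$, contradicting $d\leq 7$. For part~(ii), $\dim\snd=3$ forces $\dim\fst\geq 3$ (since $\binom{2}{2}=1$), and so $d\geq 6$; together with $d<7$ this yields $d=6$ and $\dim\fst=3$, in which case $\fst\wedge\fst\to\snd$ is a surjection between $3$-dimensional spaces and therefore an isomorphism. This is the defining property of the free $2$-step nilpotent Lie algebra of rank $3$, so $G\cong N_{3,2}$.

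For part~(iii), one has $\dim\fst=4$ and $\dim\snd=3$. Suppose $\lie{g}=\lie{h}\oplus\lie{k}$ is a direct sum of nonzero ideals; since $[\lie{h},\lie{k}]=0$, the derived algebra splits as $\snd=[\lie{h},\lie{h}]\oplus[\lie{k},\lie{k}]$, which together with $\snd=[\fst,\fst]$ forces each layer to split: setting $\lie{h}_\jone=\lie{h}\cap\lie{g}_\jone$ and $\lie{k}_\jone=\lie{k}\cap\lie{g}_\jone$, one has $\dim\lie{h}_1+\dim\lie{k}_1=4$, $\dim\lie{h}_2+\dim\lie{k}_2=3$, subject to $\dim\lie{h}_2\leq\binom{\dim\lie{h}_1}{2}$ and the analogous bound for $\lie{k}$. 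A short case analysis rules out $\dim\lie{h}_1\in\{0,2,4\}$: the extremal cases force $\lie{h}$ or $\lie{k}$ to be trivial because $\snd$ must coincide with $[\lie{k}_1,\lie{k}_1]$ or $[\lie{h}_1,\lie{h}_1]$, respectively, while $\dim\lie{h}_1=2$ gives $\dim\lie{h}_2+\dim\lie{k}_2\leq 1+1<3$. Hence $\dim\lie{h}_1\in\{1,3\}$, and up to relabelling one has $\lie{h}\cong\R$ (abelian, central in $\lie{g}$) and $\lie{k}\cong N_{3,2}$, proving $G\cong N_{3,2}\times\R$.

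It remains to split $L$. Since $\lie{h}_1\subseteq\fst$ is central and $[\lie{k}_1,\lie{k}_1]=[\fst,\fst]=\snd$ for any linear complement $\lie{k}_1$ of $\lie{h}_1$ in $\fst$, we are free to choose $\lie{k}_1$ to be the orthogonal complement of $\lie{h}_1$ with respect to the inner product $\langle\cdot,\cdot\rangle$ induced by $L$. Assembling orthonormal bases of $\lie{h}_1$ and $\lie{k}_1$ into an orthonormal basis of $\fst$ and summing squares gives $L=L'+L''$, where $L'$ is a homogeneous sublaplacian on the $N_{3,2}$ factor and $L''$ the Laplacian on the $\R$ factor. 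I expect the principal obstacle to be the case analysis in part~(iii)---in particular, cleanly eliminating the configurations $\dim\lie{h}_1\in\{0,2,4\}$ and checking that the surviving ones all give $N_{3,2}\times\R$. The splitting of $L$ itself is then automatic, relying only on the centrality of $\lie{h}_1$, which allows the ideal complement $\lie{k}_1$ to be chosen orthogonally.
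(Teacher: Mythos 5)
Parts (i) and (ii) are fine and coincide with the paper's argument (the bound $\dim\snd\leq\binom{\dim\fst}{2}$, dimension counting, and for (ii) the commutator map $\Lambda^2\fst\to\snd$ becoming an isomorphism).

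The proof of (iii) has a genuine gap. You assert that the decomposition $\lie{g}=\lie{h}\oplus\lie{k}$ into ideals ``forces each layer to split,'' i.e.\ that with $\lie{h}_\jone=\lie{h}\cap\lie{g}_\jone$, $\lie{k}_\jone=\lie{k}\cap\lie{g}_\jone$ one has $\dim\lie{h}_1+\dim\lie{k}_1=4$. That is false: an ideal decomposition need not respect the stratification, and the first layer does \emph{not} split in general. Concretely, take $\lie{g}$ to be the Lie algebra of $N_{3,2}\times\R$ with first layer $\Span\{X_1,X_2,X_3,X_4\}$, second layer $\Span\{U_{12},U_{13},U_{23}\}$ and $X_4$ central, and set $\lie{h}=\Span\{X_1,X_2,X_3,U_{12},U_{13},U_{23}\}$, $\lie{k}=\Span\{X_4+U_{12}\}$. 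Then $\lie{g}=\lie{h}\oplus\lie{k}$ is a direct sum of nonzero ideals, but $\lie{k}\cap\fst=\{0\}$, so $\dim\lie{h}_1+\dim\lie{k}_1=3\neq 4$. The second-layer count $\dim\lie{h}_2+\dim\lie{k}_2=3$ does hold (because $\snd=[\lie{g},\lie{g}]=[\lie{h},\lie{h}]\oplus[\lie{k},\lie{k}]$ and $[\lie{h},\lie{h}]=\snd\cap\lie{h}$, $[\lie{k},\lie{k}]=\snd\cap\lie{k}$), but the first-layer count does not, and your case analysis that rules out $\dim\lie{h}_1\in\{0,2,4\}$ and your identification of $\lie{h}_1$ as a central one-dimensional subspace of $\fst$ both rest on it.

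The argument can be repaired, and in two ways. One is to work with the images $\pi(\lie{h})$, $\pi(\lie{k})$ under the quotient map $\pi:\lie{g}\to\lie{g}/\snd\cong\fst$ instead of the intersections $\lie{h}\cap\fst$: these \emph{do} satisfy $\fst=\pi(\lie{h})\oplus\pi(\lie{k})$, with $\dim\pi(\lie{h})=\dim\lie{h}-\dim[\lie{h},\lie{h}]$, and your dimension count then goes through. The other is the route the paper takes: deduce from $\snd=[\lie{g}',\lie{g}']\oplus[\lie{g}'',\lie{g}'']$ that (after relabeling) $\dim\lie{g}'=6$, $\dim\lie{g}''=1$, observe that the commutator map $\Lambda^2(\lie{g}'/[\lie{g}',\lie{g}'])\to[\lie{g}',\lie{g}']$ is an isomorphism so that the center $\lie{z}$ of $\lie{g}$ has dimension $4$, and then build a \emph{fresh} graded decomposition from $\lie{z}$: since $\lie{z}\supseteq\snd$, $\lie{z}$ is a graded ideal, $\lie{z}=(\lie{z}\cap\fst)\oplus\snd$ with $\dim(\lie{z}\cap\fst)=1$, and one sets $\fst'=(\lie{z}\cap\fst)^\perp$. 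The splitting of $L$ is then as you describe, using the orthogonal complement in $\fst$; that last step is correct, but it needs the preceding one to actually produce a one-dimensional central subspace of $\fst$, which your version does not establish.
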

\begin{proof}
Since $G$ is a quotient of the free $2$-step nilpotent group on $\dim\fst$ generators, it must be $\dim\snd \leq \binom{\dim\fst}{2}$, and the assumption $d \leq 7$ implies that $\dim\snd \leq 3$.

For the same reason, if $\dim\snd = 3$, then $d \geq 6$; in the case $d < 7$ we conclude that $d = 6$ and that $G$ is isomorphic to $N_{3,2}$.

Suppose now that $d = 7$, $\dim\snd = 3$, and $\lie{g}$ is decomposable, that is, $\lie{g} = \lie{g}' \oplus \lie{g}''$ for some nontrivial (commuting) ideals $\lie{g}',\lie{g}''$ of $\lie{g}$. In particular $\snd = [\lie{g}',\lie{g}'] \oplus [\lie{g}'',\lie{g}'']$, and modulo replacing $\lie{g}'$ with $\lie{g}''$ we may assume $\dim[\lie{g}',\lie{g}'] \geq 2$. But then $\dim\lie{g}' \geq 5$, hence $\dim\lie{g}'' \leq 2$, therefore $\lie{g}''$ is abelian, thus necessarily $[\lie{g}',\lie{g}'] = \snd$, which is $3$-dimensional, and consequently $\dim\lie{g}' \geq 6$; since $\lie{g}',\lie{g}''$ are nontrivial, it must be $\dim\lie{g}'=6$, $\dim\lie{g}'' = 1$. Therefore, if $\lie{z}$ is the center of $\lie{g}$, then $\dim\lie{z} = 4$, while $\dim\snd = 3$, and since $\lie{z} = (\lie{z} \cap \lie{g}_1) \oplus \lie{g}_2$, then $\dim(\lie{z} \cap \fst) = 1$. Let $\fst'$ be the orthogonal complement of $\lie{z} \cap \fst$ in $\fst$. Then $[\fst',\fst'] = \snd$. Consequently we obtain the decomposition $\lie{g} = \tilde{\lie{g}}'\oplus \tilde{\lie{g}}''$, where $\tilde{\lie{g}}' = \fst'\oplus\snd$ and $\tilde{\lie{g}}'' = \fst \cap \lie{z}$ are commuting ideals, the Lie algebra $\tilde{\lie{g}}'$ is isomorphic to the Lie algebra of $N_{3,2}$, and $\tilde{\lie{g}}''$ is $1$-dimensional. By choosing an orthonormal basis of $\fst$ adapted to the decomposition $\fst = \fst' \oplus (\fst \cap \lie{z})$, we can write $L = L' + L''$, where $L'$ and $L''$ correspond to sublaplacians on $\tilde{\lie{g}}'$ and $\tilde{\lie{g}}''$ respectively.
\end{proof}

By part (i) of the previous proposition, our assumptions $d \leq 7$ and $\dim\snd > 2$ imply $\dim\snd = 3$. Moreover, by part (ii), the case $\dim\snd=3$ and $d<7$ is covered by \cite{martini_n32}, whereas, by part (iii), the case $d=7$, $\dim\snd=3$ and $\lie{g}$ decomposable is covered by \cite[\S 6]{martini_heisenbergreiter}.

It remains to consider the case $\lie{g}$ indecomposable, $d = 7$, $\dim\fst = 4$, $\dim\snd = 3$. Fix an identification of $\fst$ with $\R^4$, so that the inner product on $\fst$ determined by the sublaplacian becomes the standard inner product on $\R^4$. The map $\eta \mapsto J_\eta$ then determines an embedding of $\dusnd$ in $\lie{so}_4$, the space of $4\times 4$ skewsymmetric real matrices. It is then convenient to analyze the spectral decomposition of the elements of $\lie{so}_4$.

The identification of $\R^4$ with $\C^2$ allows us to identify $\lie{su}_2$ with a subspace of $\lie{so}_4$. If $K$ is the $\R$-linear involutive automorphism of $\C^2$ given by $(z_1,z_2) \mapsto (z_1,\overline{z_2})$ and $\widetilde{\lie{su}_2} = K \lie{su}_2 K$, then
\begin{equation}\label{eq:semisimpledec}
\lie{so}_4 = \lie{su}_2 \oplus \widetilde{\lie{su}_2}
\end{equation}
is the decomposition of the semisimple Lie algebra $\lie{so}_4$ into simple ideals. Let $\mu = \mu^- + \mu^+$ denote the decomposition of an element $\mu \in \lie{so}_4$ according to \eqref{eq:semisimpledec}.
Fix moreover the inner product on $\lie{so}_4$ defined by
\[ 
\langle \mu,\mu' \rangle = -\tr(\mu \mu')/4
\]
for all $\mu,\mu' \in \lie{so}_4$, and let $|\cdot|$ denote the corresponding norm.

\begin{prp}\label{prp:four_spectral}
Let $\mu \in \lie{so}_4$.
\begin{itemize}
\item[(i)] If $\mu^+ = 0$ or $\mu^- = 0$, then $-\mu^2 = |\mu|^2$.
\item[(ii)] If both $\mu^+,\mu^-$ are nonzero, then
\[
P_1^\mu = \frac{1}{2} - \frac{1}{2} \frac{\mu^+}{|\mu^+|} \frac{\mu^-}{|\mu^-|}, \qquad P_2^\mu = \frac{1}{2} + \frac{1}{2} \frac{\mu^+}{|\mu^+|} \frac{\mu^-}{|\mu^-|}
\]
are complementary orthogonal projections on $\R^4$, and if
\[
b_1^\mu = |\mu^+|+|\mu^-|, \qquad b_2^\mu = ||\mu^+|-|\mu^-||,
\]
then
\[
-\mu^2 = (b_1^\mu)^2 P_1^\mu + (b_2^\mu)^2 P_2^\mu.
\]
\end{itemize}
\end{prp}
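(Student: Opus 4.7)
The plan is to reduce everything to a single identity: for every $\nu \in \lie{su}_2 \cup \widetilde{\lie{su}_2}$, viewed as an element of $\lie{so}_4$, one has $\nu^2 = -|\nu|^2 I$. Part (i) is then immediate by taking $\nu = \mu$. To prove this identity on $\lie{su}_2$, I would represent $\nu$ as a $2 \times 2$ traceless anti-Hermitian matrix acting on $\C^2$: Cayley--Hamilton gives $\nu^2 = -(\det \nu) I$, and the eigenvalues of $\nu$ being purely imaginary conjugates $\pm i\lambda$ force $\det \nu = \lambda^2 \geq 0$. Each complex eigenvalue doubles when $\nu$ is regarded as a real $4\times 4$ matrix, so $-\tr_{\R^4}(\nu^2)/4 = \lambda^2 = \det\nu$, giving $|\nu|^2 = \det \nu$ as required. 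The case $\nu \in \widetilde{\lie{su}_2}$ then follows by writing $\nu = KAK$ with $A \in \lie{su}_2$: using $K^2 = I$ and the cyclicity of the trace, both $\nu^2 = -|A|^2 I$ and $|\nu|^2 = |A|^2$.

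For part (ii) the extra ingredient is that $\mu^-$ and $\mu^+$ commute as matrices, which follows from $[\lie{su}_2, \widetilde{\lie{su}_2}] = 0$: as distinct simple ideals of $\lie{so}_4$ they have zero bracket. Expanding the square and applying the identity above to each summand,
\[
-\mu^2 = |\mu^-|^2 I + |\mu^+|^2 I - 2\mu^- \mu^+.
\]
I would then set $X = \mu^-/|\mu^-|$, $Y = \mu^+/|\mu^+|$ and $Z = XY$, and verify two properties: $Z^2 = X^2 Y^2 = I$ by commutativity and the basic identity, and $Z^T = Y^T X^T = (-Y)(-X) = YX = XY = Z$ since $X, Y$ are skew-symmetric and commute. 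Hence $Z$ is a symmetric involution with spectrum $\{\pm 1\}$, and $P_1^\mu = (I-Z)/2$, $P_2^\mu = (I+Z)/2$ are complementary orthogonal projections onto its $-1$ and $+1$ eigenspaces, matching the formulas in the statement.

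The identity $-\mu^2 = (b_1^\mu)^2 P_1^\mu + (b_2^\mu)^2 P_2^\mu$ is then a direct algebraic check: substituting the definitions of the $b_j^\mu$ and $P_j^\mu$, the right-hand side collapses to $(|\mu^+|^2 + |\mu^-|^2) I - 2|\mu^+||\mu^-| Z$, which matches the displayed expansion of $-\mu^2$. The only real obstacle is the bookkeeping inside the basic identity --- keeping the factor of two between the traces over $\R^4$ and $\C^2$ consistent with the normalization $\langle \cdot, \cdot \rangle = -\tr(\cdot \, \cdot)/4$ --- after which the rest is elementary linear algebra.
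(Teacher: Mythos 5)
Your proof is correct, and it takes a genuinely different route from the paper's. The paper conjugates $\mu$ by an element of $SO_4$ into the normal form $\lambda_1 I_1 + \lambda_2 I_2$, then reads off $\mu^\pm$, $I_\pm I_\mp$ and the spectral data by an explicit $4\times 4$ matrix computation; to do this it needs (and tacitly uses, stating it without proof) the equivariance $(\Omega\mu\Omega^{-1})^\pm = \Omega\mu^\pm\Omega^{-1}$ and $|\Omega\mu\Omega^{-1}| = |\mu|$ for $\Omega \in SO_4$. You instead work coordinate-free: you derive $\nu^2 = -|\nu|^2$ for $\nu$ in either simple ideal from Cayley--Hamilton on $\C^2$ and the normalization of $|\cdot|$, you use $[\lie{su}_2,\widetilde{\lie{su}_2}]=0$ to get matrix commutativity of $\mu^+$ and $\mu^-$, and you then observe that $Z = \mu^+\mu^-/(|\mu^+||\mu^-|)$ is a symmetric involution, so $(1\mp Z)/2$ are the orthogonal spectral projections of $Z$. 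The final identity is the same algebra in both proofs. Your version avoids the normal-form reduction and the equivariance check, at the price of invoking a few structural facts (realification of complex matrices, commutativity of distinct simple ideals); the paper's version is more elementary but more computational and strictly coordinate-dependent. Both are complete. One minor remark: your step $P_1^\mu = (I - Z)/2$ agrees with the statement's $\frac12 - \frac12\frac{\mu^+}{|\mu^+|}\frac{\mu^-}{|\mu^-|}$ precisely because $\mu^+$ and $\mu^-$ commute, so $Z = XY = YX$; it is worth noting this explicitly so the match with the stated order of factors is visible.
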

\begin{proof}
Note that, for all $\mu \in \lie{so}_4$ and $\Omega \in SO_4$, $(\Omega \mu \Omega^{-1})^\pm = \Omega \mu^\pm \Omega^{-1}$ and $|\Omega \mu \Omega^{-1}| = |\mu|$. Via these identities, we may reduce to the case where the skewsymmetric matrix $\mu$ is in normal form, i.e., $\mu = \lambda_1 I_1 + \lambda_2 I_2$, where $\lambda_1,\lambda_2 \in \R$, $\lambda_1 \geq |\lambda_2|$, and
\[
I_1 = \begin{pmatrix}
0 & -1 & 0 & 0 \\
1 &  0 & 0 & 0 \\
0 &  0 & 0 & 0 \\
0 &  0 & 0 & 0 
\end{pmatrix}, \qquad
I_2 = \begin{pmatrix}
0 & 0 & 0 &  0 \\
0 & 0 & 0 &  0 \\
0 & 0 & 0 & -1 \\
0 & 0 & 1 &  0 
\end{pmatrix}.
\]
Set $I_\pm = I_1 \pm I_2$. It is then easily seen that $I_- \in \lie{su}_2$, $I_+ \in \widetilde{\lie{su}_2}$, $I_\pm^2 = -1$ and $|I_\pm| = 1$; therefore $\mu^\pm = (\lambda_1\pm\lambda_2) I_\pm /2$ and $|\mu^\pm| = (\lambda_1\pm\lambda_2)/2$. From this, part (i) follows immediately. As for part (ii), a simple computation shows that
\[
\frac{\mu^+}{|\mu^+|} \frac{\mu^-}{|\mu^-|} = I_+ I_- = \begin{pmatrix}
-1 &  0 & 0 & 0 \\
 0 & -1 & 0 & 0 \\
 0 &  0 & 1 & 0 \\
 0 &  0 & 0 & 1 
\end{pmatrix},
\]
and therefore
\[
\frac{1}{2} - \frac{1}{2} \frac{\mu^+}{|\mu^+|} \frac{\mu^-}{|\mu^-|} = \begin{pmatrix}
1 &  0 & 0 & 0 \\
 0 & 1 & 0 & 0 \\
 0 &  0 & 0 & 0 \\
 0 &  0 & 0 & 0 
\end{pmatrix},
\quad
\frac{1}{2} + \frac{1}{2} \frac{\mu^+}{|\mu^+|} \frac{\mu^-}{|\mu^-|} = \begin{pmatrix}
0 &  0 & 0 & 0 \\
 0 & 0 & 0 & 0 \\
 0 &  0 & 1 & 0 \\
 0 &  0 & 0 & 1 
\end{pmatrix},
\]
which are complementary orthogonal projections; since moreover $|\mu^+| + |\mu^-| = \lambda_1$ and $|\mu^+|-|\mu^-| = \lambda_2$, the conclusion follows.
\end{proof}

The previous proposition further reduces the cases to be considered. In fact, by (i), if the image $V$ of $\dusnd$ via $\eta \mapsto J_\eta$ coincides with one of the $3$-dimensional subspaces $\lie{su}_2, \widetilde{\lie{su}_2}$ of $\lie{so}_4$, then $-J_\eta^2$ is a multiple of the identity for all $\eta \in \dusnd$, that is, $G$ is an H-type group, and this case is covered by \cite{hebisch_multiplier_1993}. On the other hand, if $V$ is contained in the ``cone'' $C = \{ \mu \tc |\mu^+| = |\mu^-| \}$, then by (ii) $-J_\eta^2$ has exactly one nonzero eigenvalue for all $\eta \in \dusnd \setminus \{0\}$, hence this case is covered by \cite{martini_heisenbergreiter}.

We can then suppose that $V_- = V \cap \lie{su}_2$ and $V_+ = V \cap \widetilde{\lie{su}_2}$ are proper subspaces of $V$, and that $V \cap C$ is a proper Zariski-closed subset of $V$. Hence, if we set
\[
\ddsnd = \{ \eta \in \dusnd \tc 0 \neq |J_\eta^-| \neq |J_\eta^+| \neq 0 \},
\]
then $\dusnd \setminus \ddsnd$ is a proper Zariski-closed subset of $\dusnd$ and, for all $\eta \in \ddsnd$, the spectral decomposition of $-J_\eta^2$ as in Lemma~\ref{lem:spectraldecomposition} can be obtained by Proposition~\ref{prp:four_spectral}(ii); in other words, $\done = 2$, $r_0 = 0$, $r_1,r_2 = 1$, and
\[
-J_\eta^2 = (b_1^\eta)^2 P_1^\eta + (b_2^\eta)^2 P_2^\eta,
\]
where $P_\jone^\eta = P_\jone^{J_\eta}$, $b_\jone^\eta = b_\jone^{J_\eta}$. In particular $(b_1^\eta)^2 (b_2^\eta)^2 = \det J_\eta = |\pf J_\eta|^2$, where $\pf \mu$ denotes the Pfaffian of $\mu \in \lie{so}_4$ (see, e.g., \cite[\S5.2]{bourbaki_algebre9}); moreover the preimage via $\eta \mapsto J_\eta$ of the cone $C$ coincides with the zero set of the quadratic polynomial $\eta \mapsto \pf J_\eta$.

Note that the polynomial $\eta \mapsto \pf J_\eta$, modulo change of sign and linear changes of variable, is an invariant of the isomorphism class of the Lie algebra $\lie{g}$: in fact, if $\omega_\eta$ is the alternating $2$-form on $\lie{g}/\snd$ defined by
\[
\omega_\eta(v+\snd,v'+\snd) = \eta([v,v'])
\]
for all $\eta \in \dusnd$ and $v,v'\in\lie{g}$, then $\eta \mapsto \omega_\eta$ is intrinsically defined (i.e., it does not depend on the choice of $\fst$ or $L$, or on any choice of coordinates) and
\[
\omega_\eta \wedge \omega_\eta = 2 (\pf J_\eta) \, \mathrm{Vol},
\]
where $\mathrm{Vol}$ is the volume form on $\lie{g}/\snd$ induced by the chosen identification of $\fst$ with $\R^4$. By the classification of quadratic forms, a suitable choice of linear coordinates $(\eta_1,\eta_2,\eta_3)$ on $\snd$ and of the orientation of $\fst$ then allows one to assume that $\pf J_\eta$ has one of the following forms:
\[
0, \quad \eta_1^2, \quad \eta_1 \eta_2, \quad \eta_1^2 + \eta_2^2, \quad \eta_1^2 + \eta_2^2 - \eta_3^2, \quad \eta_1^2 + \eta_2^2 + \eta_3^2.
\]
An inspection of the classification of the indecomposable $7$-dimensional $2$-step nilpotent real Lie algebras with $3$-dimensional center given by \cite{gong_classification_1998,kuzmich_graded_1999} shows that each of the above normal forms for $\pf J_\eta$ corresponds to exactly one isomorphism class, as summarized by the following table:

\begin{center}
\begin{tabular}{ccc}
name in \cite{gong_classification_1998} & name in \cite{kuzmich_graded_1999} & Pfaffian\\\hline
(37A)                  & m7\_2\_2  & $0$ \\
(37B)                  & m7\_2\_4  & $\eta_1 \eta_2$ \\
(37B\textsubscript{1}) & m7\_2\_4r & $\eta_1^2 + \eta_2^2$ \\
(37C)                  & m7\_2\_3  & $\eta_1^2$ \\
(37D)                  & m7\_2\_5  & $\eta_1^2 + \eta_2^2 - \eta_3^2$ \\
(37D\textsubscript{1}) & m7\_2\_5r & $\eta_1^2 + \eta_2^2 + \eta_3^2$
\end{tabular}
\end{center}

In the following we will refer to these isomorphism classes with the names given in \cite{gong_classification_1998}.
Note that the case (37A) coincides with the case $V \subseteq C$, which we have already discussed. In the case (37D\textsubscript{1}), on the other hand, $J_\eta$ is invertible for all $\eta \in \dusnd \setminus \{0\}$, hence $G$ is a M\'etivier group, which is covered by \cite{hebisch_multiplier_1993}.

About the remaining cases, we will show that (37B), (37B\textsubscript{1}) and (37C) can be treated analogously as the groups with $2$-dimensional second layer. The case (37D) is the most difficult and requires a special technique, which will be described in the next section. In the rest of the present section, we suppose that we are in one of the cases (37B), (37B\textsubscript{1}), (37C).

Let $\Omega_\cnnb = \{ \eta \in \dusnd \tc b_2^\eta < b_1^\eta/2 \}$ and $\Omega_\prnb = \{ \eta \in \dusnd \tc b_2^\eta > b_1^\eta/4 \}$. Note that $\Omega_\cnnb,\Omega_\prnb$ are an open cover of $\dusnd \setminus \{0\}$; the restriction to one of these open sets allow us to consider separately the components $\{\eta \tc |J_\eta^+| \, |J_\eta^-| = 0\}$ and $\{ \eta \tc |J_\eta^+| = |J_\eta^-|\}$ of the singular set $\dusnd \setminus \ddsnd$. We now show that, in each of $\Omega_\cnnb,\Omega_\prnb$, we can find suitable coordinates so that the hypothesis \eqref{eq:selfcontrolhypothesis} is satisfied.

\begin{lem}\label{lem:pcoords}
There exist linear coordinates $(\eta_1^\prnb,\eta_2^\prnb,\eta_3^\prnb)$ on $\dusnd$ such that, if $D_\prnb = (\eta_1^\prnb \partial_{\eta_1^\prnb},\eta_2^\prnb \partial_{\eta_2^\prnb},\eta_3^\prnb \partial_{\eta_3^\prnb})$ and $\tilde\Omega_\prnb = \Omega_\prnb \cap \{ \eta \tc \eta_1^\prnb \eta_2^\prnb \eta_3^\prnb \neq 0\}$, then
\[
b_1^\eta,b_2^\eta \in \SC_{\tilde\Omega_\prnb,D_\prnb}^\infty, \qquad P_1^\eta,P_2^\eta \in \BD_{\tilde\Omega_\prnb,D_\prnb}^\infty(1).
\]
\end{lem}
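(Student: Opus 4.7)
The statement is the three-dimensional analog of Lemma~\ref{lem:2dc_discretesingularities}, now specialized to the region $\Omega_\prnb$. I would adapt the Puiseux-based proof of that lemma to three variables, handling the cases (37B), (37B\textsubscript{1}), (37C) in parallel.

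First, combining Lemma~\ref{lem:factorization} with Proposition~\ref{prp:four_spectral}, the irregularity set $\dusnd \setminus \ddsnd$ equals (away from the origin) $\{b_1^\eta = b_2^\eta\} \cup \{b_2^\eta = 0\}$, which by the explicit formulas $b_1^\eta = |J_\eta^+|+|J_\eta^-|$, $b_2^\eta = ||J_\eta^+|-|J_\eta^-||$ decomposes as
\[
 (\ker(\eta \mapsto J_\eta^+) \cup \ker(\eta \mapsto J_\eta^-)) \cup \{\pf J_\eta = 0\}.
\]
Since $b_2^\eta > b_1^\eta/4 > 0$ on $\Omega_\prnb \setminus \{0\}$, only the two kernels are relevant inside $\Omega_\prnb$. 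Using the identity $\pf J_\eta = |J_\eta^+|^2 - |J_\eta^-|^2$, each kernel is a linear subspace lying in the half-space $\{\pm \pf J_\eta \leq 0\}$. In each of the three cases, the normal form for $\pf J_\eta$ (respectively $\eta_1 \eta_2$, $\eta_1^2 + \eta_2^2$, $\eta_1^2$) constrains these kernels tightly, and a case-by-case inspection yields linear coordinates $(\eta_1^\prnb, \eta_2^\prnb, \eta_3^\prnb)$ on $\dusnd$ such that both $\ker(\eta \mapsto J_\eta^\pm)$ and $\{\pf J_\eta = 0\}$ are contained in $\bigcup_{i=1}^{3} \{\eta_i^\prnb = 0\}$.

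With this choice, $\tilde\Omega_\prnb \subseteq \ddsnd$, so by Lemmata~\ref{lem:factorization} and~\ref{lem:spectraldecomposition} the functions $b_j^\eta$ are real-analytic, strictly positive, and homogeneous of degree~$1$ on $\tilde\Omega_\prnb$, while the entries of $P_j^\eta$ are real-analytic, bounded (as entries of orthogonal projections), and homogeneous of degree~$0$. To upgrade these to $\SC^\infty$ and $\BD^\infty(1)$ bounds, I would extend to three variables the Puiseux argument of Lemma~\ref{lem:2dc_discretesingularities}: since $(b_j^\eta)^2$ are roots of the quadratic $X^2 - \tr(J_\eta^* J_\eta)\,X + (\pf J_\eta)^2$ with coefficients in $\R[\eta]$, each $b_j^\eta$ is algebraic over $\R[\eta]$, and by Lemma~\ref{lem:spectraldecomposition} so are the matrix entries of $P_j^\eta$. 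Near each coordinate hyperplane $\{\eta_i^\prnb = 0\}$, a Puiseux expansion in the transverse variable, combined with homogeneity and a compactness argument (in the style of Lemma~\ref{lem:schom_unif}) on the portion of the unit sphere away from all hyperplanes, then yields the pointwise estimates $|D_\prnb^\alpha b_j^\eta| \leq C_\alpha \, b_j^\eta$ and $|D_\prnb^\alpha (P_j^\eta)| \leq C_\alpha$ on all of $\tilde\Omega_\prnb$.

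The main obstacle is the case-by-case verification that the kernels $\ker(\eta \mapsto J_\eta^\pm)$ really do align with coordinate hyperplanes in each of (37B), (37B\textsubscript{1}), (37C)---particularly in the cases where the sign of $\pf J_\eta$ is not strict on a codimension-$1$ set. The multi-variable Puiseux bookkeeping can in principle be reduced to the one-variable analysis of Lemma~\ref{lem:2dc_discretesingularities} by freezing two coordinates and letting the third approach the hyperplane, but one must verify that the resulting bounds are uniform as the frozen coordinates themselves range over compact subsets of the unit sphere bounded away from the other hyperplanes.
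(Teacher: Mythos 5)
Your proposal diverges from the paper's argument and has a genuine gap at the point you yourself flag. The paper does \emph{not} use Puiseux expansions here. Instead, it exploits a structural fact specific to the region $\Omega_\prnb$: each function $\eta\mapsto|J_\eta^\pm|$ factors as a linear projection followed by a Euclidean norm. Concretely, since $V_-\cap V_+=0$ and both are proper subspaces of $V$, one can pick a complement $W$ of $V_++V_-$ in $V$ and set $\tilde V_\pm=V_\pm\oplus W$; then $|\mu^\pm|=|\pi_\pm\mu|_\pm$, where $\pi_\pm:V\to\tilde V_\pm$ is the projection with kernel $V_\mp$ and $|\cdot|_\pm$ a Euclidean norm on $\tilde V_\pm$. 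Choosing coordinates adapted to $V_+\oplus V_-\oplus W$, this exhibits $|\mu^\pm|$ as a multihomogeneous, nonvanishing function in the sense of Lemma~\ref{lem:schom_multi}, which immediately gives $|\mu^\pm|\in\SC^\infty_{\tilde\Omega_\prnb,D_\prnb}$ and $\mu^\pm/|\mu^\pm|\in\BD^\infty_{\tilde\Omega_\prnb,D_\prnb}(1)$. From there, $b_1^\eta=|\mu^+|+|\mu^-|$ and the projections $P_j^\eta$ (rational in $\mu^\pm/|\mu^\pm|$ by Proposition~\ref{prp:four_spectral}) are handled by Lemma~\ref{lem:sc} and Leibniz' rule, while $b_2^\eta=(b_1^\eta)^{-1}(b_1^\eta b_2^\eta)$ is handled by noting $b_1^\eta b_2^\eta=\bigl||\mu^+|^2-|\mu^-|^2\bigr|/2$ is smooth, homogeneous, and nonvanishing on $\overline{\Omega_\prnb}\setminus\{0\}$ so that Lemma~\ref{lem:schom_unif} applies. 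No resolution of singularities is needed because, within $\Omega_\prnb$, the only singularities are along the \emph{linear} subspaces $V_\pm$, and the norm-of-projection structure captures these exactly.

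Your Puiseux-based route has two problems. First, the requirement that $\{\pf J_\eta=0\}$ be contained in $\bigcup_i\{\eta_i^\prnb=0\}$ is superfluous --- on $\Omega_\prnb$ the function $\pf J_\eta$ is bounded away from zero --- and imposing it needlessly constrains the choice of coordinates while adding nothing. Second, and more seriously, the ``freeze two coordinates and apply 1D Puiseux'' scheme does not automatically yield bounds of the required form. The 1D Puiseux expansion has coefficients and exponents depending on the frozen variables; to obtain $|D_\prnb^\alpha b_j^\eta|\le C_\alpha|b_j^\eta|$ uniformly on $\tilde\Omega_\prnb$ you would need the Puiseux data to vary controllably with the parameters, which is not a consequence of the one-variable Puiseux theorem used in Lemma~\ref{lem:2dc_discretesingularities}. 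You identify this issue yourself but do not resolve it, so as written the argument is incomplete. One could in principle push through some multi-parameter resolution-of-singularities argument, but the paper avoids this entirely by recognizing that the singularity set in $\Omega_\prnb$ is a union of linear subspaces on which the relevant functions are norms of projections, placing the problem squarely within the scope of the multi-homogeneity Lemma~\ref{lem:schom_multi}.
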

\begin{proof}
By identifying $\dusnd$ with its embedding $V$ in $\lie{so}_4$, the problem is reduced to choosing suitable coordinates on $V$. We know that $V_-$ and $V_+$ are proper subspaces  of $V$, and clearly $V_- \cap V_+ = 0$. Let $W$ be a linear complement of $V_- + V_+$ in $V$, and set $\tilde V_\pm = V_\pm + W$. Let $\pi_\pm : V \to \tilde V_\pm$ be the projection on the first component with respect to the decomposition $V = \tilde V_\pm \oplus V_\mp$.  Then, for all $\mu \in V$, $|\mu^\pm| = |\pi_\pm \mu|_\pm$ for some Euclidean norm $|\cdot|_\pm$ on $\tilde V_\pm$. Consequently, by Lemma~\ref{lem:schom_multi}, for any choice of coordinates $(\eta_1^\prnb,\eta_2^\prnb,\eta_3^\prnb)$ on $V$ compatible with the decomposition $V = V_+ \oplus V_- \oplus W$, the functions $\mu \mapsto |\mu^\pm|$ restricted to $V$ are in $\SC_{\tilde\Omega_\prnb,D_\prnb}^\infty$, and similarly the functions $\mu \mapsto \mu^\pm / |\mu^\pm|$ are in $\BD_{\tilde\Omega_\prnb,D_\prnb}^\infty(1)$. From the formulas given by Proposition~\ref{prp:four_spectral} and Leibniz' rule it is then clear that $\mu \mapsto P_1^\mu$ and $\mu \mapsto P_2^\mu$ restricted to $V$ are also in $\BD_{\tilde\Omega_\prnb,D_\prnb}^\infty(1)$ and moreover, by Lemma~\ref{lem:sc}(ii), $\mu \to b_1^\mu$ is in $\SC_{\tilde\Omega_\prnb,D_\prnb}^\infty$. Finally, $b_1^\mu b_2^\mu = ||\mu^+|^2-|\mu^-|^2|/2$ restricted to $\overline{\Omega_\prnb} \setminus \{0\}$ does not vanish, hence it is smooth,
and Lemma~\ref{lem:schom_unif} shows that $b_1^\mu b_2^\mu \in \SC_{\Omega_\prnb,D_\prnb}^\infty$, but then $b_2^\eta = (b_1^\eta)^{-1} (b_1^\eta b_2^\eta) \in \SC_{\tilde\Omega_\prnb,D_\prnb}$ by Lemma~\ref{lem:sc}(iii,iv).
\end{proof}

An inspection of the proof shows that the previous lemma would hold also in the case (37D). On the other hand, the assumption on the form of $\pf J_\eta$ is essential for the validity of the following lemma.

\begin{lem}\label{lem:ccoords}
There exist linear coordinates $(\eta_1^\cnnb,\eta_2^\cnnb,\eta_3^\cnnb)$ on $\dusnd$ such that, if $D_\cnnb = (\eta_1^\cnnb \partial_{\eta_1^\cnnb},\eta_2^\cnnb \partial_{\eta_2^\cnnb},\eta_3^\cnnb \partial_{\eta_3^\cnnb})$ and $\tilde\Omega_\cnnb = \Omega_\cnnb \cap \{\eta \tc \eta_1^\cnnb \eta_2^\cnnb \eta_3^\cnnb \neq 0\}$, then
\[
b_1^\eta,b_2^\eta \in \SC_{\tilde\Omega_\cnnb,D_\cnnb}^\infty, \qquad P_1^\eta,P_2^\eta \in \BD_{\tilde\Omega_\cnnb,D_\cnnb}^\infty(1).
\]
\end{lem}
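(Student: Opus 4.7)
The plan is to emulate Lemma~\ref{lem:pcoords}, the added difficulty being that on $\Omega_\cnnb$ the second eigenvalue $b_2^\eta = \bigl||J_\eta^+|-|J_\eta^-|\bigr|$ can vanish (on the preimage of the cone $C$), so it is not a priori smooth, let alone self-controlled. The first step is to choose linear coordinates $(\eta_1^\cnnb,\eta_2^\cnnb,\eta_3^\cnnb)$ on $\dusnd$ so that $\eta\mapsto\pf J_\eta$ takes one of the three normal forms from the table at hand, namely $(\eta_1^\cnnb)^2$, $\eta_1^\cnnb \eta_2^\cnnb$, or $(\eta_1^\cnnb)^2+(\eta_2^\cnnb)^2$, corresponding to the isomorphism classes (37C), (37B), (37B\textsubscript{1}). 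The crucial feature is that in each case the zero set of $\pf J_\eta$ is contained in the union $\{\eta_1^\cnnb \eta_2^\cnnb \eta_3^\cnnb = 0\}$ of the coordinate hyperplanes; hence $\pf J_\eta$ does not vanish on $\tilde\Omega_\cnnb$.

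For $b_1^\eta$ and the projections, I would observe that the condition $b_2^\eta<b_1^\eta/2$ defining $\Omega_\cnnb$, together with $b_1^\eta=|J_\eta^+|+|J_\eta^-|$ and $b_2^\eta=\bigl||J_\eta^+|-|J_\eta^-|\bigr|$, forces $|J_\eta^\pm|\geq b_1^\eta/4>0$ on $\overline{\Omega_\cnnb}\setminus\{0\}$. Thus the quadratic polynomials $q_\pm(\eta)\defeq|J_\eta^\pm|^2$ are homogeneous of degree $2$ and nonvanishing on $\overline{\Omega_\cnnb}\setminus\{0\}$, so Lemma~\ref{lem:schom_unif}(ii) (applied after declaring our coordinates orthonormal, so as to induce a Euclidean structure on $\dusnd$) gives $q_\pm\in\SC^\infty_{\Omega_\cnnb,D_\cnnb}$. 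By Lemma~\ref{lem:sc}(v, ii) it follows that $b_1^\eta=\sqrt{q_+}+\sqrt{q_-}\in\SC^\infty_{\Omega_\cnnb,D_\cnnb}$. Similarly the matrix-valued functions $J_\eta^\pm/|J_\eta^\pm|$ are smooth and homogeneous of degree $0$ on $\overline{\Omega_\cnnb}\setminus\{0\}$, hence in $\BD^\infty_{\Omega_\cnnb,D_\cnnb}(1)$ by Lemma~\ref{lem:schom_unif}(i), and the explicit formula of Proposition~\ref{prp:four_spectral}(ii) then yields $P_1^\eta,P_2^\eta\in\BD^\infty_{\Omega_\cnnb,D_\cnnb}(1)$.

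For $b_2^\eta$, I would exploit the identity $b_1^\eta b_2^\eta = |\pf J_\eta|$, so that $b_2^\eta=|\pf J_\eta|/b_1^\eta$. Since $b_1^\eta\in\SC^\infty$ and $b_1^\eta>0$ on $\tilde\Omega_\cnnb$, Lemma~\ref{lem:sc}(iii, iv) reduces the problem to showing $|\pf J_\eta|\in\SC^\infty_{\tilde\Omega_\cnnb,D_\cnnb}$. This is checked by a direct computation in each case: applying any of the derivations $\eta_i^\cnnb\partial_{\eta_i^\cnnb}$ to $(\eta_1^\cnnb)^2$, to $|\eta_1^\cnnb||\eta_2^\cnnb|$, or to $(\eta_1^\cnnb)^2+(\eta_2^\cnnb)^2$ produces either zero or a single nonnegative term already pointwise dominated by $|\pf J_\eta|$ itself, so all iterated derivatives are controlled by $|\pf J_\eta|$.

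The main obstacle is the treatment of $b_2^\eta$: in the coordinates adapted to the decomposition $V=V_+\oplus V_-\oplus W$ used in Lemma~\ref{lem:pcoords}, the Pfaffian $\pf J_\eta$ is a general quadratic form whose zero locus meets $\Omega_\cnnb$ in a nontrivial quadric (the preimage of $C$), so neither $\pf J_\eta$ nor $b_2^\eta$ is self-controlled there. The remedy is to abandon those coordinates in favor of coordinates putting $\pf J_\eta$ itself in normal form; this is possible precisely because the classification of indecomposable $7$-dimensional $2$-step Lie algebras (combined with the exclusion of (37A), (37D), (37D\textsubscript{1}) already handled earlier) restricts the Pfaffian to the three particularly simple shapes above.
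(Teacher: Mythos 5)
Your proposal is correct and follows essentially the same route as the paper: choose coordinates putting $\pf J_\eta$ in normal form, observe that $\mu^\pm$ (hence $b_1^\eta$ and the $P_j^\eta$) are nonvanishing on $\overline{\Omega_\cnnb}\setminus\{0\}$ so that Lemma~\ref{lem:schom_unif} applies, then recover $b_2^\eta$ as $(b_1^\eta)^{-1}(b_1^\eta b_2^\eta)$ using the normal form of $\pf J_\eta = b_1^\eta b_2^\eta$. The only cosmetic differences are that the paper invokes Lemma~\ref{lem:schom_multi} where you perform the (equally valid) direct derivative computation, and your lower bound $|J_\eta^\pm|\geq b_1^\eta/4$ should read $b_1^\eta/6$, though the exact constant is immaterial.
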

\begin{proof}
Let $(\eta_1^\cnnb,\eta_2^\cnnb,\eta_3^\cnnb)$ be the coordinates on $\dusnd$ that bring the Pfaffian $\pf J_\eta$ in normal form as described above. It is then clear from the form of the Pfaffian and from Lemma~\ref{lem:schom_multi} that $\eta \mapsto b_1^\eta b_2^\eta$ is in $\SC_{\tilde\Omega_\cnnb,D_\cnnb}^\infty$. On the other hand, the functions $\mu \mapsto \mu^\pm$ do not vanish on $\overline{\Omega_\cnnb} \setminus \{0\}$, hence the functions $\eta \mapsto P_\jone^\eta$ and $\eta \mapsto b_1^\eta$ are smooth there, and Lemma~\ref{lem:schom_unif} show that $b_1^\eta \in \SC_{\Omega_\cnnb,D_\cnnb}^\infty$ and $P_1^\eta,P_2^\eta \in \BD_{\Omega_\cnnb,D_\cnnb}^\infty(1)$. Consequently $b_2^\eta = (b_1^\eta)^{-1} (b_1^\eta b_2^\eta) \in \SC_{\tilde\Omega_\cnnb,D_\cnnb}^\infty$ by Lemma~\ref{lem:sc}(iii,iv).
\end{proof}

Let $(u_1^\prnb,u_2^\prnb,u_3^\prnb)$ the coordinates on $\snd$ dual to the coordinates $(\eta_1^\prnb,\eta_2^\prnb,\eta_3^\prnb)$ given by Lemma~\ref{lem:pcoords}; analogously, let  $(u_1^\cnnb,u_2^\cnnb,u_3^\cnnb)$ be the coordinates on $\snd$ dual to the coordinates $(\eta_1^\cnnb,\eta_2^\cnnb,\eta_3^\cnnb)$ given by Lemma~\ref{lem:ccoords}. Let $\zeta_\cnnb,\zeta_\prnb : \dusnd \setminus \{0\} \to \R$ be a partition of unity subordinate to the open cover $\Omega_\cnnb,\Omega_\prnb$ and made of smooth functions homogeneous of degree $0$.
A repetition of the proof of Proposition~\ref{prp:2dc_secondweightedestimate} then yields the following estimates.

\begin{prp}
For all compact sets $K \subseteq \R$, for all Borel functions $F : \R \to \C$ supported in $K$, and for all $\alpha \in \leftclosedint 0,1/2 \rightopenint^3$,
\begin{gather*}
\int_G \left| \Kern_{F(L) \, \zeta_\prnb(\vecU)}(x,u) \prod_{\jtwo=1}^3 (1+|u_\jtwo^\prnb|)^{\alpha_\jtwo} \right|^2 \,dx\,du \leq C_{K,\alpha} \|F\|_{W_2^{|\alpha|}}^2,\\
\int_G \left| \Kern_{F(L) \, \zeta_\cnnb(\vecU)}(x,u) \prod_{\jtwo=1}^3 (1+|u_\jtwo^\cnnb|)^{\alpha_\jtwo} \right|^2 \,dx\,du \leq C_{K,\alpha} \|F\|_{W_2^{|\alpha|}}^2.
\end{gather*}
\end{prp}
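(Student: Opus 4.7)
The plan is to mimic the proof of Proposition~\ref{prp:2dc_secondweightedestimate} step by step, with the two-dimensional dyadic partition on the second layer replaced by a three-dimensional one, and with the coordinates and self-control estimates supplied by Lemma~\ref{lem:pcoords} for the $\prnb$ estimate and by Lemma~\ref{lem:ccoords} for the $\cnnb$ estimate. Since the two cases are completely parallel, I describe only the $\prnb$ one.

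First, using the cut-off $\chi$ fixed in \S\ref{section:2dc}, for $\radp > 0$ and $\crdp = (\crdp_1,\crdp_2,\crdp_3) \in \leftopenint 0,1 \rightclosedint^3$ set
\[
\zeta_{\prnb,\radp,\crdp}(\eta) = \chi(|\eta|/\radp) \, \zeta_\prnb(\eta/|\eta|) \, \prod_{\jtwo=1}^{3} \chi(|\eta_\jtwo^\prnb|/(|\eta|\crdp_\jtwo)),
\]
so that $\zeta_\prnb(\eta) = \sum_{m \in \Z,\ n \in \N^3} \zeta_{\prnb,2^m,(2^{-n_1},2^{-n_2},2^{-n_3})}(\eta)$ off the Plancherel-negligible set $\{\eta_1^\prnb \eta_2^\prnb \eta_3^\prnb = 0\}$. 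By Lemma~\ref{lem:spectraldecomposition}, $\eta \mapsto (\sum_{\jone} 2 r_\jone (b_\jone^\eta)^2)^{1/2}$ is a norm on $\dusnd$, hence $F(L) \, \zeta_{\prnb,\radp,\crdp}(\vecU) = 0$ unless $\radp \leq C_K$; so in the decomposition of $F(L)\, \zeta_\prnb(\vecU)$ only finitely many values of $m$ contribute.

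Second, apply Proposition~\ref{prp:cutoff_partiall2norm} to each piece $\zeta_{\prnb,\radp,\crdp}$ with $\Omega = \tilde\Omega_\prnb$ and $D = D_\prnb$; the hypothesis \eqref{eq:selfcontrolhypothesis} is delivered by Lemma~\ref{lem:pcoords}. Take $f(\eta) = |\eta|$ and $\chi_\rad(t) = \chi(t/\radp)$, and set
\[
\chi_{\sph,\crdp}(\eta) = \zeta_\prnb(\eta/|\eta|) \, \prod_{\jtwo=1}^{3} \chi(|\eta_\jtwo^\prnb|/(|\eta|\crdp_\jtwo)),
\]
which is homogeneous of degree $0$. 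By \eqref{eq:sc_scalar}, Lemma~\ref{lem:schom_multi} applied to $\eta \mapsto \eta_\jtwo^\prnb/|\eta|$, and Lemma~\ref{lem:sccomp2}(i), the quantity $\|\chi_{\sph,\crdp}\|_{\BD^A_{\tilde\Omega_\prnb, D_\prnb}(1)}$ is bounded uniformly in $\crdp$. Since $|\eta_\jtwo^\prnb| \sim \radp \crdp_\jtwo$ on $\supp\zeta_{\prnb,\radp,\crdp}$, Proposition~\ref{prp:cutoff_partiall2norm} gives for all $\alpha \in \N^3$ with $|\alpha| \leq A$
\[
\int_G \Bigl| |(u^\prnb)^\alpha| \, \Kern_{F(L)\,\zeta_{\prnb,\radp,\crdp}(\vecU)}(x,u) \Bigr|^2 \,dx\,du \leq C_{K,\alpha} \|F\|_{W_2^{|\alpha|}}^2 \prod_{\jtwo=1}^{3} (\radp\crdp_\jtwo)^{1-2\alpha_\jtwo},
\]
and complex interpolation extends the bound to all $\alpha \in \leftclosedint 0,\infty \rightopenint^3$.

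Third, I would sum the pieces dyadically: triangle inequality in $L^2$, the finite range of $m$, and the geometric sums $\sum_{n_\jtwo \geq 0} 2^{-n_\jtwo(1-2\alpha_\jtwo)}$ converge precisely because $\alpha_\jtwo < 1/2$, producing
\[
\int_G \Bigl| |(u^\prnb)^\alpha| \, \Kern_{F(L)\,\zeta_\prnb(\vecU)}(x,u) \Bigr|^2 \,dx\,du \leq C_{K,\alpha} \|F\|_{W_2^{|\alpha|}}^2 \qquad (\alpha \in \leftclosedint 0,1/2 \rightopenint^3).
\]
To replace each $|u_\jtwo^\prnb|^{\alpha_\jtwo}$ by $(1+|u_\jtwo^\prnb|)^{\alpha_\jtwo}$, expand $\prod_\jtwo (1+|u_\jtwo^\prnb|)^{\alpha_\jtwo}$ as a sum of $2^3$ monomials in the $|u_\jtwo^\prnb|^{\alpha_\jtwo}$ and add together the eight bounds obtained above with each subset of the components of $\alpha$ set to $0$; since $|\alpha|$ is not increased, the Sobolev exponent is still $|\alpha|$. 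This gives the $\prnb$ inequality, and the $\cnnb$ one is obtained verbatim with $D_\prnb, \tilde\Omega_\prnb, (u^\prnb_\jtwo)$ replaced by $D_\cnnb, \tilde\Omega_\cnnb, (u^\cnnb_\jtwo)$, using Lemma~\ref{lem:ccoords}. The only place where anything could go wrong is the convergence of the dyadic sum, which is exactly why the hypothesis $\alpha_\jtwo < 1/2$ is imposed.
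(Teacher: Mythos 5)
Your proposal follows the paper's intended route: the paper proves this proposition literally by saying ``a repetition of the proof of Proposition~\ref{prp:2dc_secondweightedestimate}'' with Lemmas~\ref{lem:pcoords} and~\ref{lem:ccoords} supplying the self-control hypothesis, and you reproduce exactly that repetition, with the correct $3$-dimensional dyadic decomposition, the same application of Proposition~\ref{prp:cutoff_partiall2norm}, the same interpolation step, the same dyadic summation with convergence ensured by $\alpha_\jtwo<1/2$, and the same device of summing over the $2^3$ subsets of indices to pass from $|u_\jtwo|^{\alpha_\jtwo}$ to $(1+|u_\jtwo|)^{\alpha_\jtwo}$.

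There is one technical slip in the way you feed the data into Proposition~\ref{prp:cutoff_partiall2norm}. You take $f(\eta)=|\eta|$ and $\chi_\rad(t)=\chi(t/\radp)$. While the product $\chi_\rad(f(\eta))$ is the intended factor $\chi(|\eta|/\radp)$, the hypotheses \eqref{eq:suppcondition}--\eqref{eq:boundcondition} then fail to hold \emph{uniformly in $\radp$}: one has $\supp\chi_\rad\subseteq[\radp/2,2\radp]$ (not contained in a fixed $[\kappa^{-1},\kappa]$), $|\supp\chi_\rad|\sim\radp$ (so the lower bound $|\supp\chi_\rad|\geq\kappa^{-1}$ degenerates), and $\|\chi(\cdot/\radp)\|_{C^A}\sim\radp^{-A}$. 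Hence the constant $C_{\kappa,K,\alpha}$ coming out of Proposition~\ref{prp:cutoff_partiall2norm} would depend on $\radp$ and ruin the dyadic summation. The correct split, used in the proof of Proposition~\ref{prp:2dc_secondweightedestimate} that you are imitating, is $f(\eta)=|\eta|/\radp$ and $\chi_\rad=\chi$: then $\|f\|_{\SC^A}$ is independent of $\radp$ by \eqref{eq:sc_scalar}, $\chi_\rad$ is fixed with fixed support $\subseteq[1/2,2]$, and $\kappa$ may be chosen once and for all. With that single correction your proof is complete and matches the paper's.
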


As in Proposition~\ref{prp:2dc_l1estimate} these estimates can be combined at the level of $L^1$, so to obtain the following result, which completes the proof of Proposition~\ref{prp:l1low}, except for the case (37D).

\begin{prp}\label{prp:7d_l1estimate}
Let $K \subseteq \R$ be compact. For all Borel functions $F : \R \to \C$ such that $\supp F \subseteq K$, and for all $\alpha,\beta \in \R$ such that $\beta \geq 3/2$ and $\beta > \alpha + 7/2$,
\[
\| (1+|\cdot|_G)^\alpha \, \Kern_{F(L)} \|_1 \leq C_{K,\alpha,\beta} \| F \|_{W_2^\beta}.
\]
\end{prp}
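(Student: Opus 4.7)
The proof proposal mirrors closely the argument of Proposition~\ref{prp:2dc_l1estimate}, adjusted for the dimensions of the present setting ($d=7$, $\dim\snd = 3$, $\dim\fst = 4$, $Q = 10$). I will establish the weighted $L^1$-estimate in two overlapping regions of the $(\alpha,\beta)$-plane, and then obtain the claimed range by interpolation. Specifically, I will prove \eqref{eq:weightedl1} (with $d/2$ and $\dtwo/2$ replaced by $7/2$ and $3/2$) separately for
\begin{equation*}
\text{(I)} \quad \beta \geq 0, \ \beta > \alpha + 5
\qquad\text{and}\qquad
\text{(II)} \quad \beta \geq 0, \ 2\beta > \alpha+5, \ \alpha < -2.
\end{equation*}
The point $P_\delta = (-2-\delta, 3/2)$ lies in (II) for every $\delta > 0$ and sits on the line $\beta = \alpha + 7/2 + \delta$, so the convex hull of $P_\delta$ with region (I) covers $\{\beta \geq 3/2,\ \beta > \alpha + 7/2 + \delta\}$, from which the conclusion follows by complex interpolation and sending $\delta \to 0^+$.

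For region (I), the argument is exactly the ``standard'' one: pick $s \in (\alpha+5,\beta)$ and apply Cauchy--Schwarz together with \eqref{eq:standardl2}:
\[
\|(1+|\cdot|_G)^\alpha \Kern_{F(L)}\|_1 \leq \|(1+|\cdot|_G)^{\alpha-s}\|_2 \, \|(1+|\cdot|_G)^s \Kern_{F(L)}\|_2 \leq C_{K,\alpha,\beta} \|F\|_{W_2^\beta},
\]
since $2(s-\alpha) > Q = 10$ ensures the integrability of the weight on $G$.

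The substance lies in region (II). Here I split $\Kern_{F(L)} = \Kern_{F(L) \zeta_\cnnb(\vecU)} + \Kern_{F(L) \zeta_\prnb(\vecU)}$ and treat each piece separately by H\"older's inequality with a diagonal weight in the appropriate coordinates $(u_1^\bullet,u_2^\bullet,u_3^\bullet)$:
\begin{multline*}
\|(1+|\cdot|_G)^\alpha \Kern_{F(L)\zeta_\bullet(\vecU)}\|_1 \\
\leq \Bigl\|(1+|(x,u)|_G)^\alpha \textstyle\prod_{\jtwo=1}^3 (1+|u_\jtwo^\bullet|)^{-\theta}\Bigr\|_2 \cdot \Bigl\|\textstyle\prod_{\jtwo=1}^3 (1+|u_\jtwo^\bullet|)^\theta \Kern_{F(L)\zeta_\bullet(\vecU)}\Bigr\|_2.
\end{multline*}
The second factor is controlled by $C_{K,\theta} \|F\|_{W_2^{3\theta}}$ via the preceding weighted $L^2$-estimate, provided $\theta \in \leftclosedint 0, 1/2 \rightopenint$, and hence by $C \|F\|_{W_2^\beta}$ as long as $3\theta \leq \beta$. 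To bound the first factor, I write $-\alpha = \alpha_1 + \alpha_2$ with $\alpha_1 > \dim\fst / 2 = 2$ and use \eqref{eq:homogeneousnorm} together with $(1+|u|^{1/2})^{-2\alpha_2} \leq C \prod_\jtwo (1+|u_\jtwo^\bullet|)^{-\alpha_2/3}$ to reduce the integral to
\[
\int_{\fst}(1+|x|)^{-2\alpha_1}\,dx \cdot \int_{\snd} \textstyle\prod_{\jtwo=1}^3 (1+|u_\jtwo^\bullet|)^{-2\theta - \alpha_2/3} \,du,
\]
which is finite iff $2\alpha_1 > 4$ and $2\theta + \alpha_2/3 > 1$. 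The compatibility condition $\alpha_1 + \alpha_2 = -\alpha$ with $\alpha_1 > 2$ and $\alpha_2 > 3-6\theta$ translates into $6\theta > \alpha+5$; combined with $3\theta \leq \beta$ and $\theta < 1/2$, this is feasible precisely when $2\beta > \alpha+5$ and $\alpha < -2$, i.e.\ exactly in region (II).

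The main obstacle is essentially bookkeeping: ensuring that the single parameter $\theta$ can be chosen to simultaneously satisfy the $L^2$-side constraint $3\theta \leq \beta$, $\theta < 1/2$ and the integrability constraint $6\theta > \alpha+5$. The verification that these ranges intersect in region (II), and that the resulting $(\alpha,\beta)$-window interpolates with region (I) to yield exactly $\beta \geq 3/2$, $\beta > \alpha + 7/2$, follows the computation carried out in Proposition~\ref{prp:2dc_l1estimate} with $\dtwo = 2$ replaced by $\dtwo = 3$.
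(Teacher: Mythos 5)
Your proposal is correct and follows exactly the same route as the paper: the paper's proof of Proposition~\ref{prp:7d_l1estimate} simply says to repeat the argument of Proposition~\ref{prp:2dc_l1estimate} with the two weighted $L^2$-estimates for $\Kern_{F(L)\,\zeta_\prnb(\vecU)}$ and $\Kern_{F(L)\,\zeta_\cnnb(\vecU)}$ in place of those from Proposition~\ref{prp:2dc_secondweightedestimate}, and that is precisely what you do, with the correct arithmetic for $\dim\fst = 4$, $\dtwo = 3$, $Q = 10$.
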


\section{A particular group with $3$-dimensional second layer}\label{section:g743}

In this section we assume that we are in the case (37D) according to the classification described in \S\ref{section:7dim}. Therefore we can choose orthogonal coordinates $(x_1,x_2,x_3,x_4)$ on $\fst$ and coordinates $(\eta_1,\eta_2,\eta_3)$ on $\dusnd$ such that
\begin{equation}\label{eq:pfaffiancondition}
\pf J_\eta = \eta_1^2 + \eta_2^2 - \eta_3^2.
\end{equation}
Let us fix an inner product on $\dusnd$ such that $(\eta_1,\eta_2,\eta_3)$ are orthogonal coordinates. We may suppose that the homogeneous norm $|\cdot|_G$ on $G$ is defined by \eqref{eq:homogeneousnorm} where the norms on $\fst$ and $\snd$ are induced by the chosen inner products.

In contrast with the result of \S\ref{section:7dim}, in this case we are not able to find coordinates on $\dusnd$ for which the hypothesis \eqref{eq:selfcontrolhypothesis} is satisfied in a neighborhood of the cone $\{ \eta \tc \pf J_\eta = 0\}$; a more refined decomposition will then be used, involving an infinite number of systems of coordinates. An additional ingredient that will be exploited is a special extra weight on the first layer, given by an adaptation of the technique of \cite{hebisch_multiplier_1993,hebisch_multiplier_1995} and \cite[\S3]{martini_joint_2012}, and by the following estimates.

\begin{lem}\label{lem:g743_hzestimate}
There exists a continuous function $w : \fst \to \leftclosedint 0,\infty \rightopenint$ such that:
\begin{itemize}
\item[(i)] for all $\eta \in \dusnd$ and $x \in \fst$,
\[
|J_\eta x| \geq |\eta| \, w(x);
\]
\item[(ii)] if $\alpha,\gamma \in \leftclosedint 0,\infty \rightopenint$ and $\min\{\gamma,1\} + \alpha > \dim \fst$, then
\[
\int_{\fst} (1+|x|)^{-\alpha} \,(1+w(x))^{-\gamma} \,dx < \infty.
\]
\end{itemize}
\end{lem}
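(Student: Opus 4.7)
My plan is to define $w$ as the smallest singular value of the linear map $\eta \mapsto J_\eta x$, compute it explicitly in a convenient concrete model of the (37D) algebra, and then verify (ii) by direct integration.

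First, I would fix an adapted realization. Since the image of $\eta \mapsto J_\eta$ is a $3$-dimensional subspace $V \subseteq \lie{so}_4$ on which the Pfaffian has signature $(2,1)$, and since Proposition~\ref{prp:four_spectral} yields $\pf \mu = |\mu^+|^2 - |\mu^-|^2$ under the decomposition $\lie{so}_4 = \widetilde{\lie{su}_2} \oplus \lie{su}_2$, such a $V$ is obtained by taking an orthonormal pair in $\widetilde{\lie{su}_2}$ together with a unit vector in $\lie{su}_2$. A convenient concrete model is to identify $\fst$ with the quaternions via an orthonormal basis $1,i,j,k$ and set
\[
J_\eta x = \eta_1 \, j x + \eta_2 \, k x + \eta_3 \, x i,
\]
where the first two operators are left quaternion multiplications (lying in $\widetilde{\lie{su}_2}$) and the third is a right multiplication (lying in $\lie{su}_2$); one checks that they are mutually orthonormal in $\lie{so}_4$, so $\pf J_\eta = \eta_1^2 + \eta_2^2 - \eta_3^2$, confirming that we are in case (37D). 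Any other realization of (37D) differs from this by a Lie algebra automorphism, i.e.\ a linear change of the orthogonal coordinates on $\fst$, which preserves condition (ii) up to constants.

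I would then set $w(x) = \inf_{|\eta|=1} |J_\eta x|$, the least singular value of $M_x : \dusnd \to \fst$, $\eta \mapsto J_\eta x$. Continuity and property (i) are immediate. Writing $J_\eta x = M(x)\,\eta$ with $M(x)$ a $4\times 3$ matrix and computing $M(x)^{\mathrm T} M(x)$ via the quaternion multiplication table gives
\[
M(x)^{\mathrm T} M(x) = |x|^2 I_3 + \begin{pmatrix} 0 & 0 & A \\ 0 & 0 & B \\ A & B & 0 \end{pmatrix},
\]
with $A = 2(x_1 x_4 + x_2 x_3)$ and $B = 2(x_2 x_4 - x_1 x_3)$. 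The algebraic identity $A^2 + B^2 = 4(x_1^2+x_2^2)(x_3^2+x_4^2)$ then reduces the perturbation to a rank-$2$ symmetric matrix whose nonzero eigenvalues are $\pm 2\sqrt{(x_1^2+x_2^2)(x_3^2+x_4^2)}$, giving
\[
w(x) = \Bigl| \sqrt{x_1^2+x_2^2} - \sqrt{x_3^2+x_4^2} \Bigr|,
\]
so that $w$ vanishes exactly on the $3$-dimensional cone $\{x_1^2+x_2^2 = x_3^2+x_4^2\}$.

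Finally, (ii) is a direct calculation: polar coordinates $r = \sqrt{x_1^2+x_2^2}$, $s = \sqrt{x_3^2+x_4^2}$ reduce the integral to
\[
\int_0^\infty\int_0^\infty (1+\sqrt{r^2+s^2})^{-\alpha}(1+|r-s|)^{-\gamma}\, rs \, dr\, ds,
\]
after which the change of variables $u = r-s$, $v = r+s$ on $\{v \geq |u| \geq 0\}$ (with $rs = (v^2-u^2)/4$ and $r^2+s^2 = (u^2+v^2)/2$) makes the integrand comparable to $(1+v)^{-\alpha}(1+|u|)^{-\gamma}(v^2-u^2)$ up to constants. The inner $u$-integral contributes a factor of order $1$ when $\gamma > 1$, of order $\log(1+v)$ when $\gamma = 1$, and of order $v^{1-\gamma}$ when $0 \leq \gamma < 1$; the resulting $v$-integral then converges precisely under the hypothesis $\min\{\gamma,1\}+\alpha > 4 = \dim\fst$. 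The only nontrivial step is the eigenvalue computation, which collapses thanks to the identity $A^2+B^2 = 4(x_1^2+x_2^2)(x_3^2+x_4^2)$; I do not expect any further obstacle.
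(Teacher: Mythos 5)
Your proof is correct, and it takes a somewhat different route from the paper's; the underlying object is the same, however, since the paper's
\[
w(x) = \sqrt{|x|^2 - \sqrt{(x_1^2-x_2^2+x_3^2-x_4^2)^2 + (2x_1x_2+2x_3x_4)^2}}
\]
(computed in the model \eqref{eq:g743_specialform}) is also precisely the smallest singular value of $\eta\mapsto J_\eta x$, even though the paper arrives at part (i) by expanding $|J_\eta x|^2$ via the spectral decomposition of $-J_\eta^2$ and applying Cauchy--Schwarz, rather than by writing down $M(x)^{\mathrm T}M(x)$ as you do. Your quaternion model puts the off-diagonal $2\times 2$ block of $M(x)^{\mathrm T}M(x)$ into a form where the identity $A^2+B^2 = 4(x_1^2+x_2^2)(x_3^2+x_4^2)$ collapses the eigenvalue computation to $w(x)=\bigl|\sqrt{x_1^2+x_2^2}-\sqrt{x_3^2+x_4^2}\bigr|$; this simpler formula buys you a direct verification of (ii) by the $(r,s)\to(u,v)$ change of variables, whereas the paper's more opaque $w$ forces an argument via $S^3$-polar coordinates plus the observation that $w$ vanishes to first order along $\{\omega_1\omega_4-\omega_2\omega_3=0\}\subset S^3$. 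Both verifications of (ii) match the threshold $\min\{\gamma,1\}+\alpha>4$, and both reductions rely on the same structural inputs: one concrete model of case (37D), plus the classification of indecomposable $7$-dimensional $2$-step algebras with $3$-dimensional center and the invariance of the statement under equivalent norms. One small imprecision in your write-up: the isomorphism carrying an arbitrary (37D) realization to your quaternion model need not be orthogonal on $\fst$ (the inner product comes from the chosen sublaplacian and can be arbitrary), so ``a linear change of the orthogonal coordinates'' should read ``a general linear change of coordinates''; this costs nothing since, as the paper notes, replacing the norms on $\fst$ and $\dusnd$ by equivalent ones only rescales $w$ and hence preserves both (i) and (ii).
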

\begin{proof}
Suppose first that, in the chosen coordinates,
\begin{equation}\label{eq:g743_specialform}
J_\eta = \begin{pmatrix}
0               & 0              & -\eta_1 - \eta_3 & -\eta_2         \\
0               & 0              & -\eta_2          & \eta_1 - \eta_3 \\
\eta_1 + \eta_3 & \eta_2         & 0                & 0               \\
\eta_2          & -\eta_1+\eta_3 & 0                & 0
\end{pmatrix}.
\end{equation}
One may check that the previous formula indeed defines a $2$-step stratified structure on $\R^4 \times \R^3$ and that \eqref{eq:pfaffiancondition} holds. Proposition~\ref{prp:four_spectral} then gives that
\[
P_1^\eta-P_2^\eta = \frac{\sgn \eta_3}{\sqrt{\eta_1^2 + \eta_2^2}} \begin{pmatrix}
\eta_1 & \eta_2  & 0      & 0      \\
\eta_2 & -\eta_1 & 0      & 0      \\
0      & 0       & \eta_1 & \eta_2 \\
0      & 0       & \eta_2 & -\eta_1
\end{pmatrix}
\]
and
\[
b_1^\eta = \sqrt{\eta_1^2 + \eta_2^2} + |\eta_3|, \qquad b_2^\eta = \left|\sqrt{\eta_1^2+\eta_2^2}-|\eta_3|\right|.
\]
Define the function $w : \fst \to \leftclosedint 0,\infty \rightopenint$ by
\[
w(x) = \sqrt{|x|^2 - \sqrt{(x_1^2-x_2^2+x_3^2-x_4^2)^2 + (2x_1 x_2 + 2x_3 x_4)^2}}.
\]
For all $\eta \in \dusnd$ and $x \in \fst$,
\[\begin{split}
|J_\eta x|^2 &= \langle -J_\eta^2 x, x \rangle = (b_1^\eta)^2 \langle P_1^\eta x,x \rangle + (b_2^\eta)^2 \langle P_2^\eta x, x \rangle \\
&= \frac{(b_1^\eta)^2 + (b_2^\eta)^2}{2} \langle (P_1^\eta + P_2^\eta) x, x \rangle + \frac{(b_1^\eta)^2 - (b_2^\eta)^2}{2} \langle (P_1^\eta - P_2^\eta) x, x \rangle.
\end{split}\]
Since $(b_1^\eta)^2 + (b_2^\eta)^2 = 2|\eta|^2$, $(b_1^\eta)^2 - (b_2^\eta)^2 = 4 |\eta_3| \sqrt{\eta_1^2 + \eta_2^2}$, $P_1^\eta + P_2^\eta = 1$, 
we deduce that
\[
|J_\eta x|^2 = |\eta|^2 |x|^2 - 2 \eta_3 (v_1 \eta_1 + v_2 \eta_2).
\]
where
\[
v_1 = x_1^2 - x_2^2 + x_3^2 - x_4^2, \qquad v_2 = 2x_1 x_2 + 2x_3 x_4.
\]
The Cauchy-Schwarz inequality gives us that
\[
|2 \eta_3 (v_1 \eta_1 + v_2 \eta_2)| \leq 2 |\eta_3| \sqrt{\eta_1^2+\eta_2^2} \sqrt{v_1^2+v_2^2} \leq |\eta|^2 \sqrt{v_1^2 + v_2^2}
\]
and since $w(x)^2 = |x|^2 - \sqrt{v_1^2 + v_2^2}$, part (i) follows.

As for part (ii), choose $\gamma' \in \leftclosedint 0, 1 \rightopenint$ such that $\gamma' \leq \gamma$ and $\gamma' + \alpha > \dim \fst$. Then
\[
\int_{\fst} (1+|x|)^{-\alpha} \,(1+w(x))^{-\gamma} \,dx \leq \int_{\fst} (1+|x|)^{-\alpha} \,(1+w(x))^{-\gamma'} \,dx,
\]
and moreover, by the properties of $w$,
\[
1+w(x) = 1 + |x| \,w(x/|x|) \geq (1+|x|) \, w(x/|x|),
\]
therefore
\[
\int_{\fst} (1+|x|)^{-\alpha} \,(1+w(x))^{-\gamma'} \,dx \leq \int_{\fst} (1+|x|)^{-\alpha-\gamma'} w(x/|x|)^{-\gamma'} \,dx.
\]
If the integral in the right-hand side is performed in polar coordinates, then the convergence of the radial part follows from the assumption $\gamma' + \alpha > \dim\fst$, and we are left with the angular part
\[
\int_{S^3} w(\omega)^{-\gamma'} \,d\omega,
\]
Note now that, for all $\omega \in S^3$,
\[
w(\omega)^2 \sim 1-(\omega_1^2-\omega_2^2+\omega_3^2-\omega_4^2)^2-(2\omega_1\omega_2+2\omega_3\omega_4)^2 = (2\omega_1 \omega_4 -2\omega_2 \omega_3)^2,
\]
and it is easily checked that $\omega \mapsto \omega_1\omega_4 - \omega_2\omega_3$ vanishes of first-order on $S^3$ (its gradient as a function on $\R^4$ is never normal to $S^3$ on the zero set of the function), hence the integral on $S^3$ converges because $\gamma'<1$.

\smallskip

We have thus completed the proof in the particular case where \eqref{eq:g743_specialform} holds. Note now that (i) can be equivalently rewritten as
\[
\sup_{x' \in \fst \setminus \{0\}} \frac{|\eta([x,x'])|}{|x'|} \geq |\eta| \, w(x).
\]
If we replace the norms on $\fst$ and $\dusnd$ with equivalent norms, then the previous inequality still holds, modulo multiplying $w$ by a suitable nonzero constant; these modifications clearly preserve also the validity of (ii). Since by the aforementioned classification result of \cite{gong_classification_1998,kuzmich_graded_1999} there is only one indecomposable $2$-step stratified group (up to isomorphism) such that $d = 7$ and $\dtwo = 3$ and \eqref{eq:pfaffiancondition} holds in suitable coordinates, the conclusion follows in the general case.
\end{proof}

Let $\Omega_\cnnb = \{ \eta \in \dusnd \tc b_1^\eta b_2^\eta < \smct |\eta|^2 \}$ and $\Omega_\prnb = \{ \eta \in \dusnd \tc b_1^\eta b_2^\eta > \smct |\eta|^2/2 \}$, where $\smct \in \leftopenint 0,1/2 \rightopenint$ is sufficiently small so that
\begin{equation}\label{eq:g743_cnnbcontainment}
\Omega_\cnnb \subseteq \{\eta \tc b_2^\eta < b_1^\eta/2 \}.
\end{equation}
Let $\zeta_\cnnb,\zeta_\prnb : \dusnd \setminus \{0\} \to \R$ be a smooth homogeneous partition of unity subordinate to the open cover $\Omega_\cnnb,\Omega_\prnb$. Since $\Omega_\cnnb$ does not intersect the plane $\{\eta \tc \eta_3=0\}$, $\zeta_\cnnb$ decomposes uniquely as $\zeta_+ + \zeta_-$, where $\zeta_\pm : \dusnd \setminus \{0\} \to \R$ is smooth and supported in $\Omega_\cnnb \cap \{ \eta \tc \pm \eta_3 > 0 \}$.

We consider first the region $\Omega_\prnb$, that is the region far from the cone $\{ \eta \tc \pf J_\eta = 0\}$. This is the ``easy part'' to be considered, since a single system of coordinates is sufficient. In fact, by repeating the proof of Lemma~\ref{lem:pcoords}, we obtain immediately the following result.

\begin{lem}\label{lem:g743_pcoords}
There exist coordinates $(\eta_1^\prnb,\eta_2^\prnb,\eta_3^\prnb)$ on $\dusnd$ such that $\max_\jtwo|\eta_\jtwo^\prnb| \leq |\eta|$ and, if $D_\prnb = (\eta_1^\prnb \partial_{\eta_1^\prnb},\eta_2^\prnb \partial_{\eta_2^\prnb},\eta_3^\prnb \partial_{\eta_3^\prnb})$ and $\tilde\Omega_\prnb = \Omega_\prnb \cap \{ \eta \tc \eta_1^\prnb \eta_2^\prnb \eta_3^\prnb \neq 0\}$, then
\[
b_1^\eta,b_2^\eta \in \SC_{\tilde\Omega_\prnb,D_\prnb}^\infty, \qquad P_1^\eta,P_2^\eta \in \BD_{\tilde\Omega_\prnb,D_\prnb}^\infty(1).
\]
\end{lem}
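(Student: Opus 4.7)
The plan is to repeat the proof of Lemma~\ref{lem:pcoords} essentially verbatim, with two adjustments: to verify that its hypotheses remain available in case (37D), and to arrange the coordinates to be orthonormal with respect to the inner product fixed on $\dusnd$, so that the additional bound $\max_\jtwo|\eta_\jtwo^\prnb| \leq |\eta|$ comes for free.

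I would first identify $\dusnd$ with its image $V \subseteq \lie{so}_4$ under $\eta \mapsto J_\eta$ and set $V_- = V \cap \lie{su}_2$, $V_+ = V \cap \widetilde{\lie{su}_2}$. The indefiniteness of $\pf J_\eta = \eta_1^2 + \eta_2^2 - \eta_3^2$ forces both $V_\pm \subsetneq V$: were, for instance, $V_+ = V$, then $\mu^- \equiv 0$ on $V$ and Proposition~\ref{prp:four_spectral}(i) would give $-J_\eta^2 = |\mu|^2 \, \id$ for all $\mu \in V$, whence $(\pf J_\eta)^2 = |\mu|^4$; by continuity $\pf J_\eta$ would then be sign-definite, contradicting its explicit indefinite form. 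Since $V_+ \cap V_- = 0$ is automatic, I take $W$ to be the orthogonal complement of $V_+ + V_-$ in $V$ with respect to the $\dusnd$-inner product, and pick orthonormal bases of $V_+, V_-, W$ separately. Concatenating these yields an orthonormal basis of $\dusnd$, and the associated dual coordinates $(\eta_1^\prnb, \eta_2^\prnb, \eta_3^\prnb)$ satisfy $|\eta|^2 = \sum_\jtwo (\eta_\jtwo^\prnb)^2$, whence $|\eta_\jtwo^\prnb| \leq |\eta|$ automatically.

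From this point on the proof of Lemma~\ref{lem:pcoords} transcribes without change. I would introduce the projections $\pi_\pm : V \to \tilde V_\pm \defeq V_\pm + W$ along $V_\mp$, observe that $|\mu^\pm| = |\pi_\pm \mu|_\pm$ for suitable Euclidean norms on $\tilde V_\pm$, and apply Lemma~\ref{lem:schom_multi} with the two-block decomposition of the coordinates (one block for $\tilde V_\pm$, one for $V_\mp$) to obtain $\mu \mapsto |\mu^\pm| \in \SC^\infty_{\tilde\Omega_\prnb,D_\prnb}$ and $\mu \mapsto \mu^\pm/|\mu^\pm| \in \BD^\infty_{\tilde\Omega_\prnb,D_\prnb}(1)$; nonvanishing of $|\mu^\pm|$ on $\tilde\Omega_\prnb$ follows because the exclusion of the coordinate hyperplanes keeps $\pi_\pm \mu$ away from zero. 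Then $b_1^\eta = |\mu^+| + |\mu^-| \in \SC^\infty$ by Lemma~\ref{lem:sc}(ii); since $b_1^\eta b_2^\eta = |\pf J_\eta|$ is a homogeneous polynomial of degree $2$ bounded below by $\smct|\eta|^2/2$ on $\overline{\Omega_\prnb} \setminus \{0\}$, Lemma~\ref{lem:schom_unif}(ii) gives $b_1^\eta b_2^\eta \in \SC^\infty_{\Omega_\prnb,D_\prnb}$, whence $b_2^\eta = (b_1^\eta b_2^\eta)/b_1^\eta \in \SC^\infty_{\tilde\Omega_\prnb,D_\prnb}$ by Lemma~\ref{lem:sc}(iii,iv). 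Finally, Proposition~\ref{prp:four_spectral}(ii) expresses $P_1^\eta, P_2^\eta$ as algebraic combinations of $\mu^\pm/|\mu^\pm|$ and the identity, so Leibniz' rule combined with the $\BD^\infty(1)$ estimates above yields $P_1^\eta, P_2^\eta \in \BD^\infty_{\tilde\Omega_\prnb,D_\prnb}(1)$.

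I do not foresee any genuine obstacle: the only (37D)-specific input is the properness of $V_\pm$, which is immediate from the indefiniteness of $\pf J_\eta$, and arranging orthonormal coordinates is a routine Gram--Schmidt step that does not interfere with the block structure.
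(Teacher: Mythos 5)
Your overall strategy is the paper's: the proof of Lemma~\ref{lem:pcoords} is to be repeated verbatim, and the only new ingredient is the normalization $\max_\jtwo|\eta_\jtwo^\prnb| \leq |\eta|$. The multihomogeneity argument, the nonvanishing of $b_1^\eta b_2^\eta$ on $\overline{\Omega_\prnb}\setminus\{0\}$ (which you correctly read off from the new definition of $\Omega_\prnb$ in this section), and the application of Lemmata~\ref{lem:schom_multi}, \ref{lem:schom_unif}, \ref{lem:sc} are all transcribed correctly; the argument for properness of $V_\pm$, while redundant (the paper establishes it in \S\ref{section:7dim} before separating the cases, precisely by noting that $V_\pm = V$ forces the H-type situation), is sound.

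There is, however, one step that is asserted but not justified: that concatenating orthonormal bases of $V_+$, $V_-$ and $W$ gives an orthonormal basis of $\dusnd$. Since you choose $W$ to be the orthogonal complement of $V_+ + V_-$ in $V$, you do get $W \perp V_+$ and $W \perp V_-$, but $V_+ \perp V_-$ is a separate claim. The Pfaffian bilinear form does make $V_+$ and $V_-$ mutually orthogonal (since on $V_+ \times V_-$ both $\langle \mu^+,(\mu')^+\rangle$ and $\langle \mu^-,(\mu')^-\rangle$ vanish), but that form has signature $(2,1)$ and is not the fixed Euclidean inner product on $\dusnd$; the two orthogonality relations do not coincide in general. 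In the explicit realization \eqref{eq:g743_specialform} one computes $V_+ = \{\eta_3 = 0\}$ and $V_- = \{\eta_1 = \eta_2 = 0\}$, which are indeed Euclidean-orthogonal (and in fact $W = \{0\}$), so the claim is true there; but you neither invoke that realization nor supply an argument. The cleaner fix is to drop the orthonormality requirement entirely: take any linear coordinates compatible with $V = V_+ \oplus V_- \oplus W$, and then rescale each coordinate by the operator norm of its dual linear functional. The vector fields $\eta_\jtwo^\prnb \partial_{\eta_\jtwo^\prnb}$, and hence the classes $\SC_{\cdot,D_\prnb}$ and $\BD_{\cdot,D_\prnb}(1)$, are invariant under rescaling of individual coordinates, so all estimates survive, and $|\eta_\jtwo^\prnb| \leq |\eta|$ holds by construction.
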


Let $(\eta_1^\prnb,\eta_2^\prnb,\eta_3^\prnb)$ be the coordinates on $\dusnd$ given by Lemma~\ref{lem:g743_pcoords}, and let $(u_1^\prnb,u_2^\prnb,u_3^\prnb)$ be the dual coordinates on $\snd$. Let $w : \fst \to \leftclosedint 0,\infty \rightopenint$ be the function given by Lemma~\ref{lem:g743_hzestimate}. Let $\chi \in C^\infty_\cnnb(\leftopenint 0,\infty \rightopenint)$ be such that $\supp \chi \subseteq \leftclosedint 1/2,2 \rightclosedint$ and $\sum_{n \in \Z} \chi(2^n t) = 1$ for all $t \in \leftopenint 0,\infty \rightopenint$.

\begin{prp}\label{prp:g743_firstweightedestimate}
Let $K \subseteq \R$ be compact. For all smooth $F : \R \to \C$ such that $\supp F \subseteq K$,
for all $\radp \in \leftopenint 0,\infty \rightopenint$, $\crdp \in \leftopenint 0,\infty \rightopenint^3$, if $\zeta_{\prnb,\radp,\crdp} : \dusnd \to \C$ is defined by
\[
\zeta_{\prnb,\radp,\crdp}(\eta) = \zeta_\prnb(\eta) \, \chi(|\eta|/\radp) \prod_{\jtwo = 1}^3 \chi(|\eta^\prnb_\jtwo|/(\crdp_\jtwo|\eta|)),
\]
then, for all $\alpha \in \leftclosedint 0,\infty \rightopenint^3$ and $\theta \in \leftclosedint 0,\infty \rightopenint$,
\begin{multline}\label{eq:g743_firstweightedestimate}
\int_{G} \Bigl| \Kern_{F(L) \, \zeta_{\prnb,\radp,\crdp}(\vecU)}(x,u) \, (1+w(x))^\theta \prod_{\jtwo=1}^3 (1+|u^\prnb_\jtwo|)^{\alpha_\jtwo} \Bigr|^2 \,dx \,du \\
 \leq C_{K,\alpha,\theta} \, \|F\|_{W_2^{|\alpha|}} \, \radp^{3-2|\alpha|-2\theta} \prod_{\jtwo=1}^3 \crdp_\jtwo^{1-2\alpha_\jtwo} .
\end{multline}
\end{prp}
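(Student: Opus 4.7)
The plan is to combine the framework of \S\ref{section:kernelderivatives}, which handles the polynomial weights $(1+|u_\jtwo^\prnb|)^{\alpha_\jtwo}$ via $\eta$-derivatives of $V$, with the extra-weight technique of \cite{hebisch_multiplier_1993,hebisch_multiplier_1995} (as extended to the joint functional calculus in \cite[\S 3]{martini_joint_2012}), in order to absorb the first-layer weight $(1+w(x))^\theta$.

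For the case $\theta = 0$, the bound reduces essentially to an application of Proposition~\ref{prp:cutoff_partiall2norm}. The cutoff $\zeta_{\prnb,\radp,\crdp}$ factors as $\chi_\rad(f(\eta)) \cdot \chi_\sph(\eta)$ with $f(\eta)=|\eta|$ and a degree-$0$ spherical factor $\chi_\sph(\eta) = \zeta_\prnb(\eta)\prod_\jtwo\chi(|\eta_\jtwo^\prnb|/(\crdp_\jtwo|\eta|))$. Lemma~\ref{lem:g743_pcoords} supplies \eqref{eq:selfcontrolhypothesis} on $\tilde\Omega_\prnb$, and Lemmata~\ref{lem:sccomp2}, \ref{lem:schom_unif}, \ref{lem:schom_multi} yield the required $\SC$ and $\BD$ bounds uniformly in $\radp,\crdp$. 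Since $|\eta_\jtwo^\prnb|\sim\radp\crdp_\jtwo$ on $\supp\zeta_{\prnb,\radp,\crdp}$, one has
\[
\int_{\supp\zeta_{\prnb,\radp,\crdp}}|(\eta^\prnb)^{-2\alpha}|\,d\eta \lesssim \radp^{3-2|\alpha|}\prod_\jtwo\crdp_\jtwo^{1-2\alpha_\jtwo}
\]
for $\alpha\in\N^3$, which yields \eqref{eq:g743_firstweightedestimate} for $\theta = 0$ and integer $\alpha$; fractional $\alpha\geq 0$ are recovered by H\"older interpolation in the polynomial weights.

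For $\theta>0$, the crucial input is the pointwise inequality $w(x)\leq |J_\eta x|/|\eta|$ of Lemma~\ref{lem:g743_hzestimate}(i); combined with the localization $|\eta|\sim\radp$ forced by $\chi(|\eta|/\radp)$, it yields on the support of the cutoff
\[
(1+w(x))^{2\theta}\lesssim \radp^{-2\theta}\bigl(\radp^2 + (b_1^\eta)^2|P_1^\eta x|^2 + (b_2^\eta)^2|P_2^\eta x|^2\bigr)^\theta.
\]
Using Plancherel in $u$ (which converts $(1+|u_\jtwo^\prnb|)^{\alpha_\jtwo}$ into $\partial_{\eta^\prnb}^\alpha$ applied to $V$, as in \S\ref{section:kernelderivatives}) together with the binomial expansion of the parenthesized term, for integer $\theta$ the left-hand side of \eqref{eq:g743_firstweightedestimate} is dominated by a finite sum, over $(a_0,a_1,a_2)\in\N^3$ with $a_0+a_1+a_2=\theta$, of terms
\[
C\,\radp^{2a_0-2\theta}\int_\ddsnd(b_1^\eta)^{2a_1}(b_2^\eta)^{2a_2}\int_\fst|P_1^\eta x|^{2a_1}|P_2^\eta x|^{2a_2}\,|W_\alpha(x,\eta)|^2\,dx\,d\eta,
\]
where $W_\alpha(x,\eta)$ denotes the inverse Fourier transform in $\xi$ of $\partial_{\eta^\prnb}^\alpha V$. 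For each fixed $\eta$, Plancherel in $x$ and integration by parts convert multiplication by $|P_1^\eta x|^{2a_1}|P_2^\eta x|^{2a_2}$ into the action of the elliptic differential operator $(|P_1^\eta\nabla_\xi|^2)^{a_1}(|P_2^\eta\nabla_\xi|^2)^{a_2}$ on $\partial_{\eta^\prnb}^\alpha V$.

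For this group, Proposition~\ref{prp:kernel} with $r_0=0$, $r_1=r_2=1$ gives
\[
V(\xi,\eta)=\sum_n m_H(n,0,\eta)\,\Ell_{n_1}^{(0)}(|P_1^\eta\xi|^2/b_1^\eta)\,\Ell_{n_2}^{(0)}(|P_2^\eta\xi|^2/b_2^\eta),
\]
and $|P_j^\eta\nabla_\xi|^2$ is the Euclidean Laplacian in the two-dimensional range of $P_j^\eta$. Applied by the chain rule to a radial function of $t=|P_j^\eta\xi|^2/b_j^\eta$, it transforms, via \eqref{eq:laguerred}, type-$0$ Laguerre functions into linear combinations of type-$2$ Laguerre functions with shifted index, each application producing a factor $(1/b_j^\eta)^2$. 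Iterating $a_j$ times and combining with the $\partial_{\eta^\prnb}^\alpha$-action analyzed in Proposition~\ref{prp:derivatives}, the enhanced orthogonality Lemma~\ref{lem:laguerreorthogonality} produces, after $\xi$-integration, an estimate analogous to Proposition~\ref{prp:partiall2norm} with additional $\langle n_j\rangle^{O(a_j)}$ factors. The principal obstacle is the combinatorial bookkeeping of these $b_j^\eta$- and $\langle n_j\rangle$-powers coming from the interaction between the $\partial_{\eta^\prnb}$- and $|P_j^\eta\nabla_\xi|^2$-operators; however, in $\Omega_\prnb$ one has $b_1^\eta\sim b_2^\eta\sim\radp$ (from $b_1^\eta b_2^\eta>\smct|\eta|^2/2$, $b_2^\eta\leq b_1^\eta$, and $b_1^\eta\lesssim|\eta|\sim\radp$) and on $\supp m_H$ one has $b_j^\eta\langle n_j\rangle\lesssim\max K$, which together absorb these extra factors into $K$-dependent constants. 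The subsequent integration over $\eta$ is then carried out as in the proof of Proposition~\ref{prp:cutoff_partiall2norm}, yielding \eqref{eq:g743_firstweightedestimate} for integer $\theta$; the full range $\theta\in\leftclosedint 0,\infty\rightopenint$ follows by log-convexity interpolation (H\"older's inequality) between $\theta=0$ and a sufficiently large integer value.
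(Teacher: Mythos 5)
Your proposal takes a genuinely different, and substantially harder, route than the paper. The paper \emph{decouples} the two weights: it first proves \eqref{eq:g743_firstweightedestimate} with $\theta = 0$ (essentially Proposition~\ref{prp:cutoff_partiall2norm} combined with Lemma~\ref{lem:g743_pcoords}, as you do), and then separately proves the estimate with $\alpha = 0$ and only the first-layer weight $w(x)^\theta$ — the latter by combining Lemma~\ref{lem:g743_hzestimate}(i) with the abstract Hebisch-type weighted $L^2$ inequality of \cite[Proposition 3.5]{martini_joint_2012} and the Plancherel measure formula of Corollary~\ref{cor:plancherelmeasure}, followed by the same polar-coordinate rescaling used in the proof of Proposition~\ref{prp:cutoff_partiall2norm}. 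It then obtains the mixed weight by Stein interpolation between the two estimates. This avoids entirely the combinatorial interaction between the $\partial_{\eta^\prnb}^\alpha$-derivatives and the spatial weight that you struggle with.

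Your direct approach contains a real gap. You expand $(1+w(x))^{2\theta}$, convert multiplication by $|P_j^\eta x|^{2a_j}$ into iterated applications of $|P_j^\eta\nabla_\xi|^2$ acting on $V(\xi,\eta)$, and then attempt to combine this with the derivative machinery of \S\ref{section:kernelderivatives}. You explicitly identify the bookkeeping of the resulting $b_j^\eta$- and $\langle n_j\rangle$-powers as ``the principal obstacle'' and then assert, without verification, that they are absorbed into $K$-dependent constants; but neither $b_j^\eta$ nor $\langle n_j\rangle$ is separately bounded on the support of the multiplier (only products like $b_j^\eta\langle n_j\rangle$ are, and $b_j^\eta \sim \radp$ can be arbitrarily small), so the cancellation has to be exhibited, not asserted. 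There is also a concrete arithmetic slip: applying $|P_j^\eta\nabla_\xi|^2$ once to a function of $t=|P_j^\eta\xi|^2/b_j^\eta$ gives $(1/b_j^\eta)\bigl(4r_j f'(t) + 4t f''(t)\bigr)$, i.e.\ a factor of $1/b_j^\eta$ per application, not $(1/b_j^\eta)^2$ as you claim, which throws off the balance of powers you rely on. Finally, you do not notice that the two weights can be handled separately and the mixed estimate recovered by interpolation — this is the paper's key simplification and would have rendered all of the difficult bookkeeping unnecessary.
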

\begin{proof}
As in the proof of Proposition~\ref{prp:2dc_secondweightedestimate}, we may assume that $\crdp_1,\crdp_2,\crdp_3 \leq 2$ and that $\radp \leq C_K$, otherwise $F(L)\, \zeta_{\prnb,\radp,\crdp}(\vecU) = 0$.

From Lemma~\ref{lem:g743_pcoords} and Proposition~\ref{prp:cutoff_partiall2norm}, we get immediately that, for all $\alpha \in \N^3$,
\begin{multline}\label{eq:g743_p_1}
\int_{G} \Bigl| |u^\prnb_1|^{\alpha_1} |u^\prnb_2|^{\alpha_2} |u^\prnb_3|^{\alpha_3}\, \Kern_{F(L) \, \zeta_{\prnb,\radp,\crdp}(\vecU)}(x,u) \Bigr|^2 \,dx \,du \\
\leq C_{K,\alpha} \|F\|_{W_2^{|\alpha|}}^2 \prod_{\jtwo=1}^3 (\radp\crdp_\jtwo)^{1-2\alpha_\jtwo}.
\end{multline}
The previous inequality extends to all $\alpha \in \leftclosedint 0,\infty \rightopenint^3$ by interpolation.

On the other hand, by Lemma~\ref{lem:g743_hzestimate}, $|J_\eta x| \geq |\eta| \, w(x)$. Hence, by \cite[Proposition 3.5]{martini_joint_2012} and Corollary~\ref{cor:plancherelmeasure}, for all $\theta \in \leftclosedint 0,\infty \rightopenint$,
\begin{multline*}
\int_{G} \Bigl| w(x)^\theta \, \Kern_{F(L) \, \zeta_{\prnb,\radp,\crdp}(\vecU)}(x,u) \Bigr|^2 \,dx \,du \\
\leq C_{K,\theta}
\int_{\ddsnd} \sum_{n \in \N^2} |F(b_1^\eta(2n_1+1) + b_2^\eta (2n_2+1))|^2 \,|\zeta_{\prnb,\radp,\crdp}(\eta)|^2 \, |\eta|^{-2\theta} \, b_1^\eta \,b_2^\eta \,d\eta.
\end{multline*}
Analogously as in the proof of Proposition~\ref{prp:cutoff_partiall2norm}, by passing to polar coordinates and rescaling, we easily obtain that
\begin{equation}\label{eq:g743_p_2}
\int_{G} \Bigl| w(x)^\theta \Kern_{F(L) \, \zeta_{\prnb,\radp,\crdp}(\vecU)}(x,u) \Bigr|^2 \,dx \,du \leq C_{K,\theta} \|F\|_{W_2^{0}}^2 \, \radp^{3-2\theta} \crdp_1 \crdp_2 \crdp_3.
\end{equation}

By interpolating \eqref{eq:g743_p_1} and \eqref{eq:g743_p_2}, we obtain that, for all $\alpha \in \leftclosedint 0,\infty \rightopenint^3$ and $\theta \in \leftclosedint 0,\infty \rightopenint$,
\begin{multline*}
\int_{G} \Bigl| w(x)^\theta \, |u^\prnb_1|^{\alpha_1} |u^\prnb_2|^{\alpha_2} |u^\prnb_3|^{\alpha_3}\, \Kern_{F(L) \, \zeta_{\prnb,\radp,\crdp}(\vecU)}(x,u) \Bigr|^2 \,dx \,du \\
\leq C_{K,\alpha,\theta} \|F\|_{W_2^{|\alpha|}}^2 \, \radp^{-2\theta} \prod_{\jtwo=1}^3 (\radp \crdp_\jtwo)^{1-2\alpha_\jtwo} .
\end{multline*}
The conclusion follows by combining this inequality with the corresponding ones where $\theta$ and/or some of the components of $\alpha$ are replaced by $0$.
\end{proof}

If $\alpha_1,\alpha_2,\alpha_3,\theta$ are sufficiently small, then the exponents of $\radp,\crdp_1,\crdp_2,\crdp_3$ in \eqref{eq:g743_firstweightedestimate} are positive; hence, as in the proof of Proposition~\ref{prp:2dc_secondweightedestimate}, the estimates given by the previous proposition can be summed via a dyadic decomposition, in order to obtain the following result.

\begin{cor}\label{cor:g743_firstweightedestimate}
Let $K \subseteq \R$ be compact. For all smooth $F : \R \to \C$ such that $\supp F \subseteq K$,
for all $\alpha \in \leftclosedint 0,1/2 \rightopenint^3$ and $\theta \in \leftclosedint 0,3/2-|\alpha|\rightopenint$,
\[
\int_{G} \Bigl| \Kern_{F(L) \, \zeta_\prnb(\vecU)}(x,u)  \, (1+w(x))^\theta \prod_{\jtwo=1}^3 (1+|u_\jtwo^\prnb|)^{\alpha_\jtwo} \Bigr|^2 \,dx \,du \leq C_{K,\alpha,\theta} \|F\|_{W_2^{|\alpha|}}^2.
\]
\end{cor}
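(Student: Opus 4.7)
\smallskip

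The plan is to obtain the corollary by summing the single-scale estimates of Proposition~\ref{prp:g743_firstweightedestimate} via a dyadic decomposition, exactly in the spirit of the corresponding step in the proof of Proposition~\ref{prp:2dc_secondweightedestimate}. Concretely, since $\sum_{n \in \Z} \chi(2^n t) = 1$ for $t > 0$, one has
\[
\zeta_\prnb(\eta) = \sum_{m \in \Z} \sum_{n \in \N^3} \zeta_{\prnb,2^m,(2^{-n_1},2^{-n_2},2^{-n_3})}(\eta)
\]
for a.e.\ $\eta \in \dusnd$ (the restriction to $n \in \N^3$ coming from $\crdp_\jtwo \leq 2$, which suffices because $|\eta^\prnb_\jtwo| \leq |\eta|$ on $\supp \zeta_\prnb$ by Lemma~\ref{lem:g743_pcoords}). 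Moreover, as in the proof of Proposition~\ref{prp:2dc_secondweightedestimate}, the operator $F(L) \, \zeta_{\prnb,\radp,\crdp}(\vecU)$ vanishes unless $\radp \lesssim_K 1$, because $|\eta| \leq \kappa(\sum_\jone b_\jone^\eta (2n_\jone + r_\jone) + \mu)$ by \eqref{eq:hsnorm} and Lemma~\ref{lem:spectraldecomposition}. Hence the sum over $m$ may be restricted to $m \leq m_0$ for some $m_0 = m_0(K) \in \Z$.

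\smallskip

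Applying the triangle inequality in the weighted $L^2$-norm appearing on the left-hand side of the corollary and then invoking Proposition~\ref{prp:g743_firstweightedestimate} piece by piece gives
\[
\mathcal{I}^{1/2} \leq \sum_{m \leq m_0} \sum_{n \in \N^3} C_{K,\alpha,\theta}^{1/2} \, \|F\|_{W_2^{|\alpha|}} \, 2^{m(3/2-|\alpha|-\theta)} \prod_{\jtwo=1}^3 2^{-n_\jtwo(1/2-\alpha_\jtwo)},
\]
where $\mathcal{I}$ denotes the left-hand side of the corollary. The hypothesis $\alpha_\jtwo < 1/2$ makes each of the sums $\sum_{n_\jtwo \geq 0} 2^{-n_\jtwo(1/2-\alpha_\jtwo)}$ converge, while the hypothesis $\theta < 3/2 - |\alpha|$ makes the geometric series $\sum_{m \leq m_0} 2^{m(3/2-|\alpha|-\theta)}$ converge to a quantity bounded by $C_K \cdot 2^{m_0(3/2-|\alpha|-\theta)}$. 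Combining these bounds yields the desired estimate.

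\smallskip

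The only point that genuinely needs care is the justification that the series of kernels converges in the weighted $L^2$-norm under consideration, so that the triangle inequality applies; this can be handled by a standard truncation argument (approximate $\zeta_\prnb$ by a finite subsum, apply the estimate, and pass to the limit using the dominated convergence theorem together with the already-established bound on the partial sums). No additional obstacle is expected, since all the hard analytic work has already been carried out in Proposition~\ref{prp:g743_firstweightedestimate}; the corollary is essentially a convergent-series bookkeeping once the strict inequalities $\alpha_\jtwo < 1/2$ and $\theta < 3/2 - |\alpha|$ are in force.
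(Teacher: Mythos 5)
Your proof is correct and follows exactly the paper's own approach: the paper obtains the corollary by exactly this dyadic summing of the single-scale estimates of Proposition~\ref{prp:g743_firstweightedestimate}, with the strict inequalities $\alpha_\jtwo < 1/2$ and $\theta < 3/2 - |\alpha|$ guaranteeing that the exponents of $2^m$ and $2^{-n_\jtwo}$ are of the right sign for convergence, mirroring the proof of Proposition~\ref{prp:2dc_secondweightedestimate}. The only cosmetic note is that $\|F\|_{W_2^{|\alpha|}}$ in \eqref{eq:g743_firstweightedestimate} should be read as squared (clearly a typo in the paper), which you have implicitly corrected.
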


H\"older's inequality then yields the following $L^1$-estimate.

\begin{cor}\label{cor:g743_firstweightedestimate_final}
Let $K \subseteq \R$ be compact. For all smooth $F : \R \to \C$ such that $\supp F \subseteq K$, for all $\alpha,\beta \in \R$ such that $\beta \geq 0$, $2\beta > \alpha+9/2$, $\alpha < -5/2$,
\[
\| (1+|\cdot|_G)^{\alpha} \, \Kern_{F(L) \, \zeta_\prnb(\vecU)}\|_1 \leq C_{K,\alpha,\beta} \|F\|_{W_2^{\beta}}.
\]
\end{cor}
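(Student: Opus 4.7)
The plan is to deduce this $L^1$-estimate from the weighted $L^2$-bound of Corollary~\ref{cor:g743_firstweightedestimate} via the Cauchy--Schwarz inequality, in the same spirit as Proposition~\ref{prp:2dc_l1estimate}. I would introduce three auxiliary parameters to be tuned at the end: a splitting $-\alpha = \alpha_x + \alpha_u$ with $\alpha_x, \alpha_u > 0$, a vector $\vec{a} = (a_1,a_2,a_3) \in \leftclosedint 0, 1/2 \rightopenint^3$, and an exponent $\theta \in \leftclosedint 0, 3/2-|\vec{a}| \rightclosedint$. They control how much of the weight $(1+|\cdot|_G)^{-\alpha}$ is absorbed by the first layer versus the second, and how much additional decay in $w(x)$ and in the dual coordinates $u^\prnb_\jtwo$ is moved inside the $L^2$-factor. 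Cauchy--Schwarz then gives
\[
\| (1+|\cdot|_G)^\alpha \Kern_{F(L)\zeta_\prnb(\vecU)} \|_1 \leq I_1^{1/2} \, I_2^{1/2},
\]
where $I_2$ is the square of the weighted $L^2$-norm appearing in Corollary~\ref{cor:g743_firstweightedestimate}, so that $I_2 \leq C_K \|F\|_{W_2^{|\vec{a}|}}^2 \leq C_K \|F\|_{W_2^\beta}^2$ as soon as $|\vec{a}| \leq \beta$, while $I_1$ is the pure weight integral
\[
I_1 = \int_G (1+|(x,u)|_G)^{2\alpha} (1+w(x))^{-2\theta} \prod_{\jtwo=1}^3 (1+|u^\prnb_\jtwo|)^{-2a_\jtwo} \,dx\,du.
\]

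To verify that $I_1$ is finite I would first decouple the two layers through the elementary inequality
\[
(1+|x|+|u|^{1/2})^{-2\alpha_x-2\alpha_u} \leq (1+|x|)^{-2\alpha_x} (1+|u|^{1/2})^{-2\alpha_u},
\]
and then spread the second-layer decay across the three dual coordinates by
\[
(1+|u|^{1/2})^{-2\alpha_u} \leq (1+|u|)^{-\alpha_u} \leq C \prod_{\jtwo=1}^3 (1+|u^\prnb_\jtwo|)^{-\alpha_u/3}.
\]
The $u$-integral then factors into three one-dimensional integrals $\int_\R (1+|t|)^{-2a_\jtwo-\alpha_u/3}\,dt$, each finite iff $2a_\jtwo + \alpha_u/3 > 1$, while the remaining $x$-integral is exactly of the form controlled by Lemma~\ref{lem:g743_hzestimate}(ii) and converges iff $\min\{2\theta,1\} + 2\alpha_x > \dim\fst = 4$.

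The main obstacle, and really the only delicate point, is the arithmetic of checking that the resulting system of constraints
\[
\alpha_x + \alpha_u = -\alpha, \quad a_\jtwo \in \leftopenint 1/2-\alpha_u/6,\, 1/2 \rightopenint, \quad \theta \leq 3/2-|\vec{a}|, \quad \min\{2\theta,1\}+2\alpha_x > 4, \quad |\vec{a}| \leq \beta
\]
is simultaneously solvable under the hypotheses $\alpha < -5/2$, $\beta \geq 0$, $2\beta > \alpha + 9/2$. I would take $\alpha_x = 3/2 + \epsilon$ for small $\epsilon > 0$: the $\fst$-integrability then forces $\theta > 1/2 - \epsilon$, so the constraint $\theta \leq 3/2-|\vec{a}|$ caps $|\vec{a}|$ just below $1 + \epsilon$ from above, while the lower bound $a_\jtwo > 1/2-\alpha_u/6$ pushes $|\vec{a}|$ just above $3/2-\alpha_u/2 = 9/4 + \alpha/2 + \epsilon/2$. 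Since by hypothesis $\beta > (\alpha + 9/2)/2 = 9/4 + \alpha/2$, for $\epsilon$ sufficiently small the interval of admissible values of $|\vec{a}|$ is nonempty and contained in $\leftclosedint 0, \beta \rightclosedint$, which completes the proof.
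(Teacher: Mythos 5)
Your proposal is correct and follows essentially the same route as the paper's own proof: Cauchy--Schwarz against the weighted $L^2$-bound of Corollary~\ref{cor:g743_firstweightedestimate}, distributing the weight $(1+|\cdot|_G)^{-\alpha}$ between the layers, spreading the second-layer decay across the three coordinates $u^\prnb_\jtwo$, and invoking Lemma~\ref{lem:g743_hzestimate}(ii) for the $\fst$-integral. The paper parametrizes slightly differently (it fixes $\theta=1/2$, writes $-\alpha=\alpha_1+2\alpha_2$, takes all three $u$-exponents equal to $s/3$, and chooses $s$ last), but this is notational rather than substantive; your constraint-solving and the check that the ranges are nonempty under the hypotheses $\alpha<-5/2$, $\beta\geq 0$, $2\beta>\alpha+9/2$ are correct. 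One small imprecision: Lemma~\ref{lem:g743_hzestimate}(ii) gives only the sufficiency of $\min\{2\theta,1\}+2\alpha_x>\dim\fst$, not an ``iff''; since you only use the sufficient direction, the argument is unaffected.
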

\begin{proof}
Under our hypothesis, we can choose $\alpha_1 \in \leftopenint 3/2, -\alpha-1-2(1-\beta)_+ \rightopenint$. Hence, if $\alpha_2 = (-\alpha-\alpha_1)/2$, then $-\alpha = \alpha_1 + 2\alpha_2$ and $\alpha_2 > 1/2 + (1-\beta)_+$, and therefore we can choose $s \in \leftopenint 3/2-\alpha_2,\beta\rightclosedint \cap \leftclosedint 0,1\rightopenint$. Consequently, by H\"older's inequality and Corollary~\ref{cor:g743_firstweightedestimate},
\begin{multline*}
\| (1+|\cdot|_G)^{\alpha} \, \Kern_{F(L) \, \zeta_\prnb(\vecU)}\|_1^2 \\
 \leq C_{K,\alpha,\beta} \|F\|_{W_2^{s}}^2 \int_G (1+|x|)^{-2\alpha_1} \,(1+w(x))^{-1} \prod_{\jtwo=1}^3 (1+|u_\jtwo|)^{-2(\alpha_2+s)/3} \,dx\,du.
\end{multline*}
Since $2\alpha_1 + 1 > 4$, and $2(\alpha_2+s) > 3$, the last integral is finite by Lemma~\ref{lem:g743_hzestimate}, and the conclusion follows because $s \leq \beta$.
\end{proof}

Let us consider now the ``hard part'', that is the region $\Omega_\cnnb$ near the cone $\{ \eta \tc \pf J_\eta = 0\}$. This region will be decomposed into an infinite number of pieces, for each of which a specific system of coordinates will be used. The decomposition can be described in two steps:
\begin{itemize}
\item first decomposition: we decompose $\Omega_\cnnb$ in ``truncated conic shells'' where the distance from the origin and the distance from the cone are approximately constant, i.e., $|\eta| \sim \radp$ and $b_1^\eta b_2^\eta / |\eta|^2 \sim \crdp$ for some (small, dyadic) parameters $\radp,\crdp \in \leftopenint 0,\infty \rightopenint$; each of these shells is invariant by rotations around the axis $\{ \eta \tc \eta_1 = \eta_2 = 0\}$ of the cone;
\item second decomposition: each shell given by the first decomposition is further decomposed into ``sectors'', according to an angular parameter (i.e., the argument of $(\eta_1,\eta_2)$), with angular width $\sim \crdp^{1/2}$; as it turns out, in each of these sectors an orthonormal system of coordinates (with axes approximately given by the radial direction, the normal to the cone, and the tangent to the cone parallel to the plane $\{\eta \tc \eta_3 = 0\}$) can be chosen so to satisfy the hypothesis \eqref{eq:selfcontrolhypothesis}.
\end{itemize}
Due to the fact that this decomposition must be achieved via a smooth partition of unity, and that the estimates to be obtained (which depend on the derivatives of the components of the partition of unity) must be uniform from piece to piece, the details of the decomposition are slightly technical. Some help is given by the rotational invariance of the cone; note however that $b_1^\eta$ and $b_2^\eta$ need not be invariant by rotations around the axis of the cone.

Recall that $\dusnd$ is identified with $\R^3 = \R^2 \times \R$ via the coordinates $(\eta_1,\eta_2,\eta_3)$. For all $\crdp \in \leftopenint 0,1 \rightclosedint$, let $I_{\epsilon}$ and $(\chi_{\epsilon,v})_{v \in I_\epsilon}$ denote the subset of $S^1$ and the homogeneous partition of unity of $\R^2 \setminus \{0\}$ given by Lemma~\ref{lem:hompartuni} corresponding to the thinness parameter $\thinp = \smctb \crdp^{1/2}$, where $\smctb \in \leftopenint 0,1/4 \rightopenint$ is a small constant to be fixed later; set moreover $Y_\crdp = I_\epsilon \times \{-1,+1\}$, and define, for all $q = (v,\pm 1) \in Y_\crdp$,
\[
\tilde\chi_{\crdp,q}(\eta) = \chi_{\epsilon,v}(\eta_1,\eta_2).
\]

For all $v \in S^1$, let $v^\perp \in S^1$ be such that $v,v^\perp$ is a positive orthonormal basis of $\R^2$. 
For all $q = (v,\pm 1) \in S^1 \times \{-1,+1\}$, let $(\eta_1^q,\eta_2^q,\eta_3^q)$ be the orthonormal coordinates on $\dusnd$ corresponding to the basis $(v,\pm 1)/\sqrt{2},(v^\perp,0),(v,\mp 1)/\sqrt{2}$. It is then easily seen that
\begin{equation}\label{eq:g743_pfaffian2}
\pf J_\eta = 2\eta_1^{q} \eta_3^{q} + (\eta_2^{q})^2.
\end{equation}
Set $D_{q} = (\eta_1^{q} \partial_{\eta_1^{q}},\eta_2^{q} \partial_{\eta_2^{q}}, \eta_3^{q} \partial_{\eta_3^{q}})$ and $V_q = \{ \eta \tc 2 (\eta_2^q)^2 < |\pf J_\eta| \}$.

Further, for all $\radp \in \leftopenint 0,\infty \rightopenint$, $\crdp \in \leftopenint 0,1 \rightclosedint$, $q = (v,\pm 1) \in Y_\crdp$, set
\begin{gather*}
\zeta_{\cnnb,\crdp,q}(\eta) 
= \zeta_\pm(\eta) \, \chi( b_1^\eta b_2^\eta / (|\eta|^2 \crdp) ) \, \tilde\chi_{\crdp,q}(\eta), \\
\zeta_{\cnnb,\radp,\crdp,q}(\eta) = \zeta_{\cnnb,\crdp,q}(\eta) \, \chi(|\eta|/\radp).
\end{gather*}
Each cutoff $\zeta_{\cnnb,\radp,\crdp,q}$ corresponds to one of the sectors given by the second decomposition, and $(\eta_1^q,\eta_2^q,\eta_3^q)$ are the coordinates meant to be used there. The following lemma collects the estimates on the derivatives of these cutoffs and on the sizes of their supports, together with the estimates on the derivatives of $b_1^\eta,b_2^\eta,P_1^\eta,P_2^\eta$, which are needed to apply the machinery of \S\ref{section:kernelderivatives}.

\begin{lem}\label{lem:g743_uniformestimates}
For all $A \in \N$ there exists $\kappa \in \leftclosedint 1,\infty \rightopenint$ such that, for all $\radp \in \leftopenint 0,\infty\rightopenint$, $\crdp \in \leftopenint 0,1 \rightclosedint$, $q \in Y_\crdp$, the following holds:
\begin{itemize}
\item[(i)] $\|b_1^\eta b_2^\eta \|_{\SC_{V_q,D_q}^A} \leq \kappa$,
\item[(ii)] $\|b_1^\eta\|_{\SC_{\Omega_\cnnb,D_q}^A},\|P_1^\eta\|_{\BD_{\Omega_\cnnb,D_q}^A(1)},\|P_2^\eta\|_{\BD_{\Omega_\cnnb,D_q}^A(1)} \leq \kappa$,
\item[(iii)] $\|b_2^\eta\|_{\SC_{\Omega_\cnnb \cap V_q,D_{q}}^A} \leq \kappa$,
\item[(iv)] $\supp \zeta_{\cnnb,\crdp,q} \subseteq \Omega_\cnnb \cap V_q$,
\item[(v)] $\|\zeta_{\cnnb,\crdp,q}\|_{\BD_{\Omega_\cnnb \cap V_q,D_q}^A(1)} \leq \kappa$,
\item[(vi)] $|\eta_1^q|/\radp, \, |\eta_2^q|/(\radp \crdp^{1/2}), \, |\eta_3^q|/(\radp \crdp) \in \leftclosedint \kappa^{-1}, \kappa \rightclosedint$ for all $\eta \in \supp \zeta_{\cnnb,\radp,\crdp,q}$.
\end{itemize}
\end{lem}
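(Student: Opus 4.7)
The plan is to tackle the items in the order (vi), (iv), (i), (ii), (iii), (v). Part (vi) is a direct consequence of the three support conditions combined with the identities $|\eta|^2-\pf J_\eta=2(\eta_1^2+\eta_2^2)$ and $|\eta|^2=\sum_\jtwo(\eta_\jtwo^q)^2$. From $\chi(|\eta|/\radp)$ we get $|\eta|\sim\radp$; since $|\pf J_\eta|\leq\smct|\eta|^2$ on $\Omega_\cnnb$ with $\smct<1/2$, the first identity yields $|(\eta_1,\eta_2)|\sim\radp$. The angular cutoff together with Lemma~\ref{lem:hompartuni}(ii) then gives $|\eta_2^q|\sim\smctb\crdp^{1/2}\radp$, while $b_1^\eta b_2^\eta=|\pf J_\eta|=|2\eta_1^q\eta_3^q+(\eta_2^q)^2|\sim\crdp\radp^2$ combined with the bound on $(\eta_2^q)^2$ extracts $|\eta_1^q\eta_3^q|\sim\crdp\radp^2$; the second identity then pins down $|\eta_1^q|\sim\radp$ and $|\eta_3^q|\sim\crdp\radp$. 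Part (iv) is essentially already contained: on $\supp\tilde\chi_{\crdp,q}$ one has $(\eta_2^q)^2\lesssim\smctb^2\crdp|\eta|^2$, while on $\supp\chi(b_1^\eta b_2^\eta/(|\eta|^2\crdp))$ one has $|\pf J_\eta|\geq\crdp|\eta|^2/2$, so choosing $\smctb$ sufficiently small ensures $2(\eta_2^q)^2<|\pf J_\eta|$, i.e., $\eta\in V_q$.

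For (i) I would exploit the polynomial form of the Pfaffian directly: $D_q^\alpha(2\eta_1^q\eta_3^q)$ equals $2\eta_1^q\eta_3^q$ when $\alpha_2=0$ and vanishes otherwise, while $D_q^\alpha((\eta_2^q)^2)=2^{\alpha_2}(\eta_2^q)^2$ when $\alpha_1=\alpha_3=0$ and vanishes otherwise. Every iterated $D_q$-derivative of $\pf J_\eta$ is therefore a multiple of one of $\pf J_\eta$, $2\eta_1^q\eta_3^q$, or $(\eta_2^q)^2$; on $V_q$ the defining inequality $(\eta_2^q)^2<|\pf J_\eta|/2$ gives $|2\eta_1^q\eta_3^q|\leq|\pf J_\eta|+(\eta_2^q)^2<2|\pf J_\eta|$, and all three are uniformly controlled by $|\pf J_\eta|=b_1^\eta b_2^\eta$. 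Since $\pf J_\eta$ has constant sign on each connected component of $V_q$, the same bound passes to its absolute value. For (ii), Proposition~\ref{prp:four_spectral}(ii) expresses $b_1^\eta$ and $P_\jone^\eta$ through $J_\eta^\pm/|J_\eta^\pm|$; if $|J_\eta^\pm|=0$ at some $\eta\in\overline{\Omega_\cnnb}\setminus\{0\}$ then $b_1^\eta=b_2^\eta$, which combined with $(b_1^\eta b_2^\eta)^2=(\pf J_\eta)^2$ and Lemma~\ref{lem:spectraldecomposition} would force $b_1^\eta b_2^\eta\sim|\eta|^2$, incompatible with $\Omega_\cnnb$ for $\smct$ small. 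Hence $|J_\eta^\pm|>0$ on $\overline{\Omega_\cnnb}\setminus\{0\}$, so $b_1^\eta$ is smooth, homogeneous of degree $1$, and non-vanishing there, while $P_\jone^\eta$ is smooth, homogeneous of degree $0$, and bounded; Lemma~\ref{lem:schom_unif} then produces bounds uniform across orthonormal coordinate systems, and in particular uniform in $q$. Part (iii) is immediate from $b_2^\eta=(b_1^\eta b_2^\eta)/b_1^\eta$ together with (i), (ii), and Lemma~\ref{lem:sc}(iii,iv).

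For (v), Leibniz's rule reduces matters to the three factors of $\zeta_{\cnnb,\crdp,q}$. Since $\smct<1/2$ the set $\Omega_\cnnb$ stays away from $\{\eta_3=0\}$, so $\zeta_\pm$ is smooth and homogeneous of degree $0$ on all of $\dusnd\setminus\{0\}$ and Lemma~\ref{lem:schom_unif}(i) gives a uniform bound. The middle factor is $\chi\circ f$ with $f=b_1^\eta b_2^\eta/(|\eta|^2\crdp)$; by (i), the identity $|\eta|^2=\sum_\jtwo(\eta_\jtwo^q)^2$, and Lemma~\ref{lem:sc} we have $f\in\SC^A_{V_q,D_q}$ uniformly, and since $f(V_q)\cap\supp\chi\subseteq[1/2,2]$, Lemma~\ref{lem:sccomp2}(i) supplies the required $\BD^A$-estimate.

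The main obstacle is the angular factor $\tilde\chi_{\crdp,q}(\eta)=\chi_{\thinp,v}(\eta_1,\eta_2)$ with $\thinp=\smctb\crdp^{1/2}$, because Lemma~\ref{lem:hompartuni}(iv) provides uniform bounds only in the coordinates $D_v$, which differ from $D_q$. I would proceed by direct computation: the chain rule expresses $D_q^\gamma\tilde\chi_{\crdp,q}$ as a finite sum of terms of the form $c_{abc}(\eta_1^q)^a(\eta_2^q)^b(\eta_3^q)^c\,\partial_1^{a+c}\partial_2^b\chi_{\thinp,v}(\eta_1,\eta_2)$ with $a\leq\gamma_1$, $b\leq\gamma_2$, $c\leq\gamma_3$, and Lemma~\ref{lem:hompartuni}(iii) bounds the derivative factor by a constant times $|(\eta_1,\eta_2)|^{-(a+b+c)}\thinp^{-b}$ on the support. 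Two estimates must then be combined to close the argument uniformly: on $\supp\tilde\chi_{\crdp,q}$, Lemma~\ref{lem:hompartuni}(ii) yields $|\eta_2^q|\lesssim\thinp|(\eta_1,\eta_2)|$, which absorbs the singular factor $\thinp^{-b}$; and on $\Omega_\cnnb$ the identity $\pf J_\eta=|(\eta_1,\eta_2)|^2-\eta_3^2$ together with $|\pf J_\eta|\leq\smct|\eta|^2$ and $\smct<1/2$ forces $|(\eta_1,\eta_2)|^2\sim\eta_3^2\sim|\eta|^2$, so that expanding $\eta_\jtwo^q=\eta\cdot e_\jtwo^q$ gives $|\eta_1^q|,|\eta_3^q|\leq C|(\eta_1,\eta_2)|$. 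Together these estimates kill all the remaining weights, yielding the uniform bound; this interplay between the angular cutoff and the $\Omega_\cnnb$-condition is the subtle point of the lemma.
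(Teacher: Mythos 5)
Your treatment of parts (i), (ii), (iii), (iv), and (v) essentially matches the paper's own argument: using the form \eqref{eq:g743_pfaffian2} to control iterated $D_q$-derivatives of $\pf J_\eta$, invoking Proposition~\ref{prp:four_spectral} and Lemma~\ref{lem:schom_unif} for $b_1^\eta$ and the $P_\jone^\eta$ on $\Omega_\cnnb$, deducing (iii) from $b_2^\eta=(b_1^\eta)^{-1}(b_1^\eta b_2^\eta)$, and handling the angular factor $\tilde\chi_{\crdp,q}$ in (v) by the chain rule together with Lemma~\ref{lem:hompartuni}(iii), the support estimate $|\eta_2^q|\lesssim\thinp|(\eta_1,\eta_2)|$, and the fact that $|\eta_1^q|,|\eta_3^q|\lesssim|(\eta_1,\eta_2)|\sim|\eta|$ on $\Omega_\cnnb$. (There is a cosmetic slip in your identity: it should read $|\eta|^2+\pf J_\eta=2(\eta_1^2+\eta_2^2)$, equivalently $|\eta|^2-\pf J_\eta=2\eta_3^2$; the subsequent deduction $|(\eta_1,\eta_2)|\sim|\eta|$ on $\Omega_\cnnb$ is unaffected.)

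There is, however, a genuine gap in your proof of part (vi). You establish $|\eta|\sim\radp$, $|\eta_2^q|\sim\crdp^{1/2}\radp$, and $|\eta_1^q\eta_3^q|\sim\crdp\radp^2$, and then assert that the identity $|\eta|^2=\sum_\jtwo(\eta_\jtwo^q)^2$ "pins down" $|\eta_1^q|\sim\radp$ and $|\eta_3^q|\sim\crdp\radp$. But the two relations $(\eta_1^q)^2+(\eta_3^q)^2\sim\radp^2$ and $|\eta_1^q\eta_3^q|\sim\crdp\radp^2$ are completely symmetric in $\eta_1^q$ and $\eta_3^q$: they determine the unordered pair $\{|\eta_1^q|,|\eta_3^q|\}$ to be $\{\sim\radp,\sim\crdp\radp\}$ but cannot distinguish which is which. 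The missing ingredient is the sign alignment built into the definition of $Y_\crdp$ and of $\zeta_\pm$: for $q=(v,\pm1)$ one has $\eta_1^q=(\langle(\eta_1,\eta_2),v\rangle\pm\eta_3)/\sqrt{2}$, and on $\supp\zeta_{\cnnb,\radp,\crdp,q}$ the angular cutoff forces $\langle(\eta_1,\eta_2),v\rangle\sim|(\eta_1,\eta_2)|>0$ while $\supp\zeta_\pm\subseteq\{\pm\eta_3>0\}$ forces $\pm\eta_3=|\eta_3|>0$; so $\eta_1^q$ is a sum of two \emph{nonnegative} quantities each of size $\sim\radp$, whence $|\eta_1^q|\sim\radp$, and only then $|\eta_3^q|=|\eta_1^q\eta_3^q|/|\eta_1^q|\sim\crdp\radp$ follows. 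Without this step, your argument would apply equally well to the "wrong" coordinate system in which the axis roles of $\eta_1^q$ and $\eta_3^q$ are swapped, and would then produce the false conclusion $|\eta_3^q|\sim\radp$.
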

\begin{proof}
By \eqref{eq:g743_pfaffian2}, the only nonzero iterated $D_p$-derivatives of $\pf J_\eta$ are constant multiples of $\eta_1^q \eta_3^q$ or of $(\eta_2^q)^2$. Since
\begin{equation}\label{eq:g743_firstetaestimates}
(\eta_2^q)^2 \lesssim |\pf J_\eta| \quad\text{and}\quad |\eta_1^q \eta_3^q| \sim |\pf J_\eta| \qquad\text{for } \eta \in V_q,
\end{equation}
it is clear that $\pf J_\eta$ and $b_1^\eta b_2^\eta = |\pf J_\eta|$ are in $\SC_{V_q,D_q}^A$; the $\SC$-norm does not depend on $q$ because $\pf J_\eta$ has the same form \eqref{eq:g743_pfaffian2} in all coordinates $(\eta_1^q,\eta_2^q,\eta_3^q)$, and part (i) is proved.

Note that, by Proposition~\ref{prp:four_spectral}, $b_1^\eta,b_2^\eta,P_1^\eta,P_2^\eta$ are smooth on $\{\eta \tc b_2^\eta < b_1^\eta \}$ and $b_1^\eta$ does not vanish there. Part (ii) then follows by \eqref{eq:g743_cnnbcontainment} and Lemma~\ref{lem:schom_unif}. Since $b_2^\eta = (b_1^\eta)^{-1} (b_1^\eta b_2^\eta)$, part (iii) follows from parts (i) and (ii) and from Lemma~\ref{lem:sc}.

Let $q = (v,\pm 1)$. Clearly $\supp \zeta_{\cnnb,\crdp,q} \subseteq \supp \zeta_\pm \subseteq \Omega_\cnnb$.
Moreover, by Lemma~\ref{lem:hompartuni}(ii),
\begin{equation}\label{eq:g743_eta2firstestimate}
|\eta_2^q| \sim \smctb \crdp^{1/2} |(\eta_1,\eta_2)| \qquad\text{for } \eta \in \supp \tilde\chi_{\crdp,q}
\end{equation}
and also
\begin{equation}\label{eq:g743_eta12estimate}
|(\eta_1,\eta_2)|^2 = (|\eta|^2 + \pf J_\eta)/2 \sim |\eta|^2  \qquad\text{for } \eta \in \Omega_\cnnb.
\end{equation}
Therefore
\begin{equation}\label{eq:g743_eta2estimate}
|\eta_2^q|^2 \sim \smctb^2 \crdp |\eta|^2 \sim \smctb^2 b_1^\eta b_2^\eta \qquad\text{for } \eta \in \supp \zeta_{\cnnb,\crdp,q};
\end{equation}
by choosing $\smctb$ sufficiently small, we obtain that $\supp \zeta_{\cnnb,\crdp,q} \subseteq V_q$, and part (iv) is proved.

From part (i) and Lemmata~\ref{lem:schom_multi} and \ref{lem:sc} we deduce that $b_1^\eta b_2^\eta/|\eta|^2 \in \SC_{V_q,D_q}^A$, therefore by \eqref{eq:sc_scalar} and Lemma~\ref{lem:sccomp2}(i) also $\eta \mapsto \chi( b_1^\eta b_2^\eta / (|\eta|^2 \crdp) )$ is in $\BD_{V_q,D_q}^A(1)$, with norm uniformly bounded in $q,\crdp$. Moreover, by Lemma~\ref{lem:hompartuni}(iii) and \eqref{eq:g743_eta2firstestimate} and \eqref{eq:g743_eta12estimate}, for all $\alpha \in \N^3$ and $\eta \in \Omega_\cnnb$,
\[
|\partial_{\eta^q}^\alpha \tilde\chi_{q,\crdp}(\eta)| \leq C_\alpha |\eta|^{-\alpha_1-\alpha_3} |\eta^q_2|^{-\alpha_2};
\]
hence, as in the proof of Lemma~\ref{lem:hompartuni}(iv), one sees that $\tilde\chi_{\crdp,q}$ is in $\BD_{\Omega_c,D_q}^A(1)$ with norm uniformly bounded in $q,\crdp$. Further $\chi_\pm \in \BD_{\dusnd \setminus \{0\},D_q}^A(1)$ with norm uniformly bounded in $q$ by homogeneity and Lemma~\ref{lem:schom_unif}. Part (v) then follows by Leibniz' rule.

Finally, by Lemma~\ref{lem:hompartuni}(ii) and the fact that $\supp \zeta_\pm \subseteq \{\eta \tc \pm \eta_3 > 0\}$,
\[
\eta_1^q = (\langle (\eta_1,\eta_2), v \rangle \pm \eta_3 )/\sqrt{2} \sim |(\eta_1,\eta_2)| + |\eta_3| \sim \radp \qquad\text{for } \eta \in \supp\zeta_{\cnnb,\radp,\crdp,q}, 
\]
and moreover, by \eqref{eq:g743_eta2estimate}, it is clear that $|\eta_2^q| \sim \crdp^{1/2} \radp$ for $\eta \in \supp\zeta_{\cnnb,\radp,\crdp,q}$. On the other hand, by \eqref{eq:g743_firstetaestimates} and part (iv), $|\eta_1^q \eta_3^q| \sim b_1^\eta b_2^\eta \sim \crdp \radp^2$ for $\eta \in \supp\zeta_{\cnnb,\radp,\crdp,q}$; consequently $|\eta_3^q| = |\eta_1^q \eta_3^q| / |\eta_1^q| \sim \crdp\radp$ for $\eta \in \supp\zeta_{\cnnb,\radp,\crdp,q}$, and part (vi) is proved.
\end{proof}

Denote by $(u_1^q,u_2^q,u_3^q)$ the system of coordinates on $\snd$ dual to $(\eta_1^q,\eta_2^q,\eta_3^q)$ on $\dusnd$. A repetition of the proof of Proposition~\ref{prp:g743_firstweightedestimate}, exploiting Lemma~\ref{lem:g743_uniformestimates} in place of Lemma~\ref{lem:g743_pcoords}, yields the following estimate.

\begin{prp}\label{prp:g743_coneweightedestimate}
Let $K \subseteq \R$ be compact. For all smooth $F : \R \to \C$ such that $\supp F \subseteq K$,
for all $\radp \in \leftopenint 0,\infty \rightopenint$, $\crdp \in \leftopenint 0,1 \rightopenint$, $q \in Y_\crdp$,
$\alpha \in \leftclosedint 0,\infty \rightopenint^3$ and $\theta \in \leftclosedint 0,\infty \rightopenint$,
\begin{multline*}
\int_{G} \Bigl|\Kern_{F(L) \, \zeta_{\cnnb,\radp,\crdp,q}(\vecU)}(x,u) \, (1+w(x))^\theta \prod_{\jtwo=1}^3 (1+|u^q_\jtwo|)^{\alpha_\jtwo} \Bigr|^2 \,dx \,du \\
\leq C_{K,\alpha,\theta} \, \radp^{3-2|\alpha|-2\theta} \crdp^{3/2-\alpha_2-2\alpha_3} \|F\|_{W_2^{|\alpha|}}^2.
\end{multline*}
\end{prp}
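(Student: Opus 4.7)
The proof is a direct adaptation of that of Proposition~\ref{prp:g743_firstweightedestimate}, with Lemma~\ref{lem:g743_pcoords} replaced by Lemma~\ref{lem:g743_uniformestimates}; the only essentially new content lies in verifying the uniformity of all resulting constants in the parameters $\radp \in \leftopenint 0,\infty\rightopenint$, $\crdp \in \leftopenint 0,1\rightopenint$, and $q \in Y_\crdp$. I would organize the argument in three steps.

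Step~1 (pure polynomial weight in $u^q$). Apply Proposition~\ref{prp:cutoff_partiall2norm} on $\Omega = \Omega_\cnnb \cap V_q$, with the orthonormal coordinates $(\eta^q_1,\eta^q_2,\eta^q_3)$, writing the cutoff as $\chi_\rad(f(\eta))\,\chi_\sph(\eta)$ with $f(\eta)=|\eta|/\radp$, $\chi_\rad=\chi$, and $\chi_\sph = \zeta_{\cnnb,\crdp,q}$. All hypotheses hold uniformly in $(\radp,\crdp,q)$: the self-control property \eqref{eq:selfcontrolhypothesis} is provided by Lemma~\ref{lem:g743_uniformestimates}(ii)-(iii); the inclusion $\supp \zeta_{\cnnb,\radp,\crdp,q} \subseteq \Omega$ by (iv); the bound on $\chi_\sph$ by (v); and the $\SC$-bound on $f$ by \eqref{eq:sc_scalar} combined with Lemma~\ref{lem:schom_unif} applied to $\eta\mapsto|\eta|$. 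Using the size estimates (vi) one obtains
\[
\int_{\supp\zeta_{\cnnb,\radp,\crdp,q}} |\eta^{q,-2\alpha}| \,d\eta \lesssim \radp^{3-2|\alpha|}\crdp^{3/2-\alpha_2-2\alpha_3},
\]
and hence, for $\alpha \in \N^3$ with $|\alpha|\leq A$,
\[
\int_G \Bigl| \textstyle\prod_{\jtwo=1}^3 |u_\jtwo^q|^{\alpha_\jtwo} \,\Kern_{F(L)\zeta_{\cnnb,\radp,\crdp,q}(\vecU)}\Bigr|^2 dx\,du \leq C_{K,\alpha}\,\|F\|_{W_2^{|\alpha|}}^2\,\radp^{3-2|\alpha|}\crdp^{3/2-\alpha_2-2\alpha_3};
\]
interpolation extends this to $\alpha\in\leftclosedint 0,\infty\rightopenint^3$.

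Step~2 (pure $w$-weight). Exactly as in the proof of Proposition~\ref{prp:g743_firstweightedestimate}, I would invoke \cite[Proposition~3.5]{martini_joint_2012} together with the pointwise bound $|J_\eta x| \geq |\eta|\,w(x)$ from Lemma~\ref{lem:g743_hzestimate}(i) and Corollary~\ref{cor:plancherelmeasure}, to bound $\int_G |w(x)^\theta\Kern|^2\,dx\,du$ by a constant times
\[
\int_{\ddsnd} \sum_{n\in\N^2} \bigl|F\bigl(b_1^\eta(2n_1+1)+b_2^\eta(2n_2+1)\bigr)\bigr|^2 \,|\zeta_{\cnnb,\radp,\crdp,q}(\eta)|^2\,|\eta|^{-2\theta}\,b_1^\eta b_2^\eta\,d\eta.
\]
Passing to polar coordinates in $|\eta|$ and rescaling, using on $\supp\zeta_{\cnnb,\radp,\crdp,q}$ the sizes $|\eta|\sim\radp$, $b_1^\eta\sim\radp$, $b_2^\eta\sim\radp\crdp$, and $|\supp\zeta_{\cnnb,\radp,\crdp,q}|\sim\radp^3\crdp^{3/2}$ (all from Lemma~\ref{lem:g743_uniformestimates}(vi) and the definition of $\Omega_\cnnb$), one gets
\[
\int_G \bigl|w(x)^\theta\,\Kern_{F(L)\zeta_{\cnnb,\radp,\crdp,q}(\vecU)}\bigr|^2 dx\,du \leq C_{K,\theta}\,\|F\|_{W_2^0}^2\,\radp^{3-2\theta}\crdp^{3/2}.
\]

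Step~3 (combine). Interpolating between the estimates of Steps~1 and 2 (in the pair of weights $u^q$-polynomial versus $w$) yields the joint estimate with weight $w^\theta\prod_\jtwo |u_\jtwo^q|^{\alpha_\jtwo}$ and the advertised exponents $3-2|\alpha|-2\theta$ and $3/2-\alpha_2-2\alpha_3$. Finally, combining this mixed estimate with the analogous ones where some of the $\alpha_\jtwo$ and/or $\theta$ are replaced by $0$ replaces each $|u_\jtwo^q|^{\alpha_\jtwo}$ by $(1+|u_\jtwo^q|)^{\alpha_\jtwo}$ and $w^\theta$ by $(1+w)^\theta$, completing the proof. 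The main technical point is Step~1: one must guarantee uniformity of the $\SC/\BD$ constants across the \emph{infinite} family of coordinate systems $(\eta^q_\cdot)$ and scales $\crdp$, and this is precisely what Lemma~\ref{lem:g743_uniformestimates} has been designed to provide; once it is in place, the remainder is routine bookkeeping of scaling factors.
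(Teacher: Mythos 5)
Your proposal is correct and follows exactly the approach the paper has in mind: the paper itself states only that "a repetition of the proof of Proposition~\ref{prp:g743_firstweightedestimate}, exploiting Lemma~\ref{lem:g743_uniformestimates} in place of Lemma~\ref{lem:g743_pcoords}, yields the following estimate," and your three steps faithfully unpack this, with correct bookkeeping of the scaling factors $\radp, \radp\crdp^{1/2}, \radp\crdp$ coming from Lemma~\ref{lem:g743_uniformestimates}(vi) and the required uniformity over $q\in Y_\crdp$ and $\crdp$ supplied by parts (i)--(v).
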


Unfortunately we cannot sum directly the estimates given by the previous proposition, since the weight changes from piece to piece. In order to avoid this problem, we must first apply H\"older's inequality in order to obtain $L^1$-estimates with a weight independent of the piece. The next result estimates the contribution given by each of the shells corresponding to the first decomposition.

\begin{cor}\label{cor:g743_coneweightedestimate2}
Let $K \subseteq \R$ be compact. For all smooth $F : \R \to \C$ such that $\supp F \subseteq K$,
for all $\radp \in \leftopenint 0,\infty \rightopenint$ and $\crdp \in \leftopenint 0,1 \rightclosedint$, if $\zeta_{\cnnb,\radp,\crdp} : \dusnd \to \C$ is defined by
\[
\zeta_{\cnnb,\radp,\crdp}(\eta) 
= \zeta_\cnnb(\eta) \, \chi(|\eta| /\radp) \, \chi(b_1^\eta b_2^\eta / (\crdp |\eta|^2)) ,
\]
then, for all $\alpha \in \leftclosedint 0,\infty \rightopenint^3$ and $\theta,\gamma \in \leftclosedint 0,\infty\rightopenint$ such that $2\gamma > \dim\fst - \min\{1,2\theta\} + 2\sum_{\jtwo=1}^3 (1-2\alpha_\jtwo)_+$,
\[
\| (1+|\cdot|_G)^{-\gamma} \, \Kern_{F(L) \, \zeta_{\cnnb,\radp,\crdp}(\vecU)}\|_1 \leq C_{K,\alpha,\theta,\gamma} \, \radp^{3/2-|\alpha|-\theta} \crdp^{1/4-\alpha_2/2-\alpha_3} \|F\|_{W_2^{|\alpha|}}.
\]
\end{cor}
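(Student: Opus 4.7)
The plan is a three-step reduction: first, decompose the cutoff $\zeta_{\cnnb,\radp,\crdp}$ into the angular pieces $\zeta_{\cnnb,\radp,\crdp,q}$; next, for each piece, convert the weighted $L^2$-estimate of Proposition~\ref{prp:g743_coneweightedestimate} into a weighted $L^1$-estimate via Cauchy--Schwarz; finally, sum the per-piece bounds over $q \in Y_\crdp$ using the counting provided by Lemma~\ref{lem:hompartuni}(i).

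For the decomposition step, the identity $\sum_{v \in I_\epsilon} \chi_{\epsilon,v} \equiv 1$ on $\R^2 \setminus \{0\}$ (with $\epsilon = \smctb \crdp^{1/2}$), combined with $\zeta_\cnnb = \zeta_+ + \zeta_-$, yields $\zeta_{\cnnb,\radp,\crdp} = \sum_{q \in Y_\crdp} \zeta_{\cnnb,\radp,\crdp,q}$ by unwinding the definitions. The triangle inequality for $\|\cdot\|_1$ then reduces matters to controlling $\|(1+|\cdot|_G)^{-\gamma} \Kern_{F(L)\,\zeta_{\cnnb,\radp,\crdp,q}(\vecU)}\|_1$ for each individual $q$ and summing.

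For the H\"older step, fix $q \in Y_\crdp$, set $\psi_q(x,u) = (1+w(x))^\theta \prod_\jtwo (1+|u_\jtwo^q|)^{\alpha_\jtwo}$, and apply Cauchy--Schwarz:
\[
\|(1+|\cdot|_G)^{-\gamma} \Kern\|_1 \leq \|\psi_q^{-1}(1+|\cdot|_G)^{-\gamma}\|_2 \cdot \|\psi_q \Kern\|_2.
\]
The second factor is bounded by $C \radp^{3/2-|\alpha|-\theta}\crdp^{3/4-\alpha_2/2-\alpha_3}\|F\|_{W_2^{|\alpha|}}$ directly from Proposition~\ref{prp:g743_coneweightedestimate}. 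For the first factor, split $\gamma = \gamma_x + \gamma_1 + \gamma_2 + \gamma_3$ with all four summands nonnegative. The elementary inequality $(1+A+B)^{-\gamma} \leq (1+A)^{-\gamma_x}(1+B)^{-(\gamma_1+\gamma_2+\gamma_3)}$ for $A,B \geq 0$ (applied with $A=|x|$, $B=|u|^{1/2}$: reduce to the larger of $A,B$), together with $(1+|u|^{1/2})^{-\gamma_\jtwo} \leq (1+|u_\jtwo^q|^{1/2})^{-\gamma_\jtwo}$ (from $|u|^{1/2} \geq |u_\jtwo^q|^{1/2}$), factorizes the squared integrand into a piece in $x$, controlled by $(1+|x|)^{-2\gamma_x}(1+w(x))^{-2\theta}$, and three one-dimensional pieces in the $u_\jtwo^q$. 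The $x$-integral is finite by Lemma~\ref{lem:g743_hzestimate}(ii) provided $2\gamma_x > \dim\fst - \min\{1,2\theta\}$, while the $u_\jtwo^q$-integral $\int (1+|t|)^{-2\alpha_\jtwo}(1+|t|^{1/2})^{-2\gamma_\jtwo}\,dt$ is finite iff $\gamma_\jtwo > (1-2\alpha_\jtwo)_+$, and is in particular independent of $q$ by orthogonality of the coordinates. The hypothesis $2\gamma > \dim\fst - \min\{1,2\theta\} + 2\sum_\jtwo (1-2\alpha_\jtwo)_+$ is precisely the budget that allows such a splitting.

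For the summation step, Lemma~\ref{lem:hompartuni}(i) gives $|Y_\crdp| = 2|I_\epsilon| \leq C \crdp^{-1/2}$; summing the uniform per-piece bound over $q \in Y_\crdp$ multiplies the right-hand side by $\crdp^{-1/2}$, converting the exponent $3/4-\alpha_2/2-\alpha_3$ into $1/4-\alpha_2/2-\alpha_3$, as claimed. No genuine obstacle is expected here: all the difficult work has already been carried out in Proposition~\ref{prp:g743_coneweightedestimate} and the preparatory lemmas, and the remaining argument is a standard H\"older-plus-counting procedure. The only point to check is the admissibility of the weight splitting, which matches the stated constraint on $\gamma$ exactly.
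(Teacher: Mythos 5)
Your proof is correct and follows essentially the same route as the paper: decompose $\zeta_{\cnnb,\radp,\crdp}$ into the angular pieces $\zeta_{\cnnb,\radp,\crdp,q}$, convert the weighted $L^2$-bound of Proposition~\ref{prp:g743_coneweightedestimate} into an $L^1$-bound per piece via Cauchy--Schwarz with the dual weight (whose integrability follows from exactly the same splitting of $\gamma$ used in the paper and Lemma~\ref{lem:g743_hzestimate}(ii)), and then sum over $q$ using $|Y_\crdp|\lesssim \crdp^{-1/2}$ from Lemma~\ref{lem:hompartuni}(i). The only nit is that your ``iff'' for convergence of the $u^q_\jtwo$-integral should be $2\alpha_\jtwo+\gamma_\jtwo>1$ rather than $\gamma_\jtwo>(1-2\alpha_\jtwo)_+$ (the two differ only when $\alpha_\jtwo>1/2$), but this is immaterial since the ``if'' direction is all that is used.
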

\begin{proof}
Note that $\zeta_{\cnnb,\radp,\crdp} = \sum_{q \in Y_\crdp} \zeta_{\cnnb,\radp,\crdp,q}$. Since $|Y_\crdp| \lesssim \crdp^{-1/2}$ by Lemma~\ref{lem:hompartuni}(i), the conclusion will follow from Minkowski's inequality if for each summand we can prove the following estimate:
\[
\| (1+|\cdot|_G)^{-\gamma} \, \Kern_{F(L) \, \zeta_{\cnnb,\radp,\crdp,q}(\vecU)}\|_1 
\leq C_{K,\alpha,\theta,\gamma} \, \radp^{3/2-|\alpha|-\theta} \crdp^{3/4-\alpha_2/2-\alpha_3}  \|F\|_{W_2^{|\alpha|}}.
\]
By Proposition~\ref{prp:g743_coneweightedestimate} and H\"older's inequality, this estimate in turn will follow if we show that
\[
\int_G (1+|(x,u)|_G)^{-2\gamma}  \, (1+w(x))^{-2\theta} \prod_{\jtwo=1}^3 (1+|u^q_\jtwo|)^{-2\alpha_\jtwo} \,dx\,du < \infty
\]
(note that the value of the previous integral does not depend on $q$, because the $u^q$ are orthonormal coordinates).
On the other hand, under our hypothesis on $\alpha,\gamma,\theta$, we may decompose $\gamma = \gamma_1+ 2(\gamma_{2,1}+\gamma_{2,2}+\gamma_{2,3})$ so that $2\gamma_1 > \dim \fst -\min\{1,2\theta\}$ and $2\gamma_{2,\jtwo} > (1-2\alpha_\jtwo)_+$ for $\jtwo=1,2,3$. Thus
\begin{multline*}
(1+|(x,u)|_G)^{-2\gamma}  \, (1+w(x))^{-2\theta} \prod_{\jtwo=1}^3 (1+|u^q_\jtwo|)^{-2\alpha_\jtwo} \\
\leq C_{\alpha,\theta,\gamma} \, (1+|x|)^{-2\gamma_1} \, (1+w(x))^{-2\theta} \prod_{\jtwo=1}^3 (1+|u^q_\jtwo|)^{-2\gamma_{2,\jtwo}-2\alpha_\jtwo},
\end{multline*}
and since (by Lemma~\ref{lem:g743_hzestimate}) the right-hand side is integrable over $G$ we are done.
\end{proof}

By choosing $\radp,\crdp$ to be dyadic parameters, we can now sum the estimates corresponding to the first decomposition. In order to do so, the exponents of $\radp$ and $\crdp$ in the estimate must be positive, and this gives further constraints on the choice of $\alpha_1,\alpha_2,\alpha_3,\theta$. Anyhow, a suitable choice of these parameters allows us to obtain for the region $\Omega_\cnnb$ the same $L^1$-estimate obtained in Corollary~\ref{cor:g743_firstweightedestimate_final} for the region $\Omega_\prnb$.

\begin{cor}\label{cor:g743_coneweightedestimate_final}
Let $K \subseteq \R$ be compact. For all smooth $F : \R \to \C$ such that $\supp F \subseteq K$,
for all $\alpha,\beta \in \R$ with $\beta \geq 0$, $2\beta >\alpha+9/2$, $\alpha < -5/2$,
\[
\| (1+|\cdot|_G)^{\alpha} \, \Kern_{F(L) \, \zeta_\cnnb(\vecU)}\|_1 \leq C_{K,\alpha,\beta} \|F\|_{W_2^{\beta}}.
\]
\end{cor}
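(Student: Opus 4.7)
My plan is to obtain the required $L^1$-estimate by applying Corollary~\ref{cor:g743_coneweightedestimate2} to a dyadic decomposition of $\zeta_\cnnb$ and then summing two geometric series in $m$ and $n$. Concretely, using $\sum_{m \in \Z}\chi(t/2^m) = 1$ simultaneously for $t = |\eta|$ and for $t = b_1^\eta b_2^\eta / |\eta|^2$, I would write $\zeta_\cnnb = \sum_{m \in \Z,\ n \geq 0} \zeta_{\cnnb,2^m,2^{-n}}$ (the restriction $n \geq 0$ coming from the fact that $b_1^\eta b_2^\eta / |\eta|^2 < \smct$ on $\Omega_\cnnb$). The compact support of $F$, together with the lower bound $|\eta| \leq \kappa\sum_\jone b_\jone^\eta(2n_\jone+r_\jone)$ from Lemma~\ref{lem:spectraldecomposition}, forces $F(L)\,\zeta_{\cnnb,2^m,2^{-n}}(\vecU) = 0$ whenever $2^m > C_K$; Minkowski's inequality then reduces the claim to bounding
\[
\sum_{m \leq m_0}\sum_{n \geq 0}\|(1+|\cdot|_G)^\alpha\,\Kern_{F(L)\,\zeta_{\cnnb,2^m,2^{-n}}(\vecU)}\|_1
\]
by $C_{K,\alpha,\beta}\|F\|_{W_2^\beta}$.

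The core step is a careful choice of the parameters $a_1, a_2, a_3, \theta, \gamma$ of Corollary~\ref{cor:g743_coneweightedestimate2} so that each summand is majorized by $C\,2^{m e_1}\,2^{-n e_2}\,\|F\|_{W_2^s}$ with $e_1, e_2 > 0$ and $s \leq \beta$. I would take $\gamma = -\alpha$, $\theta = 1/2$, $a_3 = 0$, and split $a_1 + a_2 = \tau$ with $a_1, a_2 \in [0, 1/2)$ for a scalar $\tau$ to be determined. With $\dim\fst = 4$ and $\min\{1,2\theta\} = 1$, the applicability hypothesis of Corollary~\ref{cor:g743_coneweightedestimate2} then reduces to $-2\alpha > 9 - 4\tau$, equivalently $\tau > (\alpha+9/2)/2$; the two dyadic exponents $3/2 - |a| - \theta = 1 - \tau$ and $1/4 - a_2/2 - a_3 = 1/4 - a_2/2$ are positive exactly when $\tau < 1$ and $a_2 < 1/2$; and the Sobolev bound $|a| \leq \beta$ becomes $\tau \leq \beta$.

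It thus remains to pick $\tau \in \bigl((\alpha+9/2)/2,\ \min(\beta,1)\bigr)$; this interval is nonempty thanks to the two joint hypotheses $\alpha < -5/2$ (which gives $(\alpha+9/2)/2 < 1$) and $2\beta > \alpha + 9/2$ (which gives $(\alpha+9/2)/2 < \beta$). Any such $\tau$ can be split as $a_1 + a_2$ with both summands in $[0, 1/2)$. Substituting into Corollary~\ref{cor:g743_coneweightedestimate2} yields the piecewise bound $C\,2^{m(1-\tau)}\,2^{-n(1/4 - a_2/2)}\|F\|_{W_2^\tau}$, and summing the two convergent geometric series over $m \leq m_0$ and $n \geq 0$ together with $\|F\|_{W_2^\tau} \leq \|F\|_{W_2^\beta}$ gives the claim.

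The main obstacle I anticipate is the tension between the ``applicability'' threshold of Corollary~\ref{cor:g743_coneweightedestimate2} --- whose right-hand side $\dim\fst - \min\{1,2\theta\} + 2\sum_\jtwo(1-2a_\jtwo)_+$ grows whenever some $a_\jtwo$ is small --- and the tight summability constraint $a_2/2 + a_3 < 1/4$, which severely restricts how much Sobolev weight can be placed on $a_2$ and $a_3$. The trick of concentrating the weight on $a_1$ (the radial direction) with $a_3 = 0$ and $\theta = 1/2$ collapses the competing conditions into the nonemptiness of a single interval of $\tau$-values, whose defining endpoints are precisely encoded by the joint hypotheses $\alpha < -5/2$ and $2\beta > \alpha + 9/2$.
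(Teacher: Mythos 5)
Your proposal is correct and follows essentially the same route as the paper: apply Corollary~\ref{cor:g743_coneweightedestimate2} with $\theta=1/2$, $\gamma=-\alpha$, $\alpha_3=0$, and a total weight $\tau=\alpha_1+\alpha_2$ split across the first two coordinates, then sum two geometric series over the dyadic parameters $\radp=2^m$ and $\crdp=2^{-n}$; the paper does exactly this with the symmetric choice $\alpha_1=\alpha_2$ (its $\theta$ being your $\tau/2$). The only slip is that you pick $\tau$ from the open interval $\left((\alpha+9/2)/2,\min(\beta,1)\right)$ and then split it as $a_1+a_2\geq 0$; when $\beta=0$ this interval contains only negative reals, so you should instead take $\tau$ in $\left((\alpha+9/2)/2,1\right)\cap[0,\beta]$, which is nonempty under the hypotheses (with $\tau=0$ admissible at $\beta=0$) and still yields $\tau<1$, $a_2<1/2$ and $\|F\|_{W_2^\tau}\leq\|F\|_{W_2^\beta}$.
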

\begin{proof}
Under our hypothesis, we may choose $\theta$ such that $2\theta \in \leftopenint (9+2\alpha)/4,1 \rightopenint \cap \leftclosedint 0,\beta \rightclosedint$. In particular $-2\alpha > 9-8\theta = 4 - 1 + 2((1-2\theta) + (1-2\theta) + 1)$ and $2\theta < 1$, hence, by Corollary~\ref{cor:g743_coneweightedestimate2},
\begin{equation}\label{eq:g743_partialweightedestimate}
\| (1+|\cdot|_G)^{\alpha} \, \Kern_{F(L) \, \zeta_{\cnnb,\radp,\crdp}(\vecU)}\|_1 \leq C_{K,\alpha,\beta} \, \radp^{1-2\theta} \crdp^{1/4-\theta/2} \|F\|_{W_2^{2\theta}}
\end{equation}
for all $\radp,\crdp \in \leftopenint 0,\infty \rightopenint$. On the other hand, for some $\kappa \in \leftclosedint 0,\infty \rightopenint$,
\[
F(L) \, \zeta_\cnnb(\vecU) = \sum_{\substack{k \in \Z,\, n \in \N \\ 2^{k} \leq 2\kappa\max K}} F(L) \, \zeta_{\cnnb,2^{k},2^{-n}}(\vecU),
\]
hence an estimate for $\|(1+|\cdot|_G)^\alpha \,\Kern_{F(L) \, \zeta_\cnnb(\vecU)}\|_1$ can be obtained via the triangular inequality by summing the corresponding estimates given by \eqref{eq:g743_partialweightedestimate}. The sum converges because both $1-2\theta$ and $1/4-\theta/2$ are positive, and since $2\theta \leq \beta$ the conclusion follows.
\end{proof}

Interpolation with the standard estimate finally allows us to conclude the proof of Proposition~\ref{prp:l1low}.

\begin{prp}\label{prp:g743_l1estimate}
Let $K \subseteq \R$ be compact. For all functions $F : \R \to \C$ such that $\supp F \subseteq K$, and for all $\alpha,\beta \in \R$ such that $\beta \geq 2/2$ and $\beta > \alpha + 7/2$,
\begin{equation}\label{eq:g743_weightedl1}
\| (1+|\cdot|_G)^\alpha \, \Kern_{F(L)} \|_1 \leq C_{K,\alpha,\beta} \| F \|_{W_2^\beta}.
\end{equation}
\end{prp}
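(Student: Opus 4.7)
The plan is to follow the template of the proof of Proposition~\ref{prp:2dc_l1estimate}: establish \eqref{eq:g743_weightedl1} on two ``base regions'' of parameters $(\alpha,\beta)$, and then extract the stated range by a complex-interpolation argument.

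The first region is $R_1 = \{(\alpha,\beta) : \beta \geq 0, \, \beta > \alpha + Q/2\}$, where $Q = \dim\fst + 2\dim\snd = 10$; on $R_1$ the required bound follows verbatim as in the proof of Proposition~\ref{prp:2dc_l1estimate}, by picking $s \in \leftopenint \alpha + Q/2,\beta\rightopenint$ and combining H\"older's inequality with the standard weighted $L^2$-estimate \eqref{eq:standardl2}. The second region is $R_2 = \{(\alpha,\beta) : \beta \geq 0, \, 2\beta > \alpha + 9/2, \, \alpha < -5/2\}$; on $R_2$, writing
\[
\Kern_{F(L)} = \Kern_{F(L)\,\zeta_\prnb(\vecU)} + \Kern_{F(L)\,\zeta_\cnnb(\vecU)}
\]
and applying Corollary~\ref{cor:g743_firstweightedestimate_final} and Corollary~\ref{cor:g743_coneweightedestimate_final} to the two summands respectively, the triangle inequality yields \eqref{eq:g743_weightedl1}.

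It remains to fill in the stated range by interpolation. For every sufficiently small $\delta > 0$, the point $P_\delta = (-5/2-\delta,\,1)$ lies in $R_2$ and on the line $\beta = \alpha + 7/2 + \delta$, so an elementary planar argument shows that the convex hull of $\{P_\delta\}$ with $R_1$ contains every $(\alpha,\beta)$ with $\beta \geq 1$ and $\beta > \alpha + 7/2 + \delta$. Accordingly, given $(\alpha,\beta)$ satisfying the hypotheses of the proposition, I pick $\delta \in \leftopenint 0,\beta-\alpha-7/2 \rightopenint$, express $(\alpha,\beta) = (1-t) P_\delta + t P_1$ for a suitable $P_1 \in R_1$ and $t \in \leftopenint 0,1 \rightopenint$, and invoke the Stein interpolation theorem for analytic families (interpolating simultaneously the weights $(1+|\cdot|_G)^{\alpha_z}$ on the $L^1$-side and the Sobolev orders on the $W_2^{\beta_z}$-side) to combine the estimates at $P_\delta$ and $P_1$ into the desired estimate at $(\alpha,\beta)$.

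No genuine obstacle arises at this stage: the substantive work has already been performed in Corollaries~\ref{cor:g743_firstweightedestimate_final} and \ref{cor:g743_coneweightedestimate_final}, and the interpolation step is essentially identical to the one used in the proof of Proposition~\ref{prp:2dc_l1estimate}.
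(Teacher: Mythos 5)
Your proposal reproduces the paper's proof. The paper's own argument is terse -- it simply states the two ranges (which you correctly identify as $\beta\geq 0,\ \beta>\alpha+10/2$ from the standard estimate \eqref{eq:standardl2} plus H\"older, and $\beta\geq 0,\ 2\beta>\alpha+9/2,\ \alpha<-5/2$ from Corollaries~\ref{cor:g743_firstweightedestimate_final} and \ref{cor:g743_coneweightedestimate_final}) and says ``obtained by interpolation,'' referring to the analogous step in Proposition~\ref{prp:2dc_l1estimate}. Your explicit choice of the corner point $P_\delta=(-5/2-\delta,1)$ and the planar convex-hull computation correctly fill in the unstated geometry, and invoking Stein's analytic-families theorem is the standard (if unspecified in the paper) justification for that interpolation. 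This is the same proof.
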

\begin{proof}
Analogously as in the proof of Proposition~\ref{prp:2dc_l1estimate}, it is sufficient to prove \eqref{eq:g743_weightedl1} for all $\alpha,\beta$ belonging to either of the following ranges:
\begin{gather}
\label{eq:g743_range1} \beta \geq 0, \qquad \beta > \alpha + 10/2;\\
\label{eq:g743_range2} \beta \geq 0, \quad 2\beta > \alpha + 9/2, \quad \alpha < -5/2;
\end{gather}
the conclusion (i.e., the range $\beta \geq 2/2$, $\beta > \alpha+7/2$) is then obtained by interpolation. On the other hand, the validity of \eqref{eq:g743_weightedl1} in the range \eqref{eq:g743_range1} follows from the standard estimate \eqref{eq:standardl2} and H\"older's inequality. As for the range \eqref{eq:g743_range2}, we decompose $F(\lambda) = F(\lambda) \, \zeta_\prnb(\eta) + F(\lambda) \, \zeta_\cnnb(\eta)$ and then we sum the corresponding estimates for $\Kern_{F(L) \, \zeta_\prnb(\vecU)}$ and $\Kern_{F(L) \, \zeta_\cnnb(\vecU)}$ given by Corollaries~\ref{cor:g743_firstweightedestimate_final} and \ref{cor:g743_coneweightedestimate_final}.
\end{proof}

\bibliographystyle{amsabbrv}
\bibliography{lowdimensional}

\end{document}